\theoremstyle{plain}
\newtheorem{thm}{Theorem}[subsection]
\newtheorem{theorem}[thm]{Theorem}
\newtheorem{lemma}[thm]{Lemma}
\newtheorem{lem}[thm]{Lemma}
\newtheorem{proposition}[thm]{Proposition}
\newtheorem{cor}[thm]{Corollary}
\newtheorem{corollary}[thm]{Corollary}
\newtheorem{q}[thm]{Question}
\theoremstyle{definition}
\newtheorem{rem}[thm]{Remark}
\newtheorem{rmk}[thm]{Remark}
\newtheorem{defn}[thm]{Definition}
\newtheorem{ex}[thm]{Example}
\newtheorem{notation}[thm]{Notation}
\newtheorem{Definition-Proposition}[thm]{Definition-Proposition}
\newcommand{\sA}{{\mathcal A}}
\newcommand{\sB}{{\mathcal B}}
\newcommand{\sH}{{\mathcal H}}
\newcommand{\sF}{{\mathcal F}}
\newcommand{\sL}{{\mathcal L}}
\newcommand{\sC}{{\mathcal C}}
\newcommand{\sD}{{\mathcal D}}
\newcommand{\sI}{{\mathcal I}}
\newcommand{\sT}{{\mathcal T}}
\newcommand{\sO}{{\mathcal O}}
\newcommand{\sP}{{\mathcal P}}
\newcommand{\bC}{{\mathbb C}}
\newcommand{\bP}{{\mathbb P}}
\newcommand{\bQ}{{\mathbb Q}}
\newcommand{\bZ}{{\mathbb Z}}
\DeclareMathOperator{\hocolim}{\underset{\longrightarrow}{hocolim}}
\DeclareMathOperator{\holim}{\underset{\longleftarrow}{holim}}
\DeclareMathOperator{\coker}{{coker}}
\DeclareMathOperator{\Alb}{{Alb}}
\DeclareMathOperator{\im}{{im}}
\DeclareMathOperator{\rk}{{rk}}
\DeclareMathOperator{\id}{{id}}
\DeclareMathOperator{\Pic}{{Pic}}
\DeclareMathOperator{\Supp}{{Supp}}
\DeclareMathOperator{\Spec}{{Spec}}
\DeclareMathOperator{\Id}{{Id}}
\author{Christopher D. Hacon}
\address{Department of Mathematics, University of Utah, 155 South 1400 East,
Salt Lake City, UT 48112-0090, USA}
\email{hacon@math.utah.edu}
\author{Zsolt Patakfalvi}
\address{Department of Mathematics, Princeton University, Fine Hall, Washington Road,  NJ 08544-1000, USA}
\email{pzs@math.princeton.edu}
\begin{document}
\title[Generic vanishing and ordinary abelian varieties]{Generic vanishing in characteristic $p>0$ and the characterization of ordinary abelian varieties}
\maketitle

\begin{abstract}
We prove a generic vanishing type statement in positive characteristic and apply it to prove positive characteristic versions of Kawamata's theorems: a characterization of smooth varieties birational to ordinary abelian varieties and the surjectivity of the Albanese map when the Frobenius stable Kodaira dimension is zero.
\end{abstract}

\section{Introduction}
\label{sec:introduction}

\subsection{Varieties of maximal Albanese dimension.}
Let $X$ be a smooth projective variety over an algebraically closed field $k$.
If $X$ admits a generically finite morphism to an abelian variety $f:X\to A$ then we say that $X$  has maximal Albanese dimension (m.A.d.).
The geometry of complex projective m.A.d. varieties is extremely well understood. In particular it is known that these varieties admit a good minimal model (cf. \cite{Fujino09}), and that the 4-th pluricanonical map gives the Iitaka fibration (cf. \cite{JLT11}). The main tool used in proving results about the geometry of  m.A.d. varieties are the generic vanishing theorems first developed by Green and Lazarsfeld (and further sharpened by Chen, Hacon, Popa, Pareschi, Schnell, Simpson and others). On the other hand, in positive characteristic  very little is known about generic vanishing and m.A.d. varieties. By a result of the first author and Kov\'acs it is known that the obvious positive characteristic version of the generic vanishing theorem does not hold \cite{HK12}. Here we present a generic vanishing statement in positive characteristic which, by the results of \cite{HK12} is necessarily weaker than the characteristic zero statements, but it is strong enough to imply the following positive 
characteristic 
version of the celebrated results of Kawamata \cite{Kawamata81}.
\begin{thm}\label{t-one}
 Let $X$ be a smooth projective variety  over an algebraically closed field $k$ of characteristic $p>0$, and let $a:X\to A$ be the Albanese morphism. 
\begin{enumerate}
\item If $\kappa_S(X)=0$, then $a$ is surjective and in particular we obtain the upper bound $b_1(X) \leq 2  \dim X$ for the first Betti number.
\item $X$ is birational to an ordinary abelian variety if and only if  $p \nmid \deg a$,  $\kappa_S(X)=0$ and $b_1(X)=2\dim (X)$.\end{enumerate}
\end{thm}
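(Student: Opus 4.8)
The plan is to deduce both statements from our generic vanishing theorem, following the strategy of Kawamata and Ein--Lazarsfeld but systematically replacing ordinary cohomology and the Kodaira dimension by their Frobenius-stable counterparts, namely the spaces $S^0(X,\omega_X^{\otimes m})$ and the invariant $\kappa_S$. Throughout I would use that $b_1(X)=2\dim\Alb(X)$, so that the hypothesis $b_1(X)=2\dim X$ is equivalent to $\dim A=\dim X$, i.e.\ to $a$ being generically finite; together with part (1) this makes $a$ a generically finite surjection and gives meaning to $\deg a$.

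For part (1) I argue by contradiction. Suppose $Z:=a(X)\subsetneq A$. Since $a$ is the Albanese morphism, $Z$ generates $A$; letting $B\subseteq A$ be the stabilizer of $Z$ and passing to the quotient $q\colon A\to A'=A/B$, the image $Z'=q(Z)$ has trivial stabilizer, still generates $A'$, and satisfies $\dim A'-\dim Z'=\dim A-\dim Z>0$, so $Z'$ is a proper subvariety of $A'$ with trivial stabilizer, hence of general type (a positive-characteristic version of Ueno's theorem). In characteristic zero one would now invoke the Iitaka inequality to conclude $\kappa(X)>0$; here this step is \emph{not} automatic, since general type does not force the Frobenius-stable sections to be nonzero. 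Instead I would apply the generic vanishing theorem to the induced morphism $X\to A'$ to produce, for $P$ in a positive-dimensional subset of $\Pic^0(A')$ and for $m\gg 0$, a family of genuinely nonzero elements of $S^0(X,\omega_X^{\otimes m}\otimes a^*P)$, forcing the stable pluricanonical systems to grow and contradicting $\kappa_S(X)=0$. The bound $b_1(X)\le 2\dim X$ then follows since surjectivity of $a$ gives $\dim\Alb(X)\le\dim X$.

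For part (2) the ``only if'' direction is immediate: if $X$ is birational to an ordinary abelian variety $A_0$ then $a$ is birational (so $\deg a=1$ and $p\nmid\deg a$), $\Alb(X)=A_0$ gives $b_1(X)=2\dim A_0=2\dim X$, and since $\kappa_S$ is a birational invariant and $A_0$ is globally $F$-split we get $S^0(A_0,\omega_{A_0})=H^0(A_0,\mathcal{O}_{A_0})\ne 0$ with all $S^0(A_0,\omega_{A_0}^{\otimes m})$ one-dimensional, whence $\kappa_S(X)=0$. For the ``if'' direction, part (1) together with $b_1=2\dim X$ makes $a$ a generically finite surjection. The hypothesis $p\nmid\deg a$ splits the inclusion $\mathcal{O}_A\hookrightarrow a_*\mathcal{O}_X$ by the normalized trace, so that $a_*\omega_X\cong\omega_A\oplus\sF$ for some sheaf $\sF$. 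If $a$ were not birational then $\sF\ne 0$; applying the generic vanishing theorem to the GV-sheaf $a_*\omega_X$ (hence to $\sF$) would again yield additional Frobenius-stable sections of $\omega_X^{\otimes m}$ for $m\gg 0$, forcing $\kappa_S(X)>0$ and contradicting $\kappa_S(X)=0$. Hence $a$ is birational and $X$ is birational to $A$. Finally, by birational invariance $\kappa_S(A)=\kappa_S(X)=0$, and for an abelian variety $\kappa_S=0$ precisely when the Cartier operator is nonzero on the one-dimensional space $H^0(A,\omega_A)$, i.e.\ precisely when the top exterior power of the Hasse--Witt map is invertible, i.e.\ exactly when $A$ is ordinary. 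Thus $X$ is birational to the ordinary abelian variety $A$.

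The main obstacle I expect, in both parts, is the passage from geometric positivity---the general-type quotient $Z'$ in part (1), the nonzero summand $\sF$ in part (2)---to the \emph{growth of Frobenius-stable pluricanonical sections}. Because the characteristic-$p$ generic vanishing theorem is necessarily weaker than its characteristic-zero analogue, one cannot simply quote nonvanishing of the cohomology support loci; instead one must control the spaces $S^0$ simultaneously for all Frobenius powers and verify that the sections produced genuinely survive every trace map. Ensuring that the relevant Cartier and trace operators do not vanish---which is exactly where the hypotheses $p\nmid\deg a$ and the ordinariness of $A$ enter---is the delicate heart of the argument.
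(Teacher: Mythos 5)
There is a genuine gap in both halves of your argument, and it sits exactly where you yourself locate ``the delicate heart'': the passage from geometric positivity to growth of Frobenius-stable sections. For part (1), your plan is the characteristic-zero Kawamata/Ein--Lazarsfeld route: quotient by the stabilizer of $a(X)$, observe the image in $A'$ is of general type, and then ``apply the generic vanishing theorem to $X\to A'$ to produce a positive-dimensional family of nonzero elements of $S^0(X,\omega_X^{\otimes m}\otimes a^*P)$.'' No result in the paper produces this. The positive-characteristic generic vanishing statements here constrain where cohomology \emph{can} be nonzero; they do not extract a positive-dimensional nonvanishing locus from general-type-ness of a (possibly singular) Albanese image, and even if they did you would still have to show the resulting sections survive all trace maps, i.e.\ lie in $S^0$ rather than merely $H^0$. (Theorem \eqref{thm:closed_subvariety} is the closest available statement, but it requires $X$ smooth and embedded and still says nothing about $S^0$.) The paper's actual proof of \eqref{t-surj} runs in the opposite direction and never leaves $A$: one first sets up a \emph{nonzero} Cartier module $\Omega_0=S^0a_*(\sigma(X,\Delta)\otimes\omega_X^r)$ using the Calabi--Yau index $r$ and $\Delta=\frac{r-1}{r}G$ (Lemmas \eqref{lem:S_f_*_mK_X} and \eqref{lem:S_0_if_Kodaira_dimension_zero} --- a step your sketch omits entirely, and which is needed because $S^0(X,\omega_X)$ itself may vanish); then the multiplication-map trick shows $\kappa(X)=0$ forces $V^0(\omega_X^r)$ to meet a neighbourhood of the origin only at the origin, so $\Lambda$ acquires a nonzero direct summand which is a direct limit of Artinian sheaves supported at $0_{\hat A}$; finally \eqref{c-2} converts this, via the inverse Fourier--Mukai transform into limits of unipotent bundles, into $\Supp\Omega_0=A$, i.e.\ surjectivity of $a$.

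The same gap recurs in part (2): ``applying the generic vanishing theorem to $\sF$ would yield additional Frobenius-stable sections of $\omega_X^{\otimes m}$'' is not something the available machinery delivers. The complement $\sF$ of $\omega_A$ in $a_*\omega_X$ is not a priori a Cartier submodule, so \eqref{c-4}/\eqref{c-0} do not apply to it; and even granting some $P$ with $h^0(\omega_X\otimes a^*P)\neq 0$, you must rule out non-torsion $P$ (this is where Pink--Roessler, \eqref{p-lambda}, and hence the absence of supersingular factors enters) and again pass from $H^0$ to $S^0$. The paper's proof of \eqref{t-bir} instead works on the Fourier--Mukai side: separability gives a Frobenius-compatible inclusion $\omega_A\hookrightarrow a_*\omega_X$, which yields ordinarity of $A$ \emph{before} birationality (you need it at this stage, not after, precisely to invoke \eqref{p-lambda}); the hypothesis $p\nmid\deg a$ splits this inclusion, so $k(0)$ is a direct summand of $\sH^0(\Lambda_0)$, and $\dim\sH^0(\Lambda_0)\otimes k(0)=h^0(\omega_X)=1$ by \eqref{c-1} forces $\sH^0(\Lambda_0)\cong k(0)$; inverting the transform then gives $\rk S^0a_*\omega_X=1$, whence $\deg a=1$. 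Your ``only if'' direction and your final characterization of ordinarity via $S^0(A,\omega_A)\neq 0$ are correct, but the core of the ``if'' direction needs to be replaced by an argument of this Fourier--Mukai type rather than a nonvanishing statement that is not available in characteristic $p$.
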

Here $b_1(X)$ is the first Betti-number, which by definition is $\dim_{\bQ_l} H^1_{\textrm{\'et}} (X, \bQ_l)$ for any $l \neq p$, and $\kappa_S(X)$ is the Frobenius stable Kodaira dimension (see the later parts of the introduction or the beginning of \S 4 for the definition of $\kappa_S(X)$). Note also that it is well known that $b_1(X)= 2\dim A$ \cite[page 14]{Liedtke}. 

\subsection{Generic vanishing over the complex numbers.}
To explain our positive characteristic generic vanishing statement, let us recall briefly the known results on generic vanishing over the complex numbers. 
Let $X$ be a smooth projective variety over $\bC$ of dimension $d$, let $a:X\to A$ be the Albanese morphism and $$V^i(\omega _X):=\{P\in {\rm Pic}^0(X)|h^i(X,\omega _X\otimes P)\ne 0\}$$ the cohomology support loci.
\begin{thm}\label{t-gl} \cite{GL91,Simpson93} Every irreducible component of $V^i(\omega _X)$ is a (torsion) translate of a (reduced) subtorus of ${\rm Pic}^0(X)$ of codimension at least $i-({\rm dim }(X)-\dim (a(X)))$. If $\dim (X)=\dim\ a(X)$ then there are inclusions:
$$V^0(\omega _X)\supset V^1(\omega _X )\supset \ldots \supset V^{\dim (X)}(\omega _X)=\{\mathcal O_X\}.$$\end{thm}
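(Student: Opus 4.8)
The plan is to translate everything into the behaviour of $h^j(X,P)$ as $P$ varies, prove the codimension estimate by a generic vanishing argument, upgrade it to the subtorus structure using the rigidity of cohomology jump loci, and finally extract the chain of inclusions in the maximal Albanese dimension case. Write $d=\dim X$. Serre duality identifies $V^i(\omega_X)$ with $\{P : h^{d-i}(X,P^{-1})\neq 0\}$, so it suffices to control how $h^j(X,P)$ jumps over $\Pic^0(X)$. Since $a^*\colon H^1(\mathcal{O}_A)\to H^1(\mathcal{O}_X)$ is an isomorphism, every $P\in\Pic^0(X)$ is pulled back from $A$, and Kollár's decomposition $Ra_*\omega_X\cong\bigoplus_j R^ja_*\omega_X[-j]$ together with the projection formula gives $H^i(X,\omega_X\otimes P)\cong\bigoplus_j H^{i-j}(A,R^ja_*\omega_X\otimes P)$.

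First I would establish the codimension bound. The key input is that each $R^ja_*\omega_X$ is a \emph{GV-sheaf} on $A$, i.e. $\mathrm{codim}_{\Pic^0(A)}V^k(R^ja_*\omega_X)\geq k$; I would prove this by the Fourier--Mukai method, characterizing the GV property cohomologically through the concentration of the transform of the derived dual in degree $\dim A$, and verifying it by Kodaira--Kollár vanishing applied to a suitable line bundle pulled back from $\hat A$. By Grauert--Riemenschneider vanishing $R^ja_*\omega_X=0$ for $j>d-\dim a(X)$, so any nonzero summand above forces $P\in V^{i-j}(R^ja_*\omega_X)$ with $j\leq d-\dim a(X)$, whence $\mathrm{codim}\,V^i(\omega_X)\geq i-(d-\dim a(X))$. (Equivalently, this is the original Green--Lazarsfeld deformation-of-cohomology argument, where the codimension is read off from the exactness of the cup-product complex $H^{\bullet}(\omega_X\otimes P)$ against a class in $H^1(\mathcal{O}_X)$, exactness being forced by pulling back holomorphic one-forms from $A$ and by the dimension of the generic Albanese fibre.)

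Next I would promote this set-theoretic bound to the structural statement via Simpson. By his structure theorem, the cohomology jump loci in the character variety $\mathrm{Hom}(\pi_1(X),\mathbb{C}^*)$ are finite unions of torsion translates of subtori, an assertion that ultimately rests on these loci being defined over $\overline{\bQ}$ and stable under the Galois action. Transporting this through the Hodge-theoretic identification of $V^i(\omega_X)$ with a jump locus of rank-one (unitary) local systems yields that each irreducible component of $V^i(\omega_X)$ is a torsion translate of a subtorus of $\Pic^0(X)$, with reducedness coming from the generic smoothness of each component.

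Finally, assume $\dim X=\dim a(X)$, so $a$ is generically finite and Grauert--Riemenschneider forces $R^ja_*\omega_X=0$ for $j>0$; then $H^i(X,\omega_X\otimes P)\cong H^i(A,\mathcal{G}\otimes P)$ for the single sheaf $\mathcal{G}:=a_*\omega_X$, and $V^i(\omega_X)=V^i(\mathcal{G})$. The bound now reads $\mathrm{codim}\,V^i\geq i$, and $V^d(\omega_X)=\{P:h^0(X,P^{-1})\neq 0\}=\{\mathcal{O}_X\}$ since a degree-zero line bundle with a nonzero section is trivial. For the inclusions I would pass to the Fourier--Mukai transform of the GV-sheaf $\mathcal{G}$, which is concentrated in a single degree and yields a torsion-free sheaf on $\hat A$; the loci $V^i(\mathcal{G})$ are then identified with the nested support strata where this sheaf fails to be locally free to successively higher order, giving $V^0\supseteq V^1\supseteq\cdots\supseteq V^d$. \textbf{The main obstacle} is the heart of the statement, namely that $a_*\omega_X$ and its higher analogues are GV-sheaves: this is exactly the step that breaks in positive characteristic, so it must draw on a genuinely characteristic-zero input, either the Hodge-theoretic exactness of the cup-product complex or, equivalently, Kodaira--Kollár vanishing feeding the Fourier--Mukai criterion, and recovering the sharp correction term $d-\dim a(X)$ requires carefully tracking the top nonvanishing higher direct image.
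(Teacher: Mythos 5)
This theorem is quoted from \cite{GL91,Simpson93}; the paper gives no proof of its own, but the derivation sketched in its introduction --- Koll\'ar's splitting of $Ra_*\omega_X$ and Koll\'ar vanishing for each $R^ja_*\omega_X$ feeding the Fourier--Mukai GV-criterion of \eqref{t-eqGV}, with Simpson supplying the torsion statement --- is exactly the route you take. Your proposal is correct and matches that approach, including the bookkeeping of the correction term $\dim X-\dim a(X)$ via the vanishing of the higher direct images beyond the fibre dimension and the nested-$\mathcal{E}xt$ (minimal free resolution) argument for the chain of inclusions in the maximal Albanese dimension case.
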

In particular if $X$ is a variety of m.A.d., then the $V^i(\omega _X)$ have codimension $\geq i$, which implies the vanishing of  $H^i(X, \omega_X \otimes P)$ for $i>0$ and generic $P \in {\rm Pic}^0(X)$. Hence  this result is known as a "generic vanishing theorem". It implies that for m.A.d. varieties one can replace the ample line bundle in the statement of Kodaira vanishing by a general topologically trivial line bundle. In fact, \eqref{t-gl} can be thought of as the limit of  Kodaira vanishing if one considers the alternative point of view of \cite{Hacon04} and \cite{PP09}. In these articles  the following seemingly unrelated conjecture of Green and Lazarsfeld was proven. 
\begin{thm}\label{t-dcgv} \cite{Hacon04,PP09} Let $\sL$ be the Poincar\'e line bundle on $A\times \hat A$, where $\hat A := {\rm Pic}^0(A) (\cong {\rm Pic}^0 (X))$ is the dual abelian variety of $A$. If the Albanese image $a(X)\subset A$ has dimension $d-k$, then $$R^i{p_{\hat A,*}} (\sL _X )=0$$ for $i\not\in [d-k, d]$ and $\sL _X :=(a\times {\rm id}_{\hat A})^* \sL$.
\end{thm}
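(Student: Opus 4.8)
The plan is to realize $Rp_{\hat A,*}(\sL_X)$ as a Fourier--Mukai transform and then feed in generic vanishing. Write $g=\dim A$ and let $\Phi(-):=Rp_{\hat A,*}(p_A^*(-)\otimes\sL)$ denote the Fourier--Mukai functor with kernel $\sL$. Since $a\times\id_{\hat A}$ is the base change of $a\colon X\to A$ along the flat projection $p_A\colon A\times\hat A\to A$, flat base change together with the projection formula give
$$Rp_{\hat A,*}(\sL_X)=Rp_{\hat A,*}\bigl((a\times\id_{\hat A})^*\sL\bigr)\cong\Phi(Ra_*\sO_X),$$
so the statement is equivalent to the claim that the complex $\Phi(Ra_*\sO_X)$ has cohomology concentrated in degrees $[d-k,d]$. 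The bound $i\le d$ is automatic, since the fibres of $p_{\hat A}$ have dimension $d$ and Grothendieck vanishing forces $R^ip_{\hat A,*}=0$ for $i>d$; the real content is therefore the lower bound $i\ge d-k$.

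Next I would move to the canonical side by duality. Grothendieck duality for the proper morphism $a$, using $\omega_A\cong\sO_A$, yields $R\mathcal{H}om_A(Ra_*\sO_X,\sO_A)\cong Ra_*\omega_X[d-g]$. Working over $\bC$, I would then apply Koll\'ar's results: his vanishing theorem gives $R^ja_*\omega_X=0$ for $j>k$ (the general fibre of $X\to a(X)$ has dimension $k$), and his decomposition theorem splits the complex as $Ra_*\omega_X\cong\bigoplus_{j=0}^{k}R^ja_*\omega_X[-j]$. Dualizing once more (an involution on $D^b_{\mathrm{coh}}(A)$) and applying $\Phi$, I obtain
$$\Phi(Ra_*\sO_X)\cong\bigoplus_{j=0}^{k}\Phi\bigl(R\mathcal{H}om_A(R^ja_*\omega_X,\sO_A)\bigr)[g+j-d].$$

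The decisive input is generic vanishing in its sheaf-theoretic form: each higher direct image $R^ja_*\omega_X$ is a $GV$-sheaf on $A$, i.e.\ $\mathrm{codim}_{\hat A}\{P:h^i(A,R^ja_*\omega_X\otimes P)\neq0\}\ge i$ for all $i$. This is the refinement of Theorem \ref{t-gl} to the direct images. By the Pareschi--Popa characterization of $GV$-sheaves, this is precisely equivalent to $\Phi(R\mathcal{H}om_A(R^ja_*\omega_X,\sO_A))$ being a single sheaf placed in cohomological degree $g$. Substituting into the display above, the $j$-th summand becomes a sheaf in degree $g-(g+j-d)=d-j$; as $j$ ranges over $[0,k]$ these degrees fill exactly the interval $[d-k,d]$. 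Hence $R^ip_{\hat A,*}(\sL_X)=0$ for $i\notin[d-k,d]$, and in fact $R^ip_{\hat A,*}(\sL_X)$ is the degree-$g$ transform of $R\mathcal{H}om_A(R^{d-i}a_*\omega_X,\sO_A)$ for $i\in[d-k,d]$.

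The main obstacle is securing the generic vanishing input in exactly this form---that every $R^ja_*\omega_X$, and not only $\omega_X$ itself, is a $GV$-sheaf---which is the genuinely Hodge-theoretic step behind Theorem \ref{t-gl}, together with the translation of the $GV$ codimension bound into the concentration of the Fourier--Mukai transform of the dual. Once these are available, the rest is forced: Koll\'ar splitting plus Grothendieck duality convert the problem into a direct sum over $0\le j\le k$, and it is exactly this range of surviving higher direct images that produces the shift by $k$ in the interval $[d-k,d]$.
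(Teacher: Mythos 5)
Your argument is correct and is essentially the reduction the paper itself sketches in the introduction (the paper only cites this theorem): identify $Rp_{\hat A,*}(\sL_X)$ with the Fourier--Mukai transform of $Ra_*\sO_X\cong D_A(Ra_*\omega_X[d])$, apply Koll\'ar's splitting and vanishing, and use the $GV$ property of each $R^ja_*\omega_X$ in the form of the equivalence \eqref{t-eqGV} to concentrate each summand in a single degree. The only cosmetic difference is that you quote the $GV$ property of the $R^ja_*\omega_X$ as a Hodge-theoretic input, whereas the paper derives it from Koll\'ar vanishing via the implication $(3)\Rightarrow(2)$ of \eqref{t-eqGV}; both are standard and the degree bookkeeping matches.
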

It turns out that in fact \eqref{t-dcgv} and \eqref{t-gl} are equivalent by \cite{PP11}.  To explain this, note that by applying standard derived category machinery (Grothendieck spectral sequence, projection formula and Grothendieck duality), one can show that $R p_{\hat A,*}  ( \sL_X) \cong R \hat S (D_A(Ra_* \omega_X[d]))$, where $D_A(?) \cong R \mathcal{H}om (?, \sO _A[g])$, $g=\dim A$  and $R\hat S:D(A)\to D(\hat A)$ is the Fourier-Mukai functor defined in \cite{Mukai81} so that 
$R\hat S(?)=Rp_{\hat A,*}(Lp^*_A(?)\otimes \sL )$. Further, by a famous result of Koll\'ar, in characteristic zero $Ra_{*}\omega _X=\sum _{j=0}^kR^ja_{*}\omega _X [-j]$ (cf. \cite{Kol86}). Thus, \eqref{t-dcgv} 
is equivalent to the vanishing  
$$\mathcal H^i(R \hat S (D_A(R^j a_* \omega_X)))=0$$
for each $0  \leq j \leq k$ and $i < j-k$. 

The latter condition was shown in \cite{PP11} to be equivalent to \eqref{t-gl}. Let $\phi _L:\hat A\to A$ be the corresponding isogeny determined by the formula $\phi _L(x)=t_x^*L\otimes L^\vee$, where $t_x$ is the translation by $x\in \hat A$. Then the dual $RS(L)=\sH^0(RS(L))=\hat L$ of $L$ is a vector bundle on $A$ of rank $h^0(L)$ such that $\phi _L^*(\hat L)=\bigoplus _{h^0(L)}L^\vee$ (here $RS : D(\hat A) \to D(A)$ is the inverse Fourier-Mukai functor $R S(?)=Rp_{ A,*}(Lp^*_{\hat A}(?)\otimes \sL )$ ).

\begin{thm}\cite[Theorem A]{PP11}\label{t-eqGV} Let $A$ be an abelian variety  over an algebraically closed field and $F$ a coherent sheaf on $A$ and $l \geq 0$ an integer, then the following are equivalent:
\begin{enumerate}
\item ${\rm codim} \left( V^i(F):=\{ P\in \hat A |h^i(F\otimes P)\ne 0\}\right) \geq i-l$
\item $\sH^i(R\hat S (D_A(F)))=0$ for \ all $i<-l$.
\item $H^i(A,F\otimes \hat L ^\vee )=0$ for $i>l$ and any sufficiently ample line bundle $L$.
\end{enumerate}\end{thm}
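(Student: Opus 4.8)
The plan is to prove the chain of equivalences by treating $(2)\Leftrightarrow(3)$ as a formal consequence of the projection formula together with Serre duality, and $(1)\Leftrightarrow(2)$ as a homological statement about the Fourier--Mukai transform via cohomology and base change. Throughout write $g=\dim A$.

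For $(2)\Leftrightarrow(3)$ I would first establish the identity
$$R\Gamma(A, D_A(F)\otimes \hat L)\cong R\Gamma(\hat A, R\hat S(D_A(F))\otimes L),$$
obtained by writing $\hat L=RS(L)=Rp_{A,*}(p_{\hat A}^*L\otimes \sL)$, applying the projection formula and flat base change on $A\times\hat A$, and then pushing forward to $\hat A$ in the opposite order. Next, since $D_A(F)=R\mathcal{H}om(F,\sO_A[g])$ and $\omega_A=\sO_A$, Grothendieck--Serre duality gives $R\Gamma(A, D_A(F)\otimes \hat L)\cong R\Gamma(A, F\otimes \hat L^\vee)^\vee$, so that $H^j(A,D_A(F)\otimes \hat L)\cong H^{-j}(A, F\otimes \hat L^\vee)^\vee$. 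Finally, for a fixed coherent sheaf the complex $R\hat S(D_A(F))$ has only finitely many cohomology sheaves; choosing $L$ sufficiently ample makes each $\sH^j(R\hat S(D_A F))\otimes L$ globally generated with vanishing higher cohomology, so the hypercohomology spectral sequence for $R\Gamma(\hat A, R\hat S(D_A F)\otimes L)$ degenerates and $H^j(\hat A, R\hat S(D_A F)\otimes L)\cong H^0(\hat A, \sH^j(R\hat S(D_A F))\otimes L)$, which vanishes if and only if $\sH^j(R\hat S(D_A F))=0$. Combining the three steps identifies the vanishing of $H^i(A,F\otimes \hat L^\vee)$ for $i>l$ with that of $\sH^i(R\hat S(D_A F))$ for $i<-l$, which is $(2)\Leftrightarrow(3)$.

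For $(1)\Leftrightarrow(2)$ I would use cohomology and base change to represent $R\hat S(F)$ by a bounded complex $E^\bullet=[E^0\to\cdots\to E^g]$ of locally free sheaves on $\hat A$ with $\mathcal{H}^j(E^\bullet\otimes k(P))\cong H^j(A,F\otimes P)$ for every $P\in\hat A$, so that $V^j(F)=\Supp \mathcal{H}^j(E^\bullet)$. Mukai's duality exchange formula $R\hat S(D_A(F))\cong (-1_{\hat A})^*R\mathcal{H}om(R\hat S(F),\sO_{\hat A})$ (a harmless involution and no extra shift, as one checks on $F=\sO_A$) then identifies $\sH^i(R\hat S(D_A F))$ with the cohomology sheaves $\mathcal{H}^i((E^\bullet)^\vee)$ of the dualized complex. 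I would now run the hyperext spectral sequence
$$E_2^{p,q}=\mathcal{E}xt^p\big(\mathcal{H}^{-q}(E^\bullet),\sO_{\hat A}\big)\Longrightarrow \mathcal{H}^{p+q}\big((E^\bullet)^\vee\big),$$
and use that on the smooth variety $\hat A$ one has $\mathcal{E}xt^p(\mathcal{G},\sO_{\hat A})=0$ for $p<\operatorname{codim}\Supp\mathcal{G}$, with the lowest nonvanishing $\mathcal{E}xt$ occurring exactly in degree $\operatorname{codim}\Supp\mathcal{G}$. Setting $j=-q$, a nonzero term in total degree $i=p+q$ forces $p=i+j\geq \operatorname{codim}V^j(F)$; thus condition $(1)$ (that $\operatorname{codim}V^j(F)\geq j-l$) immediately gives $i\geq -l$ whenever $E_2^{p,q}\neq 0$, so all terms in total degree $i<-l$ vanish, proving $(1)\Rightarrow(2)$.

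The main obstacle is the converse $(2)\Rightarrow(1)$, where one must show that no differential can cancel the contribution coming from a codimension defect. I would argue by localizing at the generic point $\eta$ of a component $Z$ of $\bigcup_j V^j(F)$ of \emph{minimal} codimension $c$. Since every $V^j(F)$ then has all components of codimension $\geq c$, and a codimension-$c$ point lies on no proper subvariety of codimension $>c$, each localized module $\mathcal{H}^j(E^\bullet)_\eta$ is either zero or supported at the closed point of the regular local ring $\sO_{\hat A,\eta}$ of dimension $c$, hence of finite length. Consequently every nonzero $\mathcal{E}xt^p(\mathcal{H}^j(E^\bullet)_\eta,\sO_{\hat A,\eta})$ sits in the single column $p=c$, the spectral sequence degenerates at $\eta$, and local duality yields $\mathcal{H}^{c-j}((E^\bullet)^\vee)_\eta\cong \mathcal{E}xt^c(\mathcal{H}^j(E^\bullet)_\eta,\sO_{\hat A,\eta})\neq 0$ for every $j$ with $Z\subseteq V^j(F)$. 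Assumption $(2)$ then forces $c-j\geq -l$, i.e. $\operatorname{codim}V^j(F)\geq j-l$, for all such $j$, and a Noetherian induction stripping off minimal-codimension strata of $\bigcup_j V^j(F)$ extends the bound to every $j$. The delicate points that this plan must make precise are the base-change identification $V^j(F)=\Supp\mathcal{H}^j(E^\bullet)$ and the compatibility of the duality spectral sequence with localization.
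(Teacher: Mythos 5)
First, a point of reference: the paper does not prove this statement at all --- it is quoted from \cite[Theorem A]{PP11} --- so there is no in-house argument to compare against. The closest thing in the paper is the proof of Theorem \ref{GVT}, where the authors run precisely your $(2)\Leftrightarrow(3)$ mechanism (Grothendieck duality, the identification $\hat L=p_{A,*}(\sL\otimes p_{\hat A}^*L)$, the projection formula, and then Serre vanishing plus global generation to pass between hypercohomology of $R\hat S(D_A(\cdot))\otimes L$ and its cohomology sheaves). Your overall plan is the same as Pareschi--Popa's: $(2)\Leftrightarrow(3)$ by duality and ampleness, and $(1)\Leftrightarrow(2)$ by dualizing a locally free complex $E^\bullet$ representing $R\hat S(F)$ and controlling the $\mathcal{E}xt$ spectral sequence. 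The $(2)\Leftrightarrow(3)$ part and the $(1)\Rightarrow(2)$ part are essentially sound.

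Two points, however, do not work as written. First, the identification $V^j(F)=\Supp\sH^j(E^\bullet)$ is false in general: base change only gives a spectral sequence $\mathrm{Tor}_{-p}(\sH^q(E^\bullet),k(P))\Rightarrow H^{p+q}(A,F\otimes P)$, and already for the two-term complex $\sO\xrightarrow{\ x\ }\sO$ on $\mathbb{A}^1$ one has $\sH^0=0$ while $H^0$ of the fiber at the origin is nonzero. What is true, and what suffices, is the equality of the descending unions $\bigcup_{q\geq j}V^q(F)=\bigcup_{q\geq j}\Supp\sH^q(E^\bullet)$; since the codimension conditions in $(1)$ depend only on these unions, the argument survives, but every step must be routed through them. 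Second, your $(2)\Rightarrow(1)$ has a genuine gap: the one-column degeneration of the local-duality spectral sequence occurs only at the generic point of a component of the \emph{total} union $\bigcup_q\Supp\sH^q(E^\bullet)$ (only there are all localized cohomology modules of finite length), whereas a component of an individual $\Supp\sH^j(E^\bullet)$, or of $V^j(F)$, need not be such a component; the proposed Noetherian induction is not set up to reach those, and localizing at their generic points the spectral sequence no longer collapses. The fix is simpler and is what \cite{PP11} do: condition $(2)$ says the dual complex $G^\bullet:=(E^\bullet)^\vee$ has cohomology only in degrees $\geq -l$; now run your spectral sequence $E_2^{p,q}=\mathcal{E}xt^p(\sH^{-q}(G^\bullet),\sO_{\hat A})\Rightarrow\sH^{p+q}((G^\bullet)^\vee)\cong\sH^{p+q}(E^\bullet)$ (double duality for perfect complexes) and use only the elementary bound $\operatorname{codim}\Supp\mathcal{E}xt^p(\sG,\sO_{\hat A})\geq p$ on the smooth variety $\hat A$: a nonzero term in total degree $j$ has $q\leq l$, hence $p=j-q\geq j-l$, so $\operatorname{codim}\Supp\sH^j(E^\bullet)\geq j-l$, and the bound on $\operatorname{codim}V^j(F)$ follows from the union identity. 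No localization or stratification is needed.
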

Heuristically, we think of $\hat L^\vee $ as an ample vector bundle which  plays the role of $\frac 1 {{\rm deg}(L)}L$, so that as ${\rm deg }(L)$ increases, $\hat L^\vee $ corresponds to a smaller and smaller multiple of an ample line bundle (alternatively the rank of $\hat L^\vee $ increases but $h^0(\hat L^\vee )=1$ is fixed). In this way we interpret, as hinted earlier, the Generic Vanishing Theorem as a limit of the Kodaira vanishing theorem. In fact,  in characteristic $0$, it is easy to see that Koll\'ar vanishing implies that each $R^ja_*\omega _X $ satisfies (3) of \eqref{t-eqGV} with $l= 0$ and hence that  \eqref{t-dcgv} and \eqref{t-gl} hold (cf. \cite{Hacon04}).

\subsection{Generic vanishing in positive characteristic}
\label{subsec:introduction:GVP}
It is a natural question to generalize these important results to positive characteristic. The main issue in doing so is that
in characteristic zero, Koll\'ar's vanishing is used to prove (3) of \eqref{t-eqGV}, while in positive characteristic this vanishing theorem is known to fail. In fact, \cite{HK12} gives some elementary counter examples to generic vanishing in characteristic $p$ and shows that generic vanishing results for a generically finite seperable morphism $a:X\to A$ from a smooth variety to an abelian variety should be equivalent to the vanishing  $R^ia_* \omega _X=0$ for $i>0$. Thus it is clear that the naive generalization to positive characteristic fails and a new approach is necessary.

To establish this new approach, we begin by investigating the fundamental properties of the sheaves $a_* \omega_X$ (and $R^ia_*\omega_X$ for $i\geq 0$). Over the complex numbers it is well known that (under mild technical assumptions)  $ a_* \omega_X $ is the upper canonical extension of the lowest piece in the Hodge filtration of the variation of Hodge structures on $R^k a_* \mathbb{C}_X$ \cite[Theorem 2.6]{Kol86}. In particular, $a_* \omega_X$ is the lowest filtered piece of a filtered $D$-module on $A$. The work of Schnell and Popa on generic vanishing clearly illustrates that (in this context) this is the right way of thinking about $a_* \omega_X$ \cite{PS}. In positive characteristic Cartier modules, which are related to $D$-modules \cite{Lyubeznik} and are equivalent to \'etale local systems in the appropriate sense \cite{BB11,BB13}, seem to be the correct analog. A Cartier module is simply a triple $(M, \phi, s)$, where $M$ is a coherent sheaf, $s>0$ an integer and $\phi$ is a homomorphism 
$F_*^s M \to M$, for the 
absolute Frobenius morphism $F$ (if $s=1$, we 
omit it from the notation). One example of Cartier modules is $(\omega_X, \phi)$, where $\phi$ is the Grothendieck trace of $F$. In particular, for all integers $e \geq 0$ this yields a composition
\begin{equation}
\label{eq:canonical_Cartier_module}
\xymatrix{\phi ^e:F^e_*\omega _X\ar[r]^{F^{e-1}_*(\phi)} & F^{e-1}_*\omega _X \ar[r]^{F^{e-2}_*(\phi)} & \ldots \ar[r]^{\phi} & \omega _X}.
\end{equation}
From equation \eqref{eq:canonical_Cartier_module} we obtain many important invariants of a positive characteristic variety $X$, that are the fundamental objects of the current paper. The following is a short description the most important of these invariants.
\begin{itemize}
\item For a map $a : X \to A$, applying $a_* ( ? )$ to \eqref{eq:canonical_Cartier_module} yields another Cartier module $(a_*\omega _X,a_*(\phi ))$. The stable image of $a_*(\phi^e)$ (which is the same for all $e \gg 0$) is denoted by $S^0 a_* \omega_X$. The natural replacement for $a_* \omega_X$ in positive characteristic is then either $S^0 a_* \omega_X$ or  $\displaystyle\varprojlim_e  F^e_* S^0a_* \omega_X$ (we will use both depending on the context). 
\item Applying $H^0(X, ? \otimes L)$ for some line bundle $L$ to \eqref{eq:canonical_Cartier_module} does not yield a Cartier module. However, since $H^0(X, \omega_X \otimes L)$ is a finite dimensional vector space (whenever $X$ is projective over a field), the image of $\phi^e$ stabilizes. This stable image is denoted by $S^0(X, \omega_X \otimes L)$. It is a well behaved subset of all the sections $H^0(X, \omega_X \otimes L)$, stable under the Frobenius action. It is also used  to define the Frobenius stable Kodaira dimension $\kappa_S(X)$ which has a similar behaviour to the usual Kodaira dimension. For example $\kappa_S(X)=0$ exactly if $\dim_k S^0(X, \omega_X^m) =1$ for every divisible enough $m\in \mathbb N$ and if $\kappa_S(X)=0$ then $\kappa(X)=0$. 
\item Further, under mild technical assumptions, we can obtain a Cartier module starting from a log pair $(X, \Delta )$ instead of a smooth variety $X$. This is somewhat technical, but crucial for our purposes, see \eqref{lem:S_f_*_mK_X}.
\end{itemize}
One fundamental reason why it is convenient to replace $a_* \omega_X$ by $\Omega:=\displaystyle\varprojlim_e F^e_*S^0a_*  \omega_X$ is that it satisfies a Kodaira-vanishing type result: $H^i(X,\Omega \otimes  N )=0$ for all $i>0$ and any ample line bundle $N$. The proof is an easy combination of Serre-vanishing and the vanishing of cohomology for inverse limits guaranteed by the ML-condition. Using this we show in \eqref{l-ve} that  $H^i(X,\Omega\otimes \hat L ^\vee )=0$ for $i>0$, which is the analogue of (3) of \eqref{t-eqGV}. Note that the proof is somewhat harder than in characteristic zero, because of the presence of inseparable covers. 

At this point, one would be tempted to apply \eqref{t-eqGV} to obtain the other two points of \eqref{t-eqGV}. However, $\Omega$ is typically not coherent and in fact not even quasi-coherent so that  \eqref{t-eqGV} does not apply. Never the less we are  able to prove a Frobenius stable analogue of  (2) of \eqref{t-eqGV}, stating that the stable part of $\sH^i(R \hat S (D_A(F)))$ under the Cartier module action on $F=S^0 a_*\omega_X$ is zero.
\begin{thm}\label{t-m} ( cf.  \eqref{c-0}) Let $X$ be a projective variety over an algebraically closed field $k$ of characteristic $p>0$, $a:X\to A$ a morphism to an abelian variety.
Then for every $i<0$,
$$\varinjlim \sH^i(R \hat S (D_A(F^e_* S^0 a_*\omega_X))) = 0,$$
or equivalently for every integer $e \gg 0$,
$$\im( \sH^i(R \hat S (D_A(S^0 a_*\omega_X))) \to  \sH^i(R \hat S (D_A(F^e_* S^0 a_*\omega_X)))) = 0.$$
\end{thm}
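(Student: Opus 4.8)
The plan is to transport the Kodaira-type vanishing of \eqref{l-ve} through Grothendieck--Serre duality and the Fourier--Mukai functor; the whole difficulty is that the limit object is not coherent, so \eqref{t-eqGV} cannot be applied to it directly. Write $M:=S^0a_*\omega_X$ and $M_e:=F^e_*M$. On the stable image the Cartier structure restricts to a \emph{surjection} $\psi\colon F_*M\to M$, so the transition maps $M_e=F^{e-1}_*(F_*M)\xrightarrow{F^{e-1}_*\psi}M_{e-1}$ of the inverse system $\{M_e\}$ are surjective. Hence the system is Mittag--Leffler, $R^1\varprojlim_e M_e=0$, $\Omega=\varprojlim_e M_e=R\varprojlim_e M_e$, and since $\hat L^\vee$ is locally free,
\begin{equation*}
H^i(A,\Omega\otimes\hat L^\vee)=\varprojlim_e H^i(A,M_e\otimes\hat L^\vee)
\end{equation*}
(the $\varprojlim^1$ terms vanish as these are finite-dimensional $k$-vector spaces). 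By \eqref{l-ve} the left-hand side is $0$ for $i>0$ and every sufficiently ample $L$. Finally, since $\sH^i(R\hat S(D_A(M)))$ is coherent and $\Hom$ out of a coherent sheaf commutes with filtered colimits, vanishing of the colimit is equivalent to the displayed image condition; so it suffices to prove $\varinjlim_e\sH^i(R\hat S(D_A(M_e)))=0$ for $i<0$.

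Next I would record the Fourier--Mukai/duality identity underlying the equivalence (2)$\Leftrightarrow$(3) of \eqref{t-eqGV}: for any coherent sheaf $G$ on $A$ and any ample $L$ on $\hat A$, naturally in $G$,
\begin{equation*}
H^i(A,G\otimes\hat L^\vee)^\vee\cong\mathbb H^{-i}\left(\hat A,\,R\hat S(D_A(G))\otimes L\right).
\end{equation*}
This follows by combining Serre duality on $A$ (using $\omega_A=\sO_A$ and $R\mathcal{H}om(G\otimes\hat L^\vee,\sO_A)=R\mathcal{H}om(G,\sO_A)\otimes\hat L$), the projection formula for $\hat L=RS(L)$, which gives $R\Gamma(A,R\mathcal{H}om(G,\sO_A)\otimes\hat L)\cong R\Gamma(\hat A,R\hat S(R\mathcal{H}om(G,\sO_A))\otimes L)$, and the shift $R\hat S(D_A(G))=R\hat S(R\mathcal{H}om(G,\sO_A))[g]$ with $g=\dim A$. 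Applying this to the tower $\{M_e\}$ and dualizing---using that for a tower of finite-dimensional vector spaces $\varinjlim_e V_e^\vee\cong(\varprojlim_e V_e)^\vee$---I obtain, for every sufficiently ample $L$ and all $m<0$,
\begin{equation*}
\varinjlim_e\mathbb H^{m}\left(\hat A,\,P_e\otimes L\right)\cong\Big(\varprojlim_e H^{-m}(A,M_e\otimes\hat L^\vee)\Big)^\vee=\Big(H^{-m}(A,\Omega\otimes\hat L^\vee)\Big)^\vee=0,
\end{equation*}
where $P_e:=R\hat S(D_A(M_e))$.

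It remains to pass from this hypercohomology vanishing to vanishing of the colimit sheaves $\sH^m(P):=\varinjlim_e\sH^m(P_e)$, where $P:=\hocolim_e P_e$; since filtered colimits are exact and $\hat A$ is Noetherian, $\mathbb H^m(\hat A,P\otimes L)=\varinjlim_e\mathbb H^m(\hat A,P_e\otimes L)=0$ for $m<0$ and $L$ in a cofinal family of ample bundles. The sheaves $\sH^m(P)$ are quasi-coherent and vanish for $m<-g$, so I argue by minimal degree: suppose $m_0<0$ is least with $\sH^{m_0}(P)\neq0$. In the hypercohomology spectral sequence $E_2^{s,t}=H^s(\hat A,\sH^t(P)\otimes L)\Rightarrow\mathbb H^{s+t}(\hat A,P\otimes L)$ every term of total degree $m_0$ other than $E_2^{0,m_0}$ involves some $\sH^t(P)$ with $t<m_0$, hence vanishes, and no differential touches $E_2^{0,m_0}$; therefore
\begin{equation*}
H^0\left(\hat A,\sH^{m_0}(P)\otimes L\right)=\mathbb H^{m_0}(\hat A,P\otimes L)=0
\end{equation*}
for all $L$ in the cofinal family. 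But a nonzero quasi-coherent sheaf contains a nonzero coherent subsheaf, which acquires sections after twisting by a sufficiently ample $L$; choosing $L$ in the cofinal family and large enough for this subsheaf yields a contradiction. Thus $\sH^m(P)=0$ for all $m<0$, as claimed.

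The main obstacle is exactly the failure of coherence of $\Omega$: it blocks a direct appeal to \eqref{t-eqGV} and, concretely, means there is no single $L$ making Serre vanishing hold simultaneously for all the sheaves $F^e_*M$. The device that circumvents this is the minimal-degree argument above, which isolates the bottom Fourier--Mukai cohomology sheaf as a genuine $H^0$ with no spectral-sequence interference, so that only the colimit hypercohomology---controlled by \eqref{l-ve}---is needed. One must also check that the duality isomorphism of the second paragraph is natural in $G$ (so that it is an isomorphism of the relevant inverse/direct systems), and that the interchanges of $R\Gamma$ with $R\varprojlim$ and with $\hocolim$ are legitimate; the former rests on the Mittag--Leffler property established in the first paragraph and the latter on the Noetherianity of $\hat A$.
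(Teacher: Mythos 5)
Your proposal is correct and follows essentially the same route as the paper: the vanishing $H^i(A,F^e_*\Omega_0\otimes\hat L^\vee)=0$ from \eqref{l-ve}, the Grothendieck duality/projection formula identity $H^i(A,\Omega_e\otimes\hat L^\vee)^\vee\cong \mathbb H^{-i}(\hat A,\Lambda_e\otimes L)$, and a minimal-degree hypercohomology spectral sequence argument forcing $\sH^j(\hocolim\Lambda_e)=0$ for $j<0$ are exactly the ingredients of \eqref{GVT} and \eqref{c-0}. The only (harmless) deviation is your initial detour through $R\varprojlim$ and the Mittag--Leffler property of $\{F^e_*M\}$, which the paper avoids by working with the coherent sheaves $\Omega_e$ at finite level and only invoking the vanishing for $e\gg 0$.
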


We also remark that it is expected that  $\varprojlim F^e_* R^i a_*\omega_X=0$ for $i>\dim (X/a(X))$. If this is the case, then it is likely that following \eqref{t-m}, one can establish a Frobenius stable version of \eqref{t-dcgv}. Further, we remark that we prove a general version of \eqref{t-m} in \eqref{c-0} pertaining to arbitrary Cartier modules. In particular it applies to the higher direct images $R^j a_* \omega_X$ as well.

We are unable to prove the analog of (1) of \eqref{t-eqGV}. Theorem
\eqref{t-m} does however imply some weak versions of the more traditional generic vanishing statement, see  \eqref{c-gv} where it is shown that   \begin{cor} \label{c-gv-intro} (cf. \eqref{c-gv}) In the situation of \eqref{t-m}, set $\Omega:= \varprojlim F^e_* a_* \omega_X$. Then there exists a proper closed subset $Z\subset \hat A$ such that if $i>0$ and  $y\in V^i(\Omega )=\{ y\in \hat A|h^i(\Omega \otimes P_y)\ne 0 \}$ then $p^m y\in Z$ for all $m\gg 0$.\end{cor}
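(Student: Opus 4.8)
The plan is to relate the non-coherent object $\Omega$ to the coherent sheaf $G := S^0 a_* \omega_X$ through the Frobenius tower, and then to feed the sheaf-level vanishing of \eqref{t-m} into a base-change/spectral-sequence analysis of the cohomology support loci. Note first that since the transition maps of the system $\{F^e_* a_*\omega_X\}_e$ factor through the stable images, we have $\Omega \cong \varprojlim_e F^e_* G$, so working with $G = S^0 a_*\omega_X$ (to which \eqref{t-m} applies) is legitimate.

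First I would dispose of $\Omega$. Since $G$ is the stable image, the Cartier map $\phi : F_* G \to G$ is surjective, so the inverse system $\{F^e_* G\}_e$ has surjective transition maps; in particular it satisfies the Mittag-Leffler condition and $R^1\varprojlim=0$. Combined with the automatic ML condition on the finite-dimensional groups $H^{i-1}(F^e_* G \otimes P_y)$, this yields $H^i(\Omega \otimes P_y) \cong \varprojlim_e H^i(F^e_* G \otimes P_y)$. Using the projection formula and the identity $(F^e)^* P_y \cong P_{p^e y}$ on $\hat A$, the right-hand side becomes $\varprojlim_e H^i(G \otimes P_{p^e y})$, an inverse limit of finite-dimensional vector spaces whose transition maps $\tau^{(e)} : H^i(G \otimes P_{p^e y}) \to H^i(G \otimes P_y)$ are induced by the iterated Cartier map $\phi^{(e)} : F^e_* G \to G$. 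Because the groups are finite-dimensional, $y \in V^i(\Omega)$ forces, for every $j\geq 0$, the map $\tau^{(e)}$ evaluated at $p^j y$ to be nonzero for all $e\gg 0$ (the stable image at each level must be nonzero).

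Next I would transport the maps $\tau^{(e)}$ to the Fourier--Mukai side. Writing $K := R\hat S(D_A(G))$ and $K_e := R\hat S(D_A(F^e_* G))$, base change for $R\hat S$ identifies the derived fibre $K \otimes^L k(y)$ with $R\Gamma(A, D_A(G)\otimes P_y)$, and Grothendieck--Serre duality on $A$ (with $\omega_A \cong \sO_A$) gives $\sH^{-i}(K \otimes^L k(y)) \cong H^i(A, G \otimes P_{-y})^\vee$; the map $D_A(\phi^{(e)})$ then realises the dual of $\tau^{(e)}$ at $-y$. Running the Tor spectral sequence $\mathrm{Tor}_{-q}(\sH^p(K), k(y)) \Rightarrow \sH^{p+q}(K \otimes^L k(y))$ and its functorial comparison with the one for $K_e$, the induced map on $\sH^{-i}$ has associated graded pieces built from the maps $\mathrm{Tor}_{-q}(\sH^p(K), k(y)) \to \mathrm{Tor}_{-q}(\sH^p(K_e), k(y))$ with $p+q=-i$ and $-q\geq 0$, hence $p\geq -i$. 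For $p<0$ these maps vanish for all $e\gg 0$ by (the equivalent form of) \eqref{t-m}; for $p\geq 0$ the contributing Tor has index $-q = i+p>0$, so it vanishes as soon as $\sH^p(K)$ is locally free at $y$. I would then define $Z$ to be the negative (under $x\mapsto -x$) of the union, over the finitely many $p\geq 0$ with $\sH^p(K)\neq 0$, of the non--locally--free loci of $\sH^p(K)$; each such locus is a proper closed subset of the integral variety $\hat A$ (a coherent sheaf is free at the generic point), so $Z$ is a proper closed subset.

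Combining the two steps finishes the argument: for $w\notin Z$ all contributions to the fibrewise map on $\sH^{-i}$ vanish, so $\tau^{(e)}$ at $w$ is zero for all $e\gg 0$, with a threshold uniform in $w$ coming from the single sheaf-level threshold in \eqref{t-m} and the finiteness of the relevant $p$. Hence if $p^j y \notin Z$ for some $j$, the stable image at level $j$ of the inverse system vanishes and $y\notin V^i(\Omega)$; therefore $y\in V^i(\Omega)$ implies $p^j y\in Z$ for every $j\geq 0$, which is even stronger than the asserted conclusion. The hard part will be the middle step: one must verify that the only surviving contributions to the fibrewise map on $\sH^{-i}$ are exactly the higher-Tor terms of the nonnegative-degree cohomology sheaves of $K$. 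This is where the degree restriction $i<0$ in \eqref{t-m} enters essentially, and where one must check that the base-change and duality identifications are genuinely functorial in the Cartier maps $\phi^{(e)}$ so that the sheaf-level vanishing of \eqref{t-m} really controls the fibrewise maps $\tau^{(e)}$.
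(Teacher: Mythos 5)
Your overall strategy is close in spirit to the paper's, but the key step of your argument contains a genuine gap, and it leads you to overclaim. For comparison, the paper's proof of \eqref{c-gv} is blunter: it puts into $Z$ the non-locally-free loci of \emph{all} the cohomology sheaves $\sH^{-j}(\Lambda_0)$, $0\le j\le g$, of $\Lambda_0=R\hat S(D_A(\Omega_0))$. At a point with $p^e y\notin Z$, every $\sH^{-j}(\Lambda_e)=V^{e,*}\sH^{-j}(\Lambda_0)$ is locally free at $y$, so cohomology and base change gives the clean degree-by-degree identification $\sH^{-i}(\Lambda_e)\otimes k(y)\cong H^i(\Omega_e\otimes P_y^\vee)^\vee$, and the conclusion is read off from $\varinjlim\sH^{-i}(\Lambda_e)=0$ (Theorem \eqref{GVT}) by passing to the cofinal subsystem of good $e$'s; no Tor spectral sequence is needed. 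Your preliminary reductions (to $S^0a_*\omega_X$, and commuting $H^i$ with $\varprojlim$ via the ML condition) are fine and in fact more explicit than the paper's.

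The gap: you take $Z$ to be only the non-flat locus of $\sH^0(K)$ (your $K=\Lambda_0$) and handle the negative-degree cohomology sheaves through \eqref{t-m} inside the hypertor spectral sequence; you then infer that because every associated graded piece of the fibrewise map $\sH^{-i}(K\otimes^L k(w))\to\sH^{-i}(K_e\otimes^L k(w))$ vanishes, the map itself vanishes. That inference is false for a single map of filtered objects: a filtered map that is zero on all graded pieces merely shifts the filtration (multiplication by $p$ on $\bZ/p^2$ with the $p$-adic filtration is the standard counterexample). The step can be repaired by arguing at the level of the colimit $\varinjlim_e\sH^{-i}(K_e\otimes^L k(y))$: exactness of filtered colimits shows its finite filtration has graded pieces that are subquotients of $\varinjlim_e\mathrm{Tor}_{-q}(\sH^p(K_e),k(y))$, which vanish for $p<0$ by \eqref{t-m} and vanish for $p=0$ because $\mathrm{Tor}_i(\sH^0(K),k(p^ey))=0$ for the cofinally many $e$ with $p^ey\notin Z$. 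But this repair uses, and only yields, the hypothesis ``$p^my\notin Z$ for infinitely many $m$''; your claimed strengthening that a \emph{single} $j$ with $p^jy\notin Z$ suffices (equivalently, that $y\in V^i(\Omega)$ forces $p^jy\in Z$ for \emph{every} $j\ge 0$) does not follow from your argument and is not asserted by the paper. Likewise, your assertion that $y\in V^i(\Omega)$ makes the stable image at level $j$ nonzero for every $j$ is only justified for $j\gg 0$, which is all the statement requires.
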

We also recover a weak version of Simpson's result (see \eqref{p-finite1}).
\begin{thm} \label{p-finite1-intro} (cf. \eqref{p-finite1}) In the situation of \eqref{t-m}, if the reduced Picard variety of $X$ has no supersingular factors (cf. \eqref{ss-F}), then each maximal dimensional component of the closure of the set of points $y\in \hat A$ such that $h^0(\Omega \otimes a^*P_y)\ne 0$ is a finite union of torsion translates of abelian subvarieties, where $\Omega= \varprojlim F^e_* \omega_X$.
\end{thm}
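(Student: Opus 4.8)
The plan is to reduce the statement to the structure of a $[p]$-invariant closed subset of $\hat A$, and then invoke a torsion-translate theorem for such subsets. First I would rewrite the support locus in terms of Frobenius-stable sections. Since $F\colon X\to X$ is finite, the projection formula together with $F^{e*}a^*P_y=a^*P_{p^ey}$ gives $F^e_*\omega_X\otimes a^*P_y\cong F^e_*(\omega_X\otimes a^*P_{p^ey})$, and hence $H^0(X,F^e_*\omega_X\otimes a^*P_y)=H^0(X,\omega_X\otimes a^*P_{p^ey})$. As $\Omega=\varprojlim_e F^e_*\omega_X$ and $H^0$ commutes with inverse limits, one obtains
$$H^0(X,\Omega\otimes a^*P_y)=\varprojlim_e H^0(X,\omega_X\otimes a^*P_{p^ey}).$$
Because all the groups are finite dimensional the tower satisfies the Mittag--Leffler condition, so this limit is nonzero precisely when the stable image $S^0(X,\omega_X\otimes a^*P_y)$ is nonzero. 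Writing $V_S:=\{y\in\hat A\mid S^0(X,\omega_X\otimes a^*P_y)\neq 0\}$, the locus in the statement therefore equals $V_S$, and it suffices to describe the maximal dimensional components of $W:=\overline{V_S}$.

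Next I would use the Cartier module structure to see that $V_S$ is forward invariant under multiplication by $p$. The trace $\phi\colon F_*\omega_X\to\omega_X$, twisted by $a^*P_y$, induces a map $H^0(X,\omega_X\otimes a^*P_{py})\to H^0(X,\omega_X\otimes a^*P_y)$, and the definition of $S^0$ as the eventual image exhibits $S^0(X,\omega_X\otimes a^*P_y)$ as the image of $S^0(X,\omega_X\otimes a^*P_{py})$ under it. Hence $S^0(X,\omega_X\otimes a^*P_y)\neq 0$ forces $S^0(X,\omega_X\otimes a^*P_{py})\neq 0$, that is $y\in V_S\Rightarrow py\in V_S$, so $[p](V_S)\subseteq V_S$. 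Since $[p]\colon\hat A\to\hat A$ is finite, hence a closed map, this passes to closures: $[p](W)\subseteq\overline{[p](V_S)}\subseteq W$.

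Now I would run a combinatorial argument on components. As $[p]$ is finite it preserves dimension, so it carries each maximal dimensional component of $W$ onto a maximal dimensional component, defining a self-map $\sigma$ of the finite set of such components. Choosing a component $Z$ lying in a cycle of $\sigma$ of length $m$ yields $[q](Z)=Z$ with $q=p^m\geq 2$. A maximal dimensional component $Z'$ not lying in a cycle satisfies $[p^n]Z'=z+B$ for some cyclic component $z+B$ and $n\gg 0$, whence $Z'$ is an irreducible component of $[p^n]^{-1}(z+B)$, a finite union of torsion translates of $B$; so it suffices to treat the cyclic case $[q](Z)=Z$.

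The heart of the argument, and the step I expect to be the main obstacle, is the structure theorem for cyclic components: an irreducible closed subvariety $Z\subseteq\hat A$ with $[q](Z)=Z$ for some $q=p^m\geq 2$ is a torsion translate of an abelian subvariety. Over $\bC$ this is classical, reducing to the case of finite stabilizer and using a volume/degree argument or the density of periodic, hence torsion, points. In characteristic $p$ the inseparability of $[p]$ breaks the naive argument, and this is exactly where the hypothesis that the reduced Picard variety has no supersingular factors enters: it guarantees that Frobenius acts with a large \'etale (ordinary) part, so that $[q]$ has a Zariski dense set of periodic $p$-power torsion points in $Z$ and acts semisimply on the relevant Dieudonn\'e/Tate module, which recovers the characteristic zero conclusion. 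Making this density and semisimplicity precise, while controlling the supersingular locus that the hypothesis excludes (cf. \eqref{ss-F}), is the technical core; granting it, each maximal dimensional component of $W$ is a torsion translate of an abelian subvariety, which is the assertion.
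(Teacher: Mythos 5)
Your overall strategy --- rewrite the locus via the trace maps, prove forward invariance under multiplication by $p$, and conclude from a structure theorem for $[p]$-stable closed subsets --- is exactly the paper's. The invariance step is the same computation the paper uses: $H^0(X,(F^e_*\omega_X)\otimes a^*P)\cong H^0(X,F^{e-1}_*(\omega_X\otimes a^*(P^p)^{p^{e-1}}))$, so nonvanishing at $P$ forces nonvanishing at $P^p$, and closedness of $[p]$ passes this to the closure.

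The problem is the step you yourself flag as ``the technical core'': that an irreducible closed $Z\subseteq\hat A$ with $[p](Z)\subseteq Z$ (or $[q](Z)=Z$) is a torsion translate of an abelian subvariety when $\hat A$ has no supersingular factors. You leave this unproven, and your sketch --- density of periodic $p$-power torsion points, semisimplicity on the Tate/Dieudonn\'e module --- is a heuristic, not an argument; making it work in the presence of the inseparable part of $[p]$ is genuinely delicate. This is precisely Theorem \eqref{t-pr}, recorded in \S\ref{ss-F} and quoted from Pink--Roessler \cite{PR03}; the paper's proof of \eqref{p-finite1} consists of the invariance computation above followed by an appeal to \eqref{t-pr} (via the argument of \eqref{p-finite}). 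Note that \eqref{t-pr} already treats the case $[p](Z)\subseteq Z$ directly for maximal dimensional components, so your combinatorial reduction to cycles of the self-map $\sigma$ on components is unnecessary. One smaller inaccuracy: your identification of the locus with $V_S=\{y\mid S^0(X,\omega_X\otimes a^*P_y)\ne 0\}$ only gives an inclusion $V_S\subseteq\{y\mid h^0(\Omega\otimes a^*P_y)\ne 0\}$; the inverse limit $\varprojlim_e H^0(X,\omega_X\otimes a^*P_{p^ey})$ is nonzero iff $S^0(X,\omega_X\otimes a^*P_{p^ey})\ne 0$ for \emph{some} $e\geq 0$, not necessarily $e=0$, since the stable images form a system with surjective transition maps only from some level on. The correct locus is still forward $[p]$-invariant, so this does not derail the argument, but as written your description of the closure is of a possibly smaller set.
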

Similar results for $\{y\in \hat A|S^0(X,\omega _X\otimes a^*P_y)\ne 0\}$ and for the support of $\Lambda$ are obtained in \eqref{p-finite} and \eqref{p-lambda}.
It should be noted that we are unable to prove the analog of the inclusions $V^i(\omega _X)\supset V^{i+1}(\omega _X)$ (which holds for m.A.d. projective varieties over $\mathbb C$) nor the result on the reducedness of the loci $V^i(\omega _X)$.

Even though these results seem to be somewhat technical,
it turns out that they have several nice applications which mirror the characteristic $0$ theory.
In particular we prove in  \eqref{t-surj} and \eqref{t-bir} the previously mentioned \eqref{t-one}. Furthermore, we show:

\begin{thm}
(cf. \eqref{thm:closed_subvariety})
If $a : X \hookrightarrow A$ is a closed, smooth subvariety of general type of an abelian variety, then the smallest abelian subvariety $\hat B \subseteq \hat A$ such that the union of finitely many translates of $\hat B$ contains $V^0(A, S^0 a_* \omega_X)$ is equal to $\hat A$.
\end{thm}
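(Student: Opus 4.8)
The plan is to argue by contraposition: I assume that the minimal abelian subvariety $\hat B\subseteq\hat A$ whose finitely many translates contain $V^0(A,S^0a_*\omega_X)$ is a \emph{proper} subtorus, and I derive that $X$ is not of general type. First I dualize the inclusion $\hat B\hookrightarrow\hat A$. It corresponds to a quotient $\pi:A\to B:=A/K$ of abelian varieties, where $K=\Ker(\pi)$ is an abelian subvariety with $\hat B=\pi^*\Pic^0(B)$ and $\Pic^0(K)\cong\hat A/\hat B$; since $\hat B$ is proper, $\dim K=\dim\hat A-\dim\hat B\ge 1$. Reading the hypothesis across $\pi$, it says precisely that the image of $V^0(A,S^0a_*\omega_X)$ under the dual quotient $\hat A\to\hat A/\hat B\cong\Pic^0(K)$ is a finite set of points $\hat\beta_1,\dots,\hat\beta_r$.

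The heart of the argument is to convert this cohomological containment into the geometric statement that $K$ stabilizes $X$. Over $\bC$ the Fourier--Mukai equivalence would immediately give that the transform $R\hat S(S^0a_*\omega_X)$ is set-theoretically supported over this finite set, whence $S^0a_*\omega_X$ is, up to the line bundles $P_{\hat\beta_i}$ indexing the translates, a pullback along $\pi$; since twisting by a line bundle and pulling back along $\pi$ each produce $K$-invariant support, $\Supp(S^0a_*\omega_X)$ would be invariant under translation by $K$. In our setting the relevant object is governed by the non--quasi-coherent limit $\Omega=\varprojlim_e F^e_*S^0a_*\omega_X$, for which the Fourier--Mukai equivalence is unavailable; I will instead extract the same support information from the Frobenius-stable transform, using the vanishing $\varinjlim\sH^i(R\hat S(D_A(F^e_*S^0a_*\omega_X)))=0$ for $i<0$ of \eqref{t-m} to pin down where the stable part of the transform can be supported. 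Granting (as holds for $X$ of general type) that $S^0a_*\omega_X\ne 0$, it is a nonzero subsheaf of the rank-one torsion-free sheaf $a_*\omega_X$ on the irreducible variety $X$, so $\Supp(S^0a_*\omega_X)=X$; the $K$-invariance of this support then forces $X$ itself to be invariant under translation by $K$, i.e. the connected component of the stabilizer $\{x\in A\mid x+X=X\}$ contains $K$.

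Once $K\subseteq A$ is a positive-dimensional abelian subvariety stabilizing $X$, the morphism $\pi|_X:X\to B$ realizes $X$ as a union of translates of $K$, that is $X=\pi^{-1}(\pi(X))$ with every fibre a translate of the abelian variety $K$. Easy addition then gives $\kappa(X)\le\dim\pi(X)+\kappa(K)=\dim\pi(X)=\dim X-\dim K<\dim X$, so $X$ is not of general type, which is the desired contradiction; hence the minimal $\hat B$ must equal $\hat A$. I expect the \emph{main obstacle} to be exactly the middle step: in characteristic $p$ one cannot invoke the Fourier--Mukai equivalence to pass from ``$V^0$ lies in translates of $\hat B$'' to ``$S^0a_*\omega_X$ is a $\pi$-pullback (up to a twist)'', and this must be replaced by a careful analysis of the stable images controlled by \eqref{t-m}, together with the handling of the twists $P_{\hat\beta_i}$ (in particular any inseparable $p$-power part) via possibly non-étale isogenies, where the usual descent arguments available over $\bC$ break down.
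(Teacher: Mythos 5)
Your overall strategy coincides with the paper's: dualize $\hat B\subsetneq\hat A$ to a quotient $A\to B$ with positive-dimensional kernel $K$, show that the support of the relevant sheaf (which is all of $X$) is invariant under translation by $K$, and conclude that $X$ is fibered by translates of a positive-dimensional abelian variety, contradicting general type. The endgame differs only cosmetically: you invoke easy addition, while the paper notes $\omega_X|_G\cong\sO_G$ for the general fiber $G$ and contradicts bigness of $\omega_X$; both work. One small misstep: the nonvanishing of $S^0a_*\omega_X$ is not a consequence of general type but of smoothness --- for smooth $X$ the trace $F_*\omega_X\to\omega_X$ is surjective, so in fact $S^0a_*\omega_X=a_*\omega_X$, and this equality (not just nonvanishing) is what the paper uses.

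The genuine gap is exactly the step you flag as the ``main obstacle'' and then do not carry out: passing from ``$V^0(A,S^0a_*\omega_X)$ is contained in finitely many translates of $\hat B$'' to ``$\Supp(S^0a_*\omega_X)=X$ is invariant under translation by $K=\widehat{\hat A/\hat B}$''. Citing \eqref{t-m} does not suffice: the vanishing of the negative-degree cohomologies of $\Lambda$ localizes nothing by itself. What the paper actually uses is Corollary \eqref{c-span} together with Lemma \eqref{lem:support}, whose proof runs as follows: $\sH^0(\Lambda_0)$ is supported on $V^0(\Omega_0)$ by \eqref{c-1}, hence is unchanged by tensoring with $\pi^*P$ for $P\in\Pic^0(\hat A/\hat B)$; since $\Lambda_e\cong\hat F^{es,*}\sH^0(\Lambda_0)$ and the isogeny $\hat F^{es}$ is surjective on $\pi^*\Pic^0(\hat A/\hat B)$ (which disposes of your worry about inseparability --- one only needs to solve $\hat F^{es,*}Q\cong P$, not descend anything), the same twist-invariance holds for each $\Lambda_e$ and hence for $\Lambda$; finally the degree-zero reconstruction $\Omega=(-1_A)^*D_A RS(\Lambda)[-g]$ of Theorem \eqref{GVT}, combined with Mukai's exchange of translation and twisting \eqref{M3.1} and \eqref{lem:D_A_T_x}, converts $\Lambda\otimes P\cong\Lambda$ into $T_x^*\Omega\cong\Omega$, and \eqref{lem:support} identifies $\Supp\Omega$ with $\Supp\Omega_0=X$. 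Without this chain --- in particular without the reconstruction of $\Omega$ from $\Lambda$, which is the substantive content of \eqref{GVT} rather than of \eqref{t-m} --- your argument does not close. Note also that the paper never needs your stronger intermediate claim that $S^0a_*\omega_X$ is a $\pi$-pullback up to twist; it only extracts the support statement, which is weaker and is what makes the argument tractable in characteristic $p$.
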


\subsection{Organization} In Section \ref{sec:preliminaries} we recall the important facts about derived categories \eqref{ss-derived_categories}, $F$-singularities \eqref{ss-F_singularities}, the Frobenius morphism on abelian varieties \eqref{ss-F}, the Fourier-Mukai transform on abelian varieties \eqref{ss-Fourier_Mukai}, the behaviour of $S^0$ in families \eqref{ss-S_0_families} and higher direct images of the canonical bundle \eqref{ss-direct_images}. In Section \ref{sec:generic_vanishing} we first prove our main generic vanishing theorem \eqref{ss-GVT} and draw some consequences \eqref{ss-consequences_of_GVT}. Then, we prove in subsection \eqref{ss-cohomo_support_loci} the statements \eqref{c-gv-intro} and \eqref{p-finite1-intro} about cohomology support loci, and finally we give some examples \eqref{ss-examples}. In Section \ref{sec:geometry} we define the Frobenius stable Kodaira dimension  
\eqref{ss-K_S} and then we prove Theorem \eqref{t-one} in \eqref{ss-thm_1} and \eqref{ss-thm_2}.

\subsection{Acknowledgments}
The authors would like to thank B. Bhatt and K. Schwede  for useful discussions and comments. The
first named author was partially supported by NSF research grant DMS-0757897 and DMS-1300750 and a grant from the Simons foundation. The second name author would like to thank the  NCTS Mathematics Division (Taipei Office),  where part of  this article was completed during a fruitful visit.

\section{Preliminaries} \label{sec:preliminaries}

We fix an algebraically closed field $k$, of characteristic $p>0$. All schemes will be over $k$ unless otherwise stated. 

\subsection{Derived categories} \label{ss-derived_categories}
Let $X$ be a quasi-compact and separated scheme and $D(X)$ be its derived category (i.e. the derived category of $\mathcal O _X$ modules).  $D_{\rm qc}(X)$ denotes the full subcategory of $D(X)$ consisting of complexes whose cohomologies are  quasi-coherent. For any $F\in D(X)$, $F[n]$ denotes the object obtained by shifting $F$, $n$ places to the left and $\sH^n(F)$ denotes the $\mathcal O_X$ module obtained by taking the $n$-th homology of a complex representing $F$. 
Recall the following.
\begin{thm}[Projection formula]\label{PF} Let $f:X\to Y$ be a morphism of separated, quasi-compact schemes, then there is a functorial isomorphism $$Rf_*(F)\otimes _{\mathcal O _Y}G\to Rf_*(F\otimes _{\mathcal O_X}Lf^*G)$$ for any 
$F\in D_{\rm qc}(X), G\in D_{\rm qc}(Y)$. (Here $\otimes$ is taken in the left-derived sense.)\end{thm}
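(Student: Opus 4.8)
The plan is to first construct the comparison morphism and then prove it is an isomorphism by dévissage in the variable $G$. The morphism comes from the adjunction $(Lf^*, Rf_*)$: since $Lf^*$ is (strong) monoidal one has $Lf^*(Rf_*(F)\otimes^{L}G)\cong Lf^*Rf_*(F)\otimes^{L}Lf^*G$, and composing with the counit $Lf^*Rf_*(F)\to F$ tensored by $\id_{Lf^*G}$ gives a map $Lf^*(Rf_*(F)\otimes^{L}G)\to F\otimes^{L}Lf^*G$. Its adjoint is the desired natural transformation
$$\theta_{F,G}: Rf_*(F)\otimes^{L}_{\sO_Y}G\longrightarrow Rf_*(F\otimes^{L}_{\sO_X}Lf^*G),$$
which is functorial in both variables by construction.

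Next I would reduce to the case $Y=\Spec A$ affine. Both sides commute with restriction to an open $U\subseteq Y$: one uses that $Rf_*$ commutes with open (flat) base change, while $Lf^*$ and $\otimes^{L}$ are local, and $\theta_{F,G}$ is compatible with these restrictions. Since being an isomorphism in $D(Y)$ is local on $Y$, it suffices to treat $Y$ affine. Here the quasi-compact, separated hypotheses are used only to guarantee that $f$ is quasi-compact and quasi-separated, so that the formalism below applies.

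For fixed $F$, I would regard both sides as triangulated functors of $G$ on $D_{\mathrm{qc}}(Y)$, with $\theta_{F,-}$ a morphism between them. The crucial point is that both functors commute with arbitrary coproducts: $Lf^*$ and $\otimes^{L}$ do because they are left adjoints, and $Rf_*$ does on $D_{\mathrm{qc}}$ precisely because $f$ is quasi-compact and quasi-separated. Since $D_{\mathrm{qc}}(Y)$ is, for $Y$ affine, the localizing subcategory generated by $\sO_Y$, and a morphism of coproduct-preserving triangulated functors is an isomorphism as soon as it is one on a generator, it remains only to evaluate $\theta_{F,\sO_Y}$. But $Lf^*\sO_Y=\sO_X$, so the target is $Rf_*(F\otimes^{L}\sO_X)=Rf_*F$, the source is $Rf_*F\otimes^{L}\sO_Y=Rf_*F$, and $\theta_{F,\sO_Y}$ is the identity; hence $\theta_{F,G}$ is an isomorphism for all $G$.

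The step I expect to be the main obstacle is verifying that $Rf_*$ preserves arbitrary coproducts on $D_{\mathrm{qc}}$, which is exactly where the finiteness hypotheses enter (this is Neeman's theorem, valid for qcqs $f$). If one prefers to remain within bounded complexes and avoid coproducts, the dévissage can instead be run via the classical way-out lemma, reducing to $G$ a finite free module or $\sO_Y[n]$ after replacing $G$ by a flat resolution in order to compute $Lf^*G$ and $\otimes^{L}$ honestly; the final reduction to $G=\sO_Y$ is then the same trivial identification.
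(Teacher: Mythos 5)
Your argument is correct and is essentially the proof of the result the paper itself only cites (Neeman's Proposition 5.3, via Bousfield/Brown representability techniques): construct $\theta_{F,G}$ by adjunction, localize to affine $Y$, and conclude by d\'evissage using that both sides are coproduct-preserving triangulated functors of $G$ agreeing on the generator $\sO_Y$, the key input being that $Rf_*$ preserves coproducts on $D_{\rm qc}$ for quasi-compact separated $f$. Since the paper gives no independent argument, there is nothing further to compare.
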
\begin{proof} 
\cite[Proposition 5.3]{Neeman}. \end{proof}
If $X$ is a variety of dimension $n$ over a field $k$ and $\omega ^\cdot_X$ denotes its dualizing complex (which is by definition $f^! \sO_k$ for Hartshorne's $f^!$, such that $\sH^{-\dim X}(\omega^\cdot_X) \cong \omega_X$), then the dualizing functor $D_X$ is defined by $D_X(F)=R{\mathcal H}om(F,\omega _X^\cdot)$ for any $F\in D_{\rm qc}(X)$. We have
\begin{thm}[Grothendieck Duality] Let $f:X\to Y$ be a proper morphism of quasi-projective varieties over a field $k$,  then $$Rf_*D_X(F)=D_YRf_*(F)\qquad \forall\ F\in D_{\rm qc}(X).$$\end{thm}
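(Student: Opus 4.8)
The plan is to reduce the statement to the abstract adjunction between $Rf_*$ and the twisted inverse image functor $f^!$, together with the compatibility of $f^!$ with composition; the dualizing complexes then match up automatically.

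First I would invoke the fundamental theorem of Grothendieck duality: for a proper morphism $f : X \to Y$ of quasi-projective varieties over $k$, the functor $Rf_* : D_{\mathrm{qc}}(X) \to D_{\mathrm{qc}}(Y)$ admits a right adjoint $f^! : D_{\mathrm{qc}}(Y) \to D_{\mathrm{qc}}(X)$ (whose existence follows from Brown representability, cf. \cite{Neeman}), and that this adjunction refines to the functorial \emph{sheafified duality} isomorphism
\begin{equation}\label{eq:gd_sheaf_duality}
Rf_* R\mathcal{H}om_X(F, f^! G) \xrightarrow{\ \sim\ } R\mathcal{H}om_Y(Rf_* F, G),
\end{equation}
valid for $F \in D_{\mathrm{qc}}(X)$ and any $G$ with bounded coherent cohomology (which is all we need, since $\omega_Y^\cdot$ is such an object). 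Applying $R\Gamma(Y, -)$ to \eqref{eq:gd_sheaf_duality} and taking $H^0$ recovers the plain adjunction $\Hom_{D(Y)}(Rf_* F, G) \cong \Hom_{D(X)}(F, f^! G)$, so the content of \eqref{eq:gd_sheaf_duality} is precisely the promotion of this adjunction to an isomorphism of objects of $D_{\mathrm{qc}}(Y)$.

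Next I would identify $\omega_X^\cdot$ with $f^! \omega_Y^\cdot$. Writing $\pi_X : X \to \Spec k$ and $\pi_Y : Y \to \Spec k$ for the structure morphisms, by definition $\omega_X^\cdot = \pi_X^! \sO_k$ and $\omega_Y^\cdot = \pi_Y^! \sO_k$. Since $\pi_X = \pi_Y \circ f$ and the formation of $(-)^!$ is pseudofunctorial, i.e. $(\pi_Y \circ f)^! \cong f^! \circ \pi_Y^!$, I get $\omega_X^\cdot \cong f^!(\pi_Y^! \sO_k) = f^! \omega_Y^\cdot$. Substituting $G = \omega_Y^\cdot$ into \eqref{eq:gd_sheaf_duality} then yields
\begin{equation}\label{eq:gd_conclusion}
Rf_* D_X(F) = Rf_* R\mathcal{H}om_X(F, f^! \omega_Y^\cdot) \cong R\mathcal{H}om_Y(Rf_* F, \omega_Y^\cdot) = D_Y Rf_*(F),
\end{equation}
which is the desired isomorphism.

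The entire content sits in the first step; the identification in the second step is formal. The main obstacle is therefore the construction of $f^!$ and the proof of \eqref{eq:gd_sheaf_duality}. To establish these by hand I would use that $f$ is projective (a proper morphism of quasi-projective varieties is projective) to factor it as a closed immersion $i : X \hookrightarrow \bP^n_Y$ followed by the projection $p : \bP^n_Y \to Y$, set $f^! = i^! p^!$ with $p^!(-) = Lp^*(-) \otimes \omega_p[n]$ and $i^!(-) = R\mathcal{H}om_{\bP^n_Y}(\sO_X, -)$, and then verify \eqref{eq:gd_sheaf_duality} separately in the smooth projective case (via the trace map and relative Serre duality) and in the closed immersion case (via finite/local duality), finally checking independence of the chosen factorization. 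Alternatively, and more cleanly for the unbounded setting used elsewhere in the paper, one simply cites the theorem in the form \eqref{eq:gd_sheaf_duality} from the standard references, e.g. Hartshorne's \emph{Residues and Duality} or \cite{Neeman}.
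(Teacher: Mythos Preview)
Your proposal is correct and matches the paper's approach: the paper gives no argument of its own, merely citing \cite[\S VII]{Hartshorne66} for bounded $F$ and \cite{Neeman} for the general case, and your sketch simply unpacks the content of those references (existence of $f^!$ via Brown representability, the sheafified adjunction, and pseudofunctoriality of $(-)^!$). There is nothing to correct or add.
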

\begin{proof} For bounded $F$  it is shown in \cite[\S VII]{Hartshorne66}. The general case is in \cite{Neeman}.  
\end{proof}

\begin{defn}
Given a direct system of objects $\sC_i\in D(X)$
\begin{equation*}
\xymatrix{
\sC_1 \ar[r]^{f_1}  & \sC_2 \ar[r]^{f_2} & \dots
}
\end{equation*}
$\hocolim \sC_i$ is defined by the following triangle 
\begin{equation*}
\xymatrix{
\bigoplus \sC_i \ar[r] & \bigoplus \sC_i \ar[r] &  \hocolim \sC_i \ar[r]^>>{+1} &
},
\end{equation*}
where the first map is  the homomorphism given by ${\rm id}-{\rm shift}$,  and
"shift"
denotes the map
$\bigoplus \sC _i\to \bigoplus \sC_i$ defined on $\sC _i$ by the composition $\sC_i\to \sC_{i+1}\subset \bigoplus \sC_j$.
(which is called 1-shift in \cite{Neeman}). 
\end{defn}

\begin{lemma} \label{l-dir} Let $\sC _i \overset{f_i}\to\sC _{i+1}$
be a direct system in $D_{\rm qc}(X)$. Then homotopy colimits commute
with tensor products, pullbacks and pushforwards. In particular we have \begin{enumerate}
\item $\hocolim \sH^j( \sC _i )=\sH^j( \hocolim \sC_i )$, and
\item $\hocolim  R^j\Gamma  (\sC _i )=R^j\Gamma ( \hocolim \sC _i)$.
\end{enumerate}
\end{lemma}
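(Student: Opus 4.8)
The plan is to exploit the fact that the homotopy colimit is defined entirely in terms of coproducts and a single distinguished triangle, so that any exact functor of triangulated categories that preserves coproducts automatically commutes with it. Concretely, suppose $G$ is a triangulated functor between two of the categories $D_{\rm qc}(-)$ that commutes with arbitrary direct sums. Applying $G$ to the defining triangle
$$\bigoplus \sC_i \xrightarrow{\ \id - \text{shift}\ } \bigoplus \sC_i \to \hocolim \sC_i \xrightarrow{+1}$$
and using $G(\bigoplus \sC_i) = \bigoplus G(\sC_i)$ together with the observation that, by functoriality and additivity, $G$ carries $\id - \text{shift}$ for $\{\sC_i\}$ to $\id - \text{shift}$ for $\{G(\sC_i)\}$, we obtain a distinguished triangle
$$\bigoplus G(\sC_i) \xrightarrow{\ \id - \text{shift}\ } \bigoplus G(\sC_i) \to G(\hocolim \sC_i) \xrightarrow{+1}.$$
By the uniqueness (up to isomorphism) of the third vertex of a triangle, this identifies $G(\hocolim \sC_i)$ with $\hocolim G(\sC_i)$. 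Thus the whole commutation statement reduces to checking that each of $(-)\otimes G$, $Lf^*$ and $Rf_*$ is exact and preserves coproducts.

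For the derived tensor product and the derived pullback $Lf^*$ this is immediate: both are left adjoints (to $R\mathcal{H}om$ and $Rf_*$ respectively) and hence commute with all colimits, in particular coproducts, and both are visibly exact. The delicate case is the pushforward $Rf_*$, which as a right adjoint has no formal reason to preserve coproducts. Here one invokes the theorem of Neeman that, for a quasi-compact, quasi-separated morphism $f$, the functor $Rf_* : D_{\rm qc}(X) \to D_{\rm qc}(Y)$ preserves arbitrary direct sums; this applies in our setting since $X$ is quasi-compact and separated. I expect this coproduct-preservation of $Rf_*$ to be the only genuine obstacle --- everything else being formal triangulated-category manipulation --- and it is precisely the point where the quasi-coherence hypothesis and the restriction to $D_{\rm qc}$ are essential.

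For part (1) I would apply the cohomology functor $\sH^j$ to the defining triangle and use the resulting long exact sequence. Since $\sH^j$ commutes with direct sums of sheaves we have $\sH^j(\bigoplus \sC_i) = \bigoplus \sH^j(\sC_i)$, and the map induced by $\id - \text{shift}$ is the corresponding $\id - \text{shift}$ on the direct system $\{\sH^j(\sC_i)\}$. The standard fact that $\id - \text{shift}$ is injective on a direct sum of (quasi-coherent) sheaves, with cokernel the direct limit, breaks the long exact sequence into short exact sequences
$$0 \to \bigoplus \sH^j(\sC_i) \xrightarrow{\ \id-\text{shift}\ } \bigoplus \sH^j(\sC_i) \to \sH^j(\hocolim \sC_i) \to 0,$$
whence $\sH^j(\hocolim \sC_i) \cong \varinjlim \sH^j(\sC_i)$. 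Since direct limits of sheaves are exact, the latter coincides with $\hocolim \sH^j(\sC_i)$, giving (1).

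Finally, part (2) follows by combining the previous two. Taking $f$ to be the structure morphism $X \to \Spec k$, so that $Rf_* = R\Gamma$, the pushforward case of the first paragraph gives $R\Gamma(\hocolim \sC_i) \cong \hocolim R\Gamma(\sC_i)$. Applying part (1) to the direct system $\{R\Gamma(\sC_i)\}$ then yields
$$R^j\Gamma(\hocolim \sC_i) = \sH^j\bigl(\hocolim R\Gamma(\sC_i)\bigr) = \hocolim \sH^j\bigl(R\Gamma(\sC_i)\bigr) = \hocolim R^j\Gamma(\sC_i),$$
which is (2).
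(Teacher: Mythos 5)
Your proof is correct and takes essentially the same route as the paper's: the paper simply cites Neeman's Lemma 2.8 (and Kuznetsov) and isolates exactly the point you use, namely that ${\rm id}-{\rm shift}$ becomes injective after applying any additive functor, so the long exact sequence of a homological functor applied to the defining triangle breaks into short exact sequences identifying the cohomology of the homotopy colimit with the direct limit. You also correctly locate the one genuinely nontrivial input, Neeman's theorem that $Rf_*$ preserves coproducts on $D_{\rm qc}$ for quasi-compact separated morphisms, which is implicit in the paper's citation.
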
\begin{proof} See \cite[Lemma 2.8]{Neeman} (with $c= \sO_X$ and $\sT= D_{\rm qc}(X)$) or \cite[2.11]{Kuznetsov11}. The fundamental reason is that for any additive functor $F$ from $D_{\rm qc}(X)$ to the category of abelian groups, $F({\rm id}-{\rm shift})$ is injective.  Indeed, let $(c_i) \in \oplus F(\sC_i)$, and let $(d_i) \in \oplus F(\sC_i)$ be the image via $F({\rm id}-{\rm shift})$. That is, 
\begin{equation*}
d_1 = c_1 \qquad d_2 = c_2 - F(f_1)(c_1) \qquad \dots \qquad d_i = c_i - F(f_{i-1})(c_{i-1}) \qquad \dots
\end{equation*}
Then, we have
\begin{equation*}
c_1 = d_1 \qquad c_2 = d_2 - F(f_1)(d_1) \qquad c_3 = d_3 - F(f_2)(d_2 - F(f_1)(d_1)) \quad \dots 
\end{equation*}
In particular, if all $d_i$ are zero, then so are all the $c_i$. 
\end{proof}

\begin{defn}
  Given an inverse system of objects $\sC_i\in D_{\rm qc}(X)$
\begin{equation*}
\xymatrix{
\sC_1 & \ar[l]^{\tilde{f}_1} \sC_2 & \ar[l]^{\tilde{f}_2}  \dots
}
\end{equation*}
then $\holim \sC_i$ is defined by the following triangle \cite[Definition 29]{Murfet}
\begin{equation*}
\xymatrix{
 \holim \sC_i \ar[r] & \prod \sC_i \ar[r] &  \prod \sC_i \ar[r]^{+1} &.
}
\end{equation*}
Here the map between products is $\prod  \left( \id - {\rm shift} \right)$. Note also that by product we mean the ordinary product of chain complexes (which is well-defined on the derived category and is the product in $D(X)$),  not the product inside $D_{\rm qc}(X)$. In particular then $\holim C_i$ is an object of $D(X)$, not of $D_{\rm qc}(X)$.
\end{defn}

It is easy to check that if $\sC_i$ are coherent sheaves, then $\hocolim \sC_i = \varinjlim \sC_i$ (see the proof of \eqref{l-dir}).  However, $\holim \sC_i \neq \varprojlim \sC_i$ in general. For an easy example, consider $\sC_i:= \sO_{\mathbb{A}^1_k}$, where $\mathbb{A}_1 =\Spec k[x]$, with the maps $\sC_{i+1} \to \sC_i$ being multiplication by $x$. Then $\prod \sC_i \to \prod \sC_i$ is not surjective, since $(1) \in \prod \sO_{\mathbb{A}^1_k}$ is not in the image. 

\begin{lemma}
\label{lem:holim_equals_lim}
Given  an inverse system of quasi-coherent sheaves $\sC_i$ 
\begin{equation*}
\xymatrix{
\sC_1 & \ar[l]^{\tilde{f}_1} \sC_2 & \ar[l]^{\tilde{f}_2}  \dots
}
\end{equation*}
satisfying the ML-condition, that is, for every $i$,  $\im (\sC_j \to \sC_i)$ is the same for all $j \gg i$, we have
\begin{equation*}
\holim \sC_i = \varprojlim \sC_i.
\end{equation*}
\end{lemma}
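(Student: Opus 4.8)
The plan is to read off the two cohomology sheaves of $\holim \sC_i$ from its defining triangle and then to show that, under the ML-condition, the second one vanishes. Since each $\prod \sC_i$ is a sheaf concentrated in cohomological degree $0$, the triangle
$$\holim \sC_i \to \prod \sC_i \xrightarrow{\ g\ } \prod \sC_i \xrightarrow{+1}\ ,$$
with $g = \prod(\id - {\rm shift})$, identifies $\holim \sC_i$ with the fiber of $g$; taking the long exact sequence of cohomology sheaves shows that the only possibly nonzero ones are $\sH^0(\holim\sC_i) = \Ker(g)$ and $\sH^1(\holim \sC_i) = \coker(g)$. First I would observe that the kernel needs no hypothesis: computing sections over any open $U$ gives $(\prod\sC_i)(U) = \prod \sC_i(U)$ and $\Ker(g)(U) = \{(c_i) : c_i = \tilde f_i(c_{i+1})\} = \varprojlim(\sC_i(U)) = (\varprojlim \sC_i)(U)$, so $\Ker(g) = \varprojlim\sC_i$ on the nose. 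Hence it suffices to prove $\coker(g) = 0$, i.e. that $g$ is an epimorphism of sheaves; equivalently, the ``${\varprojlim}^1$'' sheaf of the system vanishes.

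The next step is to check this epimorphism locally. Surjectivity of a map of sheaves may be checked stalkwise, so it suffices to find local lifts over the members of an affine open cover of $X$; concretely I would show that $g(U) : \prod \sC_i(U) \to \prod \sC_i(U)$ is surjective for every affine open $U = \Spec R$. Here I would use the two features that separate the quasi-coherent setting from an arbitrary topological space: on an affine scheme $\Gamma(U,-)$ is exact on quasi-coherent sheaves, and kernels, images and cokernels of maps of quasi-coherent sheaves are again quasi-coherent. Consequently $\Gamma(U,-)$ commutes with the formation of images, so the stabilization of the image sheaves $\im(\sC_j \to \sC_i)$ supplied by the ML-condition descends to the stabilization of the submodules $\im(\sC_j(U) \to \sC_i(U)) \subseteq \sC_i(U)$; that is, the inverse system of $R$-modules $\{\sC_i(U)\}$ is Mittag--Leffler in the classical sense.

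Finally I would invoke the classical Mittag--Leffler lemma: an inverse system of abelian groups satisfying the ML-condition has vanishing ${\varprojlim}^1$, equivalently $\id - {\rm shift}$ on $\prod \sC_i(U)$ is surjective. Concretely, after replacing the system by its stable images one reduces to surjective transition maps $\tilde f_i(U)$, and then any $(b_i)$ is the image of the sequence built recursively from $c_1 = 0$ together with successive lifts $c_{i+1}$ of $c_i - b_i$ along $\tilde f_i(U)$. This gives surjectivity of $g(U)$ for all affine $U$, whence $\coker(g) = 0$, and therefore $\holim\sC_i$ is the sheaf $\varprojlim \sC_i$ placed in degree $0$, as claimed.

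The main obstacle is exactly the sheaf-theoretic bookkeeping of the middle paragraph. Because an infinite product of quasi-coherent sheaves need not be quasi-coherent (this is precisely why $\holim\sC_i$ a priori lies only in $D(X)$), one cannot argue with modules globally; and because surjectivity of a map of sheaves is not a statement about sections over a fixed open, the naive ``solve the telescoping system on global sections'' argument is not available. Passing to affine opens, where exactness of $\Gamma$ on quasi-coherent sheaves simultaneously transports the ML-condition to sections and converts sheaf-surjectivity into section-surjectivity, is the device that makes the classical ${\varprojlim}^1$-vanishing applicable.
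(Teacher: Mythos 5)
Your proposal is correct and follows essentially the same route as the paper: identify $\varprojlim\sC_i$ with $\ker\bigl(\prod\sC_i\to\prod\sC_i\bigr)$, reduce the vanishing of the cokernel to surjectivity of $\prod\sC_i(U)\to\prod\sC_i(U)$ on affine opens, and conclude by the classical Mittag--Leffler vanishing of ${\varprojlim}^1$ for abelian groups. The only difference is that you spell out why the ML-condition on sheaves descends to the modules of sections over an affine (exactness of $\Gamma(U,-)$ on quasi-coherent sheaves), a point the paper leaves implicit.
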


\begin{proof}
It is immediate that 
\begin{equation*}
\varprojlim \sC_i = \ker \left( \prod \sC_i \to \prod \sC_i \right).
\end{equation*}
Hence, to prove the required equality we have to show that $\prod \sC_i \to \prod \sC_i$ is surjective. For that it is enough to prove that $\prod \sC_i(U) \to \prod \sC_i(U)$ is surjective for each affine open set $U$. However, there the question becomes a question on abelian groups, which is well known (see for example \cite[\href{http://stacks.math.columbia.edu/tag/0123}{Tag 07KW}, (3)]{stacks-project} and  \cite[ \href{http://stacks.math.columbia.edu/tag/0123}{Tag 0594}]{stacks-project} for the definition of the ML-condition).
\end{proof}

Further, using the language of \cite[Lemma 2.8]{Neeman} one obtains the following.

\begin{lemma}
\label{lem:direct_inverse_limit_alternative}
If 
\begin{equation*}
\xymatrix{
\sC_1 \ar[r]^{f_1}  & \sC_2 \ar[r]^{f_2} & \dots
}
\end{equation*}
is a direct system in $D_{\rm qc}(X)$ and $\sD \in D_{\rm qc}(X)$, then
\begin{equation*}
\mathcal{RH}om ( \hocolim \sC_i , \sD) \cong \holim \mathcal{RH}om(  \sC_i, \sD) . 
\end{equation*}
\end{lemma}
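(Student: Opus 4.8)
The plan is to apply the contravariant exact functor $\mathcal{RH}om(-,\sD)\colon D_{\rm qc}(X)^{\rm op}\to D(X)$ to the distinguished triangle defining $\hocolim \sC_i$ and to recognize the resulting triangle as the one defining $\holim \mathcal{RH}om(\sC_i,\sD)$. Concretely, $\mathcal{RH}om(-,\sD)$ is triangulated, so it carries the defining triangle
$$\bigoplus \sC_i \xrightarrow{\ \id-{\rm shift}\ } \bigoplus \sC_i \to \hocolim \sC_i \xrightarrow{+1}$$
to the distinguished triangle
$$\mathcal{RH}om(\hocolim \sC_i,\sD)\to \mathcal{RH}om\Bigl(\bigoplus \sC_i,\sD\Bigr)\xrightarrow{\ g\ } \mathcal{RH}om\Bigl(\bigoplus \sC_i,\sD\Bigr)\xrightarrow{+1},$$
where $g$ is the image of $\id-{\rm shift}$ under this additive contravariant functor, so $g=\id-\mathcal{RH}om({\rm shift},\sD)$.

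The second step, which is the crux, is the identification $\mathcal{RH}om(\bigoplus_i \sC_i, \sD)\cong \prod_i \mathcal{RH}om(\sC_i, \sD)$ in $D(X)$: that is, $\mathcal{RH}om$ converts coproducts in its first variable into products. I would justify this by representing $\sD$ by a K-injective complex $\sI^\bullet$ of $\sO_X$-modules, using that at the level of sheaves $\mathcal{H}om(\bigoplus_i \sC_i, \sI^\bullet)=\prod_i \mathcal{H}om(\sC_i,\sI^\bullet)$, and invoking the exactness of arbitrary products of $\sO_X$-modules so that the ordinary product of chain complexes computes the product in $D(X)$ (which, as recalled in the definition of $\holim$, is exactly the product used there). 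Under this identification I then have to check that the map ${\rm shift}$ on $\bigoplus \sC_i$ — which on the summand $\sC_i$ is $\sC_i\xrightarrow{f_i}\sC_{i+1}\hookrightarrow \bigoplus \sC_j$ — is carried to the map $\prod_j \mathcal{RH}om(\sC_j,\sD)\to \prod_i \mathcal{RH}om(\sC_i,\sD)$ that on the $i$-th factor is $\mathcal{RH}om(f_i,\sD)$; this is precisely the shift of the inverse system $\{\mathcal{RH}om(\sC_i,\sD)\}$ with transition maps $\tilde f_i := \mathcal{RH}om(f_i,\sD)$. Granting this, $g=\prod(\id-{\rm shift})$.

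Combining the two steps, the triangle becomes
$$\mathcal{RH}om(\hocolim \sC_i,\sD)\to \prod \mathcal{RH}om(\sC_i,\sD)\xrightarrow{\ \prod(\id-{\rm shift})\ } \prod \mathcal{RH}om(\sC_i,\sD)\xrightarrow{+1},$$
which exhibits $\mathcal{RH}om(\hocolim \sC_i,\sD)$ as a fiber of $\prod(\id-{\rm shift})$. Since $\holim \mathcal{RH}om(\sC_i,\sD)$ is by definition the fiber of the very same map, and a fiber in a triangulated category is determined up to isomorphism, the two objects are isomorphic, as claimed. I expect the main obstacle to be exactly the second step: making the identification $\mathcal{RH}om(\bigoplus \sC_i,\sD)\cong \prod \mathcal{RH}om(\sC_i,\sD)$ precise in $D_{\rm qc}(X)$ for unbounded complexes and verifying its compatibility with the $\id-{\rm shift}$ maps, so that $\mathcal{RH}om(-,\sD)$ genuinely transports the homotopy-colimit presentation to the homotopy-limit presentation; this is the content one extracts from the formalism of \cite[Lemma 2.8]{Neeman}. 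The residual bookkeeping — signs in the triangle and the non-canonicity of the isomorphism of fibers — is routine and does not affect the stated isomorphism.
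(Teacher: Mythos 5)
Your proof is correct and follows essentially the same route as the paper's: apply $\mathcal{RH}om(-,\sD)$ to the triangle defining $\hocolim \sC_i$, exchange the coproduct for a product in the first variable, and recognize the resulting triangle, with map $\prod(\id-{\rm shift})$, as the one defining $\holim \mathcal{RH}om(\sC_i,\sD)$. The only (immaterial) difference is in how the identification $\mathcal{RH}om(\bigoplus\sC_i,\sD)\cong\prod\mathcal{RH}om(\sC_i,\sD)$ is justified: you use a K-injective resolution of $\sD$ and exactness of products of $\sO_X$-modules, whereas the paper checks the map on cohomology sheaves over affine opens via the universal property of the direct sum.
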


\begin{proof}
Apply $\mathcal{RH}om ( \_ ,\sD)$ to the triangle
\begin{equation*}
\bigoplus \sC_i \to \bigoplus \sC_i \to \hocolim \sC_i \stackrel {+1}\to .
\end{equation*}
We obtain the triangle
\begin{equation*}
\mathcal{RH}om ( \hocolim \sC_i, \sD) \to \mathcal{RH}om ( \bigoplus \sC_i, \sD) \to \mathcal{RH}om (\bigoplus \sC_i, \sD)  \to^{+1}.
\end{equation*}
Notice now that 
\begin{equation}
\label{eq:direct_inverse_limit_alternative:prod_co_prod}
\mathcal{RH}om (\bigoplus \sC_i, \sD) \cong \prod \mathcal{RH}om (\sC_i, \sD). 
\end{equation}
Indeed, for every $i$ there is a natural map $\mathcal{RH}om (\bigoplus \sC_i, \sD) \to \mathcal{RH}om (\sC_i, \sD) $. This induces a natural map  $ \mathcal{RH}om (\bigoplus \sC_i, \sD) \to \prod \mathcal{RH}om (\sC_i, \sD) $. To prove that it is an isomorphism, it is enough to prove that it induces an isomorphism on each cohomology sheaf. By restricting to affine patches we may also replace $\mathcal{RH}om$ by $\mathcal{R}\mathrm{Hom}$. That is we have to show that $\prod \mathrm{Hom}_{\sD(X)} (\sC_i, \sD) \to \mathrm{Hom}_{\sD(X)} (\bigoplus \sC_i, \sD)  $ is an isomorphism. This holds because of the universal property of $\bigoplus$. This concludes the proof of \eqref{eq:direct_inverse_limit_alternative:prod_co_prod}.
 
Then the previous triangle translates to 
\begin{equation*}
\mathcal{RH}om ( \hocolim \sC_i, \sD) \to \prod \mathcal{RH}om (\sC_i, \sD) \to \prod \mathcal{RH}om (\sC_i, \sD) \to^{+1}
\end{equation*}
and the map between the products is just $\prod \left( \id_{\mathcal{RH}om (\sC_i, \sD)} - \mathcal{RH}om (f_{i-1}, \sD)\right) $. This shows that $\mathcal{RH}om ( \hocolim \sC_i, \sD)$ is indeed the homotopy limit of the inverse system
\begin{equation*}
\xymatrixcolsep{80pt}
\xymatrix{
\mathcal{RH}om ( \sC_1, \sD) &  \ar[l]^{\mathcal{RH}om ( f_1, \sD) } \mathcal{RH}om ( \sC_2, \sD) &  \ar[l]^{\mathcal{RH}om ( f_2, \sD) } \dots
}
\end{equation*}
\end{proof}

\subsection{F-singularities}\label{ss-F_singularities}
The title is somewhat misleading.  We define here the global invariants whose origin lies in the theory of $F$-singularities. For the general theory we refer to \cite{ST} and \cite{S11}. Throughout this subsection the letter $F$ stands for the absolute Frobenius morphism of the given variety. 

\begin{defn} Let $X$ be a smooth, proper variety over $k$, $\Delta \geq 0$ a $\bQ$-divisor, $s >0$ an integer, such that $(p^s -1)\Delta$ is an integral divisor, $f : X \to Y$ a morphism over $k$ and $M$ a Cartier divisor on $X$. We define the subsheaf $S^0f_*(\sigma (X,\Delta ) \otimes \mathcal O_X(M)) \subseteq f_* \sO_X(M)$ to be the intersection:
$$\bigcap _{e\geq 0}{\rm Image}\left( {\rm Tr}^{es}F^{es}_*f_*\mathcal O _X((1-p^{es})(K_X +\Delta )+p^{es}M)\to f_*\mathcal O_X (M)\right), $$
where ${\rm Tr}^{es}$ is obtained from the Grothendieck trace $F^{es}_* \omega_X \to \omega_X$ of $F^{es}$ by twisting with $\sO_X(M - K_X)$, pushing forward by $f$ and applying that $F_* f_* = f_* F_*$. In the special case of $Y = \Spec k$, we use the notation $S^0(X,\sigma(X, \Delta) \otimes \sO_X(M))$ instead of $S^0f_*(\sigma (X,\Delta ) \otimes \mathcal O_X(M))$.
\end{defn}
This intersection is a descending intersection, so a priori it needs not stabilize. In this case $S^0f_*(\sigma (X,\Delta ) \otimes \mathcal O_X(M))$ tends not to be coherent. There are several cases when the intersection stabilizes, for example if $M-K_X-\Delta$ is ample (cf. \cite[2.15]{HX13}).

\begin{lem}
\label{lem:obvious_inclusion}
With the above notation, there is a natural inclusion $S^0(X,\sigma (X,\Delta ) \otimes \mathcal O_X(M)) \subseteq H^0(Y, S^0f_*(\sigma (X,\Delta ) \otimes \mathcal O_X(M)))$.
\end{lem}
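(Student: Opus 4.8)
The plan is to reduce this comparison of two descending intersections to a single termwise inclusion of images, which then follows from the left-exactness of the global sections functor. Write $g : Y \to \Spec k$ for the structure morphism, so that $h := g \circ f : X \to \Spec k$ is the structure morphism of $X$, and $g_* = H^0(Y,-)$, $h_* = H^0(X,-)$. For each $e \geq 0$ put $N_e := (1-p^{es})(K_X+\Delta)+p^{es}M$ and let $\tau_e : F^{es}_* \sO_X(N_e) \to \sO_X(M)$ be the $X$-level trace (the Grothendieck trace of $F^{es}$ twisted by $\sO_X(M-K_X)$ and adjusted by the integral divisor $(p^{es}-1)\Delta$). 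Pushing $\tau_e$ forward by $f$ and using $f_* F^{es}_* = F^{es}_* f_*$ produces the $Y$-level trace $\mathrm{Tr}^{es} : F^{es}_* f_* \sO_X(N_e) \to f_* \sO_X(M)$; write $\mathcal I_e \subseteq f_* \sO_X(M)$ for its image sheaf, so that by definition $S^0 f_*(\sigma(X,\Delta)\otimes \sO_X(M)) = \bigcap_e \mathcal I_e$. Since an intersection of subsheaves of a fixed sheaf is computed sectionwise, applying $H^0(Y,-)$ gives
$$H^0\big(Y, S^0 f_*(\sigma(X,\Delta)\otimes \sO_X(M))\big) = \bigcap_e H^0(Y, \mathcal I_e).$$

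Next I would record the compatibility of the two trace constructions. Because $h = g\circ f$ we have $h_* = g_* f_*$, so applying $g_*$ to $\mathrm{Tr}^{es}$ recovers precisely the $\Spec k$-level trace whose image defines $S^0(X,\sigma(X,\Delta)\otimes\sO_X(M))$; concretely $g_*(\mathrm{Tr}^{es})$ is the map $H^0(X,\sO_X(N_e)) \to H^0(X,\sO_X(M))$ occurring in that definition (the Frobenius twist on the $k$-structure over the point $\Spec k$ is harmless and does not affect images as subsets). Hence
$$S^0(X,\sigma(X,\Delta)\otimes\sO_X(M)) = \bigcap_e \im\big(g_*(\mathrm{Tr}^{es})\big),$$
and it suffices to prove, for each fixed $e$, the termwise inclusion $\im(g_*(\mathrm{Tr}^{es})) \subseteq H^0(Y,\mathcal I_e)$; intersecting over $e$ then yields the lemma.

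The termwise inclusion is exactly where the statement is only an inclusion and not an equality, and is the crux of the argument: it encodes the failure of $H^0$ to commute with images. Factor $\mathrm{Tr}^{es} = \iota_e \circ \beta_e$, where $\beta_e : F^{es}_* f_* \sO_X(N_e) \twoheadrightarrow \mathcal I_e$ is the canonical surjection onto the image and $\iota_e : \mathcal I_e \hookrightarrow f_* \sO_X(M)$ the inclusion. Applying the left-exact functor $g_* = H^0(Y,-)$ keeps $g_*(\iota_e)$ injective with image exactly $H^0(Y,\mathcal I_e)$, so
$$\im\big(g_*(\mathrm{Tr}^{es})\big) = \im\big(g_*(\iota_e)\circ g_*(\beta_e)\big) \subseteq \im\big(g_*(\iota_e)\big) = H^0(Y,\mathcal I_e).$$
This holds for every $e$, hence $S^0(X,\sigma(X,\Delta)\otimes\sO_X(M)) \subseteq \bigcap_e H^0(Y,\mathcal I_e) = H^0(Y, S^0 f_*(\sigma(X,\Delta)\otimes\sO_X(M)))$, and since every map used is canonical the resulting inclusion is natural. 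I do not expect any serious obstacle here; the one point to verify carefully is the identification of $g_*(\mathrm{Tr}^{es})$ with the absolute trace of the definition, i.e. the functoriality $h_* = g_* f_*$ of the trace under composition of the two pushforwards. Beyond that the argument is purely formal.
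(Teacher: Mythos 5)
Your proof is correct and follows essentially the same route as the paper's: both reduce to the termwise fact that the image of $H^0$ of a map is contained in $H^0$ of the image sheaf (left-exactness of global sections), combined with the observation that $H^0$ commutes with intersections of subsheaves of a fixed sheaf. Your write-up merely makes explicit the factorization through the image and the identification $h_* = g_* \circ f_*$ that the paper leaves implicit.
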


\begin{proof}
Denote 
\begin{equation*}
\sF_e := F^{es}_*f_*\mathcal O _X((1-p^{es})(K_X +\Delta )+p^{es}M) .
\end{equation*}
Then, 
\begin{equation*}
S^0(X,\sigma (X,\Delta ) \otimes \mathcal O_X(M)) = \bigcap_{e \geq0} \im (H^0(Y,\sF_e) \to H^0(Y,\sF_0) ),
\end{equation*}
while
\begin{equation*}
 H^0(Y, S^0f_*(\sigma (X,\Delta ) \otimes \mathcal O_X(M))) = H^0 \left(Y, \bigcap_{e \geq0} \im (\sF_e \to \sF_0 ) \right).
\end{equation*}
The inclusion then follows from the following computation.
\begin{multline*}
 \bigcap_{e \geq0} \im (H^0(Y,\sF_e) \to H^0(Y,\sF_0) ) \subseteq \bigcap_{e \geq0} H^0 \left( Y, \im (\sF_e \to \sF_0 ) \right) \\ = H^0 \left( Y, \bigcap_{e \geq0}  \im (\sF_e \to \sF_0 ) \right).
\end{multline*}

\end{proof}

\begin{lem} \cite[Lemma 2.6]{Pat13}
\label{lem:S_f_*_mK_X}
Let $X$ be a smooth variety and  $D \in |mK_X|$ for some integer $m>0$ coprime to $p$. Assume further that $f : X \to Y$ is a proper morphism over $k$. Define $s>0$ to be the smallest integer such that $m |(p^s -1)$ and $\Delta:= \frac{m-1}{m}D$.  Then the chain 
\begin{multline}
\label{eq:S_f_*_mK_X:one_step}
\dots \to f_* F^{(e+1)s}_* \sO_X ( m p^{(e+1)s} K_X + (1-p^{(e+1)s}) (K_X + \Delta) )  \to  \\
 \to f_* F^{es}_* \sO_X ( m p^{es} K_X + (1-p^{es}) (K_X + \Delta) ) \to \dots 
\end{multline}
is isomorphic to 
\begin{equation}
\label{eq:S_f_*_mK_X:stream}
\dots \to f_* F^{(e+1)s}_* \sO_X(mK_X) \xrightarrow{f_* F^{es}_*(\alpha)} f_* F^{es}_* \sO_X(mK_X) \xrightarrow{f_* F^{(e-1)s}_*(\alpha)}  f_* F^{(e-1)s}_* \sO_X(mK_X) \to \dots,
\end{equation}
where $\alpha$ is the usual homomorphism induced by the Grothendieck trace of Frobenius
\begin{equation*}
F^s_* \sO_X(mK_X) \cong F^s_* \sO_X ( m p^s K_X+(1-p^s)(K_X + \Delta)  )  \to  \sO_X(m  K_X).
\end{equation*}
In particular, the intersection in the definition of $\Omega_0 := S^0f_*(\sigma(X,\Delta) \otimes \sO_X(mK_X))$ stabilizes by \cite[Proposition 8.1.4]{BS12} and agrees with the  image of 
$$f_* F^{(e+1)s}_* \sO_X(mK_X) \to f_* \sO_X(mK_X)$$
for $e \gg 0$. Furthermore, these then form a chain of surjective maps
\begin{equation*}
\dots \twoheadrightarrow F^{2s}_* \Omega_0 \twoheadrightarrow F^s_* \Omega_0 \twoheadrightarrow \Omega_0 .
\end{equation*}

\end{lem}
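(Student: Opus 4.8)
The plan is to identify the two chains \eqref{eq:S_f_*_mK_X:one_step} and \eqref{eq:S_f_*_mK_X:stream} term by term by twisting with a power of the section defining $D$, to check that these term-wise isomorphisms are compatible with the transition maps (this is the heart of the matter), and then to deduce stabilization and surjectivity formally.

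\textbf{Term-by-term identification.} Let $t\in H^0(X,\sO_X(D))$ be the section cutting out $D$ and fix the identification $\sO_X(D)\cong\sO_X(mK_X)$ coming from $D\in|mK_X|$. Writing $N_e:=mp^{es}K_X+(1-p^{es})(K_X+\Delta)$ for the divisor appearing at level $e$, a direct computation gives
$$ N_e-mK_X=(p^{es}-1)\bigl((m-1)K_X-\tfrac{m-1}{m}D\bigr)\sim n_eD,\qquad n_e:=\frac{(m-1)(p^{es}-1)}{m}, $$
where the linear equivalence uses $D\sim mK_X$ and the integrality of $n_e$ is exactly where the hypothesis $m\mid(p^s-1)$ (whence $m\mid p^{es}-1$) enters. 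Thus multiplication by $t^{n_e}$ yields an isomorphism $\sO_X(mK_X)\xrightarrow{\sim}\sO_X(N_e)$, and applying $f_*F^{es}_*$ identifies the $e$-th term of \eqref{eq:S_f_*_mK_X:one_step} with $f_*F^{es}_*\sO_X(mK_X)$, the $e$-th term of \eqref{eq:S_f_*_mK_X:stream}.

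\textbf{Compatibility with the maps (the crux).} Using $N_{e+1}=p^sN_e+(1-p^s)(K_X+\Delta)$, the transition map of \eqref{eq:S_f_*_mK_X:one_step} from level $e+1$ to level $e$ is $f_*F^{es}_*$ applied to the one-step twisted trace $T_e\colon F^s_*\sO_X(N_{e+1})\to\sO_X(N_e)$, where $T_e$ is the Grothendieck trace of $F^s$ precomposed with multiplication by $t^{n_1}$. I would then verify that $T_e$, read through the identifications $\sO_X(N_{e+1})\cong\sO_X(mK_X)$ and $\sO_X(N_e)\cong\sO_X(mK_X)$, becomes exactly $\alpha$. The decisive input is the recursion
$$ n_{e+1}=p^sn_e+n_1,\qquad\text{equivalently}\qquad t^{n_{e+1}}=(t^{n_e})^{p^s}\cdot t^{n_1}, $$
combined with the $p^s$-linearity of the trace (projection formula) $\mathrm{Tr}^s\!\left(F^s_*(b^{p^s}\cdot-)\right)=b\cdot\mathrm{Tr}^s(-)$: this lets one pull the factor $(t^{n_e})^{p^s}$ through $T_e$, where it reappears as multiplication by $t^{n_e}$ on the target $\sO_X(N_e)$, leaving behind precisely $\mathrm{Tr}^s\circ F^s_*(\cdot\,t^{n_1})=\alpha$. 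Granting this commuting square, the chains \eqref{eq:S_f_*_mK_X:one_step} and \eqref{eq:S_f_*_mK_X:stream} are isomorphic and the transition maps become $f_*F^{es}_*(\alpha)$. I expect this step --- tracking the powers of $t$ through $F^s_*$ and the twisted trace and checking the square commutes on the nose --- to be the main obstacle; it rests entirely on the displayed recursion and the projection formula.

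\textbf{Stabilization and surjectivity.} After this identification all terms are coherent: $f$ is proper, so $f_*\sO_X(mK_X)$ is coherent, and $f_*F^{es}_*\sO_X(mK_X)=F^{es}_*f_*\sO_X(mK_X)$ is then coherent as $F^{es}_*$ is finite. Hence the images $\im\bigl(f_*F^{es}_*\sO_X(mK_X)\to f_*\sO_X(mK_X)\bigr)$ form a descending chain of coherent subsheaves of the coherent sheaf $f_*\sO_X(mK_X)$, which stabilizes by Noetherianity, i.e.\ by \cite[Proposition 8.1.4]{BS12}; its stable value is $\Omega_0$ and the defining intersection agrees with the image at any $e\gg0$. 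Finally, since $F^{es}_*$ is exact and $f_*F^{es}_*=F^{es}_*f_*$, applying $F^s_*$ to the surjection $f_*F^{es}_*\sO_X(mK_X)\twoheadrightarrow\Omega_0$ realizes $F^s_*\Omega_0$ as the image of $f_*F^{(e+1)s}_*\sO_X(mK_X)$ inside $f_*F^s_*\sO_X(mK_X)$; composing with $f_*(\alpha)$ and using that the images at levels $e$ and $e+1$ both equal $\Omega_0$ for $e\gg0$ shows that the induced map $F^s_*\Omega_0\to\Omega_0$ is surjective. This gives the chain $\cdots\twoheadrightarrow F^{2s}_*\Omega_0\twoheadrightarrow F^s_*\Omega_0\twoheadrightarrow\Omega_0$.
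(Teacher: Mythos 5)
The paper gives no proof of this lemma; it is quoted verbatim from \cite[Lemma 2.6]{Pat13}, and your argument is exactly the standard one carried out there: identify the terms by twisting with powers of the section of $\sO_X(mK_X)$ cutting out $D$, check compatibility of the transition maps via the recursion $n_{e+1}=p^sn_e+n_1$ and the $p^s$-linearity of the trace, and then read off stabilization and surjectivity. So the approach is right and the proof is essentially complete. Two local slips are worth correcting. First, the displayed linear equivalence ``$N_e-mK_X\sim n_eD$'' is false: since $D\sim mK_X$ one has $N_e-mK_X=n_e(mK_X-D)\sim 0$. What you actually need, and what makes the next sentence correct, is the \emph{equality of divisors} $N_e=(n_e+1)mK_X-n_eD$, so that multiplication by $t^{n_e}\in H^0(X,\sO_X(n_emK_X))$ (with divisor $n_eD$) maps $\sO_X(mK_X)$ isomorphically onto $\sO_X(N_e)$ inside $\sO_X((n_e+1)mK_X)$. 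Second, the justification ``stabilizes by Noetherianity, i.e.\ by \cite[Proposition 8.1.4]{BS12}'' conflates two different things: coherent subsheaves of a coherent sheaf satisfy the \emph{ascending} chain condition, not the descending one (consider $\mathfrak m\supset\mathfrak m^2\supset\cdots$), so Noetherianity alone does not stabilize your descending chain of images. The stabilization is the genuinely nontrivial content of \cite[Proposition 8.1.4]{BS12} (Gabber, Blickle--B\"ockle), and it applies precisely because the first part of the lemma exhibits the chain as the chain of images of the iterates of a single $p^{-s}$-linear map $f_*F^s_*\sO_X(mK_X)\to f_*\sO_X(mK_X)$ on a coherent sheaf. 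The citation you give is the correct justification; the gloss ``by Noetherianity'' is not. With these two points repaired, the proof is correct.
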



\begin{lem}
\label{lem:slightly_less_obvious_containment}
If $a : X \to A$ is a  proper morphism  from a smooth variety $X$ to a scheme $A$ over $k$, then $S^0(X, \omega_X) = S^0(A, S^0 a_* \omega_X)$.
\end{lem}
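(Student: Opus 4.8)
The plan is to realise both sides as subspaces of the finite‐dimensional vector space
\[
V := H^0(X,\omega_X) = H^0(A, a_* \omega_X),
\]
the last identification being the usual one for the pushforward along $a$, and the finite-dimensionality coming from the properness of $X$ over $k$. Let $\phi : F_* \omega_X \to \omega_X$ be the Grothendieck trace and let $t : V \to V$ be the $k$-semilinear endomorphism it induces on global sections; under the identification above $t$ is equally the map induced by $a_* \phi : F_* a_* \omega_X \to a_* \omega_X$, since $H^0(A, F^e_* a_* \omega_X) = H^0(A, a_* \omega_X)$ as $F$ is affine. By definition $S^0(X,\omega_X) = \bigcap_{e \geq 0} t^e V$ is the stable image of $t$, and the whole statement reduces to an elementary comparison of stable images.

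On the $A$-side, set $\Omega_e := \im\!\left( a_* \phi^e : F^{e}_* a_* \omega_X \to a_* \omega_X \right)$, so that $S^0 a_* \omega_X = \bigcap_e \Omega_e$ as a descending family of subsheaves of $a_* \omega_X$. I stress that I do \emph{not} need this sheaf intersection to stabilise: since $H^0$ commutes with descending intersections of subsheaves of a fixed sheaf, one has
\[
W := H^0(A, S^0 a_* \omega_X) = \bigcap_e H^0(A, \Omega_e) \subseteq V,
\]
and this is a descending intersection of subspaces of the \emph{finite}-dimensional space $V$, hence it stabilises. Because $S^0 a_* \omega_X$ is a Cartier submodule of $(a_* \omega_X, a_* \phi)$, its Cartier structure induces on global sections precisely the restriction $t|_W : W \to W$ (in particular $t(W) \subseteq W$), and therefore $S^0(A, S^0 a_* \omega_X) = \bigcap_e t^e W$. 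Thus everything comes down to proving $\bigcap_e t^e V = \bigcap_e t^e W$.

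For the inclusion $\bigcap_e t^e W \subseteq \bigcap_e t^e V$ it is enough that $W \subseteq V$, which forces $t^e W \subseteq t^e V$ for every $e$. For the reverse inclusion I would invoke \eqref{lem:obvious_inclusion}, applied with $Y = A$, $f = a$, $\Delta = 0$ and $M = K_X$ (so that $\sigma(X,0) \otimes \sO_X(K_X) = \omega_X$): it yields exactly $S^0(X,\omega_X) = \bigcap_e t^e V \subseteq W$. Now, since $V$ is finite-dimensional the descending chain $t^e V$ stabilises, so $t$ restricts to a \emph{surjection} of $S^0(X,\omega_X) = \bigcap_e t^e V$ onto itself; hence for every $e$
\[
S^0(X,\omega_X) = t^e\!\left( S^0(X,\omega_X) \right) \subseteq t^e W,
\]
the containment using $S^0(X,\omega_X) \subseteq W$. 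This gives $S^0(X,\omega_X) \subseteq \bigcap_e t^e W = S^0(A, S^0 a_* \omega_X)$, and combining the two inclusions proves the equality.

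The only non-formal ingredient, and the step I expect to be the crux, is the surjectivity of $t$ on its own stable image; this rests entirely on the stabilisation of the chain $t^e V$, i.e. on the finite-dimensionality of $H^0(X,\omega_X)$ coming from properness. Everything else is bookkeeping: the identification $H^0(X,\omega_X) = H^0(A, a_* \omega_X)$, the compatibility of $H^0$ with the trace action and with intersections of subsheaves, and the inclusion already recorded in \eqref{lem:obvious_inclusion}.
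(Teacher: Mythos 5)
Your proof is correct, and for the harder inclusion it takes a genuinely more elementary route than the paper. Both arguments agree on the easy containment (the paper's diagram chase for $S^0(A,S^0a_*\omega_X)\subseteq S^0(X,\omega_X)$ is just your observation $t^eW\subseteq t^eV$ written out element by element), but they diverge on $S^0(X,\omega_X)\subseteq S^0(A,S^0a_*\omega_X)$: the paper invokes the coherent-sheaf stabilization of the Cartier module $(a_*\omega_X,a_*\phi)$ via \cite[Proposition 8.1.4]{BS12} to produce a surjection $F^e_*a_*\omega_X\twoheadrightarrow S^0a_*\omega_X$, and then chases a diagram whose commutativity up to unit rests on \cite[Lemma 3.9]{S09}; you instead combine \eqref{lem:obvious_inclusion} with the purely linear-algebraic fact that the $p^{-1}$-linear map $t$ restricts to a surjection of its stable image $\bigcap_e t^eV$ onto itself, which needs only that $\dim_k H^0(X,\omega_X)<\infty$ and that $k$ is perfect (so that each $t^eV$ is a $k$-subspace). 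Your route therefore never needs the sheaf-level intersection defining $S^0a_*\omega_X$ to stabilize, and the one sheaf-theoretic input you do need --- that $S^0a_*\omega_X$ is a Cartier submodule, i.e.\ $a_*\phi(F_*\Omega_e)=\Omega_{e+1}\subseteq\Omega_e$, so that $t(W)\subseteq W$ and $S^0(A,S^0a_*\omega_X)=\bigcap_e t^eW$ --- you verify directly. What the paper's heavier argument buys is the explicit sheaf-level surjection onto $S^0a_*\omega_X$, which it reuses elsewhere (e.g.\ in the discussion following \eqref{GVT}); what yours buys is independence from the Gabber/Blickle--Schwede stabilization theorem and a cleaner reduction to finite-dimensional linear algebra.
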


\begin{proof}
Suppose that $f\in S^0(A, S^0 a_* \omega_X)$, then there are elements $$f_e\in H^0(A, F^e_*S^0 a_* \omega_X)\subset H^0(A,  F^e_*a_* \omega_X)\cong H^0(X,   F^e_*\omega_X)$$
such that $H^0(A,a_*({\rm tr}_{F^e}))(f_e)=f$ for all $e\geq 0$.
We write $\tilde f$ and $\tilde f_e$ for the corresponding elements in $H^0(X,\omega _X)$ and $H^0(X,   F^e_*\omega_X)$. Since the following diagram commutes
\begin{equation*}
\xymatrix{
H^0(A, F^e_*S^0 a_* \omega_X)\ar[r] \ar@{^(->}[d] & H^0(A, S^0 a_* \omega_X)\ar@{^(->}[d] \\
H^0(A, F^e_*a_* \omega_X)\ar[r] \ar@{=}[d] & H^0(A, a_* \omega_X)\ar@{=}[d] \\
H^0(X, F^e_* \omega_X)\ar[r] & H^0(X,  \omega_X)
},
\end{equation*}
it follows that $\tilde f= H^0(X,{\rm tr}_{F^e})(\tilde f_e)$ so that $\tilde f\in S^0(X,   \omega_X)$. Thus $S^0(X, \omega_X) \supset S^0(A, S^0 a_* \omega_X)$.

For the reverse inclusion, note that if $\phi : F_* \omega_X \to \omega_X$ is the Grothendieck trace, then  $(a_* \omega_X, a_*( \phi))$ is a Cartier module. Hence, by \cite[Proposition 8.1.4]{BS12}, there is an $e$, such that $F^e_* a_* \omega_X \to S^0 a_* \omega_X$ is surjective. Let then $e' \geq 0$ be arbitrary. The statement of the lemma follows from the  commutative diagram below (commutativity up-to-unit follows from \cite[Lemma 3.9]{S09}).
\begin{small}\begin{equation*}
\xymatrix@C=50pt{
H^0(X, F^{e+e'} _*\omega_X) = H^0(A,a_* F^{e + e'} _*\omega_X ) \ar[rdd]_{H^0\left(X,{\rm tr}_{F^{e+e'}} \right)} \ar[r]^-{H^0(A,F^{e'}_*a_*({\rm tr}_{F^{e}}))} & H^0(A,F^{e'}_* S^0 a_* \omega_X ) \ar[d]_{H^0 \left(A,a_*\left({\rm tr}_{F^{e'}}\right) \right)} \\
 & H^0(A, S^0 a_* \omega_X) \ar@{^(->}[d] \\
 & H^0(A, a_* \omega_X) = H^0(X, \omega_X)
}
\end{equation*}\end{small}
\end{proof}

\subsection{The Frobenius morphism on abelian varieties}\label{ss-F}
Throughout this paper $A$ will denote an abelian variety of dimension $g$ over $k$, $\hat A={\rm Pic }^0(A)$ the dual abelian variety and $\sL$ the normalized Poincar\'e line bundle on $A\times \hat A$. Further $V_A$ and $V_{\hat A}$ are the Verschiebung isogenies \cite[5.18]{MvdG}, where we drop the subindex whenever it is clear from the context.

Given a scheme $X$ over $k$ with structure morphism $\mu : X \to \Spec k$, we denote by $X'$ the twisted version of $X$, i.e., the scheme over $k$ which is identical to  $X$ as an abstract scheme but its $k$-structure is $F_k \circ \mu $. If instead of $F_k$ we compose with $F_k^m$ then we write $X^{(m)}$, where $m$ can be negative as well. The main reason for introducing $X'$, and $X^{(m)}$ in general, is that the absolute Frobenius morphism $F : X \to X$ is not a $k$-morphism. To treat it  as a $k$-morphism one has to regard it as a morphism $X' \to X$. For many purposes (e.g., the content of Subsection \ref{ss-F_singularities}), this is not necessary and taking the domain of the Frobenius to be $X$ as well is more convenient (e.g., for defining iterations of a Frobenius action). However, when the $k$-structure is important, e.g., when one takes the product of $X$ with another $k$-variety (which we will do frequently), then it is important to treat the domain of the Frobenius as $X'$. Further, our notions 
will be based on dimensions of $k$-vector spaces, which is invariant, by the perfectness assumption on $k$, under twisting the $k$-structure by the Frobenius morphism of $k$. Hence we will freely change between the two point of views, according to which is more adequate, sometimes leaving a few details to the reader.

\begin{lem}
\label{lem:dual_of_Frobenius}
Given an abelian variety $A$ over $k$, the dual of $A'$ is isomorphic to   $ (\hat A )'$ (hence we denote both by $\hat A  '$). Further  the Verschiebung $ V  : \hat A \to \hat A '$ is the dual of the Frobenius $F : A' \to A$. 
\end{lem}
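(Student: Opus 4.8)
The plan is to reduce the statement to the standard duality between the relative Frobenius and the Verschiebung, once the dictionary between the paper's twist $X \rightsquigarrow X'$ and the usual Frobenius base change has been fixed. Since $k$ is perfect, $F_k \colon \Spec k \to \Spec k$ is an isomorphism, and a direct check on the level of structure sheaves shows that $X'$ is canonically the base change $X \times_{\Spec k, F_k^{-1}} \Spec k$ of $X$ along $F_k^{-1}$. Under this description, twisting up and then down is the identity, so there is a canonical identification of the Frobenius twist of $A'$ with $A$ itself, and the absolute Frobenius of the underlying scheme, regarded as the $k$-morphism $F \colon A' \to A$, becomes exactly the relative Frobenius $F_{A'/k} \colon A' \to A$ of the $k$-variety $A'$.

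For the first assertion, I would use that the formation of $\Pic^0$ commutes with base change of the ground field. Applying this to the automorphism $F_k^{-1}$ of $\Spec k$ yields a canonical isomorphism $\widehat{A'} \cong (\hat A)'$, which is exactly what justifies writing $\hat A'$ for either side.

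For the second assertion, I would invoke the textbook fact (e.g.\ \cite[5.18]{MvdG}) that duality interchanges the relative Frobenius and the Verschiebung: for every abelian variety $B$ one has $(F_{B/k})^\vee = V_{\hat B/k}$, where $F_{B/k}$ maps $B$ to its Frobenius twist. Taking $B = A'$ and using the identifications of the first paragraph---namely that the Frobenius twist of $A'$ is $A$, whose dual is $\hat A$, together with $\widehat{A'} = \hat A'$---the dual $F^\vee$ of $F = F_{A'/k}$ becomes the Verschiebung $V_{\hat A'/k} \colon \hat A \to \hat A'$, which is precisely the map $V$ in the statement.

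I expect the only genuine work to be the bookkeeping of the twists and their variances: confirming that $X'$ is the $F_k^{-1}$-twist (and not the $F_k$-twist), that the Frobenius twist of $A'$ is canonically $A$, and that under these identifications the source and target of $F^\vee$ are $\hat A$ and $\hat A'$ in the correct order. Once this dictionary is in place, the statement is literally the standard Frobenius--Verschiebung duality applied to the abelian variety $A'$.
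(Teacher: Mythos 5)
Your argument is correct, but it takes a different route from the paper's. The paper disposes of the lemma in one line as ``a straightforward application of the Seesaw principle,'' i.e.\ it implicitly proposes a direct verification: the dual of $F : A' \to A$ is characterized by an isomorphism of pullbacks of Poincar\'e bundles on $A' \times \hat A$, and one checks that the Verschiebung satisfies this characterization by restricting to the two families of slices $A' \times \{y\}$ and $\{x\} \times \hat A$ (and likewise identifies $\widehat{A'}$ with $(\hat A)'$). You instead set up the dictionary $X' = X \times_{\Spec k, F_k^{-1}} \Spec k$, observe that the twist of $A'$ is canonically $A$ so that the paper's $F : A' \to A$ is the relative Frobenius $F_{A'/k}$, and then quote the standard duality $(F_{B/k})^\vee = V_{\hat B}$ applied to $B = A'$, together with the compatibility of $\Pic^0$ with base change along $F_k^{-1}$. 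Your identifications of the twists all check out (in particular that $X'$ is the $F_k^{-1}$-twist and that $(\widehat{A'})^{(p)} \cong \hat A$, so the dual of $F$ really has source $\hat A$ and target $\hat A'$ as claimed). What your approach buys is that the only genuine content is bookkeeping, at the cost of importing the Frobenius--Verschiebung duality as a black box; the seesaw route is more self-contained but requires actually writing down the line-bundle computation. Both are legitimate proofs of the lemma.
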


The proof is an straight forward application of the Seesaw principle.

\begin{proposition}\label{p-ordinary} Let $A$ be an abelian variety over $k$ of dimension $n$.
The following are equivalent.
\begin{enumerate}
\item $A$ is ordinary in the sense of \cite[7.2]{BK},
\item there are $p^g$ $p$-torsion points,
\item $V$ is \'etale,
\item the Frobenius action  $H^n(A, \mathcal O_A) \to H^n(A, \mathcal O_A)$ is bijective, 
\item the Frobenius action  $H^i(A, \mathcal O_A) \to H^i(A, \mathcal O_A)$ is bijective for $0\leq i\leq n$,
\item $A$ is globally F-split,
\item $S^0(A,\omega _A) \neq 0$. 
\end{enumerate}
\end{proposition}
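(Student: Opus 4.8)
The plan is to single out the $p$-semilinear Frobenius action $\phi := F^\ast$ on $V := H^1(A,\mathcal O_A)$ as the central object and to prove that every one of the seven conditions is equivalent to the statement $(\star)$: \emph{$\phi$ is bijective}. The starting point is the classical identification $H^i(A,\mathcal O_A) \cong \bigwedge^i V$ coming from cup product, under which the absolute Frobenius acts as $\bigwedge^i \phi$ (since $F^\ast$ is a ring homomorphism of $H^\ast(A,\mathcal O_A)$). I will then invoke two elementary facts about a $p$-semilinear endomorphism $\phi$ of a finite-dimensional vector space $V$ over the perfect field $k$: (a) $\phi$ is bijective if and only if $\bigwedge^{\dim V}\phi$ is nonzero on the one-dimensional top exterior power, and in that case $\bigwedge^i \phi$ is bijective for all $i$; and (b) $V$ decomposes canonically as $V_s \oplus V_n$ with $\phi$ bijective on $V_s$ and nilpotent on $V_n$, where $\dim_k V_s = \dim_k \bigcap_{e} \im \phi^e$ equals the $p$-rank of $A$.

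First I would treat the cohomological and group-theoretic conditions. Since $H^n(A,\mathcal O_A) = \bigwedge^g V = \det V$, fact (a) gives $(4)\Leftrightarrow(\star)$, and functoriality of $\bigwedge^i$ together with the case $i=n$ gives $(\star)\Leftrightarrow(5)$; hence $(4)\Leftrightarrow(5)\Leftrightarrow(\star)$. By fact (b) the $p$-rank equals $\dim_k V_s$, so $A$ has exactly $p^g$ geometric $p$-torsion points iff $\dim_k V_s = g$ iff $(\star)$, which yields $(2)\Leftrightarrow(\star)$; and $(1)\Leftrightarrow(2)$ is the standard characterization of ordinariness via maximality of the $p$-rank. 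For $(3)$ I would use \eqref{lem:dual_of_Frobenius}: $V_{\hat A}$ is the dual of the Frobenius $F : A' \to A$, and under $\mathrm{Lie}(\hat A)\cong H^1(A,\mathcal O_A)=V$ its differential $dV$ is identified with $\phi$; since $\deg V_{\hat A} = p^g$, the isogeny $V_{\hat A}$ is étale iff $dV$ is an isomorphism iff $(\star)$, giving $(3)\Leftrightarrow(\star)$.

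Next I would handle the two splitting-type conditions, exploiting that $\omega_A\cong\mathcal O_A$, so $H^0(A,\omega_A)\cong k$ is one-dimensional. By Serre duality the Grothendieck trace $\mathrm{Tr}: H^0(A,F_\ast\omega_A)\to H^0(A,\omega_A)$ is the $k$-transpose of $\phi$ acting on $H^n(A,\mathcal O_A)=\bigwedge^g V$, so $\mathrm{Tr}$ is nonzero iff $(\star)$. Because $S^0(A,\omega_A)$ is the descending intersection of the images of the iterates $\mathrm{Tr}^e$ inside the one-dimensional space $H^0(A,\omega_A)$, it is nonzero precisely when $\mathrm{Tr}$ is nonzero; this gives $(7)\Leftrightarrow(\star)$. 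Finally, using $\omega_A\cong\mathcal O_A$ and Grothendieck duality for the finite flat morphism $F$, the space $\mathrm{Hom}(F_\ast\mathcal O_A,\mathcal O_A)$ is one-dimensional and the evaluation-at-$1$ functional that detects the existence of a Frobenius splitting is identified with the same trace functional; hence $A$ is globally $F$-split iff $\mathrm{Tr}\neq 0$, that is $(6)\Leftrightarrow(7)$.

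The hard part will be the bookkeeping underlying the identifications in the last two paragraphs: one must faithfully transport the relevant ``Frobenius'' across three incarnations --- the $p$-semilinear $\phi$ on $H^1(A,\mathcal O_A)$, the differential $dV$ of Verschiebung on $\mathrm{Lie}(\hat A)$, and the Serre-dual trace on $H^0(A,\omega_A)$ --- and verify at each step that bijectivity (respectively non-vanishing on a one-dimensional space) is preserved, keeping careful track of $p$- versus $p^{-1}$-semilinearity. Identifying the evaluation-at-$1$ functional of $(6)$ with the trace functional of $(7)$ likewise requires the trivialization $\omega_A\cong\mathcal O_A$ to be handled with care; this, rather than any single implication, is where the genuine content lies.
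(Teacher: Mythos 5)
Your proposal is correct, but it takes a genuinely different route from the paper. The paper's proof is largely a patchwork of citations: $(1)\Leftrightarrow(2)$ from \cite[7.4]{BK}, $(2)\Leftrightarrow(4)\Leftrightarrow(5)$ from \cite[5.4]{MS11}, $(2)\Leftrightarrow(3)$ by a direct point count on the length-$p^g$ group scheme $V^{-1}(0)$, $(4)\Leftrightarrow(7)$ by dualizing $\sO_A\to F_*\sO_A$ to the Grothendieck trace, and $(6)\Leftrightarrow(7)$ declared well known. You instead centralize everything on the $p$-semilinear operator $\phi$ on $H^1(A,\sO_A)$: the exterior-algebra structure of $H^*(A,\sO_A)$ plus the determinant criterion for semilinear bijectivity replaces the citation for $(4)\Leftrightarrow(5)$; the differential of the Verschiebung on $\mathrm{Lie}(\hat A)\cong H^1(A,\sO_A)$ replaces the paper's point count for $(3)$; and you spell out the Serre-duality and one-dimensionality arguments for $(6)$ and $(7)$ that the paper compresses or omits, correctly noting that nonvanishing of a single semilinear trace on a one-dimensional space over a perfect field already forces all iterates to be nonzero, which is what $S^0\neq 0$ requires. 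What your approach buys is self-containedness and a single organizing principle; what it costs is length and one point of presentation you should fix: your ``elementary fact (b)'' conflates the genuinely elementary Fitting decomposition $V=V_s\oplus V_n$ for a $p$-semilinear operator with the \emph{theorem} that $\dim_k V_s$ equals the $p$-rank of $A$ (equivalently $|A[p](k)|=p^{\dim V_s}$). That identification is exactly the content the paper outsources to \cite[5.4]{MS11} (and classically to Mumford's treatment of the $p$-rank), so it deserves a citation rather than being labeled elementary; likewise $(1)\Leftrightarrow(2)$ should still point to \cite[7.4]{BK}, since the Bloch--Kato definition of ordinarity is not literally the $p$-rank condition.
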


Note that apart from point $(2)$, the rest is equivalent also over non algebraically closed perfect fields, because they are properties that are invariant under base-field extension. 

\begin{proof} (1) and (2) are equivalent by \cite[7.4]{BK}. 

(2) and (3) are equivalent 
because as $V:\hat A\to \hat A$ is an isogeny, $V$ is the quotient by the scheme theoretic inverse image $V^{-1}(0)$. Hence $V$ is \'etale if and only if $V^{-1}(0)$ is reduced. However $V^{-1}(0)$ has always length $p^g$ (because $\deg V = p^g$), and so $V^{-1}(0)$ is reduced if and only if it contains $p^g$ points. Since the points of $V^{-1}(0)$ are exactly the $p$-torsion points, we are done.

(2) and (4) are equivalent by \cite[5.4]{MS11}.

(4) and (5) are equivalent by \cite[5.4]{MS11}.

(4) and (7) are equivalent because the morphism $\mathcal O_A \to F_* \mathcal O_A$, which induces the map of (4) is dual to the Grothendieck trace. So, dualizing the map of (4) one obtains $H^0(A, F_* \omega_A) \to  H^0(A, \omega_A)$.

(6) and (7) are well known to be equivalent.  
\end{proof}
If any of the above equivalent conditions holds, we say that $A$ is {\it ordinary}. We have the following easy lemma.
\begin{lemma} \label{l-isogeny} Let $\varphi:A\to B$ be an isogeny between abelian varieties of dimension $n$, then $A$ is ordinary if and only if $B$ is ordinary. In particular $A$ is ordinary if and only if $\hat A$ is ordinary.\end{lemma}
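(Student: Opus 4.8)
The plan is to use the characterization of ordinariness by the number of $p$-torsion points, namely condition (2) of \eqref{p-ordinary}, together with the fact that this number is an isogeny invariant. Writing $f(A)$ for the integer with $\# A[p](k) = p^{f(A)}$ (the $p$-rank), I will show that $f(A)=f(B)$ whenever $A$ and $B$ are isogenous. Since, by condition (2), $A$ is ordinary exactly when $f(A)=n=\dim A$, the first assertion follows immediately.

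To compare the two $p$-ranks, let $\varphi : A \to B$ be an isogeny with kernel $N := \Ker \varphi$, a finite group scheme. Because $k$ is algebraically closed, $\varphi$ is surjective on $k$-points and each fibre is a coset of $N$, so there is a short exact sequence of abelian groups
$$0 \to N(k) \to A(k) \xrightarrow{\varphi} B(k) \to 0.$$
I would then apply the snake lemma to the endomorphism ``multiplication by $p$'' of this sequence. As $[p]:A\to A$ and $[p]:B\to B$ are themselves isogenies, hence surjective on $k$-points, the cokernels $A(k)/pA(k)$ and $B(k)/pB(k)$ vanish, and the six-term exact sequence collapses to
$$0 \to N(k)[p] \to A[p](k) \to B[p](k) \to N(k)/pN(k) \to 0.$$
Since $N(k)$ is a finite abelian group one has $\# N(k)[p] = \#\bigl(N(k)/pN(k)\bigr)$, so comparing orders along the displayed exact sequence yields $\# A[p](k) = \# B[p](k)$, that is $f(A)=f(B)$. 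Hence $A$ is ordinary if and only if $B$ is.

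For the statement about the dual I would observe that $A$ is always isogenous to $\hat A$: choosing an ample line bundle $L$ on $A$ (which exists as $A$ is projective), the associated polarization $\phi_L$, given by $x \mapsto t_x^* L \otimes L^\vee$, is an isogeny between $A$ and $\hat A$. Applying the first part to $\phi_L$ then shows that $A$ is ordinary if and only if $\hat A$ is, which is the ``in particular'' statement.

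The only genuine subtlety—the step I would be most careful about—is the exactness of $0 \to N(k) \to A(k) \to B(k) \to 0$ on $k$-points and the surjectivity of $[p]$ on $k$-points. Both rely crucially on $k$ being algebraically closed; this is exactly what lets me sidestep the connected/\'etale structure of the finite group schemes $N$ and $A[p]$ and argue throughout with honest finite abelian groups. Everything else is the snake lemma and the elementary identity $\# G[p] = \#(G/pG)$ for finite abelian groups $G$.
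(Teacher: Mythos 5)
Your proof is correct and follows the same route as the paper: characterize ordinarity by the count of $p$-torsion points (condition (2) of \eqref{p-ordinary}), observe that this count is an isogeny invariant, and handle the dual via the polarization isogeny $\phi_L : A \to \hat A$. The only difference is that where the paper simply cites \cite[5.22]{MvdG} for the isogeny-invariance of the $p$-rank, you supply a correct, self-contained snake-lemma argument on $k$-points for that fact.
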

\begin{proof} This is immediate by (2) of \eqref{p-ordinary} and \cite[5.22]{MvdG}. For the addendum, note that any abelian variety is isogenous to its dual abelian variety.
\end{proof}
\begin{lemma}\label{l-sq} Let $\varphi:A\to B$ be a surjective morphism of abelian varieties. If $A$ is ordinary, then so is $B$.\end{lemma}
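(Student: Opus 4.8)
The plan is to reduce the statement entirely to two facts already at hand: that ordinarity is an isogeny invariant \eqref{l-isogeny}, and that by (2) of \eqref{p-ordinary} an abelian variety $C$ of dimension $m$ is ordinary precisely when it attains the maximal number $p^m$ of $p$-torsion points. First, since any morphism of abelian varieties is a translate of a homomorphism and the ordinarity of $B$ is insensitive to translation, I may assume that $\varphi$ is a homomorphism. Let $K=\ker\varphi$ and let $K^0$ be its identity component, an abelian subvariety of $A$. Then $\varphi$ factors as $A\to A/K^0\to A/K=B$, where the second arrow has kernel the finite group scheme $K/K^0$ and is therefore an isogeny.

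Next I would apply Poincar\'e complete reducibility to the abelian subvariety $K^0\subseteq A$: there is a complementary abelian subvariety $C\subseteq A$ with $K^0\cap C$ finite and $K^0+C=A$, so that the addition map $K^0\times C\to A$ is a surjective homomorphism with finite kernel, hence an isogeny. Restricting to $C$, the induced map $C\to A/K^0$ is surjective (because $K^0+C=A$), has finite kernel $C\cap K^0$, and the source and target have the same dimension $\dim A-\dim K^0$; it is therefore an isogeny. Composing it with the isogeny $A/K^0\to B$ produced above yields an isogeny $C\to B$.

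Finally I would chain the two invariance facts. Since $A$ is ordinary and $K^0\times C$ is isogenous to $A$, \eqref{l-isogeny} shows that $K^0\times C$ is ordinary. As $(K^0\times C)[p](k)=K^0[p](k)\times C[p](k)$, the $p$-torsion count is multiplicative, so $K^0\times C$ attains its maximal $p$-torsion exactly when both factors do; in particular $C$ is ordinary by (2) of \eqref{p-ordinary}. Applying \eqref{l-isogeny} once more to the isogeny $C\to B$ gives that $B$ is ordinary, as desired.

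The step that requires care, and the reason one cannot argue more directly, is the possibly inseparable behaviour of $\varphi$. It would be tempting to transport the Frobenius bijectivity of (5) of \eqref{p-ordinary} along the pullback $\varphi^{*}\colon H^{i}(B,\mathcal O_B)\to H^{i}(A,\mathcal O_A)$, but an inseparable isogeny can annihilate $H^{1}(\mathcal O)$, so $\varphi^{*}$ need not be injective and cohomological bijectivity does not descend. The essential point of the argument above is that it never uses coherent cohomology at all: by routing the entire morphism through isogenies via the Poincar\'e decomposition, the inseparable phenomena are absorbed into the isogeny factors and handled through the $p$-torsion criterion behind \eqref{l-isogeny}, where inseparability is harmless.
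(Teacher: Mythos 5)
Your argument is correct. It shares the paper's key reduction -- Poincar\'e complete reducibility plus the isogeny invariance of ordinarity \eqref{l-isogeny} -- but closes differently. The paper's proof, after reducing to the case where $\varphi$ is a projection onto a factor, invokes criterion (5) of \eqref{p-ordinary}: $F_A^*$ is bijective on $H^d(A,\mathcal O_A)$ with $d=\dim B$, and since $\varphi^*\colon H^d(B,\mathcal O_B)\to H^d(A,\mathcal O_A)$ is injective for a projection (by K\"unneth) and commutes with Frobenius, $F_B^*$ is bijective on $H^d(B,\mathcal O_B)$, which is criterion (4). You instead apply criterion (2) to the product $K^0_{\mathrm{red}}\times C$: since the number of $p$-torsion points is multiplicative and bounded by $p^{\dim}$ on each factor, the product attains the maximum exactly when both factors do, so $C$ (isogenous to $B$) is ordinary. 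Your route buys a proof that never touches coherent cohomology, and your closing remark correctly identifies why the naive transport of (5) along an arbitrary surjection fails (the pullback on $H^1(\mathcal O)$ can die along an inseparable quotient); the paper sidesteps the same issue by working only with the top cohomology of a genuine product projection. One small point of hygiene: in characteristic $p$ the kernel $K=\ker\varphi$ and its identity component may be non-reduced, so you should take the reduced identity component $(K^0)_{\mathrm{red}}$, which over the perfect field $k$ is an abelian subvariety; the rest of your argument is unaffected, since $K/(K^0)_{\mathrm{red}}$ is still finite and $A/(K^0)_{\mathrm{red}}\to B$ is still an isogeny.
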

\begin{proof} Let $d=\dim B$. By \eqref{l-isogeny} and \cite[p. 173]{Mumford} we may assume that $A \to B$ is a projection onto a factor.  By (5) of \eqref{p-ordinary}, $F_A^*:H^d(A, \mathcal O_A) \to H^d(A, \mathcal O_A)$ is bijective. Since $\varphi : H^d(B, \mathcal O_B)\to H^d(A, \mathcal O_A)$ is injective and  $F_B\circ \varphi =\varphi\circ F_A$, it follows that  $F_B^*:H^d(B, \mathcal O_B) \to H^d(B, \mathcal O_B)$ is bijective and hence that $B$ is ordinary.
\end{proof}
 Following \cite{PR03} we say that
 an abelian variety $A$ endowed with an isogeny $\varphi:A\to A$ is {\it pure of positive weight} if there exists $r,s>0$ such that $\varphi ^s={F}_{p^r}$ for some model of $A$ over $\mathbb F_{p^r}$. In particular $\varphi$ is purely inseperable. 
 If $A$ is defined over a finite field, then $A$ is {\it supersingular} if and only if it is pure of positive weight for the isogeny given by multiplication by $p$, in general $A$ is supersingular if and only if it is isogenous to a supersingular abelian variety defined over a finite field. 
Further, as in \cite{PR03}, $A$ has 
 {\it no supersingular factor} if there does not exists a nontrivial homomorphism to a supersingular  abelian variety. 

Since for an ordinary abelian variety $[p]= V \circ F$ is never purely inseparable by \eqref{p-ordinary}, an ordinary abelian variety is never pure of positive weight for the isogeny given by multipliation by $p$. So, we have the following.
\begin{lemma} If $A$ is an ordinary abelian variety, then $A$ has no supersingular factors.\end{lemma}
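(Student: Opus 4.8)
\emph{Proof proposal.} The plan is to argue by contraposition: assuming $A$ admits a nontrivial homomorphism $f : A \to S$ to a supersingular abelian variety $S$, I will exhibit a nonzero abelian variety that is simultaneously ordinary and of $p$-rank zero, which is impossible. First I would pass to the image $B := \im(f) \subseteq S$, which is a nonzero abelian subvariety of $S$ (nonzero because $f \neq 0$) realized as a surjective quotient $A \twoheadrightarrow B$. Since $A$ is ordinary, \eqref{l-sq} shows that $B$ is ordinary as well, so by \eqref{p-ordinary}(2) the group $B[p](\bar k)$ of geometric $p$-torsion points has order $p^{\dim B}>1$; in particular it is nonzero. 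The point of routing through \eqref{l-sq} is that it gives ordinariness of $B$ intrinsically, avoiding the subtlety that a surjection $A \twoheadrightarrow B$ need not be surjective on $p$-torsion $\bar k$-points.

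On the other hand, $B$ is a closed abelian subvariety of $S$, so $B[p] = B \cap S[p]$ as group schemes and hence $B[p](\bar k) \hookrightarrow S[p](\bar k)$. Thus it suffices to show that a supersingular $S$ satisfies $S[p](\bar k) = 0$. This is where the definition of supersingularity enters: by hypothesis $S$ is isogenous to a supersingular abelian variety $S_0$ over a finite field $\bF_{p^r}$, and $S_0$ is pure of positive weight for multiplication by $p$, i.e.\ $[p]_{S_0}^s = F_{p^r}$ for some $s>0$; as recorded just before the statement, this forces $[p]_{S_0}$ to be purely inseparable. A purely inseparable isogeny has infinitesimal kernel, so $S_0[p](\bar k)=0$, i.e.\ $S_0$ has $p$-rank zero, and since the $p$-rank is an isogeny invariant, $S[p](\bar k)=0$ as well. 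This contradicts $B[p](\bar k)\neq 0$, and the contradiction shows that no such $f$ exists.

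The reduction to the image and the two $p$-torsion computations are routine; the one link that deserves care, and which I regard as the main obstacle, is the last one, namely translating the definition of supersingularity (phrased via ``pure of positive weight'' over a finite field) into the concrete vanishing $S[p](\bar k)=0$. Concretely this rests on the two standard facts that a purely inseparable isogeny kills no geometric point and that the $p$-rank is preserved under isogeny. I note that this is exactly the quantitative content of the remark preceding the statement: for the ordinary $B$ one has $[p]=V\circ F$ with $V$ \'etale by \eqref{p-ordinary}(3), so $V$ accounts for the $p^{\dim B}$ geometric $p$-torsion points, whereas on a supersingular variety $[p]$ is (up to isogeny) purely inseparable and contributes none. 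An alternative route, which I would mention but not pursue since it requires the structure theorem that supersingular abelian varieties are isogenous to powers of a supersingular elliptic curve, is to argue that the factor $B$ of $S$ is itself supersingular and then combine \eqref{l-isogeny} with that remark directly.
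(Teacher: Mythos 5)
Your proof is correct, and its skeleton is the same as the paper's: argue by contraposition, replace $S$ by the image $B$ of the putative nontrivial homomorphism, and invoke \eqref{l-sq} to conclude that $B$ is ordinary. The paper's own proof is just ``Immediate from \eqref{l-sq}'', relying on the preceding remark that an ordinary abelian variety is never pure of positive weight for $[p]$; implicitly it needs that $B$, being an abelian subvariety of the supersingular $S$, is itself supersingular (and that ordinarity and supersingularity are each isogeny-invariant). Where you genuinely diverge is in how the contradiction is extracted once $B$ is known to be ordinary and to sit inside $S$: instead of asserting that $B$ is supersingular (which would lean on the structure theory of supersingular abelian varieties, or at least Poincar\'e reducibility), you count geometric $p$-torsion directly, using $B[p](\bar k)\hookrightarrow S[p](\bar k)$ together with the observation that ``pure of positive weight for $[p]$'' forces $[p]$ to be purely inseparable and hence kills all geometric $p$-torsion, with isogeny-invariance of the $p$-rank bridging from $S_0$ over the finite field to $S$. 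This is a more elementary and more complete completion of the argument than the one the paper records; the paper's version is shorter but leaves the key ``the image is still supersingular'' step to the reader.
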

\begin{proof} Immediate from \eqref{l-sq}.
\end{proof}
\begin{thm}\label{t-pr} Let $A$ be an abelian variety over $k$ and $ X \subset  A$ a reduced closed subscheme satisfying $p(X) = X$ (resp. $p(X)\subset X$). If $A$ has no supersingular factors, 
 then $X$ is completely linear i.e. a finite union of torsion translates of sub abelian varieties (resp. all maximal dimensional irreducible components of $X$ are completely linear).
\end{thm}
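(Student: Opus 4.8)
The plan is to reduce, in two steps, to a single irreducible $[q]$-invariant subvariety with finite stabiliser and then to show such a subvariety must be a point; the completely linear conclusion follows by descent. Throughout write $[q]:=[p^N]$ for an integer $N$ to be chosen, and note $\gcd(q-1,p)=1$. \emph{Step 1 (reduction to an irreducible self-mapped subvariety).} Since $[p]$ is a finite morphism it maps each irreducible component of $X$ to an irreducible closed subset of the same dimension contained in $[p](X)\subseteq X$, hence onto an irreducible component of the same dimension; thus $[p]$ induces a self-map $\sigma$ of the finite set of maximal-dimensional components. Choosing $N$ divisible enough we may arrange that $[q]$ fixes each component in the eventual image $\sigma^{\infty}(S)$, and for such a $Y$ we get $[q](Y)=Y$ (equality again by finiteness and invariance of dimension). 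It then suffices to prove that an irreducible $Y$ with $[q](Y)=Y$ is a torsion translate of an abelian subvariety: for a maximal-dimensional component $X_i$ with $\sigma^{m}(i)\in\sigma^{\infty}(S)$ we have $[q]^{m}(X_i)=t+B=:Y$ with $Y$ completely linear, so $X_i$ is a maximal-dimensional component of $[q]^{-m}(Y)=[q]^{-m}(t)+B$, a finite union of torsion translates of $B$, whence $X_i$ is itself one of them. In the case $p(X)=X$, the map $\sigma$ is surjective, hence a permutation, and the same analysis applied in each dimension shows all components are completely linear.

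\emph{Step 2 (reduction to finite stabiliser).} Let $G=(\mathrm{Stab}_A Y)^{0}_{\mathrm{red}}$, an abelian subvariety, and put $\pi:A\to\bar A:=A/G$, $\bar Y:=\pi(Y)$. Then $\bar Y$ is irreducible with $[q](\bar Y)=\bar Y$ and finite stabiliser, and $\bar A$ again has no supersingular factor, since a nonzero homomorphism $\bar A\to S$ would compose with $\pi$ to a nonzero one from $A$. If we know $\dim\bar Y=0$, then $\bar Y$ is a single $[q]$-fixed, hence $(q-1)$-torsion, point, so $Y=\pi^{-1}(\bar Y)=y_0+G$; as $[q-1]$ is an (étale) isogeny of $G$ we may adjust $y_0$ by an element of $G$ so that $(q-1)y_0=0$, and $Y$ is completely linear. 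Thus we are reduced to the following assertion: if $Y\subseteq A$ is irreducible, $[q](Y)=Y$, $\mathrm{Stab}_A Y$ is finite, and $A$ has no supersingular factor, then $\dim Y=0$.

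\emph{Step 3 (the arithmetic core).} Suppose $\dim Y\ge 1$, and let $B\subseteq A$ be the abelian hull of $Y$ (the smallest abelian subvariety with $Y-Y\subseteq B$); it is $[q]$-stable, positive-dimensional, and inherits the no-supersingular-factor property from $A$ by Poincaré reducibility. Following \cite{PR03}, the strategy is to extract from the $[q]$-invariance of $Y$ a nonzero abelian subquotient of $B$ on which a power of $[p]$ acts as a power of Frobenius, i.e. which is pure of positive weight in the sense $\varphi^{s}=F_{p^{r}}$, hence supersingular, contradicting the hypothesis. Concretely one spreads $A$, $B$ and $Y$ out over a finitely generated base and specialises to a model over a finite field $\bF_{q}$, so that the separable and inseparable parts of $[q]|_Y$ may be compared with the action of the geometric Frobenius on the realisations of $B$. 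The identity $[q](Y)=Y$, together with finiteness of $\mathrm{Stab}_A Y$, then forces the Frobenius eigenvalues governing $B$ into the pure-of-positive-weight relation on a nonzero subquotient, which is exactly the supersingularity one has excluded.

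The main obstacle is precisely Step 3, and two points require care. First, the reduction to a finite field must retain the no-supersingular-factor hypothesis; since the Newton polygon can only jump up under specialisation, one must specialise at a suitable (generic enough) place, or argue the relevant rigidity directly over $k$. Second and more essentially, one must convert the set-theoretic identity $[q](Y)=Y$ into a linear-algebraic statement about Frobenius eigenvalues on a subquotient of $T_\ell B$ and its crystalline counterpart; this is the technical heart of \cite{PR03}, and it is where the absence of a supersingular factor is genuinely used. One should resist the tempting shortcut that $d[q]=0$ makes $[q]|_Y$ purely inseparable: the differential of $[q]$ vanishes identically, yet $[q]$ still carries a separable part coming from the Verschiebung, and it is exactly the interplay of this separable part with the supersingular condition that must be controlled.
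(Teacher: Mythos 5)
The paper's own proof of this theorem is a one\--line citation: ``See \cite[2.2]{PR03} and the proof of \cite[4.1]{PR03}.'' Your Steps 1 and 2 are correct and in fact reproduce the outer reductions of Pink and Roessler's argument: since $[p]$ is finite, it induces a self-map of the set of maximal-dimensional components, so a power $[q]=[p^N]$ fixes every component in the eventual image; $[q](Y)\subseteq Y$ upgrades to $[q](Y)=Y$ by irreducibility and equality of dimensions; preimages $[q^m]^{-1}(t+B)$ of torsion translates are again finite unions of torsion translates; and quotienting by $(\mathrm{Stab}_A Y)^0_{\mathrm{red}}$ preserves irreducibility, $[q]$-invariance, and the no-supersingular-factor hypothesis. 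The only point needing extra care, which you flag correctly, is that in the case $p(X)=X$ one needs a downward induction on dimension to see that the induced map on \emph{all} components is a dimension-preserving permutation.

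However, Step 3 is not a proof; it is a description of what a proof would have to do, and you say so yourself. The assertion that $[q](Y)=Y$ together with finiteness of $\mathrm{Stab}_A Y$ ``forces the Frobenius eigenvalues governing $B$ into the pure-of-positive-weight relation on a nonzero subquotient'' is exactly the content of the theorem after your reductions, and no argument is supplied: the spreading-out to a finite field, the comparison of $[q]|_Y$ with geometric Frobenius on the Tate module and its crystalline counterpart, and the extraction of a supersingular subquotient are all deferred to \cite{PR03}. So as a self-contained argument the proposal has a genuine gap precisely at its arithmetic core. Measured against the paper, though, you are not worse off: the published proof is nothing but the citation, whereas you make explicit (and correctly) the reductions that the citation hides, before stopping at the same hard step. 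Your closing caveat --- that $d[q]=0$ does \emph{not} make $[q]|_Y$ purely inseparable, because $[p]=V\circ F$ retains a separable part through the Verschiebung --- is a genuinely useful warning against the most tempting false shortcut here.
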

\begin{proof} See \cite[2.2]{PR03} and the proof of  \cite[4.1]{PR03}.\end{proof}
 
\subsection{Abelian varieties and the Fourier-Mukai functor} \label{ss-Fourier_Mukai}
The Fourier-Mukai transforms $R\hat S:D(A)\to D(\hat A)$ and $R S:D(\hat A)\to D( A)$ are defined by
$$R\hat S (?)=Rp_{\hat A,*}(Lp_A^*?\otimes \sL ), \qquad R S (?)=Rp_{A,*}(Lp_{\hat A}^*?\otimes \sL ).$$
Note that $p_A^*$ and $p_{\hat A}^*$ are exact so, sometimes $L$ is omitted in front of them. By \cite{Mukai81}, it is known that:
\begin{thm}\label{M2.2} The following equalities hold on $D_{\rm qc}(A)$ and $D_{\rm qc}( \hat A)$.
$$RS\circ R\hat S=(-1_A)^*[-g],\qquad {\rm and}\qquad R\hat S\circ RS=(-1_{\hat A})^*[-g],$$
where $[-g]$ denotes the shift by $g$ places to the right and $-1_A$ is the inverse on $A$.\end{thm}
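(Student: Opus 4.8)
The plan is to prove the first identity $RS \circ R\hat S = (-1_A)^*[-g]$; the second then follows verbatim by interchanging the roles of $A$ and $\hat A$. The strategy is the classical one: realize the composition as a single integral transform on $A \times A$, compute its kernel, and recognize that kernel as the structure sheaf of the antidiagonal shifted by $[-g]$.

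First I would introduce the triple product $A \times \hat A \times A$ with projections $q_1, q_2, q_3$ to the three factors and $q_{12}, q_{23}, q_{13}$ to the pairwise products, writing $(x,\alpha,y)$ for a point. Unwinding the two definitions and applying flat base change (valid since all projections from products are flat) together with the projection formula \eqref{PF}, the composite $RS(R\hat S(M))$ becomes $Rq_{3,*}\bigl(q_1^*M \otimes q_{12}^*\sL \otimes q_{23}^*\sL\bigr)$. Equivalently, $RS \circ R\hat S$ is the integral transform on $A \times A$ with kernel
\[
\mathcal{K} := Rq_{13,*}\bigl(q_{12}^*\sL \otimes q_{23}^*\sL\bigr),
\]
so it suffices to identify $\mathcal{K}$.

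Next I would exploit the bilinearity of the Poincaré bundle (the theorem of the cube): writing $m : A \times A \to A$ for the group law, one has $q_{12}^*\sL \otimes q_{23}^*\sL \cong (m \times \id_{\hat A})^*\sL$ after reorganizing the triple product as $(A \times A) \times \hat A$, because $\sL|_{A \times \{\alpha\}}$ lies in ${\rm Pic}^0$ and is therefore additive in the $A$-variable. The square relating the projection $(A \times A) \times \hat A \to A \times A$ to $p_A : A \times \hat A \to A$ along $m$ is cartesian, so a second application of flat base change gives $\mathcal{K} \cong m^*\bigl(Rp_{A,*}\sL\bigr)$.

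Thus everything reduces to computing $Rp_{A,*}\sL$, and this is where I expect the genuine work to lie. The claim is $Rp_{A,*}\sL \cong k(0_A)[-g]$, the reduced skyscraper at the origin placed in cohomological degree $g$. I would argue in two stages: by cohomology and base change, using that $\sL|_{\{x\}\times\hat A}$ is a nontrivial degree-zero line bundle on $\hat A$ for $x \neq 0$ (hence has vanishing cohomology in every degree) while it is trivial for $x = 0$, one sees that every $R^ip_{A,*}\sL$ is supported at the origin; then a local Koszul-type analysis of $\sL$ near $0_A$ (equivalently, Mukai's original base-change argument) shows the complex is concentrated in degree $g$ and equals $k(0_A)$ there. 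Granting this, $\mathcal{K} \cong m^*k(0_A)[-g] = \sO_{m^{-1}(0_A)}[-g] = \sO_{\Gamma}[-g]$, where $\Gamma = \{(x,-x)\}$ is the graph of $-1_A$; the transform with this kernel is exactly $(-1_A)^*[-g]$. The main obstacle is precisely the identification $Rp_{A,*}\sL \cong k(0_A)[-g]$: localizing the support at the origin is immediate from base change, but pinning down that the complex is concentrated in a single degree and is the reduced skyscraper there requires the local computation of the Poincaré bundle at $0_A$.
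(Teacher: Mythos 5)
Your argument is correct and is precisely Mukai's original proof, which is all the paper itself invokes (the theorem is stated with only the citation to \cite{Mukai81} and no independent proof). The convolution-of-kernels reduction on $A\times\hat A\times A$, the seesaw identification $q_{12}^*\sL\otimes q_{23}^*\sL\cong(m\times\id)^*\sL$, and the final computation $Rp_{A,*}\sL\cong k(0_A)[-g]$ are exactly the steps of \cite[Theorem 2.2]{Mukai81}, and you correctly isolate the last of these as the only point requiring genuine work.
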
The following two properties are proven in \cite[3.1 and 3.8]{Mukai81}.
\begin{lem}\label{M3.1} (Exchange of translation and $\otimes {\rm Pic }^0$.) Let $x\in \hat A$ and $P_{x}= \sL |_{A\times x}$, then the following equalities hold on $D_{\rm qc}(A)$.
$$RS \circ T_x^*\cong (\otimes P_{-x})\circ RS.$$\end{lem}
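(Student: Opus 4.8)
The plan is to reduce everything to a single geometric identity on the Poincar\'e bundle and then push it through the definition of $RS$ via the projection formula. Write $\Phi := \id_A \times T_x$ for the automorphism of $A \times \hat A$ translating the second factor by $x$; then $p_A \circ \Phi = p_A$ and $p_{\hat A} \circ \Phi = T_x \circ p_{\hat A}$, so in particular $\Phi^* p_{\hat A}^* F \cong p_{\hat A}^* T_x^* F$ for every $F \in D_{\rm qc}(\hat A)$. First I would establish, by the Seesaw principle, the isomorphism
$$\Phi^* \sL \cong \sL \otimes p_A^* P_x. \qquad (\star)$$
Restricting both sides to $A \times \{y\}$ gives $P_{y+x}$ on the left and $P_y \otimes P_x$ on the right, and these agree because $y \mapsto P_y$ is the group isomorphism $\hat A \cong \Pic^0(A)$; restricting to $\{0\} \times \hat A$ gives the trivial bundle on both sides by the normalization $\sL|_{\{0\}\times \hat A}\cong \sO_{\hat A}$ (note that $\Phi$ preserves $\{0\} \times \hat A$). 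Hence $(\star)$ holds, and tensoring by $p_A^* P_{-x}$ rewrites it as $\sL \cong \Phi^* \sL \otimes p_A^* P_{-x}$.

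With $(\star)$ in hand the computation is formal. Using $\Phi^* p_{\hat A}^* F \cong p_{\hat A}^* T_x^* F$ and substituting $\sL \cong \Phi^* \sL \otimes p_A^* P_{-x}$, and that $\Phi^*$ is a tensor functor, I obtain
$$RS(T_x^* F) = Rp_{A,*}\left( \Phi^* p_{\hat A}^* F \otimes \sL \right) \cong Rp_{A,*}\left( \Phi^*\left( p_{\hat A}^* F \otimes \sL\right) \otimes p_A^* P_{-x} \right).$$
Since $P_{-x}$ is a line bundle on $A$, the projection formula \eqref{PF} extracts it:
$$RS(T_x^* F) \cong Rp_{A,*}\left( \Phi^*\left( p_{\hat A}^* F \otimes \sL\right)\right) \otimes P_{-x}.$$
Finally, because $\Phi$ is an isomorphism with $p_A \circ \Phi = p_A$, one has $Rp_{A,*} \circ \Phi^* \cong Rp_{A,*}$: indeed $p_A\circ\Phi = p_A$ gives $Rp_{A,*} \cong Rp_{A,*} \circ \Phi_*$, and applying this to $\Phi^* G$ together with $\Phi_* \Phi^* \cong \id$ yields the claim. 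Therefore
$$RS(T_x^* F) \cong Rp_{A,*}\left( p_{\hat A}^* F \otimes \sL \right) \otimes P_{-x} = RS(F) \otimes P_{-x},$$
which is the asserted identity $RS \circ T_x^* \cong (\otimes P_{-x}) \circ RS$.

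I do not expect a serious obstacle once $(\star)$ is secured, since the remainder is a string of standard manipulations. The one point deserving care is that every isomorphism above must be natural in $F$, so that the object-wise isomorphisms assemble into an isomorphism of functors on $D_{\rm qc}(\hat A)$; this is automatic because the base-change identity for $\Phi^* p_{\hat A}^*$, the projection formula, and $Rp_{A,*}\circ\Phi^*\cong Rp_{A,*}$ are all functorial, while $(\star)$ is a fixed isomorphism of sheaves independent of $F$. Thus the only genuinely geometric input is $(\star)$, whose verification via Seesaw — using the homomorphism property $P_{y+x} \cong P_y \otimes P_x$ and the normalization of $\sL$ — is the heart of the argument.
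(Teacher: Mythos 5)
Your proof is correct, and it is essentially the standard argument: the paper gives no proof of its own here but cites Mukai's Lemma 3.1, whose verification rests on exactly the same two ingredients you use, namely the seesaw identity $(\id_A\times T_x)^*\sL\cong\sL\otimes p_A^*P_x$ and the projection formula combined with $p_A\circ(\id_A\times T_x)=p_A$. Nothing is missing; the seesaw step is carried out correctly (including pinning down the $p_{\hat A}^*$-ambiguity via the normalization $\sL|_{\{0\}\times\hat A}\cong\sO_{\hat A}$), and the functoriality remark at the end addresses the only point where one could otherwise quibble.
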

\begin{lem}\label{M3.8} We have $D_A\circ RS\cong ((-1_A)^*\circ RS \circ D_{\hat A})[g]$ on $D_{\rm qc}( \hat A)$.
\end{lem}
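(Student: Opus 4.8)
\emph{Proposal.} The plan is to deduce the formula from Grothendieck Duality for the projection $p_A : A \times \hat A \to A$ together with the symmetry properties of the normalized Poincar\'e bundle. Fix $G \in D_{\rm qc}(\hat A)$. Since $p_A$ is proper, Grothendieck Duality gives
$$D_A(RS(G)) = D_A Rp_{A,*}(Lp^*_{\hat A}G \otimes \sL) \cong Rp_{A,*} D_{A\times\hat A}(Lp^*_{\hat A}G \otimes \sL).$$
Because $\sL$ is invertible, it factors out of the dualizing functor, so that $D_{A\times\hat A}(Lp^*_{\hat A}G \otimes \sL) \cong D_{A\times\hat A}(Lp^*_{\hat A}G) \otimes \sL^{-1}$, and the problem is reduced to identifying $D_{A\times\hat A}(Lp^*_{\hat A}G)$.

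Next I would compute this dual. The projection $p_{\hat A}$ is smooth of relative dimension $g$, and since $\omega_A$ is trivial the relative dualizing sheaf $\omega_{A\times\hat A/\hat A} \cong p^*_A\omega_A$ is trivial; hence $p^!_{\hat A} \cong Lp^*_{\hat A}(\_)[g]$ and $\omega^\cdot_{A\times\hat A} \cong Lp^*_{\hat A}\omega^\cdot_{\hat A}[g]$. Using the compatibility of $R{\mathcal H}om$ with the flat pullback $Lp^*_{\hat A}$ one then obtains $D_{A\times\hat A}(Lp^*_{\hat A}G) \cong Lp^*_{\hat A}D_{\hat A}(G)[g]$. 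Substituting back and pulling out the shift yields
$$D_A(RS(G)) \cong Rp_{A,*}\left(Lp^*_{\hat A}D_{\hat A}(G) \otimes \sL^{-1}\right)[g].$$

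To finish I would invoke the symmetries of $\sL$. Writing $\sigma = \id_A \times (-1_{\hat A})$, the identity $\sL^{-1} \cong \sigma^*\sL$ together with $p_A \circ \sigma = p_A$ and $p_{\hat A}\circ \sigma = (-1_{\hat A})\circ p_{\hat A}$ lets me perform a change of variables along the involution $\sigma$, rewriting the right-hand side as $RS\bigl((-1_{\hat A})^*D_{\hat A}(G)\bigr)[g]$. A second change of variables along $-1_A\times -1_{\hat A}$, using $(-1_A\times -1_{\hat A})^*\sL \cong \sL$, records the commutation $RS\circ(-1_{\hat A})^* \cong (-1_A)^*\circ RS$, which turns this into $(-1_A)^* RS(D_{\hat A}(G))[g]$, the desired identity. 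The two symmetries used here both follow from the standard relations $(-1_A\times\id)^*\sL\cong\sL^{-1}\cong(\id\times -1_{\hat A})^*\sL$ for the normalized Poincar\'e bundle (cf. \cite{Mukai81}).

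The main obstacle I expect is the second paragraph, namely establishing $D_{A\times\hat A}(Lp^*_{\hat A}G) \cong Lp^*_{\hat A}D_{\hat A}(G)[g]$ with the correct shift and twist on the possibly unbounded category $D_{\rm qc}$: this requires the commutation of $R{\mathcal H}om$ with the flat pullback $Lp^*_{\hat A}$ and the careful identification of $\omega^\cdot_{A\times\hat A}$ through $p^!_{\hat A}$. The Poincar\'e-bundle bookkeeping of the last step is by contrast routine once the symmetries above are available.
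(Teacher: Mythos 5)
The paper does not actually prove this lemma but simply cites \cite[3.8]{Mukai81}, and your argument --- Grothendieck duality for $p_A$, factoring out the invertible $\sL$, the identification $\omega^\cdot_{A\times\hat A}\cong Lp_{\hat A}^*\,\omega^\cdot_{\hat A}[g]$ coming from the triviality of $\omega_A$, and the two Poincar\'e-bundle symmetries --- is precisely Mukai's proof, so this is the same approach and it is correct. The one caveat you rightly flag, the commutation $Lp_{\hat A}^*\,R{\mathcal H}om(G,\omega^\cdot_{\hat A})\cong R{\mathcal H}om(Lp_{\hat A}^*G,Lp_{\hat A}^*\omega^\cdot_{\hat A})$, is indeed the only point needing care when one upgrades Mukai's statement from bounded coherent complexes to $D_{\rm qc}$ (it holds for pseudo-coherent inputs, e.g.\ coherent sheaves, which is all this lemma is ever applied to in the paper), so there is no genuine gap.
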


Note that in the statement of the following lemma we consider the functors on the whole $D(A)$ not only the subcategory $D_{\rm qc}( \hat A)$.

\begin{lem}
\label{lem:D_A_T_x}
For every $x \in A$, $T_x^* \circ D_A   \cong D_A \circ T_x^*$ as functors on $D(A)$. 
\end{lem}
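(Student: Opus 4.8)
The plan is to exploit the fact that $T_x : A \to A$ is an \emph{isomorphism}, so that $T_x^*$ is an exact auto-equivalence of $D(A)$ that commutes with the intrinsically defined bifunctor $R\mathcal{H}om$; the only genuinely geometric input needed is that $T_x^*$ fixes the dualizing complex.

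First I would record that, since $T_x$ is an isomorphism, $T_x^*$ is $t$-exact (no derivation is necessary, as $T_x^* = T_x^{-1}$ up to the canonical identification $T_x^{-1}\sO_A \cong \sO_A$) and there is a natural isomorphism
\[
T_x^* R\mathcal{H}om_A(F, G) \cong R\mathcal{H}om_A(T_x^* F, T_x^* G)
\]
for all $F, G \in D(A)$. This is formal for an isomorphism of ringed spaces, since $R\mathcal{H}om$ is defined intrinsically and $T_x^*$ identifies the structure sheaves. This is precisely the step that legitimizes working on the whole of $D(A)$, rather than on $D_{\rm qc}(A)$, where one would instead have to invoke a flat base-change statement.

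Next I would show $T_x^* \omega_A^\cdot \cong \omega_A^\cdot$. Writing $\mu : A \to \Spec k$ for the structure morphism, we have $\mu \circ T_x = \mu$, since translation commutes with the projection to a point. Because $T_x$ is an isomorphism, $T_x^! \cong T_x^*$ (its relative dualizing sheaf is trivial and its relative dimension is $0$). Hence, using the functoriality $(\mu \circ T_x)^! \cong T_x^! \circ \mu^!$ of Hartshorne's upper-shriek,
\[
T_x^* \omega_A^\cdot \cong T_x^! \mu^! \sO_k \cong (\mu \circ T_x)^! \sO_k \cong \mu^! \sO_k = \omega_A^\cdot .
\]
Alternatively, one may simply invoke $\omega_A \cong \sO_A$, so that $\omega_A^\cdot \cong \sO_A[g]$ is visibly translation invariant.

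Combining the two steps, for every $F \in D(A)$,
\[
T_x^* D_A(F) = T_x^* R\mathcal{H}om_A(F, \omega_A^\cdot) \cong R\mathcal{H}om_A(T_x^* F, T_x^* \omega_A^\cdot) \cong R\mathcal{H}om_A(T_x^* F, \omega_A^\cdot) = D_A(T_x^* F),
\]
which gives the desired natural isomorphism $T_x^* \circ D_A \cong D_A \circ T_x^*$. I do not expect a genuine obstacle here; the only point requiring a little care is the compatibility of $T_x^*$ with $R\mathcal{H}om$ over \emph{all} of $D(A)$, which I would justify through the observation that an isomorphism of schemes induces an equivalence of derived categories respecting the intrinsic internal-hom, rather than through a base-change statement valid only on $D_{\rm qc}$.
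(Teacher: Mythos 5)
Your proposal is correct and follows essentially the same route as the paper: both arguments rest on the exactness of $T_x^*$ (being pullback along an automorphism), its compatibility with sheaf $\mathcal{H}om$ checked on open sets, and the translation invariance of the dualizing complex — the paper merely packages the last two points by choosing a $T_x^*$-invariant injective resolution of $\omega_A^\cdot$ via Hartshorne III.2.2, whereas you separate them into the formal isomorphism $T_x^*R\mathcal{H}om(F,G)\cong R\mathcal{H}om(T_x^*F,T_x^*G)$ and the identification $T_x^*\omega_A^\cdot\cong\omega_A^\cdot$ in $D(A)$. No gap.
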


\begin{proof}
By the proof of \cite[Proposition III.2.2]{Hartshorne77} one can choose an injective resolution $\sI$ of $\omega_A^\bullet$, for which $T_x^* \sI \cong \sI$ (as complexes, not just a quasi-isomorphism). Then one can compute $T_x^* \circ D_A(\sD)$ as $T_x^* (\sH om^\bullet( \sD,\sI))$ and $D_A \circ T_x^*(\sD)$ as $\sH om^\bullet(T_x^* \sD, \sI)$. Note here we have used that $T_x$ is an automorphism, hence $T_x^*$ is exact. Further note that $T_x^* (\sH om^\bullet( \sD,\sI)) \cong  \sH om^\bullet( T_x^* \sD, T_x^*\sI)$ (by checking it on every open set),  hence it suffices to prove that $\sH om^\bullet( T_x^* \sD, T_x^*\sI)$  and  $\sH om^\bullet( T_x^* \sD,\sI)$ are  quasi-isomorphic. However, they are even isomorphic by the choice of $\sI$.
\end{proof}



The following result on the exchange between direct and inverse images is contained in \cite[3.4]{Mukai81}.
\begin{lemma}\label{m3.4} Let $\phi :A\to B$ be an isogeny of abelian varieties and $\hat \phi :\hat B\to \hat A$ the dual isogeny, then the following equalities hold on $D_{\rm qc}(B)$ and $D_{\rm qc}(A)$.
$$\phi ^*\circ RS_B \cong RS_A\circ \hat \phi _* $$
$$\phi _* \circ RS _A \cong RS_B\circ \hat \phi ^* .$$ 
\end{lemma}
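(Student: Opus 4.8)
The plan is to reduce both isomorphisms to a single compatibility of the two Poincar\'e bundles under the isogeny, namely
\begin{equation*}
(\phi\times\id_{\hat B})^*\sL_B \;\cong\; (\id_A\times\hat\phi)^*\sL_A \qquad\text{on } A\times\hat B, \tag{$\star$}
\end{equation*}
where $\sL_A$ and $\sL_B$ denote the normalized Poincar\'e bundles on $A\times\hat A$ and $B\times\hat B$, and then to run the standard Fourier--Mukai manipulations via flat base change and the projection formula \eqref{PF}. The identity $(\star)$ is essentially the definition of the dual isogeny $\hat\phi=\Pic^0(\phi)$; I would prove it by the seesaw principle, checking that both sides restrict to the same degree-zero line bundle on every slice $\{a\}\times\hat B$ and on every slice $A\times\{\eta\}$, and that they agree along the slices used to normalize $\sL_A$ and $\sL_B$.

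For the first identity $\phi^*\circ RS_B\cong RS_A\circ\hat\phi_*$, fix $G\in D_{\rm qc}(\hat B)$, write $p_B,p_{\hat B}$ for the projections of $B\times\hat B$, $p_A,p_{\hat A}$ for those of $A\times\hat A$, and $q_A,q_{\hat B}$ for those of $A\times\hat B$. Since $A\times\hat B=A\times_B(B\times\hat B)$ and $\phi$ (hence $\phi\times\id_{\hat B}$) is finite flat, flat base change gives $\phi^*Rp_{B,*}\cong Rq_{A,*}(\phi\times\id_{\hat B})^*$. Applying this to $p_{\hat B}^*G\otimes\sL_B$, using $(\phi\times\id_{\hat B})^*p_{\hat B}^*G=q_{\hat B}^*G$ together with $(\star)$, yields
\begin{equation*}
\phi^*RS_B(G)\cong Rq_{A,*}\big(q_{\hat B}^*G\otimes(\id_A\times\hat\phi)^*\sL_A\big).
\end{equation*}
On the other side, base change along the cartesian square over $\hat\phi:\hat B\to\hat A$ gives $p_{\hat A}^*\hat\phi_*G\cong(\id_A\times\hat\phi)_*q_{\hat B}^*G$; substituting this into $RS_A(\hat\phi_*G)=Rp_{A,*}(p_{\hat A}^*\hat\phi_*G\otimes\sL_A)$, applying the projection formula \eqref{PF} to $\id_A\times\hat\phi$, and using $p_A\circ(\id_A\times\hat\phi)=q_A$ produces exactly the same expression. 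This settles the first isomorphism.

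For the second identity $\phi_*\circ RS_A\cong RS_B\circ\hat\phi^*$, I would introduce the intermediate variety $B\times\hat A$ with the finite flat maps $\Phi=\phi\times\id_{\hat A}:A\times\hat A\to B\times\hat A$ and $\Psi=\id_B\times\hat\phi:B\times\hat B\to B\times\hat A$. Factoring the two relevant pushforwards to $B$ through $B\times\hat A$ and applying the projection formula \eqref{PF} to $\Phi$ and to $\Psi$ (together with $p_{\hat A}^*H=\Phi^*(\text{pullback of }H)$ and $p_{\hat B}^*\hat\phi^*H=\Psi^*(\text{pullback of }H)$) reduces the claim for every $H\in D_{\rm qc}(\hat A)$ to the single comparison $R\Phi_*\sL_A\cong R\Psi_*\sL_B$ of objects on $B\times\hat A$. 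This last isomorphism follows from the cartesian square relating $A\times\hat B$ to $A\times\hat A$ and $B\times\hat B$ over $B\times\hat A$, by base change and the compatibility $(\star)$.

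The main obstacle is $(\star)$ itself: aligning the normalizations of $\sL_A,\sL_B$ with the definition of $\hat\phi$ and verifying the isomorphism carefully by seesaw. Once $(\star)$ is in hand, the only remaining work is bookkeeping of the several cartesian squares and confirming that each base-change and projection-formula step is legitimate, which holds because all the maps in sight are finite flat. The genuine asymmetry with the first identity is the reduction of the second to $R\Phi_*\sL_A\cong R\Psi_*\sL_B$, and that comparison is where I expect to spend the most care.
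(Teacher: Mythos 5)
Your proof of the first identity is correct and complete: both sides are legitimately rewritten as $Rq_{A,*}\bigl(q_{\hat B}^*G\otimes(\id_A\times\hat\phi)^*\sL_A\bigr)$ via flat base change along the two cartesian squares over $\phi$ and over $\hat\phi$, the projection formula for the finite flat map $\id_A\times\hat\phi$, and the seesaw identity $(\star)$, which is indeed just the definition of $\hat\phi$ once the normalizations are checked. (For what it is worth, the paper supplies no argument at all here; it simply cites \cite[3.4]{Mukai81}, so you are doing strictly more than the authors.)

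The gap is in the last step of the second identity. Your reduction of $\phi_*\circ RS_A\cong RS_B\circ\hat\phi^*$ to the comparison $R\Phi_*\sL_A\cong R\Psi_*\sL_B$ on $B\times\hat A$ is correct, but that comparison does \emph{not} follow ``from the cartesian square\ldots by base change and $(\star)$'' as you assert. What base change around that square plus $(\star)$ actually yields is $\Psi^*R\Phi_*\sL_A\cong R(\phi\times\id_{\hat B})_*(\phi\times\id_{\hat B})^*\sL_B\cong\sL_B\otimes p_B^*\phi_*\sO_A$ --- a statement about $\Psi^*R\Phi_*\sL_A$, not about $R\Phi_*\sL_A$ itself, and not visibly equal to $\Psi^*R\Psi_*\sL_B$. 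In fact $R\Phi_*\sL_A\cong R\Psi_*\sL_B$ is exactly the kernel-level formulation of the second identity, so your reduction has restated the problem rather than solved it; and since both sheaves are locally free of rank $\deg\phi>1$ in general, no seesaw-type argument is available for them. The isomorphism is true but needs a genuine input. The cheapest repair with tools already in the paper: apply your (proved) first identity to the dual isogeny $\hat\phi:\hat B\to\hat A$, whose dual is $\phi$, to get $\hat\phi^*\circ R\hat S_A\cong R\hat S_B\circ\phi_*$; then pre-compose with $RS_A$, post-compose with $RS_B$, and use the inversion theorem \eqref{M2.2} together with the compatibility $RS\circ(-1)^*\cong(-1)^*\circ RS$ to cancel the $R\hat S$'s, which yields $\phi_*\circ RS_A\cong RS_B\circ\hat\phi^*$. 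This is essentially Mukai's own route.
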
 
\begin{proof} See \cite[3.4]{Mukai81}.  
\end{proof}
\begin{lemma} \label{l-dirRS} Let $\Lambda _{e}\to\Lambda _{e+1}$  be a direct system in $D_{\rm qc}(A)$. Then $\hocolim R\hat S( \Lambda _e)=R\hat S( \hocolim \Lambda _e)$.
\end{lemma}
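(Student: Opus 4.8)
===The statement to prove is Lemma \ref{l-dirRS}: that the Fourier--Mukai functor $R\hat S$ commutes with homotopy colimits of direct systems in $D_{\rm qc}(A)$. My plan is to unwind the definition of $R\hat S$ into its three constituent operations and verify that each one individually commutes with $\hocolim$, then compose. Recall that by definition
\begin{equation*}
R\hat S(?) = Rp_{\hat A, *}\left( Lp_A^*(?) \otimes \sL \right),
\end{equation*}
so $R\hat S$ is the composite of the derived pullback $Lp_A^*$, the tensor product $(? \otimes \sL)$, and the derived pushforward $Rp_{\hat A, *}$. The key tool is Lemma \ref{l-dir}, which states precisely that homotopy colimits in $D_{\rm qc}$ commute with tensor products, pullbacks, and pushforwards.

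The argument I would give is essentially a one-line invocation of \eqref{l-dir} applied three times in sequence. Concretely, starting from a direct system $\Lambda_e \to \Lambda_{e+1}$ in $D_{\rm qc}(A)$, I would write
\begin{align*}
R\hat S(\hocolim \Lambda_e)
&= Rp_{\hat A,*}\left( Lp_A^*(\hocolim \Lambda_e) \otimes \sL \right) \\
&= Rp_{\hat A,*}\left( (\hocolim Lp_A^* \Lambda_e) \otimes \sL \right) \\
&= Rp_{\hat A,*}\left( \hocolim (Lp_A^* \Lambda_e \otimes \sL) \right) \\
&= \hocolim\, Rp_{\hat A,*}\left( Lp_A^* \Lambda_e \otimes \sL \right) \\
&= \hocolim\, R\hat S(\Lambda_e),
\end{align*}
where the second equality is the fact that $Lp_A^*$ commutes with $\hocolim$, the third is that $\otimes$ commutes with $\hocolim$ (here one should note that $\sL$, a line bundle, defines a constant direct system, and tensoring a homotopy colimit against a fixed object is the relevant case of \eqref{l-dir}), and the fourth is that $Rp_{\hat A,*}$ commutes with $\hocolim$. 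Each pullback preserves $D_{\rm qc}$, and tensoring with the line bundle $\sL$ keeps us in $D_{\rm qc}(A \times \hat A)$, so the hypotheses of \eqref{l-dir} are met at every stage.

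The only genuine subtlety to be careful about is that \eqref{l-dir} as stated asserts that $\hocolim$ commutes with these functors for direct systems, so I must confirm that applying $Lp_A^*$ and $\otimes \sL$ termwise to the system $\{\Lambda_e\}$ produces a direct system of the kind to which \eqref{l-dir} applies, and that the structure maps are transported functorially; this is automatic since both operations are functors. I do not anticipate any real obstacle here: the whole content is packaged into \eqref{l-dir}, whose proof (the injectivity of $F(\id - \mathrm{shift})$ for additive functors, recorded in the excerpt) already encodes the commutation of an arbitrary additive functor with the defining triangle of $\hocolim$. If one wished to avoid invoking \eqref{l-dir} piecewise, an alternative and arguably cleaner proof would be to apply the exact functor $R\hat S$ directly to the defining triangle
\begin{equation*}
\bigoplus \Lambda_e \to \bigoplus \Lambda_e \to \hocolim \Lambda_e \xrightarrow{+1}
\end{equation*}
and use that $R\hat S$, being a composite of functors each commuting with arbitrary direct sums (pullback and tensor commute with $\oplus$ trivially, and $Rp_{\hat A,*}$ commutes with $\oplus$ because $p_{\hat A}$ is proper of finite cohomological dimension), carries this triangle to the triangle defining $\hocolim R\hat S(\Lambda_e)$. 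I would present the short composite-of-three-functors version as the main proof, since it reads off directly from \eqref{l-dir}.
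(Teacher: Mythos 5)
Your proposal is correct and follows the same route as the paper: the paper's proof is exactly the one-line unwinding of $R\hat S$ into $Rp_{\hat A,*}(Lp_A^*(\,\cdot\,)\otimes\sL)$ followed by an appeal to \eqref{l-dir} for the commutation with pullback, tensor, and pushforward. Your version merely spells out the three applications of \eqref{l-dir} separately where the paper compresses them into a single middle equality.
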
 \begin{proof} We have $$R\hat S( \hocolim \Lambda _e)=Rp_{\hat A,*}(L{p_A^*}(\hocolim \Lambda _e)\otimes \sL)=$$
$$\hocolim Rp_{\hat A,*}(L{p_A^*} \Lambda _e\otimes \sL)=\hocolim R\hat S(  \Lambda _e),$$ 
where the first and last equalities follow from the definition of $R\hat S$ and the middle equality follows by \eqref{l-dir}.
\end{proof}


\subsection{$S^0(\omega_X)$ in families} \label{ss-S_0_families}

By the following theorem from \cite[Theorem 3.3]{Pat13}, the Frobenius stable subspace $S^0 \subseteq H^0$ behaves well in families. We cite here only the special case used in this paper. For the general case please see \cite{Pat13}.

\begin{theorem} \cite[Theorem 3.3]{Pat13}
\label{thm:openness_S_0}
Let $f : X \to Y$ be a proper, surjective, generically smooth morphism of smooth varieties over $k$.   Then, there is a non-empty Zariski open set $W$ of $Y$ such that $S^0(F,\omega_F)$ has the same dimension for every geometric fiber $F$ over $W$. Further, the rank of $S^0 f_* \omega_{X}$ is at least as big as this general value.
\end{theorem}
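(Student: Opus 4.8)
The plan is to relativize the definition of $S^0$ and extract the constancy from generic flatness together with the stabilization of the associated descending chain of coherent sheaves. First I would shrink $Y$ to a dense open subset over which $f$ is flat with smooth fibers (possible since $f$ is generically smooth and by generic flatness), so that the relative dualizing sheaf satisfies $\omega_{X/Y}|_F \cong \omega_F$ for every geometric fiber $F$, and so that cohomology and base change applies to $f_*\omega_{X/Y}$ and to each relevant pushforward built from $f_* (F^{e}_{X/Y})_* \omega_{X/Y}$. On this locus I would use the $e$-th iterate of the \emph{relative} Frobenius $F^e_{X/Y}$ together with its Grothendieck trace to construct $\sO_Y$-linear maps $T_e$ to $f_* \omega_{X/Y}$ whose restriction to a geometric fiber $F = X_{\overline y}$ is identified with the $e$-th iterated fiber trace $H^0(F, F^e_* \omega_F) \to H^0(F, \omega_F)$. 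The identification rests on base change for the trace of relative Frobenius; the harmless twist of the $k$-structure by the Frobenius of $k$ does not affect any dimension count, by perfectness of $k$ (cf. Subsection \ref{ss-F}).

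Next, the images $\sI_e := \im(T_e)$ form a descending chain of coherent subsheaves of $f_* \omega_{X/Y}$, which stabilizes to $\sS := S^0 f_* \omega_{X/Y}$ by the Cartier-module argument of \cite[Proposition 8.1.4]{BS12} (exactly as in \eqref{lem:S_f_*_mK_X}). After a further shrinking of $Y$ I may assume $\sS$ and each $\sI_e$ are locally free, say $\sS$ of rank $r$; since $\omega_X$ and $\omega_{X/Y}$ differ by a line bundle pulled back from $Y$, this $r$ is also the rank of $S^0 f_* \omega_X$. The inequality in the theorem then follows at once: for $\overline y \in W$, writing $F = X_{\overline y}$, the natural surjection $\sS|_{\overline y} = (\im T_{e_0})|_{\overline y} \twoheadrightarrow \im(T_{e_0}|_{\overline y}) \supseteq S^0(F, \omega_F)$ shows $r \geq \dim S^0(F,\omega_F)$.

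It remains to prove that $\dim S^0(F, \omega_F)$ is constant on a dense open subset. I would argue that $\overline y \mapsto \dim S^0(X_{\overline y}, \omega_{X_{\overline y}})$ is a constructible function: computing $S^0$ over the generic point of $Y$ uses only finitely many of the $T_e$, and these maps, together with the base-change identifications of the first step, spread out over a dense open subset on which the fiberwise computation agrees with the generic one; Noetherian induction on the complement then yields constructibility, and a constructible function on an irreducible variety is constant on a dense open set $W$.

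The main obstacle is precisely this last step: forming the image of a trace map does not commute with restriction to a fiber, and $S^0$ is defined by an \emph{infinite} descending intersection, so even though the relative chain $\sI_e$ stabilizes at some $e_0$, the fiberwise chain $\im(T_e|_{\overline y})$ may continue to descend strictly below $\sS|_{\overline y}$ (this is exactly why one can only assert $r \geq \dim S^0(F, \omega_F)$ and not equality). The technical heart is therefore to bound the fiberwise stabilization step uniformly over a dense open $W$, which requires a careful analysis of the base-change behavior of the images $\im(T_e)$ and of the cokernels $f_* \omega_{X/Y} / \sI_e$; controlling the loci where these cokernels fail to be flat is what pins down the open set $W$.
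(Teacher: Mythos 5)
First, a point of comparison: the paper does not prove this statement at all --- it is quoted from \cite[Theorem 3.3]{Pat13} and used as a black box, so there is no in-paper argument to measure your proposal against. Judged on its own terms, your setup is the right one: shrinking $Y$ so that $f$ is flat with smooth fibres, replacing the absolute Frobenius by the relative one so that the trace maps $T_e$ become $\sO_Y$-linear and base-change compatible, and deducing the inequality $\rk S^0 f_*\omega_X \geq \dim S^0(F,\omega_F)$ from the surjection $(\im T_{e_0})|_{\overline y}\twoheadrightarrow \im(T_{e_0}|_{\overline y}) \supseteq S^0(F,\omega_F)$. (One small bridge you elide: the paper defines $S^0 f_*\omega_X$ via the absolute Frobenius of $X$, whose pushforward is only $p^{-e}$-linear over $Y$, so you still owe a comparison of its generic rank with that of your relative-Frobenius stable image.)

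The genuine gap is the one you flag yourself, and it is not a technicality that ``careful analysis'' dispatches --- it is the entire content of the theorem. For each fixed $e$ you obtain a dense open $U_e$ on which $\dim\im(T_e|_{\overline y})$ equals its generic value $d_e$, and the $d_e$ stabilize at some $d$; this yields $\dim S^0(F,\omega_F)\leq d$ on $U_{e_0}$. But the reverse inequality requires $\overline y\in\bigcap_{e\geq e_0}U_e$, and a countable intersection of dense opens need not contain a dense open (over a countable field it may contain no closed point whatsoever). The constructibility-plus-Noetherian-induction argument does not repair this, because on each stratum one faces exactly the same countable intersection; asserting that the generic computation ``spreads out'' is precisely the claim to be proved. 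A complete proof (such as the one in \cite{Pat13}) must produce a \emph{uniform} bound on the fiberwise stabilization index over a single dense open, e.g.\ by observing that once the relative chain has stabilized the maps $\sI_{e+1}\to\sI_e$ are surjections of coherent sheaves, and then applying generic flatness simultaneously to the finitely many kernels and cokernels governing one step of the iteration, so that surjectivity of that step commutes with restriction to every fibre over a fixed open set and can be iterated indefinitely. As written, the constancy half of the statement remains unproven.
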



\subsection{Higher direct images of $\omega _X$} \label{ss-direct_images}
\begin{proposition}\label{p-high} Let $f:X\to A$ be a generically finite dominant morphism of  projective varieties. Assume that $X$ is smooth of dimension $n$. Then ${\rm codim}R^if_*\omega _X\geq i+2$ for all $i>0$. 
\end{proposition}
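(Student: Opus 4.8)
The plan is to reduce first to a birational morphism and then, by cutting with general hyperplanes, to a single vanishing statement for the top nonzero direct image, which survives in characteristic $p$ because it only needs the vanishing of $H^0$ of a negative line bundle. To set this up I would factor $f$ through its Stein factorization $f = h \circ g$, where $g : X \to Y$ has connected fibres and $g_* \sO_X = \sO_Y$ (so $Y$ is normal, since $X$ is) and $h : Y \to A$ is finite. As $h$ is finite, $R^j h_* = 0$ for $j>0$, the Leray spectral sequence degenerates, and $R^i f_* \omega_X = h_* R^i g_* \omega_X$; moreover $h$ preserves dimensions of supports. Hence it suffices to show $\mathrm{codim}_Y \Supp(R^i g_* \omega_X) \geq i+2$ for the projective birational morphism $g$ from the smooth $n$-fold $X$ to the normal $n$-fold $Y$.

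By Grothendieck's bound on the cohomological dimension of the fibres, $\Supp(R^i g_* \omega_X) \subseteq Y_i := \{y : \dim g^{-1}(y) \geq i\}$. Since $g$ is dominant with finite generic fibre, $Y_i$ is a proper closed subset, so $g^{-1}(Y_i) \subsetneq X$ and $\dim Y_i + i \leq \dim g^{-1}(Y_i) \leq n-1$; thus $\mathrm{codim}_Y Y_i \geq i+1$ and it remains only to exclude components of codimension exactly $i+1$. Assuming such a component exists, I would choose a general very ample $H \subset Y$: then $X_H := g^* H$ is smooth (Bertini for the base-point-free system $g^*|H|$), $H$ is again normal, and $g_H : X_H \to H$ is birational. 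Pushing forward $0 \to \omega_X \to \omega_X(X_H) \to \omega_{X_H} \to 0$ and using $R^i g_* (\omega_X(X_H)) \cong R^i g_* \omega_X \otimes \sO_Y(H)$, the cokernel of multiplication by the equation of $H$ is a twist of $(R^i g_* \omega_X)|_H$, supported on $\Supp(R^i g_* \omega_X) \cap H$, and it embeds into $R^i (g_H)_* \omega_{X_H}$. For general $H$ this intersection has dimension one less, so the codimension-$(i+1)$ pathology descends to $g_H$; iterating, I reduce by descending induction on $\dim Y$ to the extremal case $\dim Y = i+1 =: m$, where the assertion becomes the vanishing $R^{m-1} g_* \omega_X = 0$.

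For this base case, with $g : X \to Y$ projective birational and $X$ a smooth projective $m$-fold, I would apply the theorem on formal functions: at each (isolated) point $y$ of the support it suffices that $\varprojlim_t H^{m-1}(X, \omega_X \otimes \sO_X / I^t) = 0$, where $I = g^{-1} \mathfrak{m}_y \cdot \sO_X$. The fibre $E = g^{-1}(y)$ is then a divisor contracted to $y$, hence Gorenstein of dimension $m-1$ (a divisor in a smooth variety), and $\sO_E(-E)$ is $g$-ample along the fibre, so by the negativity lemma $\sO_E(E)$ has negative degree on the curves covering $E$. Dévissage along the $I^t$ reduces the vanishing to the graded pieces $H^{m-1}(E, \omega_X \otimes \sO_X(-tE)|_E)$, and Serre duality on $E$ identifies these with $H^0(E, \sO_E((t+1)E))^\vee$; since $\sO_E((t+1)E)$ has negative degree on covering curves, it has no sections for $t \geq 0$, every graded piece vanishes, and the inverse limit is zero.

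The crux is this base case, and the whole reduction is engineered so that the only cohomological input is the vanishing of $H^0$ of a line bundle that is negative on covering curves — a statement valid in every characteristic — rather than any Kodaira–Nakano type vanishing, which genuinely fails in characteristic $p$. The remaining difficulties are bookkeeping: checking that the (possibly non-reduced or reducible) scheme-theoretic fibre still produces the stated graded pieces and that Serre duality applies (divisors in smooth varieties are Gorenstein), verifying the negativity of $\sO_E(E)$ on covering curves for a contracted fibre (intersection-theoretic, hence characteristic-free), and confirming that normality and smoothness are preserved under the general hyperplane sections of the induction in characteristic $p$.
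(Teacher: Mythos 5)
Your strategy is genuinely different from the paper's, and while its guiding philosophy (reduce everything to the vanishing of $H^0$ of a negative line bundle, which is characteristic-free) is exactly the right instinct, two of its steps are real gaps rather than bookkeeping. First, the hyperplane induction cuts \emph{downstairs}: you take $H\subset Y$ very ample and need $X_H=g^*H$ to be \emph{smooth}, invoking Bertini for the base-point-free system $g^*|H|$. In characteristic $p$ the general member of a base-point-free linear system on a smooth variety need not be smooth; the standard incidence-variety dimension count only controls singular points of $g^{-1}(\Lambda)$ where $dg$ has large rank, and on a divisor contracted by $g$ the rank of $dg$ drops, so the count fails whenever the image of the exceptional locus is positive-dimensional. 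Since smoothness of $X_H$ is load-bearing (your base case needs $\omega_{X_H}$ to be the canonical sheaf of a smooth variety), this is precisely the kind of characteristic-$p$ trap the argument is meant to avoid. The paper sidesteps it by cutting \emph{upstairs} with a general very ample $H\subset X$ (smooth by classical Bertini for embeddings, valid in all characteristics): relative Serre vanishing kills $R^if_*(\omega_X(H))$ for $i>0$, so $R^if_*\omega_X\cong R^{i-1}(f|_H)_*\omega_H$ for $i\geq 2$, the index drops by one at each cut, and everything collapses to the single statement ${\rm codim}\,R^1f_*\omega_X\geq 3$, which is local at codimension-$2$ points of $A$ and there becomes relative Kawamata--Viehweg (Grauert--Riemenschneider) vanishing for two-dimensional excellent schemes, quoted from [KK94, 2.2.5]. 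So the paper's base case is the surface case from the literature, not a top-degree vanishing in every dimension.

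Second, in your base case the assertion that ``$\sO_E(-E)$ is $g$-ample along the fibre, so by the negativity lemma $\sO_E(E)$ has negative degree on the curves covering $E$'' is not a consequence of the negativity lemma and is false in general: for a divisor $E$ contracted to a point by a birational morphism from a smooth variety, $-E|_E$ need not be nef, let alone ample (already for surfaces the reduced exceptional divisor can have $E\cdot E_i>0$ for some component; negative definiteness only guarantees the \emph{existence} of some effective exceptional $Z$ with $-Z$ relatively nef). Relatedly, $I=\mathfrak{m}_y\sO_X$ is not $\sO_X(-E)$ for the reduced fibre divisor, so the graded pieces $I^t/I^{t+1}$ are not $\sO_E(-tE)$; the correct object is the divisorial part $E'$ of $V(\mathfrak{m}_y\sO_X)$ with its multiplicities, for which $-E'$ is only relatively \emph{nef} where $\mathfrak{m}_y\sO_X$ generates it, and nefness does not give $H^0(E',\sO_{E'}(tE'))=0$ (a numerically trivial restriction to a component can have sections). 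One can always write a projective birational $g$ as the blow-up of an ideal $J$ with $J\sO_X=\sO_X(-F)$ and $-F$ relatively ample, but that $F$ is not the fibre over $y$, and principalizing $\mathfrak{m}_y\sO_X$ while keeping the total space smooth requires resolution of singularities, unavailable in characteristic $p$ in dimension $\geq 4$. To close your argument you would need a correct proof of the top-degree vanishing $R^{m-1}g_*\omega_X=0$ for all $m$, whereas the paper only ever needs it for $m=2$, where it is a theorem.
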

\begin{proof} We proceed by induction on the dimension.
Let $H$ be a very general sufficiently ample divisor. Pushing forward the short exact sequence
$$0\to \omega _X \to \omega _X (H)\to \omega _H\to 0,$$ one sees that it is enough to prove that 
  ${\rm codim}R^1f_*\omega _X\geq 3$. This can be checked by localizing at a codimension $2$ point, in which case it is a consequence of the relative Kawamata-Viehweg vanishing (which holds for two dimensional  excellent schemes see \cite[2.2.5]{KK94}).
\end{proof}

\section{Generic Vanishing} \label{sec:generic_vanishing}

\subsection{Proof of Theorem \ref{t-m}.} \label{ss-GVT}

Recall that in our notation $k$ is an algebraically closed field of characteristic $p>0$ and $A$   an abelian variety defined over $k$.

\begin{thm}\label{GVT} Let $\psi_{e}:\Omega _{e+1}\to\Omega _e$   be an inverse system of coherent sheaves    
on an abelian variety $A$ such that   for any sufficiently ample line bundle $L$ on $\hat A$ and any $e\gg 0$, $H^i(A,\Omega _e \otimes \hat L^\vee )=0$ for all $i>0$. Then, the complex
\begin{equation*}
\Lambda:=\hocolim R\hat S (D_A(\Omega_e ))
\end{equation*}
is a quasi-coherent sheaf in degree $0$, i.e., $\Lambda =\mathcal H ^0(\Lambda )$. Furthermore, if there is an integer $r>0$, such that the image $\Omega_{e'} \to \Omega_e$ is the same for every $e' \geq e + r$, then
$$\Omega :=\varprojlim \Omega_e = ((-1_A)^* D_A RS (\Lambda ) )[-g].$$
\end{thm}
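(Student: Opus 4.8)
The plan is to recover $\Omega$ by inverting the Fourier--Mukai transform applied to $\Lambda$, using the composition $RS\circ R\hat S=(-1_A)^*[-g]$ of \eqref{M2.2} together with Grothendieck--Serre biduality on the smooth variety $A$. I would work throughout in $D_{\mathrm{qc}}$, so the first assertion (that $\Lambda$ is a sheaf in degree $0$) is not logically needed for the formula itself; I would use only that each $\Omega_e$ is coherent and that $A$ is smooth of dimension $g$, so that $D_A\circ D_A\cong\mathrm{id}$ holds on each $\Omega_e$. Recall also that $D_A$ is contravariant, so the inverse system $\psi_e:\Omega_{e+1}\to\Omega_e$ becomes a direct system $D_A\Omega_e\to D_A\Omega_{e+1}$, and $R\hat S$ preserves this directedness, so $\Lambda$ is the homotopy colimit of a genuine direct system.

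First I would compute $RS(\Lambda)$. Since $RS(?)=Rp_{A,*}(Lp^*_{\hat A}(?)\otimes\sL)$ is built from a flat pullback, a tensor with a line bundle, and a pushforward, it commutes with homotopy colimits exactly as $R\hat S$ does in \eqref{l-dirRS}, the underlying reason being \eqref{l-dir}. Hence, applying $RS\circ R\hat S=(-1_A)^*[-g]$ term by term and pulling the exact functor $(-1_A)^*$ and the shift out of the colimit,
\[
RS(\Lambda)=\hocolim RS\,R\hat S(D_A\Omega_e)=\hocolim (-1_A)^*(D_A\Omega_e)[-g]=(-1_A)^*\bigl(\hocolim D_A\Omega_e\bigr)[-g].
\]

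Next I would apply $(-1_A)^* D_A(\,\cdot\,)[-g]$ and simplify. Using that $D_A$ commutes with the pullback $(-1_A)^*$ along the involution (the argument of \eqref{lem:D_A_T_x} applies verbatim to $-1_A$, since $(-1_A)^*\omega_A^\bullet\cong\omega_A^\bullet$) and that $D_A(Y[-g])=D_A(Y)[g]$, the two copies of $(-1_A)^*$ cancel, reducing the expression to $D_A(\hocolim D_A\Omega_e)[g]$. Now $D_A=\mathcal{RH}om(\,\cdot\,,\omega_A^\bullet)$ turns the homotopy colimit into a homotopy limit by \eqref{lem:direct_inverse_limit_alternative} with $\sD=\omega_A^\bullet$, giving $D_A(\hocolim D_A\Omega_e)=\holim D_A D_A\Omega_e$. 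Biduality identifies the inverse system $D_A D_A\Omega_e$ with the original system $\Omega_e$, so $\holim D_A D_A\Omega_e=\holim\Omega_e$. Finally, the uniform stabilization hypothesis (the image of $\Omega_{e'}\to\Omega_e$ is constant for $e'\ge e+r$) is precisely the Mittag--Leffler condition, so \eqref{lem:holim_equals_lim} yields $\holim\Omega_e=\varprojlim\Omega_e=\Omega$. Assembling these steps gives $(-1_A)^* D_A RS(\Lambda)=\Omega[g]$, and shifting by $[-g]$ produces $\Omega=((-1_A)^* D_A RS(\Lambda))[-g]$, as claimed.

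The main obstacle I anticipate is the bookkeeping forced by the contravariance of $D_A$: one must verify that applying $D_A$ twice to the transition maps $\psi_e$ returns, under the biduality isomorphism, exactly the maps defining $\varprojlim\Omega_e$, so that $\holim D_A D_A\Omega_e=\holim\Omega_e$ is an equality of inverse systems rather than merely a levelwise isomorphism. The second delicate point is that $\holim$ and $\varprojlim$ genuinely differ in general (recall $\holim\neq\varprojlim$ even for the $\mathbb{A}^1$ example in \S\ref{ss-derived_categories}), so the Mittag--Leffler hypothesis is indispensable and must enter at precisely the step invoking \eqref{lem:holim_equals_lim}; without it the right-hand side would only compute the homotopy limit. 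The amplitude and boundedness checks needed to keep each object inside $D_{\mathrm{qc}}$ and to apply biduality are routine given that the $\Omega_e$ are coherent and $A$ is smooth.
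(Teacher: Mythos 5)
Your derivation of the formula $\Omega = ((-1_A)^* D_A RS(\Lambda))[-g]$ is correct and follows essentially the same route as the paper: the same chain of identities via \eqref{l-dirRS}, \eqref{M2.2}, \eqref{lem:direct_inverse_limit_alternative}, biduality, and \eqref{lem:holim_equals_lim}, merely reorganized by computing $RS(\Lambda)$ first and cancelling the two copies of $(-1_A)^*$ at the end. The Mittag--Leffler point you flag is indeed exactly where the stabilization hypothesis enters, as in the paper.

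However, there is a genuine gap: you prove only half of the theorem. The statement also asserts that $\Lambda$ is a quasi-coherent sheaf concentrated in degree $0$, i.e.\ that $\sH^j(\Lambda)=0$ for $j<0$, and you explicitly set this aside as ``not logically needed for the formula.'' That may be true of the formula, but this first assertion is the actual generic vanishing content of the theorem --- it is precisely what is extracted in \eqref{c-0} and \eqref{t-m} --- and it is the only place where the hypothesis $H^i(A,\Omega_e\otimes\hat L^\vee)=0$ for $i>0$ is used. Your proposal never invokes that hypothesis at all, which is the telltale sign that the main point is missing; the obstacle you identify as principal (the contravariance bookkeeping for $D_A$) is routine by comparison. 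The paper's argument for the degree-$0$ statement is not formal: one takes the smallest $j$ with $\sH^j(\Lambda)\neq 0$ and supposes $j\leq -1$, twists by a sufficiently ample $L$ so that $\sH^j(\Lambda_e)\otimes L$ is globally generated and hence contributes a nonzero class to $R^{j}\Gamma(\Lambda\otimes L)$ via the hypercohomology spectral sequence (using minimality of $j$ to kill the other terms), and then derives a contradiction from the duality identity $R\Gamma(\Lambda_e\otimes L)\cong D_k(R\Gamma(\Omega_e\otimes\hat L^\vee))$ --- obtained from Grothendieck duality, the projection formula and $\hat L=RS(L)$ --- which together with the vanishing hypothesis forces $R^{j}\Gamma(\Lambda_e\otimes L)=0$ for $e\gg 0$ and hence $R^{j}\Gamma(\Lambda\otimes L)=\varinjlim R^{j}\Gamma(\Lambda_e\otimes L)=0$. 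None of this appears in your proposal, so as written it does not establish the theorem.
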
 
\begin{proof} The object $\Omega _e$ 
lives in degree 0, hence $D_A(\Omega _e)$ 
lives in degrees $[-g,\ldots ,0]$ and we have the support condition $${\rm codim } \   {\rm Supp}(\sH^j(D_A(\Omega_e )))\geq g+j.$$
We then have that $\Lambda_e :=R\hat S (D_A(\Omega_e ))$ lives in degrees $[-g,\ldots ,0]$. Define as in the statement $\Lambda:= \hocolim \Lambda_e$. By \eqref{l-dir}, $\Lambda$ also  lives in degrees $[-g,\ldots ,0]$ and it has quasi-coherent cohomologies.

To show the statement about $\Lambda$ we must show that \emph{$\Lambda$ lives in cohomological degree $0$}, i.e., that $\sH^j( \Lambda)=0$ for $j\in [-g,\ldots , -1]$. To see this we argue as follows:
Let $j\in  [-g,\ldots , 0]$ be the smallest integer such that $\sH^j(\Lambda)\ne 0$
and assume that $j\leq -1 $. We have that $\sH^j( \Lambda)=\hocolim \sH^j(\Lambda_e)$ (cf.  \eqref{l-dir}) and so we may fix $e>0$ such that the image of 
$\sH^j(\Lambda_e)\to \sH^j(\Lambda)$ is non-zero.
We twist by  a sufficiently ample line bundle $L$ so that  $\sH^j(\Lambda_e)\otimes L$ is globally generated and hence the image of $$R^0\Gamma (\sH^j(\Lambda_e)\otimes L)\to R^0\Gamma (\sH^j( \Lambda)\otimes L)$$ is non-zero. Let $$E^{i,l}_2=R^i\Gamma (\sH^l(\Lambda)\otimes L)$$ abutting to $R^{i+l}\Gamma (\Lambda \otimes L)$. By our choice of $j$ we have that $E^{i,l}_2=0$ for $l<j$ and hence that 
\begin{equation}
\label{eq:generic_vanishing_cohomology_not_zero}
R^{j}\Gamma ( \Lambda \otimes L) \cong R^{0}\Gamma (\sH ^j(\Lambda)\otimes L)=E^{0,j}_2\ne 0. 
\end{equation}
On the other hand, following the beginning of the proof of \cite[1.2]{Hacon04}, we have that
\begin{multline*}D_k(R\Gamma (\Omega _e \otimes \hat L^\vee ))\cong ^{\rm G.D.}R\Gamma (D_A(\Omega _e \otimes \hat L^\vee )) \cong \\ R\Gamma (D_A(\Omega _e )\otimes \hat L) \cong 
R\Gamma (D_A(\Omega _e )\otimes p_{A,*}(\sL \otimes p_{\hat A}^* L)) \cong ^{\rm P.F.}\\ R\Gamma (L {p_A^*}D_A(\Omega _e )\otimes \sL \otimes p_{\hat A}^* L)) \cong ^{\rm P.F.}R\Gamma (R\hat S D_A(\Omega _e )\otimes L) = R\Gamma (\Lambda_e \otimes L).
\end{multline*}
Since, by assumption, $R^l\Gamma (\Omega _e \otimes \hat L^\vee )=0$ for any $e\gg 0$ and $l>0$, it follows that $R^{-l}\Gamma (\Lambda_e\otimes L)=0$ for any $e\gg 0$
and hence  that $R^{j}\Gamma (\Lambda_e\otimes L)=0$ for any $e \gg 0$. Thus $$R^{j}\Gamma (\Lambda \otimes L)=\varinjlim R^{j}\Gamma (\Lambda_e\otimes L)=0$$ (cf. \eqref{l-dir}). This contradicts \eqref{eq:generic_vanishing_cohomology_not_zero} and hence concludes the first part of our statement. 

The second part is shown by the following stream of isomorphisms.
\begin{multline*}D_{A} RS (\hocolim \Lambda_e) 
\cong \underbrace{D_{ A} (\hocolim RS R\hat S D_A (\Omega _e))}_{\textrm{\eqref{l-dirRS}}}  
\\ \cong \underbrace{\holim D_{ A}( (-1_A)^*  D_A (\Omega _e) [ -g])}_{\textrm{\eqref{lem:direct_inverse_limit_alternative}+\eqref{M2.2}}} 
 \cong \underbrace{( \holim D_{ A} (-1_A)_*  D_A (\Omega _e)) [ g] }_{\textrm{$(-1_A)_* = (-1_A)^*$, because $-1_A \circ -1_A = \Id_A$}}
\\ \cong \underbrace{(-1_A)_* (\holim D_{ A}   D_A (\Omega _e)) [ g]}_{\textrm{$(-1_A)_* D_A = D_A (-1_A)_* $ by G.D.}}   
\cong \underbrace{(-1_A)^* \holim \Omega _e [ g]}_{\textrm{$D_AD_A (\sF)= \sF$ for $\sF$ coherent}} \cong  \underbrace{(-1_A)^* \Omega [g]}_{\textrm{\eqref{lem:holim_equals_lim}}} .\end{multline*}
\end{proof}
 Choose a coherent sheaf $\Omega_0$ on $A$ and an integer $s >0$, such that there is a  homomorphism $\alpha : \Omega_1:=F_*^s \Omega_0 \rightarrow \Omega_0$ (i.e., $(\Omega_0, \alpha,s)$ is a Cartier module, c.f., Subsection \ref{subsec:introduction:GVP}). Then this induces for every integer $e \geq 1$ homomorphisms $$F^{es}_* (\alpha) : \Omega_{e+1}:=F^{(e+1)s}_* \Omega_0 \rightarrow \Omega_e:= F^{es}_* \Omega_0.$$  An example of such setup is that of \eqref{lem:S_f_*_mK_X}, by 
setting  $Y:=A$ and defining $\Omega_0:=S^0 f_* (\sigma(X, \Delta) \otimes \sO_X(mK_X))$.
Notice that for any Cartier module $(\Omega_0, \alpha,s)$, there exists an integer $e_0$ and a coherent subsheaf $\Omega '_0\subset \Omega_0$ (\cite[Lemma 13.1]{Gabber}  \cite[Proposition 8.1.4]{BS12}) such that $$\Omega '_0={\rm Im}\left( \Omega _e\to \Omega _0\right) .$$ We then have that $\Omega \cong \varprojlim F^{es}_*\Omega '_0$. Thus replacing $\Omega _0$ by $\Omega '_0$ we may assume that the homomorphisms $\Omega _{e+1}\to \Omega _e$ are surjective.

\begin{lemma}\label{l-ve} With notation as above, let $L$ be an ample line bundle on $\hat A$ and $\hat L =RS(L)=R^0S(L)$ be its Fourier Mukai transform. If $\Omega _e =F^{es}_*\Omega _0$,
then $H^i(A, \Omega _e \otimes \hat L ^\vee \otimes P)=0$ for $e\gg 0$, any $i>0$ and $P\in \hat A$.
\end{lemma}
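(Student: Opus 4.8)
The plan is to strip off the Frobenius push-forward, then pull back along the polarization isogeny $\phi_L\colon\hat A\to A$ to replace the vector bundle $\hat L^\vee$ by a sum of very ample \emph{line} bundles, and finally descend the resulting Serre vanishing back to $A$. Since $\Omega_e=F^{es}_*\Omega_0$ and $F^{es}$ is finite (so $R^{>0}F^{es}_*=0$ and $H^i(A,F^{es}_*\mathcal G)=H^i(A,\mathcal G)$), the projection formula \eqref{PF} gives
\[
H^i(A,\Omega_e\otimes\hat L^\vee\otimes P)\cong H^i\bigl(A,\Omega_0\otimes(F^{es})^*\hat L^\vee\otimes P^{\otimes p^{es}}\bigr).
\]
As $P$ runs through $\hat A$ the twist $Q:=P^{\otimes p^{es}}$ runs through all of $\hat A$, so it suffices to prove that $H^i\bigl(A,\Omega_0\otimes(F^{es})^*\hat L^\vee\otimes Q\bigr)=0$ for all $i>0$, all $Q\in\hat A$ and $e\gg 0$. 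For this I would use $\phi_L^*\hat L^\vee=\bigoplus_{h^0(L)}L$. Because the absolute Frobenius is natural, $F_A\circ\phi_L=\phi_L\circ F_{\hat A}$, whence $\phi_L^*\bigl((F^{es})^*\hat L^\vee\bigr)=(F^{es})^*\phi_L^*\hat L^\vee=\bigoplus_{h^0(L)}L^{\otimes p^{es}}$, a sum of line bundles which become arbitrarily positive as $e$ grows. Pulling back the sheaf above, and noting $\phi_L^*Q\in\Pic^0(\hat A)$, gives $\bigoplus_{h^0(L)}\bigl(\phi_L^*\Omega_0\otimes\phi_L^*Q\otimes L^{\otimes p^{es}}\bigr)$. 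Since $\{\phi_L^*\Omega_0\otimes\phi_L^*Q:Q\in\hat A\}$ is a bounded family and $L$ is ample, uniform Serre vanishing over the proper parameter space $\hat A$ (via the Poincar\'e bundle) produces an $m_0$ with $H^i(\hat A,\phi_L^*\Omega_0\otimes R\otimes L^{\otimes m})=0$ for all $i>0$, $m\geq m_0$ and all $R$ in this family. Taking $e$ with $p^{es}\geq m_0$, and applying the projection formula for the finite map $\phi_L$, yields
\[
H^i\bigl(A,(\Omega_0\otimes(F^{es})^*\hat L^\vee\otimes Q)\otimes\phi_{L*}\sO_{\hat A}\bigr)=0\qquad(i>0,\ e\gg0,\ \forall\,Q).
\]

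The main obstacle is the final descent from $\hat A$ to $A$: when $\phi_L$ is inseparable the trace map is zero, so $\sO_A$ is \emph{not} a direct summand of $\phi_{L*}\sO_{\hat A}$ and one cannot simply split off the wanted cohomology, as one would in characteristic zero or the separable case. I would circumvent this using that, for any isogeny, $\phi_{L*}\sO_{\hat A}$ nevertheless admits a filtration whose graded pieces are the torsion degree-zero line bundles $P_1,\dots,P_d\in\hat A$ indexed (with multiplicity) by the kernel of the dual isogeny. Since each graded piece is locally free, these sequences stay exact after tensoring with $\Omega_0\otimes(F^{es})^*\hat L^\vee\otimes Q$, so that sheaf's tensor product with $\phi_{L*}\sO_{\hat A}$ is filtered with graded pieces $\Omega_0\otimes(F^{es})^*\hat L^\vee\otimes(Q\otimes P_j)$. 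Now suppose the desired vanishing fails and let $i^*\geq1$ be the largest integer with $H^{i^*}(A,\Omega_0\otimes(F^{es})^*\hat L^\vee\otimes Q_0)\neq0$ for some $Q_0$. Choosing $Q=Q_0\otimes P_{\mathrm{top}}^{-1}$, where $P_{\mathrm{top}}$ is the line bundle of the top filtration step, the top graded quotient becomes $\Omega_0\otimes(F^{es})^*\hat L^\vee\otimes Q_0$; since $H^{>i^*}$ of \emph{every} graded piece vanishes by maximality of $i^*$, the filtration long exact sequences show that $H^{i^*}$ of the filtered sheaf surjects onto $H^{i^*}(A,\Omega_0\otimes(F^{es})^*\hat L^\vee\otimes Q_0)\neq0$, contradicting the displayed vanishing. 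Hence the required vanishing holds.

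In short, the routine part is the Frobenius projection formula together with the isogeny pullback that turns $\hat L^\vee$ into a genuinely ample line bundle, where Serre vanishing applies. The one genuinely delicate point, and the reason the argument is harder than over $\bC$, is the existence and use of the $\Pic^0$-filtration of $\phi_{L*}\sO_{\hat A}$: it lets the top-degree bookkeeping argument replace the splitting of the cover, which is exactly what fails in the presence of inseparable $\phi_L$. I would therefore spend most of the effort making this filtration and its graded pieces precise, everything else being standard.
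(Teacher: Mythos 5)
Your proof is correct, and the first half (Frobenius projection formula, $p$-divisibility of $\hat A$ to absorb the twist, pullback along $\phi_L$ turning $F^{es,*}\hat L^\vee$ into $\bigoplus_{h^0(L)}L^{p^{es}}$, uniform Serre--Fujita vanishing on $\hat A$) coincides with the paper's argument step for step. Where you genuinely diverge is the descent from $\hat A$ back to $A$. The paper never returns to $A$ by analyzing $\phi_{L*}\sO_{\hat A}$: instead it observes, via cohomology and base change, that the desired vanishing for \emph{all} $P\in\hat A$ is equivalent to $R\hat S\bigl(\Omega_0\otimes F^{es,*}(\hat L^\vee)\bigr)$ being concentrated in degree $0$, and then uses Mukai's exchange formula \eqref{m3.4}, $\hat\phi_{L,*}\circ R\hat S\cong R\hat S\circ\phi_L^*$, together with exactness and faithfulness of pushforward along the finite isogeny $\hat\phi_L$, to check that condition after pulling back by $\phi_L$ --- which is exactly the situation where Fujita vanishing applies. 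You instead filter $\phi_{L*}\sO_{\hat A}$ by the degree-zero line bundles in $\ker\widehat{\phi_L}\subset\hat A$ and run a descending induction on the top nonvanishing cohomological degree $i^*$, quantified over all twists $Q\in\hat A$ at once. Your filtration claim is true for an arbitrary (possibly inseparable) isogeny --- it is itself most cleanly seen by Fourier--Mukai, since $\phi_{L*}\sO_{\hat A}\cong RS_A(\sO_{\ker\widehat{\phi_L}})$ and an Artinian sheaf is filtered by length-one skyscrapers, a fact the paper uses elsewhere in \eqref{c-2} --- and your bookkeeping with $i^*$ and the choice $Q=Q_0\otimes P_{\mathrm{top}}^{-1}$ is airtight because the graded pieces are locally free, so the filtration survives tensoring. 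What each approach buys: the paper's route is shorter once the Fourier--Mukai toolkit is in place and packages the ``all $P$ at once'' quantifier into a single statement about a complex; yours is more elementary at the level of machinery (no base change for complexes, no exchange lemma in the main line of the argument) and makes explicit, via the $\Pic^0$-filtration, precisely what replaces the splitting of $\sO_A\hookrightarrow\phi_{L*}\sO_{\hat A}$ that fails for inseparable $\phi_L$ --- which is indeed the point the paper flags in the introduction as the reason this lemma is harder than in characteristic zero.
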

\begin{proof} Recall that $\phi _L^*(\hat L ^\vee )\cong L^{\oplus {h^0(L)}}$. 
Note that \begin{multline*}H^i(A, \Omega _e \otimes \hat L ^\vee \otimes P)\cong H^i(A, F^{es}_{A,*} \Omega_0  \otimes \hat L ^\vee \otimes P)\cong ^{\rm proj.\ formula}\\H^i(A, \Omega _0 \otimes F^{es,*}_A(\hat L ^\vee \otimes P))\cong H^i(A, \Omega _0 \otimes F^{es,*}_A(\hat L ^\vee) \otimes P^{p^{es}})\end{multline*}
and (by Cohomology and Base Change) the required vanishing is equivalent to showing that 
$R\hat S (\Omega _0 \otimes F^{es,*}_A(\hat L ^\vee ) )$ is a sheaf (in degree $0$) for every $e \gg 0$.
(We have used the fact that $\hat A$ is $p$-divisible, so for any $Q\in \hat A$ there exists a $P\in \hat A$ with $Q=P^{p^{es}}$.)
This is equivalent to showing 
$$\hat \phi _{L,*}R  \hat S (\Omega _0 \otimes F^{es,*}_A(\hat L ^\vee ) )=R \hat S (\phi _{L}^*(\Omega _0 \otimes F^{es,*}_A(\hat L^\vee ) ))$$ is a sheaf for every $e \gg 0$ (cf. \eqref{m3.4}).
By Cohomology and Base Change, it suffices to show that $$H^i(\hat A, \phi _{L}^* \Omega _0 \otimes  \phi _{L}^*F^{es,*}_{A}(\hat L^\vee)  \otimes P )=H^i(\hat A, \phi _{L}^* (\Omega _0 \otimes  F^{es,*}_A (\hat L^\vee   ))\otimes P)=0$$ for $e \gg 0$, $i>0$ and $P\in \hat A$ (where $e$ is independent of $P$).
Since $F^{es}_A\circ \phi _L=\phi _L\circ  F^{es}_{ \hat A}$, we have $$\phi _{L}^*F^{es,*}_{A}(\hat L^\vee) =F^{es,*}_{ \hat A}\phi _{L}^*(\hat L^\vee ) =F^{es,*} _{\hat A}\left(\bigoplus _{h^0(L)}L\right)=\bigoplus _{h^0(L)}L^{p^{es}},$$ and so the last vanishing is immediate (for $e\gg 0$) from  Serre-Fujita vanishing \cite{Fujita}. 
\end{proof}

From now on we will adopt the following notation. 

\begin{notation}
\label{notation}
Let $\Omega_0$ be a coherent sheaf on $A$ and $s >0$ an integer, such that there is a  homomorphism $\alpha : \Omega_1:=F_*^s \Omega_0 \rightarrow \Omega_0$ (i.e., a Cartier module).  Unless otherwise specified, $\Omega_0$ is arbitrary. We also fix the following notation throughout the artice: $\Omega_e := F^{es}_* \Omega_0$,
$\Omega:= \varprojlim \Omega_e$, $\Lambda_e := R \hat S ( D_A( \Omega_e))$, $\Lambda := \varinjlim \Lambda_e$.
\end{notation}

\begin{cor}\label{c-0} With the  above notation, $\Lambda $ is a quasi-coherent sheaf and $\Omega =(-1_A)^* D_A RS(\Lambda )[-g]$. 
\end{cor}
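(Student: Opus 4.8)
The plan is to obtain \eqref{c-0} by applying Theorem \eqref{GVT} to the inverse system $\Omega_e=F^{es}_*\Omega_0$ attached to the Cartier module $(\Omega_0,\alpha,s)$, so the task reduces to checking the two hypotheses of \eqref{GVT} for this system.

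First I would check the cohomological vanishing hypothesis: for every sufficiently ample line bundle $L$ on $\hat A$ and every $e\gg 0$, $H^i(A,\Omega_e\otimes\hat L^\vee)=0$ for all $i>0$. This is exactly Lemma \eqref{l-ve} specialized to the trivial bundle $P=\sO_A$; indeed \eqref{l-ve} gives the stronger vanishing $H^i(A,\Omega_e\otimes\hat L^\vee\otimes P)=0$ for arbitrary $P\in\hat A$. This is the step carrying the genuine content (the passage through the isogeny $\phi_L$, the identity $\phi_L^*\hat L^\vee\cong L^{\oplus h^0(L)}$, and Serre--Fujita vanishing), and it is already established.

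Next I would check the stabilization hypothesis: that there is an integer $r>0$ with $\im(\Omega_{e'}\to\Omega_e)$ independent of $e'$ once $e'\ge e+r$. Writing $\alpha^{j}\colon F^{js}_*\Omega_0\to\Omega_0$ for the $j$-fold composite of the Cartier structure, the transition map $\Omega_{e'}\to\Omega_e$ is $F^{es}_*(\alpha^{e'-e})$, so its image is $F^{es}_*(\im\alpha^{e'-e})$ because $F^{es}_*$ is exact. By \cite[Lemma 13.1]{Gabber} and \cite[Proposition 8.1.4]{BS12} the descending chain $\{\im\alpha^{j}\}_j$ stabilizes, say to $\Omega_0'$ for all $j\ge e_0$; taking $r:=e_0$ then gives $\im(\Omega_{e'}\to\Omega_e)=F^{es}_*\Omega_0'$ for all $e'\ge e+e_0$, uniformly in $e$.

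With both hypotheses verified, \eqref{GVT} yields at once that $\Lambda=\hocolim R\hat S(D_A(\Omega_e))$ is a quasi-coherent sheaf concentrated in degree $0$ and that $\Omega=\varprojlim\Omega_e=(-1_A)^*D_A RS(\Lambda)[-g]$. Since the object $\varinjlim\Lambda_e$ fixed in Notation \eqref{notation} is precisely this homotopy colimit---its cohomology sheaves being $\varinjlim\mathcal H^j(\Lambda_e)$ by Lemma \eqref{l-dir}(1), which vanish for $j<0$---the two descriptions of $\Lambda$ agree, and the corollary follows. The genuine difficulty of the whole argument is already packaged inside \eqref{l-ve} and \eqref{GVT}, so that the corollary is essentially a verification of hypotheses; the only mildly delicate point I anticipate is securing the uniformity in $e$ of the image-stabilization step, everything else being automatic once $F^{es}_*$ is recognized as exact.
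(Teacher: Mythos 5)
Your proposal is correct and follows essentially the same route as the paper: the paper's proof of \eqref{c-0} is literally ``by \eqref{l-ve} and \eqref{GVT}'', with the image-stabilization hypothesis secured in the paragraph preceding \eqref{l-ve} by invoking the same Gabber/Blickle--Schwede result (there phrased as replacing $\Omega_0$ by the stable image $\Omega_0'$ so that the transition maps become surjective, rather than verifying the uniform-in-$e$ condition directly as you do, but the content is identical since $F^{es}_*$ is exact).
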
\begin{proof} By \eqref{l-ve} and  \eqref{GVT}.\end{proof}

\begin{proof}[Proof of \eqref{t-m}]
Choose $\Omega_0:= S^0 a_* \omega_X$. Then by \eqref{c-0},  $$0=\sH^i( \Lambda) = \sH^i( \hocolim R \hat S( D_A( F^e_* S^0 a_* \omega_X)) )=  \underbrace{\varinjlim \sH^i( R \hat S( D_A( F^e_* S^0 a_* \omega_X)) )}_{\textrm{\eqref{l-dir}}}.$$
\end{proof}

\subsection{Consequences of Theorem \ref{GVT}.} \label{ss-consequences_of_GVT}

First, we present a corollary that is not a consequence of \eqref{GVT}, but it is used frequently from here on. Then we list technical statements, most of which are used in Section \ref{sec:geometry}, except \eqref{lem:support} and \eqref{c-span} that are used already in \eqref{thm:closed_subvariety}. Note that the notations of \eqref{notation} are assumed from here.

\begin{cor} \label{c-1} 
For every closed point $y \in \hat A$, we have 
$$\Lambda \otimes k(y)\cong \varinjlim H^0(A,\Omega _e \otimes P_y^\vee)^\vee \cong \varinjlim H^0(A,\Omega _0 \otimes P_y^{-p^e})^\vee,$$ and for every closed point $y \in \hat A$ and integer $e \geq 0$, $$\sH^0(\Lambda_e) \otimes k(y)\cong H^0(A,\Omega _e \otimes P_y^\vee)^\vee \cong H^0(A,\Omega _0 \otimes P_y^{-p^e})^\vee.$$
\end{cor}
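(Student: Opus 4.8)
The plan is to reduce the assertion about $\Lambda$ to the one about the individual $\Lambda_e$ and then to compute the fibre $\sH^0(\Lambda_e)\otimes k(y)$ by pushing the Fourier--Mukai transform through the point $y$. For the reduction, recall from \eqref{c-0} that $\Lambda$ is a quasi-coherent sheaf placed in degree $0$; hence by \eqref{l-dir}(1) we have $\Lambda=\sH^0(\hocolim\Lambda_e)=\varinjlim\sH^0(\Lambda_e)$, the transition maps being those obtained by applying $R\hat S\circ D_A$ to the surjections $\Omega_{e+1}\to\Omega_e$. Since $-\otimes k(y)$ is right exact and commutes with direct limits, $\Lambda\otimes k(y)\cong\varinjlim\bigl(\sH^0(\Lambda_e)\otimes k(y)\bigr)$, so it suffices to prove the second displayed isomorphism for each fixed $e$ and then pass to the colimit.

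Fix $e$ and let $i_y:\{y\}\hookrightarrow\hat A$ be the inclusion of the closed point. First I would compute the derived fibre $Li_y^*\Lambda_e$. Writing $\Lambda_e=Rp_{\hat A,*}(p_A^*D_A(\Omega_e)\otimes\sL)$ and observing that $p_{\hat A}$ is flat and proper (so the base-change square along $i_y$ is automatically tor-independent), base change gives $Li_y^*\Lambda_e\cong R\Gamma(A,D_A(\Omega_e)\otimes P_y)$, using $\sL|_{A\times\{y\}}\cong P_y$. As $P_y$ is a line bundle, $D_A(\Omega_e)\otimes P_y\cong D_A(\Omega_e\otimes P_y^\vee)$, and Grothendieck duality for $A\to\Spec k$ yields $R\Gamma(A,D_A(\Omega_e\otimes P_y^\vee))\cong D_k R\Gamma(A,\Omega_e\otimes P_y^\vee)$. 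Taking $H^0$ and using $H^0(D_kC)\cong H^0(C)^\vee$ over the field $k$, I obtain $H^0(Li_y^*\Lambda_e)\cong H^0(A,\Omega_e\otimes P_y^\vee)^\vee$.

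The step I expect to be the main obstacle is identifying this derived fibre $H^0(Li_y^*\Lambda_e)$ with the ordinary fibre $\sH^0(\Lambda_e)\otimes k(y)$ that appears in the statement. Here I would use that $\Lambda_e$ lies in cohomological degrees $[-g,0]$ (as recorded in the proof of \eqref{GVT}), so $\Lambda_e\in D^{\leq 0}$. Applying $Li_y^*$ to the truncation triangle $\tau_{\leq-1}\Lambda_e\to\Lambda_e\to\sH^0(\Lambda_e)[0]\stackrel{+1}{\to}$ and noting that $Li_y^*$ is right $t$-exact, so that $Li_y^*\tau_{\leq-1}\Lambda_e$ still lies in $D^{\leq-1}$, the associated long exact sequence forces $H^0(Li_y^*\Lambda_e)\cong H^0(Li_y^*\sH^0(\Lambda_e))=\sH^0(\Lambda_e)\otimes k(y)$. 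Combined with the previous paragraph this gives $\sH^0(\Lambda_e)\otimes k(y)\cong H^0(A,\Omega_e\otimes P_y^\vee)^\vee$.

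It remains to rewrite the right-hand side in terms of $\Omega_0$. By the projection formula \eqref{PF}, $\Omega_e\otimes P_y^\vee=F^{es}_*\Omega_0\otimes P_y^\vee\cong F^{es}_*\bigl(\Omega_0\otimes F^{es,*}P_y^\vee\bigr)$; since the absolute Frobenius sends a line bundle $M$ to $M^{\otimes p}$, we have $F^{es,*}P_y^\vee\cong P_y^{-p^{es}}$, and because $F^{es}_*$ is a homeomorphism it preserves global sections, so $H^0(A,\Omega_e\otimes P_y^\vee)\cong H^0(A,\Omega_0\otimes P_y^{-p^{es}})$. Dualising and taking $\varinjlim_e$ delivers both displayed isomorphisms. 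Apart from the truncation bookkeeping in the third paragraph, where the hypothesis $\Lambda_e\in D^{\leq 0}$ is essential, everything is a formal manipulation of $R\hat S$, the projection formula and duality.
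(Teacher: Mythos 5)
Your proof is correct and follows essentially the same route as the paper: flat base change along $i_y$, the identity $D_A(\Omega_e)\otimes P_y\cong D_A(\Omega_e\otimes P_y^\vee)$, Grothendieck duality, and the colimit reduction via \eqref{l-dir}. The only difference is that you expand the paper's one-line appeal to cohomology and base change in top degree (cited from EGA III) into the explicit truncation-triangle argument using the right $t$-exactness of $Li_y^*$, which is exactly what that citation encapsulates.
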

\begin{proof}Note first that $\Lambda_e$ is supported in cohomological degrees $[-g,\dots, 0]$ as explained in the proof of \eqref{GVT}. Hence, by cohomology and base change,\footnote{Recall that traditionally cohomology and base change is stated for cohomology of coherent sheaves, however it also applies for hypercohomologies of bounded complexes cf. \cite[7.7, 7.7.4, 7.7.12(ii)]{EGAIII} and the remark on \cite[3.6]{PP11}.} and for any $y\in \hat A$ we have $$\sH^0(\Lambda_e) \otimes k(y)=\sH^0(R\hat S D_A(\Omega _e ))\otimes k(y)=R^0\Gamma (D_A(\Omega _e )\otimes P_y)=H^0(A,\Omega _e\otimes P_y^\vee )^\vee .$$ 
Since $\Lambda =\varinjlim  \sH^0(R\hat S D_A(\Omega _e ))$, it follows by \eqref{l-dir} that $\Lambda \otimes k(y)\cong \varinjlim H^0(A,\Omega _e \otimes P_y^\vee)^\vee$. 
\end{proof}


\begin{cor}\label{c-4} Suppose that $ H^0(A,\Omega _0 \otimes P_y)=0$ for all $y\in \hat A$, then $\Lambda=0$ and $\Omega =0$. 
\end{cor}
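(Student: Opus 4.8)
The plan is to prove the vanishing of $\Lambda$ by descending to its coherent approximations $\sH^0(\Lambda_e)$, where Nakayama's lemma is available, and then to deduce $\Omega = 0$ formally. Recall from \eqref{GVT} that $\Lambda$ is a quasi-coherent sheaf concentrated in degree $0$, and that (as used in the proof of \eqref{c-1}, via \eqref{l-dir}) $\Lambda = \sH^0(\Lambda) = \varinjlim \sH^0(\Lambda_e)$. Hence it suffices to show that $\sH^0(\Lambda_e) = 0$ for every $e \geq 0$.

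First I would compute the closed fibers of each $\sH^0(\Lambda_e)$. By \eqref{c-1}, for every closed point $y \in \hat A$ and every $e \geq 0$ we have
$$\sH^0(\Lambda_e) \otimes k(y) \cong H^0(A, \Omega_0 \otimes P_y^{-p^e})^\vee.$$
Since $y \mapsto P_y$ is a group isomorphism $\hat A \xrightarrow{\sim} \Pic^0(A)$, we have $P_y^{-p^e} = P_{-p^e y}$, i.e. it is again of the form $P_z$ for the closed point $z = -p^e y \in \hat A$. Applying the hypothesis $H^0(A, \Omega_0 \otimes P_z) = 0$ to this particular $z$ (note that no surjectivity of the multiplication-by-$p^e$ isogeny is needed, since the hypothesis is assumed for \emph{all} $z \in \hat A$), we conclude that $\sH^0(\Lambda_e) \otimes k(y) = 0$ for every closed point $y$ and every $e$.

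Next I would invoke coherence to upgrade this fiberwise vanishing. Since $\Omega_e$ is coherent, $D_A(\Omega_e)$ lies in $D^b_{\mathrm{coh}}(A)$, and hence $\Lambda_e = R\hat S(D_A(\Omega_e)) \in D^b_{\mathrm{coh}}(\hat A)$; in particular $\sH^0(\Lambda_e)$ is a \emph{coherent} sheaf. A coherent sheaf on $\hat A$ all of whose closed fibers vanish is zero: at each closed point $y$ the stalk $\sH^0(\Lambda_e)_y$ is a finitely generated $\sO_{\hat A, y}$-module with $\sH^0(\Lambda_e)_y \otimes k(y) = 0$, so Nakayama gives $\sH^0(\Lambda_e)_y = 0$, and as this holds at all closed points the sheaf vanishes. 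Thus $\sH^0(\Lambda_e) = 0$ for all $e$, whence $\Lambda = \varinjlim \sH^0(\Lambda_e) = 0$. Finally, \eqref{c-0} gives $\Omega = (-1_A)^* D_A RS(\Lambda)[-g]$, so $\Lambda = 0$ forces $\Omega = 0$.

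The one genuinely delicate point, and the step I would be most careful about, is that one cannot conclude $\Lambda = 0$ directly from the fiberwise vanishing $\Lambda \otimes k(y) = 0$: because $\Lambda$ is merely quasi-coherent (a direct limit of coherent sheaves that need not itself be coherent), Nakayama does not apply to it, and indeed a nonzero quasi-coherent sheaf can have all of its closed fibers zero. Passing through the coherent pieces $\sH^0(\Lambda_e)$ \emph{before} taking the limit is exactly what circumvents this obstacle.
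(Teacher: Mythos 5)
Your proof is correct and follows essentially the same route as the paper's: the paper's one-line argument ``by \eqref{c-1}, $\Lambda_e=0$ for every $e$'' is precisely the combination of the fiberwise identification from \eqref{c-1}, the hypothesis applied to $P_y^{-p^e}=P_{-p^ey}$, and Nakayama for the coherent sheaves $\sH^0(\Lambda_e)$ that you spell out, followed by passing to the limit and invoking \eqref{c-0}. Your remark that Nakayama must be applied to the coherent approximations rather than to the merely quasi-coherent $\Lambda$ is a correct and worthwhile clarification of what the paper leaves implicit.
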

\begin{proof} 
By \eqref{c-1}, $\Lambda_e=0$ for every $e$. Hence $\Lambda =0$ and then $\Omega = (-1_A)^* D_A RS(0) [-g]=0$ by \eqref{GVT}.
\end{proof}

Recall that a unipotent vector bundle is a given by finitely many successive extensions of line bundles $P\in \hat A$ or equivalently the Fourier Mukai transform of an Artinian module of finite rank on $\hat A$.
\begin{cor}\label{c-2} If
$\Lambda$ has a non-zero direct factor which is a direct limit of Artinian coherent sheaves, then  $RS(\Lambda)$ has a non-zero direct factor of the form $\varinjlim V_e$ where $V_e$ are unipotent vector bundles and the maps $V_e \to V_{e+1}$ are injective with cokernel being a unipotent vector bundle as well. In particular,  $\Supp \Omega_0 = \Supp D_A(RS(\Lambda))[-g]=A$  (recall that $ D_A(RS(\Lambda))[-g]$ is a sheaf by \eqref{c-0}).

\end{cor}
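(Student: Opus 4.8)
The plan is to push the hypothesised splitting of $\Lambda$ through the inverse transform $RS$ and then read off supports. Since $RS$ is a triangulated functor it preserves direct sums, and by the argument proving \eqref{l-dirRS} (applied with the roles of $A$ and $\hat A$ interchanged) it commutes with homotopy colimits. Write $\Lambda = \Lambda' \oplus \Lambda''$, where $\Lambda' = \varinjlim M_e$ is the given nonzero direct factor and each $M_e$ is Artinian coherent. Replacing $M_e$ by its image $\bar M_e := \im(M_e \to \Lambda')$, I may assume the transition maps $\bar M_e \hookrightarrow \bar M_{e+1}$ are injective with $\varinjlim \bar M_e = \Lambda'$; each $\bar M_e$, being a quotient of $M_e$, is still Artinian, and so are the cokernels $Q_e := \bar M_{e+1}/\bar M_e$.

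First I would apply $RS$ to the short exact sequences $0 \to \bar M_e \to \bar M_{e+1} \to Q_e \to 0$. By the recalled description of unipotent vector bundles as Fourier--Mukai transforms of Artinian sheaves, $V_e := RS(\bar M_e)$ and $RS(Q_e)$ are unipotent vector bundles concentrated in cohomological degree $0$ (there are no higher direct images since these sheaves are supported at finitely many points). Hence the distinguished triangle $RS(\bar M_e) \to RS(\bar M_{e+1}) \to RS(Q_e) \xrightarrow{+1}$ has all terms in degree $0$, and its cohomology sequence collapses to $0 \to V_e \to V_{e+1} \to RS(Q_e) \to 0$, giving the desired injections with unipotent cokernel. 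Using that $RS$ commutes with homotopy colimits and that $\hocolim = \varinjlim$ for coherent sheaves, $RS(\Lambda') = \varinjlim V_e$, a direct factor of $RS(\Lambda) = RS(\Lambda') \oplus RS(\Lambda'')$ of exactly the required shape; it is nonzero because $RS$ is an equivalence up to shift \eqref{M2.2} and $\Lambda' \neq 0$.

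For the final assertion I would compute supports by dualising the factor $W := \varinjlim V_e$. Since the cokernels $RS(Q_e)$ are locally free, dualising the above sequences yields surjections $V_{e+1}^\vee \twoheadrightarrow V_e^\vee$, so by \eqref{lem:direct_inverse_limit_alternative} and \eqref{lem:holim_equals_lim} (the inverse system $V_e^\vee$ satisfies the ML-condition) one gets $D_A(W)[-g] \cong \varprojlim V_e^\vee$, a sheaf whose support is all of $A$: the limit surjects onto each $V_e^\vee$, which is a nonzero locally free sheaf for $e \gg 0$, so every stalk is nonzero. As $W$ is a direct summand of $RS(\Lambda)$, the sheaf $D_A(W)[-g]$ is a direct summand of $D_A(RS(\Lambda))[-g] = (-1_A)^*\Omega$ (a sheaf by \eqref{c-0}), forcing $\Supp \Omega = A$ and $\Supp D_A(RS(\Lambda))[-g] = A$. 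Finally, for any open $U$ contained in the complement of $\Supp \Omega_0$ one has $\Omega_e|_U = F^{es}_*\Omega_0|_U = 0$ (the absolute Frobenius is a homeomorphism) and restriction to $U$ commutes with $\varprojlim$, so $\Omega|_U = 0$; thus $\Supp \Omega \subseteq \Supp \Omega_0$, and combined with $\Supp \Omega = A$ this yields $\Supp \Omega_0 = A$.

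The main obstacle I anticipate is the degree-$0$ concentration together with the support bookkeeping: one must confirm that the Fourier--Mukai transforms of the Artinian pieces live in a single degree, so that the triangles genuinely produce short exact sequences of vector bundles, and then that dualising the colimit $W$ commutes with the inverse limit (via \eqref{lem:direct_inverse_limit_alternative} and \eqref{lem:holim_equals_lim}) to produce a full-support summand of $\Omega$. Once these are in place, the comparison with $\Omega_0$ is routine.
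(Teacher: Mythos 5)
Your proof is correct and follows essentially the same route as the paper's: split off the factor $\varinjlim G_e$, replace the $G_e$ by their images to get injective transition maps with Artinian cokernels, observe that $RS$ sends these to short exact sequences of unipotent vector bundles, and then dualize to exhibit $D_A(RS(\Lambda))[-g]$ (up to a summand) as an inverse limit of nonzero vector bundles with surjective transition maps, hence of full support. The only (harmless) deviation is at the very end: the paper invokes \eqref{lem:support} to get $\Supp\Omega=\Supp\Omega_0$, whereas you prove only the inclusion $\Supp\Omega\subseteq\Supp\Omega_0$ directly, which suffices and sidesteps the surjectivity hypothesis of that lemma.
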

\begin{proof} 
By assumption, $\Lambda= B\oplus \varinjlim G_e $ where $G_e$ are Artinian sheaves, $\varinjlim G_e \neq 0$ and $B$ is some quasi-coherent sheaf. Then $ RS(\varinjlim G_e)$ is a direct factor of $RS(\Lambda)$. Thus, we may assume  that $\Lambda = \varinjlim G_e$.  Replacing $G_e$ by the image of $G_e \to \Lambda$, we may further assume that the maps $G_e \to G_{e+1}$ are injective. Now, since $G_e$ is Artinian, $H^i(\hat A \otimes_k k(x), G_e \otimes \sP_x )=0$ for every $i>0$ and $x \in A$. Therefore, by cohomology and base-change $V_e:=RS(G_e)$ is a vector bundle. Further there is an exact sequence 
\begin{equation*}
\xymatrix{
0 \ar[r] & V_e \ar[r] & V_{e+1} \ar[r] & W_e:=RS( \coker (G_e \to G_{e+1})) \ar[r] & 0
}
\end{equation*}
and both $V_e$ and $W_e$ are unipotent because Artinian coherent sheaves have a filtration by skyscraper sheaves of length one. Then by \eqref{l-dirRS}
\begin{equation}
\label{eq:RS_Lambda}
 RS ( \Lambda) \cong RS ( \varinjlim G_e) \cong \hocolim RS(G_e) = \varinjlim V_e,
\end{equation}
and furthermore since the homomorphism $V_e \to V_{e+1}$ are injective, $RS(\Lambda) \neq 0$. 

By \eqref{lem:support} and \eqref{c-0}, to prove the support statement it is enough to show that $\Supp D_A(RS (\Lambda))[-g] = A$.  So, by \eqref{eq:RS_Lambda} $D_A(RS (\Lambda)) [-g] = D_A( \varinjlim V_e) [-g] = \varprojlim D_A(V_e) [-g]$ (note that since the $V_e$ are unipotent vector bundles, $D_A(V_e)[-g]$ are sheaves), where the $D_A(V_e)[-g]$ fit into exact sequences of unipotent vector bundles
\begin{equation*}
\xymatrix{
0 \ar[r] & D_A(W_e)[-g] \ar[r] & D_A(V_{e+1})[-g] \ar[r] & D_A(V_e)[-g] \ar[r] & 0.
}
\end{equation*}
Therefore $D_A(RS (\Lambda))[-g]$ is an inverse limit of unipotent vector bundles with surjective maps between them. Thus, $\Supp D_A(RS (\Lambda))[-g] = A$.
\end{proof}

\begin{lem}
\label{lem:support}
If $\alpha : F_*^s \Omega_0 \to \Omega_0$ is surjective, then $\Supp \Omega= \Supp \Omega_0$. 
\end{lem}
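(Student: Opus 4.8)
The plan is to prove the two inclusions $\Supp \Omega \subseteq \Supp \Omega_0$ and $\Supp \Omega_0 \subseteq \Supp \Omega$ separately, exploiting throughout that the absolute Frobenius $F$ is a homeomorphism, indeed the identity on the underlying topological space of $A$. This has two consequences that I would use repeatedly. First, pushing forward along a homeomorphism does not change the set-theoretic support, so $\Supp \Omega_e = \Supp \Omega_0$ for every $e$. Second, for every open $U \subseteq A$ one has $(F^{es}_* \Omega_0)|_U = F^{es}_*(\Omega_0|_U)$, because $(F^{es})^{-1}(U) = U$. I would also use that limits of sheaves are computed sectionwise, so that $\Omega(U) = \varprojlim_e \Omega_e(U)$ for every open $U$ (here the transition maps are the surjections $\Omega_{e+1} \to \Omega_e$, so $\varprojlim_e \Omega_e = \holim_e \Omega_e$ by \eqref{lem:holim_equals_lim}).

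For the inclusion $\Supp \Omega \subseteq \Supp \Omega_0$, suppose $x \notin \Supp \Omega_0$ and choose an open $U \ni x$ with $\Omega_0|_U = 0$. Then the remarks above give $\Omega_e|_U = F^{es}_*(\Omega_0|_U) = 0$ for all $e$, hence $\Omega|_U = \varprojlim_e \Omega_e|_U = 0$, so $x \notin \Supp \Omega$.

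The main point is the reverse inclusion, for which I would show that the projection $\Omega = \varprojlim_e \Omega_e \to \Omega_0$ is a surjection of sheaves; granting this, surjectivity on stalks immediately yields $\Supp \Omega_0 \subseteq \Supp \Omega$. Since surjectivity of a sheaf map may be checked on stalks over an affine cover, I would fix an affine open $U = \Spec R$ and set $M := \Gamma(U, \Omega_0)$. Because $F$ is a homeomorphism, each $\Gamma(U, \Omega_e)$ is identified with the same abelian group $M$, and under these identifications the transition map $\Gamma(U, \Omega_{e+1}) \to \Gamma(U, \Omega_e)$ equals, for every $e$, the single additive operator $V := \Gamma(U, \alpha) : M \to M$ (the pushforward by $F^{es}$ only re-interprets the $R$-module structure and leaves the underlying map on sections unchanged; $V$ is $p^s$-semilinear, but only its additivity matters here). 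Since $\alpha$ is surjective and $U$ is affine, the quasi-coherent kernel has vanishing $H^1$ on $U$, so $V$ is surjective as a map of abelian groups. Thus over $U$ the inverse system is the constant tower $M \xleftarrow{V} M \xleftarrow{V} \cdots$ of surjections, and $\Gamma(U, \Omega) = \varprojlim_e \Gamma(U, \Omega_e)$ surjects onto $M = \Gamma(U, \Omega_0)$ via projection to the bottom term: given $m_0 \in M$, one lifts it successively through $V$ to a compatible sequence.

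The only genuine subtlety is the bookkeeping required to identify all transition maps over $U$ with the single operator $V$, that is, tracking the Frobenius twists on the $R$-module structure and checking they affect neither the underlying additive maps nor the set-theoretic support. Once this is pinned down, the system over each affine chart is a tower of surjections, its limit maps onto the bottom term, and the two inclusions combine to give $\Supp \Omega = \Supp \Omega_0$. I do not expect any obstacle beyond this routine verification.
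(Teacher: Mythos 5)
Your proof is correct and follows essentially the same route as the paper's: the easy inclusion by restricting to an open set where $\Omega_0$ vanishes, and the reverse inclusion by lifting a section step by step through the tower of surjections over an affine open (where surjectivity on sections holds because the quasi-coherent kernel has no higher cohomology), using that inverse limits of sheaves are computed sectionwise. Your packaging of the second half as surjectivity of the sheaf map $\Omega \to \Omega_0$ is a slightly cleaner formulation of the paper's stalkwise argument, and the identification of all transition maps with a single operator $V$ is correct but not actually needed.
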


\begin{proof}
Let $P \in A$. There are two cases:
\begin{itemize}
\item If $P \notin \Supp \Omega_0$, then for every open set $U \subseteq A \setminus \Supp \Omega_0$, $\Omega_0(U) = 0$. Therefore also $\Omega_e(U) = 0$. Hence $(\varprojlim \Omega_e)(U) = \varprojlim (\Omega_e(U)) = 0$. In particular, $\Omega_P=0$, and therefore $P \notin \Supp \Omega$.
\item If $P \in \Supp \Omega_0$, then choose an affine open set $U \ni P$ and an element $s_0 \in \Omega_0(U)$, such that its image in $(\Omega_0)_P$ is not zero. Since $U$ is affine, there is a chain of elements $s_e \in \Omega_e(U)$ such that $s_e$ maps onto $s_{e-1}$ for each $e >0$. Therefore $(s_e|e \geq 0)\in \varprojlim \Omega_e(U) = (\varprojlim \Omega_e)(U)$ defines an element the restriction of which to any $V \subseteq U$ is not zero, because $(s_e|e \geq 0)|_V = (s_e|_V | e \geq 0)$ and $s_0|_V \neq 0$ by the choice of $s_0$. Therefore, this defines a non-zero element of $\Omega_P$, which shows that $P \in \Supp \Omega$. 
\end{itemize}
\end{proof}

\begin{cor}\label{c-span} 
Assume that $F_*^s \Omega_0 \to \Omega_0$ is surjective.  Let $\hat B\subset \hat A$ be an abelian subvariety such that  $$V^0(\Omega_0)=\{P\in \hat A | h^0(\Omega_0 \otimes P)\ne 0\}$$ is contained in finitely many translates of $\hat B$. Then $T_x^* \Omega \cong \Omega$ for every $x \in \widehat{\hat A /\hat B}$. In particular $ \Supp \Omega $ (which is a closed subvariety by \eqref{lem:support}) is fibered by the projection $A\to B$ (i.e., $\Supp \Omega$ is a union of fibers of $A \to B$).
\end{cor}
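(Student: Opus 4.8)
The plan is to transport the desired translation-invariance of $\Omega$ into a tensor-invariance of $\Lambda$ via the Fourier--Mukai correspondence, and then read the latter off from the hypothesis on $V^0(\Omega_0)$. Throughout write $q : \hat A \to E := \hat A/\hat B$ for the quotient, $\pi : A \to B := \widehat{\hat B}$ for the dual of $\hat B \hookrightarrow \hat A$, and $K := \widehat{\hat A / \hat B} = (\ker \pi)^0 \subseteq A$; recall that a point $z \in A = \Pic^0(\hat A)$ lies in $K$ exactly when the associated $P_z := \sL|_{\{z\} \times \hat A} \in \Pic^0(\hat A)$ has the form $q^* M$ with $M \in \Pic^0(E)$, equivalently $P_z|_{\hat B} \cong \sO_{\hat B}$. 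First I would reduce the statement to showing $\Lambda \otimes P_z \cong \Lambda$ for every $z \in K$. By \eqref{c-0} we have $\Omega \cong (-1_A)^* D_A RS(\Lambda)[-g]$. Using $(-1_A) \circ T_x = T_{-x} \circ (-1_A)$ one gets $T_x^* (-1_A)^* \cong (-1_A)^* T_{-x}^*$; by \eqref{lem:D_A_T_x} the functor $T_{-x}^*$ commutes with $D_A$; and combining \eqref{M3.1} (in its analogue for $R\hat S$) with \eqref{M2.2} yields the exchange formula $T_z^* \circ RS \cong RS \circ (\,\cdot \otimes P_z)$ for $z \in A$. Stringing these together gives $T_x^* \Omega \cong (-1_A)^* D_A RS(\Lambda \otimes P_{-x})[-g]$, so that $T_x^* \Omega \cong \Omega$ follows once $\Lambda \otimes P_{-x} \cong \Lambda$; as $-x$ ranges over $K$ together with $x$, it suffices to prove $\Lambda \otimes P_z \cong \Lambda$ for all $z \in K$.

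Next I would control the support of $\Lambda$. By \eqref{c-0} and \eqref{l-dir}, $\Lambda = \sH^0(\Lambda) = \varinjlim_e \sH^0(\Lambda_e)$, and \eqref{c-1} gives $\sH^0(\Lambda_e) \otimes k(y) \cong H^0(A, \Omega_0 \otimes P_y^{-p^e})^\vee$, so $\Supp \sH^0(\Lambda_e) \subseteq \{ y : -p^e y \in V^0(\Omega_0)\}$. Since $V^0(\Omega_0)$ is contained in finitely many translates $Q_i + \hat B$ and $[p^e]$ maps $\hat B$ onto $\hat B$, this locus equals $\bigcup_i [p^e]^{-1}(-Q_i + \hat B)$, a \emph{finite} union of $\hat B$-cosets. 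Replacing $\sH^0(\Lambda_e)$ by its image $\Lambda^{(e)} := \im(\sH^0(\Lambda_e) \to \Lambda)$ (as in the proof of \eqref{c-2}), I may assume the transition maps are injective, so that $\Lambda = \bigcup_e \Lambda^{(e)}$ with $\Lambda^{(e)} \subseteq \Lambda^{(e+1)}$; then the scheme-theoretic supports $Y_e := \Supp \Lambda^{(e)}$ are nested, $Y_e \subseteq Y_{e+1}$, and each has reduction a finite union of $\hat B$-cosets.

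Now fix $z \in K$ and write $P_z = q^* M$ with $M \in \Pic^0(E)$. Since $q$ contracts each $\hat B$-coset to a point, the image $q(Y_e) \subseteq E$ is zero-dimensional, and a degree-zero line bundle on an Artinian scheme is trivial; hence $M|_{q(Y_e)} \cong \sO_{q(Y_e)}$, and pulling back, $P_z|_{Y_e} \cong \sO_{Y_e}$. As $\Lambda^{(e)}$ is an $\sO_{Y_e}$-module, this already yields $\Lambda^{(e)} \otimes P_z \cong \Lambda^{(e)}$ for each $e$. The one genuine difficulty, which I expect to be the main obstacle, is to make these isomorphisms compatible with the inclusions $\Lambda^{(e)} \hookrightarrow \Lambda^{(e+1)}$ so that they descend to the colimit: a single global trivialization of $P_z$ on $\Supp \Lambda$ need not exist, since the closure of $q(\bigcup_e Y_e)$ may be positive-dimensional, where $M$ is nontrivial. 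I would resolve this using that the $q(Y_e)$ are nested zero-dimensional subschemes: choose a trivialization $\sigma_0$ of $M|_{q(Y_0)}$ and extend it inductively to trivializations $\sigma_e$ of $M|_{q(Y_e)}$ (possible because the restriction on units $\sO_{q(Y_{e+1})}^\times \to \sO_{q(Y_e)}^\times$ is surjective on Artinian schemes), then pull the $\sigma_e$ back to compatible trivializations of $P_z|_{Y_e}$. This produces an isomorphism of direct systems $(\Lambda^{(e)} \otimes P_z) \cong (\Lambda^{(e)})$, and hence $\Lambda \otimes P_z \cong \Lambda$ in the limit.

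Finally, substituting back through the reduction of the first paragraph gives $T_x^* \Omega \cong \Omega$ for every $x \in K$. For the last assertion, $\Supp \Omega$ is closed (it equals $\Supp \Omega_0$ by \eqref{lem:support}) and, by the invariance just proved, is stable under translation by $K = (\ker \pi)^0$; a closed $K$-invariant subset is a union of $K$-cosets, i.e. a union of fibers of $A \to A/K$, which agrees with $\pi : A \to B$ up to isogeny. Hence $\Supp \Omega$ is fibered by the projection $A \to B$, as required.
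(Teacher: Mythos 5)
Your argument is correct, and its overall architecture matches the paper's: both reduce $T_x^*\Omega\cong\Omega$ to $\Lambda\otimes P_z\cong\Lambda$ via the chain $T_x^*$, $D_A$, $RS$ with \eqref{lem:D_A_T_x}, \eqref{M3.1} and \eqref{c-0}, and both extract the tensor-invariance of $\Lambda$ from the fact that the cohomology support loci sit in finitely many $\hat B$-cosets. Where you diverge is in how the invariance is propagated from level $0$ to the colimit. The paper argues only at level $0$ that $\sH^0(\Lambda_0)\otimes \pi^*P\cong\sH^0(\Lambda_0)$ (its scheme-theoretic support maps to an Artinian subscheme of $\hat A/\hat B$, where every degree-zero line bundle is trivial), and then transports this to every $\Lambda_e$ by the isogeny relation $\sH^0(\Lambda_e)\cong \hat F^{es,*}\sH^0(\Lambda_0)$ from \eqref{m3.4}, writing $P=\hat F^{es,*}Q$ with $Q\in\pi^*\Pic^0(\hat K)$; this is shorter but leaves implicit the compatibility of the resulting isomorphisms with the transition maps when passing to $\varinjlim\sH^0(\Lambda_e)$. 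You instead control the supports of all the $\sH^0(\Lambda_e)$ at once (via $\Supp\sH^0(\Lambda_e)\subseteq[-p^e]^{-1}V^0(\Omega_0)$, still a finite union of $\hat B$-cosets) and then explicitly build a compatible system of trivializations of $P_z$ on the nested Artinian images $q(Y_e)$ by lifting units along the surjections $\sO_{q(Y_{e+1})}^\times\to\sO_{q(Y_e)}^\times$. So your route costs a little more bookkeeping but actually addresses the colimit-compatibility point that the paper's two-line conclusion $\varinjlim\sH^0(\Lambda_e)\otimes P\cong\Lambda$ glosses over, while the paper's route buys brevity from the Frobenius-pullback structure of the direct system. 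Both are valid proofs of the statement.
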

\begin{proof} By \eqref{c-1}, the sheaf $\sH ^0(\Lambda_0)$ is supported on $V^0(\Omega_0)$. Let $\hat K=\hat {A}/\hat B$, then as $V^0(\Omega_0)$ is contained in finitely many fibers of  $\pi :\hat {A}\to \hat K$, it follows that $\sH ^0(\Lambda_0)\otimes \pi^* P\cong \sH ^0(\Lambda_0)$ for all $P\in {\rm Pic}^0(\hat K)=K\subset   {\rm Pic}^0(\hat A )=A$.
Since $\hat F^{es}:\hat A \to \hat A$ is an isogeny, for any $ P\in {\rm Pic} ^0(\hat A)$ and any $e>0$ we may pick $ Q\in {\rm Pic }^0(\hat A)$ such that $\hat F^{es,*} Q\cong P$. If moreover $ P\in \pi ^* {\rm Pic} ^0(\hat K)$, then we may assume that $Q\in \pi ^* {\rm Pic} ^0(\hat K)$.
By \eqref{m3.4}, it follows that 
$$\sH^0(\Lambda _e)\otimes P\cong (\hat F ^{es,*} \sH^0(\Lambda _0))\otimes P\cong \hat F ^{es,*}(\sH^0(\Lambda _0) \otimes Q)\cong \hat F ^{es,*} \sH^0(\Lambda _0) \cong \sH^0(\Lambda _e).$$  But then 
\begin{equation}
\label{eq:Lambda_invariant}
\Lambda \otimes P = \sH^0(\Lambda ) \otimes P =\varinjlim \sH^0(\Lambda _e) \otimes P \cong \Lambda  
\end{equation}
 and so
\begin{multline*}
T_x^* \Omega \cong T_x^*((-1_A)^* (D_A RS (\Lambda )) [-g]) \cong
\\ \cong ((-1_A)^*  T_{-x}^* D_A RS (\Lambda )) [-g] \cong \underbrace{((-1_A)^*   D_A T_{-x}^* RS (\Lambda )) [-g]}_{\textrm{ \eqref{lem:D_A_T_x}}}
\\ \cong \underbrace{((-1_A)^*   D_A  RS (\Lambda \otimes P_{x} )) [-g]}_{\textrm{\eqref{M3.1}}} \cong \underbrace{((-1_A)^*   D_A  RS (\Lambda  )) [-g]}_{\textrm{\eqref{eq:Lambda_invariant}}} \cong \Omega .
\end{multline*}
Therefore $\Supp \Omega$  is invariant under $T_x$ for every $x \in K$, which concludes our proof.
\end{proof} 

\begin{rem}
The image of $\Supp \Omega$ in $B$ can have positive dimension, even if one takes $\hat B$ to be the smallest abelian subvariety as above. For example if one takes the embedding $a : C \to A$ of a curve  of genus at least two into its Jacobian, and $\Omega_0:= S^0 a_* \omega_C$. Then $\Omega_0 = a_* \omega_C$ and $\Supp \Lambda_0 = \hat A$. Thus, $\hat A=  \hat B$, and hence $A=B$. So, the image of $\Supp \Omega = \Supp \Omega_0$ in $B$ is isomorphic to $C$. 
\end{rem}
\begin{thm}
\label{thm:closed_subvariety}
If $a : X \hookrightarrow A$ is a closed, smooth subvariety of general type of an abelian variety, then the smallest abelian subvariety $\hat B \subseteq \hat A$ such that the union of finitely many translates of $\hat B$ contains $V^0(A, S^0 a_* \omega_X)$ is equal to $\hat A$.
\end{thm}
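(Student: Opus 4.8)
The plan is to set $\Omega_0 := S^0 a_* \omega_X$ and argue by contradiction using \eqref{c-span} and \eqref{lem:support}. The first point I would establish is that, because $X$ is smooth, it is locally Frobenius split, so the Grothendieck trace $F_* \omega_X \to \omega_X$ is surjective; since $a$ is a closed immersion, $a_*$ is exact and hence each iterated map $F^{es}_* a_* \omega_X \to a_* \omega_X$ is surjective. Therefore the descending intersection defining $S^0 a_* \omega_X$ stabilizes to all of $a_* \omega_X$, that is $\Omega_0 = a_* \omega_X$, the Cartier-module map $\alpha : F^s_* \Omega_0 \to \Omega_0$ is surjective, and $\Supp \Omega_0 = \Supp a_* \omega_X = X$. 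By \eqref{lem:support} this gives $\Supp \Omega = X$ as well.

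Next I would suppose, for contradiction, that the smallest abelian subvariety $\hat B$ for which $V^0(A, S^0 a_* \omega_X) = V^0(\Omega_0)$ lies in finitely many translates of $\hat B$ is proper, $\hat B \subsetneq \hat A$. Then $\widehat{\hat A / \hat B} =: K \subseteq A$ is a positive-dimensional abelian subvariety, and the quotient $\pi : A \to B := A/K$ has dual isogeny realizing $\hat B$. Since the surjectivity hypothesis of \eqref{c-span} holds by the first paragraph, that corollary applies to $\hat B$ and shows that $\Supp \Omega$ is a union of fibers of $\pi$. Combined with $\Supp \Omega = X$, this forces $X = \pi^{-1}(\pi(X))$, i.e. $X$ is invariant under translation by $K$: one has $K + X = X$.

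Finally I would contradict the general type hypothesis. Because $X$ is $K$-invariant, the induced map $X \to Y := \pi(X)$ is surjective with every fiber a translate of $K$ and with $\dim Y = \dim X - \dim K < \dim X$. On a general fiber $F \cong K$ the canonical bundle restricts trivially, $\omega_X|_F \cong \omega_K \cong \sO_F$, so $\kappa(F) = 0$; the easy addition inequality then yields $\kappa(X) \le \dim Y + \kappa(F) = \dim Y < \dim X$, contradicting $\kappa(X) = \dim X$. Hence no proper $\hat B$ can occur and the minimal $\hat B$ is $\hat A$. I expect the only genuinely delicate step to be the full-support identification $S^0 a_* \omega_X = a_* \omega_X$ (equivalently, that smoothness makes the iterated trace literally surjective, so that $\Supp \Omega$ is all of $X$ rather than a proper subvariety); granting this, everything else is a formal consequence of \eqref{c-span}, \eqref{lem:support}, and the triviality of $\omega_X$ along the fibers of an abelian fibration.
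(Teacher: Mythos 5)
Your proposal is correct and follows essentially the same route as the paper: identify $S^0 a_*\omega_X$ with $a_*\omega_X$ via surjectivity of the trace on the smooth $X$, invoke \eqref{c-span} and \eqref{lem:support} to make $X=\Supp\Omega$ a union of fibers of $A\to B$, and then contradict general type using triviality of $\omega_X$ on the positive-dimensional abelian fibers. The only cosmetic difference is that you conclude via the easy addition inequality $\kappa(X)\le \dim Y+\kappa(F)$ where the paper directly contradicts bigness of $\omega_X$; both are valid.
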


\begin{proof}
Assume that $\hat B \subsetneq \hat A$. Set $\Omega_0= S^0 a_* \omega_X$ ($=a_* \omega_X$, because $X$ is smooth). By \eqref{c-span} and \eqref{lem:support}, there is a fibration $h : X \to Y$ such that every fiber is a positive dimensional abelian variety. Hence $\omega_X|_G \cong \sO_G$ for the general fiber $G$ of $h$. However, this contradicts the fact that $\omega_X$ is big. 
\end{proof}

\begin{rem}
It is easy to generalize \eqref{thm:closed_subvariety} to the case when $X$ is mildly singular in an adequate sense. We leave this to the interested reader since we will not need this in what follows.
\end{rem}

\subsection{Frobenius stable cohomology support loci (proof of \eqref{c-gv-intro} and \eqref{p-finite1-intro})}\label{ss-cohomo_support_loci}

We do not have an optimal definition of Frobenius stable cohomology support locus (see below for the different variants). So, we present separate statements for a few different possible candidates. Recall that \eqref{notation} is assumed for this section and the rest of the article.

\begin{cor}\label{c-gv} There exists a proper closed subset $Z\subset \hat A$ such that if $i>0$ and $p^e y\not \in Z$ for infinitely many $e\geq 0$, then $\varinjlim H^i(A_{k(y)},\Omega _e \otimes P_y^\vee)^\vee=0$ or equivalently $\varprojlim H^i(A_{k(y)},\Omega _e\otimes P_y^\vee)=0$. Let $W^i=\{ y\in \hat A| \varprojlim H^i (A_{k(y)}, \Omega _e\otimes P_y^\vee)\ne 0\}$, then $W^i\subset Z'=\overline{\bigcup \left( [{p^e_{\hat A}}]^{-1}(Z)\right)_{\rm red}}$. 
\end{cor}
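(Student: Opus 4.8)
The plan is to exploit the key structural result \eqref{c-0}, which tells us that $\Lambda := \varinjlim \Lambda_e$ is a quasi-coherent \emph{sheaf} (living in cohomological degree $0$), together with the base-change description of its fibers from \eqref{c-1}. The strategy is to produce the closed set $Z$ as the support of a suitable coherent subsheaf of $\sH^0(\Lambda_0)$, and then use the Frobenius-pullback relations to propagate the vanishing along the $p$-power orbits.

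First I would set up the fiberwise picture. By \eqref{c-1}, for a closed point $y \in \hat A$ we have
\begin{equation*}
\sH^0(\Lambda_e) \otimes k(y) \cong H^0(A, \Omega_e \otimes P_y^\vee)^\vee \cong H^0(A, \Omega_0 \otimes P_y^{-p^e})^\vee,
\end{equation*}
and passing to the limit, $\Lambda \otimes k(y) \cong \varinjlim H^0(A, \Omega_e \otimes P_y^\vee)^\vee$. The crucial leverage is that $\Lambda$ is a genuine sheaf, concentrated in degree $0$; dualizing, this controls the \emph{limit} of the relevant $H^0$'s rather than individual terms. For the $H^i$ with $i>0$ one instead works with $\sH^{-i}(\Lambda_e)$: since $\Lambda = \hocolim \Lambda_e$ lives in degree $0$ by \eqref{GVT}/\eqref{c-0}, one has $\varinjlim \sH^{-i}(\Lambda_e) = \sH^{-i}(\Lambda) = 0$ for $i>0$ by \eqref{l-dir}. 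Thus for each fixed $i>0$ the higher cohomology sheaves of the $\Lambda_e$ die in the limit.

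Next I would define $Z$. The sheaf $\sH^{-i}(\Lambda_0)$ is coherent on $\hat A$, so I set $Z := \bigcup_{i>0} \Supp \sH^{-i}(\Lambda_0)$, a proper closed subset (proper because $\sH^{-i}(\Lambda_0)$ is a torsion sheaf for $i>0$, by the support/codimension condition $\operatorname{codim} \Supp \sH^{j}(D_A(\Omega_0)) \geq g + j$ recalled in the proof of \eqref{GVT}). The decisive step is the Frobenius twist: by \eqref{m3.4}, $\sH^{-i}(\Lambda_e) \cong \hat F^{es,*} \sH^{-i}(\Lambda_0)$, so the fiber $\sH^{-i}(\Lambda_e)\otimes k(y)$ is governed by $\sH^{-i}(\Lambda_0)$ at the point $[p^{es}]y$ (the Frobenius on $\hat A$ acting as multiplication by $p$ on closed points up to the Verschiebung bookkeeping of \eqref{lem:dual_of_Frobenius}). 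Hence if $p^e y \notin Z$ for infinitely many $e$, then $\sH^{-i}(\Lambda_e)\otimes k(y) = 0$ for those $e$, and by cohomology and base change $H^i(A_{k(y)}, \Omega_e \otimes P_y^\vee) = 0$ for those same $e$; taking the limit kills $\varinjlim H^i$, equivalently $\varprojlim H^i = 0$. The inclusion $W^i \subseteq Z' = \overline{\bigcup_e ([p^e_{\hat A}]^{-1}(Z))_{\mathrm{red}}}$ is then the contrapositive: if $y \in W^i$, then $H^i(A_{k(y)},\Omega_e \otimes P_y^\vee) \neq 0$ for all large $e$, forcing $p^e y \in Z$ for all large $e$, i.e.\ $y \in [p^e_{\hat A}]^{-1}(Z)$ for all $e \gg 0$, so $y \in Z'$.

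The main obstacle I anticipate is the careful bookkeeping between the absolute Frobenius, the Verschiebung, and the twisted $k$-structures flagged in Subsection \ref{ss-F}: one must verify that \eqref{m3.4} genuinely identifies $\sH^{-i}(\Lambda_e)$ with the $\hat F^{es}$-pullback of $\sH^{-i}(\Lambda_0)$ at the level of sheaves, so that ``$p^e y \notin Z$'' translates correctly into vanishing of the fiber, and that cohomology and base change applies to the bounded complex $D_A(\Omega_e)\otimes P_y$ (as noted in the footnote to \eqref{c-1}, this is legitimate for hypercohomology of bounded complexes). Handling the field extension to $k(y)$ for non-closed $y$ and confirming the equivalence $\varinjlim H^i(\cdots)^\vee = 0 \Leftrightarrow \varprojlim H^i(\cdots) = 0$ via the duality $(\varinjlim V_e)^\vee = \varprojlim V_e^\vee$ of finite-dimensional vector spaces are routine but need to be stated cleanly.
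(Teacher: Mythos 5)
Your overall strategy is the paper's, but your choice of $Z$ creates a genuine gap, in two places. First, $Z:=\bigcup_{i>0}\Supp\sH^{-i}(\Lambda_0)$ need not be a proper closed subset of $\hat A$. The codimension estimate you invoke bounds $\Supp\sH^j(D_A(\Omega_0))$ as a subset of $A$; the Fourier--Mukai transform $R\hat S$ does not preserve supports (e.g.\ $R\hat S$ of a skyscraper sheaf is a line bundle on all of $\hat A$), so it gives no control on $\Supp\sH^{-i}(\Lambda_0)\subseteq\hat A$. In fact, properness of these supports for $i>0$ is essentially the classical generic vanishing statement (the analogue of (1) of \eqref{t-eqGV}), which the paper explicitly says it cannot prove and which can fail in characteristic $p$ by \cite{HK12}. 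Second, even at a point $y$ with $p^ey\notin Z$, vanishing of the sheaves $\sH^{-i}(\Lambda_e)$ for $i>0$ near $y$ does not give $H^i(A_{k(y)},\Omega_e\otimes P_y^\vee)=0$: the identification $\sH^{-i}(\Lambda_e)\otimes k(y)\cong H^i(A_{k(y)},\Omega_e\otimes P_y^\vee)^\vee$ is cohomology and base change, and it requires local freeness of the cohomology sheaves of $\Lambda_e$ (including $\sH^0(\Lambda_e)$) near $y$, not merely vanishing of the higher ones. Without that, $H^i(A_{k(y)},\Omega_e\otimes P_y^\vee)^\vee$ picks up contributions from $\mathrm{Tor}_i(\sH^0(\Lambda_e),k(y))$, which in the limit give $\mathrm{Tor}_i(\Lambda,k(y))$ --- not obviously zero, since $\Lambda$ is only quasi-coherent, not flat.

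The paper repairs both problems with a single different choice: $Z$ is the locus where some $\sH^j(\Lambda_0)$ fails to be locally free. This is automatically a proper closed subset (finitely many coherent sheaves on the integral scheme $\hat A$), and outside it the complex $\Lambda_0$ --- hence $\Lambda_e=V^{e,*}\Lambda_0$ at any $y$ with $p^ey\notin Z$ --- has locally free cohomology in every degree, so cohomology and base change holds and $\sH^{-i}(\Lambda_e)\otimes k(y)\cong H^i(A_{k(y)},\Omega_e\otimes P_y^\vee)^\vee$ for the infinitely many good $e$. The remainder of your argument, namely using $\varinjlim\sH^{-i}(\Lambda_e)=0$ from \eqref{GVT}, the Verschiebung pullback relation from \eqref{m3.4} and \eqref{lem:dual_of_Frobenius}, the duality between $\varinjlim$ and $\varprojlim$ of finite-dimensional vector spaces, and the contrapositive giving $W^i\subseteq Z'$, coincides with the paper's and goes through once $Z$ is chosen correctly.
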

\begin{proof} Let $Z$ be the proper closed subset where  $\sH^i(\Lambda_0)$ is not locally free  for any $0\leq i\leq g$. Note that $\Lambda_e = V^{e,*} \Lambda_0$, where $V$ is the Verschiebung which takes $P_y$ to $P_y^{p^e}=P_{p^ey}$ (cf. \eqref{m3.4} and \eqref{lem:dual_of_Frobenius}). In particular, if $y$ is as above, that is, $p^e y \not\in Z$ for infinitely many $e \geq 0$, then $\sH^i(\Lambda_e)$ is locally free at $y$ for infinitely many $e \geq 0$ and every $i$. In particular, by cohomology and base change (similarly to the proof of \eqref{c-1}), for infinitely many $e \geq 0$,
$$\sH^{-i}(\Lambda_e)\otimes k(y)\cong H^i(A_{k(y)},\Omega _e \otimes P_y^\vee)^\vee.$$ 
Since $\varinjlim \sH^{-i}(\Lambda_e)=0$ by \eqref{GVT}, it follows that $\varinjlim H^i(A_{k(y)},\Omega _e \otimes P_y^\vee)^\vee=0$.
Note that $\varprojlim H^i(A_{k(y)},\Omega _e\otimes P_y^\vee)=D_{k(y)}(\varinjlim H^i(A_{k(y)},\Omega _e \otimes P_y^\vee)^\vee)$.

\end{proof}

\begin{proposition}\label{p-finite} Let $X$ be a smooth, projective variety over $k$, $a : X \to A$ the Albanese morphism of $X$ and define $ V_S^0 :=\{P\in \hat A | S^0(X,\omega _X\otimes a^*P)\ne 0\}$. Then:
\begin{enumerate}
 \item $V_S^0 \subset \overline{V_S^0 }$ is the complement of countably many locally closed subsets. 
\item Whenever $P\in V_S^0 $, we also have $P^p\in V_S^0 $. If moreover $A$ has no supersingular factors, then each maximal dimensional irreducible component of the closure of $V_S^0 $ is a torsion translate of an abelian subvariety of $\hat A$.
\end{enumerate}
\end{proposition}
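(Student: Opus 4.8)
The plan is to argue directly with the iterated Grothendieck trace, bypassing the Fourier--Mukai machinery of the previous subsections. Writing $L=a^*P$, I would let
\[
\mathrm{Tr}^e_L \colon F^e_*\bigl(\omega_X \otimes L^{p^e}\bigr) \longrightarrow \omega_X \otimes L
\]
denote the $e$-th iterate of the trace; this is exactly the map occurring in the definition of $S^0(X,\omega_X\otimes a^*P)$ (the case $\Delta=0$, $s=1$, $\sO_X(M)=\omega_X\otimes a^*P$). Set $\tau^e_P:=H^0(X,\mathrm{Tr}^e_L)$ and $W_e(P):=\im\tau^e_P\subseteq H^0(X,\omega_X\otimes a^*P)$, so that by definition $S^0(X,\omega_X\otimes a^*P)=\bigcap_e W_e(P)$. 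The single observation driving everything is the factorization
\[
\mathrm{Tr}^e_L = \mathrm{Tr}^1_L \circ F_*\bigl(\mathrm{Tr}^{e-1}_{L^p}\bigr),
\]
which follows from $L^p\cong F^*L$ together with the projection formula $F_*(\omega_X\otimes F^*L)\cong (F_*\omega_X)\otimes L$. It shows in particular that the $W_e(P)$ form a descending chain of finite dimensional spaces, hence stabilize to $S^0(X,\omega_X\otimes a^*P)$, so that $P\in V_S^0$ if and only if $W_e(P)\neq 0$ for \emph{every} $e$.

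For part (1), I would first record that for each fixed $e$ the locus $U_e:=\{P\in\hat A\mid W_e(P)\neq 0\}$ is constructible, and then observe that $V_S^0=\bigcap_e U_e$. To prove constructibility one globalizes $\mathrm{Tr}^e$ over $\hat A$: pulling the Poincar\'e bundle back along $a\times\id_{\hat A}$ and along $\id_X\times[p^e]$ produces a coherent sheaf on $X\times\hat A$ whose fibre over $P$ is $\omega_X\otimes a^*P^{p^e}$, and the trace globalizes to a map of coherent sheaves. Stratifying $\hat A$ so that the relevant pushforwards are locally free and commute with base change (cohomology and base change, in the spirit of \eqref{thm:openness_S_0}), the fibrewise rank of the induced map of pushforwards is lower semicontinuous on each stratum, so $U_e$ is constructible and $U_e^c$ is a finite union of locally closed subsets. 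Since $V_S^0=\bigcap_e U_e$, we obtain $\overline{V_S^0}\setminus V_S^0=\bigcup_e\bigl(\overline{V_S^0}\cap U_e^c\bigr)$, a countable union of locally closed subsets, which is (1).

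For the first assertion of part (2), I would apply the factorization to global sections to get $\im\tau^e_P=\tau^1_P\bigl(\im\tau^{e-1}_{P^p}\bigr)$. Intersecting over $e$, and using that for fixed $P$ the chain $\{\im\tau^d_{P^p}\}_d$ is descending and eventually constant, the intersection commutes with the single linear map $\tau^1_P$, yielding
\[
S^0(X,\omega_X\otimes a^*P)=\tau^1_P\bigl(S^0(X,\omega_X\otimes a^*P^p)\bigr).
\]
Consequently $S^0(X,\omega_X\otimes a^*P)\neq 0$ forces $S^0(X,\omega_X\otimes a^*P^p)\neq 0$, i.e. $P\in V_S^0\Rightarrow P^p\in V_S^0$; phrased on $\hat A$, this reads $[p]\bigl(V_S^0\bigr)\subseteq V_S^0$.

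Finally, for the structure statement I would set $Z:=\bigl(\overline{V_S^0}\bigr)_{\mathrm{red}}$. Since $[p]$ is a finite morphism and $[p](V_S^0)\subseteq V_S^0$, we get $[p](Z)\subseteq Z$. Because having no supersingular factor is invariant under isogeny and under duality, and $A$ is isogenous to $\hat A$, the hypothesis on $A$ transfers to $\hat A$. Applying \eqref{t-pr} to the reduced closed subscheme $Z\subseteq\hat A$ in the case $p(Z)\subseteq Z$ then shows that every maximal dimensional irreducible component of $Z$ is completely linear, i.e. a torsion translate of an abelian subvariety, giving (2). The step I expect to be the main obstacle is the bookkeeping in part (1): turning the twist by $P^{p^e}$ (rather than by $P$ itself) into an honest flat family over $\hat A$ and verifying cohomology and base change along the stratification, so that the fibrewise image ranks are genuinely constructible. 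By contrast the trace factorization underlying (2), though conceptually the crux, is formally routine once the projection formula is in hand.
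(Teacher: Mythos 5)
Your argument is correct and follows essentially the same route as the paper: both proofs rest on the observation that the $e$-fold trace for $\omega_X\otimes a^*P$ factors (via $a^*P^{p}\cong F^*a^*P$ and the projection formula) through the $(e-1)$-fold trace for $\omega_X\otimes a^*P^{p}$, so that $S^0(X,\omega_X\otimes a^*P)\neq 0$ forces $S^0(X,\omega_X\otimes a^*P^{p})\neq 0$, after which \eqref{t-pr} gives the linearity of the maximal components; and part (1) in both cases comes from writing $V_S^0$ as a countable intersection of constructible loci where the finite-level image is nonzero. Your write-up merely fills in two details the paper leaves implicit — the cohomology-and-base-change stratification behind constructibility, and the transfer of the no-supersingular-factors hypothesis from $A$ to $\hat A$ via isogeny invariance.
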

\begin{proof}
We first prove point $(2)$.
Define 
\begin{equation*}
S_{e,1} : = \im ( H^0(X, F^e_*(\omega_X \otimes a^* P^{p^e}) ) \to  H^0(X, F_*(\omega_X \otimes a^* P^{p} )) ) .
\end{equation*}
Then, 
\begin{equation*}
H^0(X, F_*\omega_X \otimes a^* P^p) = S_{1,1} \supseteq S_{2,1} \supseteq S_{3,1} \supseteq \dots. 
\end{equation*}
Suppose that $P\in V^0_S$. Since $\mathrm{Tr} (S_{e,1}) = S^0(X, \omega _X\otimes a^* P)\ne 0$ for every $e \gg 0$, $S_{e,1} \neq0$ for every integer $e>0$. 
Since pushing forward via $F$ induces isomorphisms $H^0(X, F^{e-1}_*(\omega_X \otimes a^* (P^p)^{p^{e-1}}) )\cong H^0(X, F^e_*(\omega_X \otimes a^* P^{p^e} )) $, it follows that 
\begin{equation*}
S_{e,1} \cong  \im ( H^0(X, F^{e-1}_*(\omega_X \otimes (a^* P^p)^{p^{e-1}}) ) \to  H^0(X, \omega_X \otimes a^* P^{p} ) ) .
\end{equation*}
 Thus $S^0(X, \omega_X \otimes a^* P^p) \neq 0$.
By \eqref{t-pr}, if $A$ has no supersingular factors, one sees that maximal dimensional irreducible component of  $\overline{V_S^0 } $ is a finite union of torsion translates of abelian subvarieties of $\hat A$.

To prove point $(1)$, let $Z$ be an irreducible component of $\overline{V_S^0 } $. Note that for any $e>0$, the set of $P\in Z$ such that the image of $H^0(X, F^e_*(\omega_X \otimes a^* P^{p^e}) ) \to  H^0(X, \omega_X \otimes a^* P)$ is non-zero is a constructible subset. Thus $V_S^0 \subset \overline{V_S^0 }$ is the complement of countably many locally closed subsets.
\end{proof}
\begin{proposition}\label{p-finite1} With assumptions as in \eqref{p-finite} (including that $A$ has no supersingular factors),  each maximal dimensional irreducible component of the closure of the set of points such that $\varinjlim H^0(X, F^{e}_*\omega _X\otimes a^* P_y )^\vee \ne 0$ is a  torsion translate of an abelian subvariety of $\hat A$.
\end{proposition}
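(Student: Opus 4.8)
The plan is to show that the locus in the statement is literally the locus $V_S^0=\{P\in\hat A\mid S^0(X,\omega_X\otimes a^*P)\neq 0\}$ of \eqref{p-finite}, and then to quote \eqref{p-finite}(2). Thus the proposition is not an independent generic vanishing input but a reinterpretation of \eqref{p-finite}, the content being the passage from the direct limit of dual cohomology groups to the Frobenius stable space $S^0$.

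First I would fix a closed point $y\in\hat A$ and set $T_e:=H^0(X,F^e_*\omega_X\otimes a^*P_y)$, a finite dimensional $k$-vector space. The direct system defining $\varinjlim H^0(X,F^e_*\omega_X\otimes a^*P_y)^\vee$ is obtained by dualizing the tower $\cdots\to T_2\to T_1\to T_0$ whose transition maps are induced by the Grothendieck trace $F_*\omega_X\to\omega_X$, tensored by $a^*P_y$ and passed to global sections. The first step is the elementary linear algebra observation that $\varinjlim T_e^\vee\neq 0$ if and only if the stable image $S:=\bigcap_e\im(T_e\to T_0)$ is nonzero: a functional $\xi\in T_e^\vee$ survives in the colimit exactly when its restriction to the stable image $S_e:=\bigcap_{e'}\im(T_{e'}\to T_e)\subseteq T_e$ is nonzero, such a $\xi$ exists if and only if $S_e\neq 0$, and the $S_e$ form a tower with surjective transition maps, so $S_e\neq 0$ for some (equivalently all large) $e$ if and only if $S=S_0\neq 0$.

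Next I would identify $S$ with $S^0$. By the projection formula \eqref{PF} one has $F^e_*\omega_X\otimes a^*P_y\cong F^e_*(\omega_X\otimes a^*P_y^{p^e})$, and the composite of the trace transition maps $T_e\to T_0$ is the $e$-fold Grothendieck trace $H^0(X,\omega_X\otimes a^*P_y^{p^e})\to H^0(X,\omega_X\otimes a^*P_y)$ (compatibility of iterated traces, cf. \cite[Lemma 3.9]{S09}). This is precisely the map in the definition of $S^0(X,\omega_X\otimes a^*P_y)$ with $\Delta=0$ and $M=K_X+D$ for a divisor $D$ of $a^*P_y$. Hence $S=S^0(X,\omega_X\otimes a^*P_y)$, so the locus in the statement coincides with $V_S^0$. (Alternatively, with $\Omega_0=a_*\omega_X$ the same identification follows from \eqref{c-1} together with the projection formula, since there $\Lambda\otimes k(y)\cong\varinjlim H^0(X,F^e_*\omega_X\otimes a^*P_y^\vee)^\vee$.) Finally, since $A$ has no supersingular factors, \eqref{p-finite}(2) asserts that each maximal dimensional irreducible component of $\overline{V_S^0}$ is a torsion translate of an abelian subvariety of $\hat A$, which is the desired conclusion.

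The main obstacle is the bookkeeping in the first two steps: converting the direct limit of duals into the stable image and then matching the one-step-trace tower with the $e$-fold-trace description of $S^0$, all while correctly tracking the Frobenius twist that turns $a^*P_y$ into $a^*P_y^{p^e}$. None of this is deep, but care is needed so that the composite traces genuinely reproduce the maps in the definition of $S^0$ and that no $P_y$ versus $P_y^\vee$ sign discrepancy is introduced; since $-1_{\hat A}$ preserves torsion translates of abelian subvarieties, any such sign is harmless for the final statement.
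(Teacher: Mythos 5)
Your reduction to \eqref{p-finite} breaks down at the ``elementary linear algebra observation,'' and the failure is not cosmetic: the locus appearing in \eqref{p-finite1} is genuinely larger than $V_S^0$, so the proposition cannot be obtained by quoting \eqref{p-finite}(2). Writing $T_e := H^0(X, F^e_*\omega_X\otimes a^*P_y)\cong H^0(X,\omega_X\otimes a^*P_y^{p^e})$ and $S_e:=\bigcap_{e'}\im(T_{e'}\to T_e)$, what is true is that $\varinjlim T_e^\vee\ne 0$ if and only if $S_e\ne 0$ for some (equivalently, every sufficiently large) $e$; and $S_e = S^0(X,\omega_X\otimes a^*P_y^{p^e})$. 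Your further claim that this is equivalent to $S_0\ne 0$ rests on the surjectivity of $S_{e+1}\to S_e$, but surjectivity gives only the direction $S_0\ne 0\Rightarrow S_e\ne0$ for all $e$: a surjection from a nonzero space onto $S_0$ is perfectly compatible with $S_0=0$. Concretely, take $X=A=E$ an ordinary elliptic curve (which has no supersingular factors), $a=\id$, and $P_y$ a nontrivial $p$-torsion point. Then $T_0=H^0(E,\omega_E\otimes P_y)=0$, while $T_e\cong H^0(E,\omega_E)=k$ for $e\ge 1$ with bijective transition maps by ordinarity, so $\varinjlim T_e^\vee\cong k\ne 0$ even though $S^0(E,\omega_E\otimes P_y)=0$. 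Thus the locus of the proposition equals $\bigcup_e\left([p^e]^{-1}(V_S^0)\right)$ --- in this example all of $E[p^\infty]$, whose closure is $\hat E$ --- whereas $V_S^0=\{\sO_E\}$. As written, your argument only re-derives \eqref{p-finite}(2) for $\overline{V_S^0}$ and says nothing about the larger set actually named in the statement.

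The repair is short and is exactly what the paper does: the re-indexing isomorphism $H^0(X,F^{e}_*\omega_X\otimes a^*P)\cong H^0(X,F^{e-1}_*(\omega_X\otimes a^*(P^p)^{p^{e-1}}))$ identifies the direct system for $P$, with its first term dropped, with the direct system for $P^p$; hence nonvanishing of the colimit for $P$ implies nonvanishing for $P^p$, i.e.\ the locus is stable under $P\mapsto P^p$, and \eqref{t-pr} applied as in the proof of \eqref{p-finite} gives the conclusion. Your (corrected) description of the locus as $\bigcup_e\left([p^e]^{-1}(V_S^0)\right)$ would also yield this $p$-stability, but the intermediate identification with $V_S^0$ must be abandoned.
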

\begin{proof} Since $$H^0( X,(F^{e}_*\omega _X)\otimes a^*P ) \cong H^0( X,F^{e}_*(\omega _X\otimes a^*P^{p^e}) ) \cong H^0( X,F^{e-1}_*(\omega _X\otimes a^*(P^p)^{p^{e-1}}) ),$$ it is easy to see that if $\varinjlim H^0( X,(F^{e}_*\omega _X) \otimes a^*P )^\vee \ne 0$ then $\varinjlim H^0(X, (F^{e}_*\omega _X)\otimes a^*P^p )^\vee \ne 0$. The proof now follows along the lines of the previous proposition.
\end{proof}

\begin{corollary}\label{c-finite} With assumptions as in \eqref{p-finite} (including that $A$ has no supersingular factors), assume also that $\kappa _S (X)\leq 0$ (See Section \ref{ss-K_S} for the definition) and that $k$ is uncountable. Then $V_S^0 $ contains at most one point. 
\end{corollary}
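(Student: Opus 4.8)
The plan is to argue by contradiction: assuming that $V_S^0$ contains two distinct points $P_1\neq P_2$, I will manufacture a linearly growing family of Frobenius stable pluricanonical forms, which contradicts $\kappa_S(X)\leq 0$. Recall that by the definition of the Frobenius stable Kodaira dimension (see Section \ref{ss-K_S}), the hypothesis $\kappa_S(X)\leq 0$ forces $\dim_k S^0(X,\omega_X^m)$ to stay bounded as $m\to\infty$; producing, for a fixed $N$ and every $m$, at least $m+1$ linearly independent elements of $S^0(X,\omega_X^{mN})$ would give $\kappa_S(X)\geq 1$ and finish the proof.

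First I would reduce to the case in which the two points are torsion. By \eqref{p-finite} the set $V_S^0$ is the complement in its closure of countably many locally closed subsets, each maximal dimensional component of $\overline{V_S^0}$ is a torsion translate of an abelian subvariety, and $V_S^0$ is stable under $P\mapsto P^p$. If $\overline{V_S^0}$ is zero dimensional, then it is a finite set of torsion points with which $V_S^0$ coincides, and two distinct torsion points are available directly. If instead $\overline{V_S^0}$ has a positive dimensional component $Z=Q+\hat B$ (with $Q$ torsion and $\hat B$ a positive dimensional abelian subvariety), I would invoke that $k$ is uncountable: a positive dimensional variety over an uncountable field is not a countable union of proper closed subsets, so $V_S^0\cap Z$ is dense and uncountable. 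Combining this with the $p$-power stability of $V_S^0$ and the fact that the $[p^m]$-fixed points of $Z$ are exactly the torsion points $Z\cap\hat A[p^m-1]$ (a set of size $(p^m-1)^{2\dim\hat B}\geq 2$ for $m$ large), I would extract two distinct torsion points lying in $V_S^0$.

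With two distinct torsion points $P_1\neq P_2$ in hand, let $N>0$ satisfy $P_1^N=P_2^N=\sO_A$ and choose nonzero $s_i\in S^0(X,\omega_X\otimes a^*P_i)$. The associated effective divisors satisfy $\mathrm{div}(s_i)\in|K_X+a^*P_i|$, and since $a:X\to A$ is the Albanese morphism the pullback $a^*:\Pic^0(A)\to\Pic^0(X)$ is injective; hence $a^*P_1\not\cong a^*P_2$ and $\mathrm{div}(s_1)\neq\mathrm{div}(s_2)$. Using the multiplicativity of $S^0$ (the product of Frobenius stable sections is again Frobenius stable), each product $s_1^{jN}s_2^{(m-j)N}$ with $0\leq j\leq m$ lies in $S^0(X,\omega_X^{mN})$, because $a^*(P_1^{jN}P_2^{(m-j)N})\cong\sO_X$. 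These $m+1$ sections have the pairwise distinct divisors $jN\,\mathrm{div}(s_1)+(m-j)N\,\mathrm{div}(s_2)$, so they are linearly independent; thus $\dim_k S^0(X,\omega_X^{mN})\geq m+1$ for every $m$, contradicting $\kappa_S(X)\leq 0$.

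Two steps I expect to be delicate. The first is the multiplicativity of $S^0$, i.e. that products of Frobenius stable sections remain Frobenius stable, so that $s_1^{jN}s_2^{(m-j)N}$ actually lands in $S^0(X,\omega_X^{mN})$ and not merely in $H^0$; I would take this from the basic properties of $S^0$ established alongside the definition of $\kappa_S$. The second, and the \emph{main obstacle}, is the reduction in the positive dimensional case to two torsion points genuinely \emph{contained in} $V_S^0$ rather than only in its closure: one must rule out the a priori possibility that every torsion point of a positive dimensional component is absorbed into the countably many removed loci, and it is precisely here that both the $p$-power stability of $V_S^0$ from \eqref{p-finite} and the uncountability of $k$ are essential.
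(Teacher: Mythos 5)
Your final step is sound and is essentially the paper's own argument for the zero-dimensional case: two distinct torsion points $P_1\neq P_2$ of $V_S^0$ of order dividing $N$ give sections with non-proportional divisors (injectivity of $a^*:\Pic^0(A)\to\Pic^0(X)$), and multiplicativity of $S^0$ (the same computation as in Lemma \eqref{l-rm}) then forces $\dim_k S^0(X,\omega_X^{mN})$ to grow linearly, contradicting $\kappa_S(X)\leq 0$. The gap is exactly where you suspected it: the reduction to two torsion points \emph{inside} $V_S^0$ when $\overline{V_S^0}$ has a positive-dimensional component $Z=Q+\hat B$. The torsion points of $Z$ form a countable set, so over an uncountable field they carry no weight against the countably many removed locally closed loci --- a priori every torsion point of $Z$ could be removed. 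The forward-invariance $P\in V_S^0\Rightarrow P^p\in V_S^0$ from \eqref{p-finite} only propagates membership along the forward $[p]$-orbit of points \emph{already known} to lie in $V_S^0$; the very general points you can certify to be in $V_S^0\cap Z$ are non-torsion, and the forward $[p]$-orbit of a non-torsion point never lands on a torsion point. Knowing that $[p^m]$ has $(p^m-1)^{2\dim\hat B}$ fixed points on $Z$ gives you no mechanism to place even one of them in $V_S^0$, so the extraction you describe does not go through.

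The paper sidesteps this entirely: it never hunts for torsion points of a positive-dimensional component. Writing the component as $T+P$ with $T$ an abelian subvariety and $P$ torsion of order $m\geq 2$, it takes a \emph{very general} $Q\in T$ (this is where uncountability of $k$ is actually used) so that both $Q\otimes P$ and $Q^{-m+1}\otimes P$ lie in $V_S^0$, and uses the multiplication map
\begin{equation*}
S^0(\omega_X\otimes Q\otimes P)^{\otimes(m-1)}\otimes S^0(\omega_X\otimes Q^{-m+1}\otimes P)\longrightarrow S^0(\omega_X^m),
\end{equation*}
which is legitimate because $(Q\otimes P)^{m-1}\otimes(Q^{-m+1}\otimes P)\cong P^m\cong\sO_A$. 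Varying $Q$ produces elements of $S^0(\omega_X^m)$ with distinct divisors, hence $\dim S^0(\omega_X^m)\geq 2$ and then $\kappa_S(X)\geq 1$, a contradiction; only after positive-dimensional components are excluded does $\overline{V_S^0}=V_S^0$ become a finite set of torsion points, to which your (and the paper's) final argument applies. If you replace your torsion-point extraction by this pairing of very general points with trivial tensor product, your proof closes.
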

\begin{proof} Let $T+P\subset \overline{V_S^0} $ where $\dim T>0$ is maximal and $P\in \hat A$ is torsion and $T\subset \hat A$ is an abelian subvariety. Pick $m\geq 2$ such that $P^m\cong \mathcal O_A$.
Then for any very general $Q\in T$ we have a map $$S^0(\omega _X\otimes Q\otimes P)^{m-1}\otimes S^0(\omega _X\otimes Q^{-m+1}\otimes P)\to S^0(\omega _X^m ).$$
It follows immediately that $\dim S^0(\omega _X^m)\geq 2$. This is impossible and hence $\dim V_S^0 =0$ and so $V_S^0 $ is a union of finitely many torsion points. Therefore, every component of $\overline{V_S^0} $ is zero dimensional and so $\overline{V_S^0} ={V_S^0} $.

By a similar argument to the one above it then follows that $V_S^0 $ contains at most one point. Suppose by way of contradiction that there are two elements $P\ne Q$ in $V_S^0 $. By what we have seen above, $P,Q\in \hat A$ are torsion points and so there is an integer $m>0$ such that $P^m=Q^m=\mathcal O _X$. Let $G_P\in |K_X+P|$, $G_Q\in |K_X+Q|$ be corresponding divisors, then $mG_P,mG_Q\in |mK_X|$ are distinct divisors corresponding to elements of $S^0(\omega ^m_X)$ so that $\dim S^0(\omega _X^m)\geq 2$. This is the required contradiction.\end{proof}
\begin{proposition}\label{p-lambda} Let $A$ be an abelian variety that has  no supersingular factors, then each maximal dimensional irreducible component of the set $Z$ of points $P\in \hat A$ such that  the image of $\mathcal H^0(\Lambda _0)_P \to \Lambda_P$ is non-zero,  is a  torsion translate of an abelian subvariety of $\hat A$ and  $\Lambda_P\ne 0$ if and only if $P^e\in Z$ for some $e>0$.
\end{proposition}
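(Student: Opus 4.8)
The plan is to reduce both assertions to the Frobenius-equivariant structure already exploited in \eqref{c-gv}, combined with the Pink--Roessler theorem \eqref{t-pr}. Set $G_e := \im\!\left(\mathcal H^0(\Lambda_e) \to \Lambda\right)$, regarded as a subsheaf of the quasi-coherent sheaf $\Lambda = \varinjlim \mathcal H^0(\Lambda_e)$; recall $\Lambda$ sits in degree $0$ by \eqref{c-0} and that this identification of $\Lambda$ with the colimit of the $\mathcal H^0(\Lambda_e)$ is \eqref{l-dir}. Since $\Omega_0$ is coherent, so is $\mathcal H^0(\Lambda_0)=\mathcal H^0(R\hat S(D_A(\Omega_0)))$, and hence $G_0$ is a coherent quotient of it; therefore $Z=\Supp G_0$ is a genuine \emph{closed} subscheme, which is what makes the phrase ``irreducible component of $Z$'' meaningful. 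Because the map $\mathcal H^0(\Lambda_e)\to\Lambda$ factors through $\mathcal H^0(\Lambda_{e+1})$, the subsheaves form an increasing chain $G_0\subseteq G_1\subseteq\cdots$ with $\Lambda=\bigcup_e G_e$, so that $\Lambda_P\ne 0$ precisely when $(G_e)_P\ne 0$ for some $e$, i.e. $\Supp\Lambda=\bigcup_e\Supp G_e$ as sets.

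The technical heart is the pullback description $G_e\cong V^{e,*}G_0$. As recorded in \eqref{c-gv} (via \eqref{m3.4} and \eqref{lem:dual_of_Frobenius}), one has $\mathcal H^0(\Lambda_e)=V^{e,*}\mathcal H^0(\Lambda_0)$, and the whole direct system is obtained by iteratively applying $V^*$ to the basic transition map $\tau:\mathcal H^0(\Lambda_0)\to V^*\mathcal H^0(\Lambda_0)$. Since $V$ is an isogeny, $V^*$ is exact and commutes with the filtered colimits of subsheaves; computing $\ker(\mathcal H^0(\Lambda_e)\to\Lambda)$ as the union of the kernels of the finite-stage transition maps then yields $\ker(\mathcal H^0(\Lambda_e)\to\Lambda)=V^{e,*}\ker(\mathcal H^0(\Lambda_0)\to\Lambda)$, and hence $G_e\cong V^{e,*}G_0$. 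Passing to supports and using that $V^e$ induces multiplication by $p^e$ on the points of $\hat A$ (the identification already used in \eqref{c-gv}), I obtain $\Supp G_e=[p^e]^{-1}(Z)$.

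With this in hand both statements are formal. For the second assertion, $\Supp\Lambda=\bigcup_{e\ge 0}[p^e]^{-1}(Z)$; moreover the inclusion $G_0\subseteq G_1$ gives $Z\subseteq[p]^{-1}(Z)$, so the union may be taken over $e\ge 1$, and thus $\Lambda_P\ne 0$ exactly when $P^{p^e}\in Z$ for some $e>0$, which is the stated equivalence. For the first assertion, the same inclusion $Z=\Supp G_0\subseteq\Supp G_1=[p]^{-1}(Z)$ reads $[p](Z)\subseteq Z$, i.e. $p(Z)\subseteq Z$ in the notation of \eqref{t-pr}; as $Z$ is already closed there is nothing to saturate. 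Since $A$, and hence the isogenous dual $\hat A$, has no supersingular factors (isogeny-invariance of this property, cf. the proof of \eqref{l-isogeny}), \eqref{t-pr} applies to the reduced closed set $Z\subseteq\hat A$ and shows that every maximal dimensional irreducible component of $Z$ is a torsion translate of an abelian subvariety.

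I expect the main obstacle to be making the identification $G_e\cong V^{e,*}G_0$ fully rigorous: one must check that taking the image in the colimit $\Lambda$ commutes with the Verschiebung pullback, which rests on the exactness of $V^*$ and its compatibility with the filtered colimit defining $\Lambda$, and one must correctly track the twist by Frobenius that converts $V^{e,*}$ into $[p^e]^{-1}$ on supports. The remaining ingredients --- coherence of $G_0$, the chain $G_0\subseteq G_1\subseteq\cdots$, and the invocation of \eqref{t-pr} --- are routine, and this also explains why the proof can legitimately be said to ``follow along the lines'' of \eqref{p-finite} and \eqref{p-finite1}, the $p$-invariance $p(Z)\subseteq Z$ here playing the role of $P\in V_S^0\Rightarrow P^p\in V_S^0$ there.
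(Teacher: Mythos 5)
Your proof is correct and takes essentially the same route as the paper's: both rest on the identification $\mathcal H^0(\Lambda_e)\cong V^{e,*}\mathcal H^0(\Lambda_0)$, flatness of the Verschiebung to propagate nonvanishing of the stalk map from $P$ to $P^p$, coherence of the image of $\mathcal H^0(\Lambda_0)\to\Lambda$ to make $Z$ closed, and an appeal to \eqref{t-pr}. The only cosmetic difference is that you package the argument through the global subsheaves $G_e=\im(\mathcal H^0(\Lambda_e)\to\Lambda)$ and the isomorphism $G_e\cong V^{e,*}G_0$, whereas the paper argues stalk-by-stalk via faithful flatness of $V_P$; the content is the same.
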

\begin{proof} Let $V:\hat A\to \hat A$ be Verschiebung so that $V([P])= [P^p]$ for every $[P] \in \hat A$. Let $V_P:\Spec \mathcal O _{\hat A,P}\to \Spec \mathcal O _{\hat A,P^p}$ be the induced morphism. We have $$V_P^*(\Lambda_{P^p})=V_P^*(\varinjlim (V^{e,*} \mathcal H^0(\Lambda_e))_{P^{p}} )=\varinjlim (V^{e+1,*}\mathcal H^0(\Lambda_e))_P=\Lambda_P.$$ It follows that if $\Lambda_P \ne 0$ then also $\Lambda_{P^{p^e}}\ne 0$.
Let $K_e$ denote the kernel of  $\mathcal H^0(\Lambda _0)\to \Lambda_e$, then $K_i\subset K_{i+1}\subset \ldots$ so that $K_i=K$ for all $i\gg 0$. Therefore, the image of $\mathcal H^0(\Lambda _0)\to \Lambda$ 
is a coherent sheaf (isomorphic to $\mathcal H^0(\Lambda _0)/K$). It follows that 
if $Z$ is the set of  $P\in \hat A$ such that the image of $\mathcal H^0(\Lambda _0)_P \to \Lambda_P =\varinjlim ( V^{e,*}\mathcal H^0(\Lambda _0)_P)$ is non-zero, then $Z$ is a closed subset of $\hat A$.  
Since $V$ is faithfully flat, so is $V_P$.  Thus if the map $$\mathcal H^0(\Lambda _0)_P\to (V^*\mathcal H^0(\Lambda _0))_P=V_P^*(\mathcal H^0(\Lambda _0)_{P^p})\to V_P^*(\Lambda_{P^p})=\Lambda_P$$ is non-zero then the map $\mathcal H^0(\Lambda _0)_{P^p}\to \Lambda_{P^p}$ is non-zero as well. 
It follows that that if $P\in Z$, then $P^p\in Z$.
If $A$ has  no supersingular factors, then the claim follows from \eqref{t-pr}. Finally, since $\Lambda_P \ne 0$ if and only if  the image of $\mathcal H^0(\Lambda _e)_P \to \Lambda_P$ is non-zero for some $e \geq 0$, it follows that $\Lambda \otimes \mathcal O _{\hat A,P}\ne 0$ if and only if $P^e\in Z$ for some $e\geq 0$.
\end{proof}

\subsection{Examples} \label{ss-examples}

We begin by showing that for an ordinary abelian variety $A$ and for an integer $0 \leq i \leq \dim A$,
\begin{equation*}
 \overline{V^i(\Omega)} = \hat A
\end{equation*}
where $\Omega= \varprojlim \Omega_e$, $\Omega_e=F^e_* \omega_A$ and 
\begin{equation*}
V^i(\Omega):=\{ [P] \in \hat A  |  H^i(A, \Omega \otimes P) \neq 0 \}.
\end{equation*}

\begin{proposition}
For an ordinary abelian variety $A$, we have
\begin{equation*}
\{[P] \in \hat A | \exists e >0 : P^{p^e} \cong \sO_A  \} = V^i(\Omega) 
\end{equation*}
Note that this is a countably infinite set by \cite[Application 2, page 62]{Mumford}.
\end{proposition}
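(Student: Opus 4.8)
The plan is to reduce everything to the finite-level sheaves $\Omega_e=F^e_*\omega_A$, whose twisted cohomology is immediate to compute, and then to control the passage to the inverse limit $\Omega$ through the homotopy-limit formalism of Subsection~\ref{ss-derived_categories}.

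First I would compute $H^i(A,\Omega_e\otimes P)$ for a line bundle $P\in\hat A$. Since $F^e$ is finite, hence affine, the projection formula \eqref{PF} gives $\Omega_e\otimes P\cong F^e_*(\omega_A\otimes P^{p^e})$ with no higher direct images, so $H^i(A,\Omega_e\otimes P)\cong H^i(A,\omega_A\otimes P^{p^e})$. By Serre duality this is $H^{g-i}(A,P^{-p^e})^\vee$, which vanishes unless $P^{p^e}\cong\sO_A$ and otherwise has dimension $\binom{g}{g-i}\ge 1$ for every $0\le i\le g$ (using the vanishing of the cohomology of a nontrivial degree-zero line bundle on an abelian variety, together with $H^{g-i}(A,\sO_A)\ne 0$). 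Thus at each finite level the nonvanishing locus is exactly the set of $p^e$-torsion points, consistent with \eqref{c-1}.

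Next I would analyze the transition maps of the tower $\{H^i(A,\Omega_e\otimes P)\}_e$. The transition $\Omega_{e+1}\to\Omega_e$ is $F^e_*(\phi)$ for the Grothendieck trace $\phi\colon F_*\omega_A\to\omega_A$, which (exactly as in the $(4)\Leftrightarrow(7)$ step of \eqref{p-ordinary}) is Serre dual to the Frobenius $\sO_A\to F_*\sO_A$. For an ordinary $A$, \eqref{p-ordinary}(5) says this Frobenius acts bijectively on every $H^{g-i}(A,\sO_A)$. Tracing this through the identifications above, when $P^{p^{e_0}}\cong\sO_A$ the maps $H^i(A,\Omega_{e+1}\otimes P)\to H^i(A,\Omega_e\otimes P)$ are isomorphisms of $\binom{g}{i}$-dimensional spaces for all $e\ge e_0$; in particular $\varprojlim_e H^i(A,\Omega_e\otimes P)\ne 0$ and $\varprojlim^1_e H^{i-1}(A,\Omega_e\otimes P)=0$. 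When $P$ is not $p$-power torsion every term of the tower vanishes, so both the limit and the derived limit are zero.

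Finally I would bridge from $\varprojlim_e H^i(A,\Omega_e\otimes P)$ to $H^i(A,\Omega\otimes P)$. Since $\phi$ is surjective (Cartier operator; indeed split by global $F$-splitting, \eqref{p-ordinary}(6)), the transitions $\Omega_{e+1}\to\Omega_e$ are surjective, the system is Mittag--Leffler, and $\Omega\otimes P=\varprojlim_e(\Omega_e\otimes P)=\holim_e(\Omega_e\otimes P)$ by \eqref{lem:holim_equals_lim}. As $R\Gamma(A,-)=R\Hom(\sO_A,-)$ commutes with products, it commutes with the $\holim$ triangle, and taking cohomology yields the Milnor short exact sequence
\begin{equation*}
0\to \varprojlim^1_e H^{i-1}(A,\Omega_e\otimes P)\to H^i(A,\Omega\otimes P)\to \varprojlim_e H^i(A,\Omega_e\otimes P)\to 0.
\end{equation*}
Combined with the previous step this gives $H^i(A,\Omega\otimes P)\ne 0$ precisely when $P$ is $p$-power torsion, for every $0\le i\le g$. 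The main obstacle is exactly this last step: $\Omega$ is not coherent (nor even quasi-coherent), so cohomology need not commute with the inverse limit, and the real content is to verify that $R\Gamma$ commutes with the homotopy limit and that the $\varprojlim^1$ correction vanishes — for which the eventual bijectivity of the transition maps established above is essential.
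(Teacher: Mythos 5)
Your proof is correct and follows essentially the same route as the paper: reduce to $H^i(A,\Omega_e\otimes P)\cong H^i(A,\omega_A\otimes P^{p^e})$ by the projection formula, observe that for $p$-power torsion $P$ the tower eventually becomes $H^i(A,\omega_A)$ with transition maps Serre-dual to the (bijective, by ordinarity) Frobenius action on $H^{g-i}(A,\sO_A)$, and that for non-torsion $P$ every term vanishes. The only difference is at the interchange $H^i(A,\Omega\otimes P)=\varprojlim H^i(A,\Omega_e\otimes P)$, which the paper simply cites from \cite[13.3.1]{EGAIII} while you rederive it via the homotopy-limit triangle and the Milnor $\varprojlim^1$-sequence; both hinge on the same Mittag--Leffler input, so this is a presentational rather than a mathematical divergence.
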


\begin{proof}
For every $[P] \in \hat A$,
\begin{equation*}
H^i(A,  \Omega \otimes P) 
= \underbrace{ \varprojlim H^i(A, \Omega_e \otimes P)}_{\textrm{\cite[13.3.1]{EGAIII}}} = \varprojlim H^i(A, \omega_A \otimes P^{p^e}).
\end{equation*}
In particular, if $P$ is $p$-power torsion then 
\begin{equation*}
\varprojlim H^i(A, \omega_A \otimes P^{p^e}) 
\cong \underbrace{\varprojlim H^i(A, \omega_A )}_{\parbox{100pt}{\tiny discarding finitely many terms and using that $P^{p^e}\cong \sO_A$ for every $e \gg 0$}}
\cong \underbrace{(\varinjlim H^i(A, \sO_A ))^\vee}_{\textrm{Serre duality}}
\end{equation*}
where the map between the countably many copies of $H^i(A, \sO_A )$ in the last direct limit is the natural homomorphism induced by the Frobenius. In particular, this homomorphism is bijective, because $A$ is ordinary. Hence, 
\begin{equation*}
\varprojlim H^i(A, \omega_A \otimes P^{p^e}) \cong H^i(A, \sO_A)^\vee \neq 0  . 
\end{equation*}
If $P$ is not $p^e$ torsion for any $e\geq 0$, then  $H^i(A, \omega_A \otimes P^{p^e}) =0$ for all $e\geq 0$ and so $H^i(A,  \Omega \otimes P) =0$.
This concludes our proof.
 
\end{proof}

\begin{proposition} \cite[(5.30)]{MvdG}
In the situation of the above proposition, $V^i(\Omega)$ is dense in $\hat A$.
\end{proposition}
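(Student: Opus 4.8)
The plan is to identify $V^i(\Omega)$ with the group of $p$-power torsion points of $\hat A$ and then show that this group is Zariski dense. By the previous proposition we already know that
\[
V^i(\Omega) = T := \{[P] \in \hat A \mid P^{p^e} \cong \sO_A \text{ for some } e > 0\},
\]
so it suffices to prove $\overline{T} = \hat A$. Since $T$ is a subgroup of $\hat A$, its Zariski closure $Z := \overline{T}$ is a reduced closed subgroup of $\hat A$; over the perfect field $k$ it is therefore smooth, and its identity component $B := Z^0$ is an abelian subvariety with $Z/B$ finite.

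Next I would pass to the quotient isogeny-type map $q : \hat A \to C := \hat A / B$. Because $A$ is ordinary, so is $\hat A$ by \eqref{l-isogeny}, and hence so is the quotient $C$ by \eqref{l-sq}. The key observation is that $q(T)$ is simultaneously contained in the finite group $Z/B$ and equal to the full group of $p$-power torsion points of $C$. The nontrivial inclusion is that every $p$-power torsion point of $C$ lies in $q(T)$: given $c \in C$ with $p^e c = 0$, lift it to some $a \in \hat A$ with $q(a) = c$; then $p^e a \in \ker q = B$, and since the abelian variety $B$ is $p$-divisible we may write $p^e a = p^e b$ for some $b \in B$, so that $a - b \in T$ and $q(a - b) = c$.

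Finally I would derive the contradiction. If $\dim C > 0$, then by \eqref{p-ordinary}(3) the Verschiebung of $C$ is \'etale, so $C[p^e]$ consists of $p^{e \dim C}$ distinct points for every $e$; consequently the $p$-power torsion of $C$, which we have identified with $q(T)$, is infinite. This contradicts the finiteness of $Z/B \supseteq q(T)$. Hence $\dim C = 0$, i.e. $B = \hat A$, and therefore $Z \supseteq B = \hat A$, giving $Z = \hat A$ and the density of $T = V^i(\Omega)$. The only mildly delicate step is the surjectivity of $q$ on $p$-power torsion, which is precisely where the $p$-divisibility of the abelian variety $B$ enters; the remainder is formal once the previous proposition supplies the identification of $V^i(\Omega)$ with $T$ and the quotient $C$ is known to be ordinary.
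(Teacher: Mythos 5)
Your proof is correct, and it is complete at a point where the paper itself offers no argument at all: the paper simply outsources this statement to \cite[(5.30)]{MvdG}, so there is no internal proof to compare against. What you have written is the standard density argument for $p$-power torsion on an ordinary abelian variety, assembled entirely from lemmas already available in the paper: the Zariski closure $Z$ of the $p$-power torsion subgroup $T$ is a reduced, hence smooth, closed subgroup with abelian identity component $B$ and finite component group; the quotient $C=\hat A/B$ is ordinary by \eqref{l-isogeny} and \eqref{l-sq}; your lifting step (using $p$-divisibility of $B$ to show every $p$-power torsion point of $C$ lies in $q(T)$) correctly identifies $q(T)$ with the full $p$-power torsion of $C$, which by ordinarity via \eqref{p-ordinary} has $p^{e\dim C}$ points in $C[p^e]$ and so is infinite unless $\dim C=0$; the contradiction with finiteness of $Z/B$ forces $B=\hat A$. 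Each step checks out (in particular the closure of an abstract subgroup being an algebraic subgroup, and reducedness implying smoothness over the perfect field $k$). The only thing your write-up buys beyond the paper is self-containedness — the reader no longer needs to consult Moonen–van der Geer — at the cost of a paragraph; the approach is almost certainly the same one the cited reference uses.
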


Hence the following seems to be the most natural question.

\begin{q}
Is $V^i(\Omega)$ a countable union of Zariski closed sets with codimension at least $i$?
\end{q}

We now compute examples of $\Lambda$. 

\begin{ex}
Let $E$ be an elliptic curve and $\Omega_0 := \omega_E$. There are two cases:

If $E$ is supersingular (which is equivalent for elliptic curves to being not ordinary), then note that $ R\hat S(\sO _E[g] )=k_{0_{\hat E}}=\Lambda _0$ and $ R\hat S(F_*\sO _E[g])=\Lambda _1=\hat F^*\Lambda _0$ is Artinian of length $p$. By \eqref{c-1},
$\Lambda _1 \otimes k_{0_{\hat E}} \cong k_{0_{\hat E}}$. So, $\Lambda_1$ is an Artinian $\hat \sO _{\hat E, 0} \cong k[[x]]$ module which has dimension one when tensored with the residue field. Therefore, $\Lambda_1 \cong k[[x]]/(x^p)$ as a $k[[x]]$ module. Similarly $\Lambda_e \cong k[[x]]/(x^{p^e})$.

Now, let us determine the map $\Lambda_0 \to \Lambda_1$. It is a $k[[x]]$-module homomorphism $k \to k[[x]]/(x^p)$. Up to a multiplication by a unit (which can be disregarded for our purposes) there are two such maps: the zero map, and the multiplication by $x^{p-1}$. Since $\Lambda_e \to \Lambda_{e+1}$ is obtained by applying $\hat F ^{e,*}$ to $\Lambda_0 \to \Lambda_1$, if the latter was zero, then all the maps $\Lambda_e \to \Lambda_{e+1}$ would be zero, and consequently also $\Lambda$ would be zero. This is impossible by \eqref{c-0}, because $\Omega$ is not zero in our situation. Hence $\Lambda_0 \to \Lambda_1$ has to be the multiplication by $x^{p-1}$ map. In particular it is injective. Since $\hat F ^e$ is faithfully flat, $\Lambda_e \to \Lambda_{e+1}$ then has to be also injective. Therefore, $\Lambda_e \to \Lambda_{e+1}$ can be identified with the map $k[[x]]/(x^{p^e}) \to k[[x]]/(x^{p^{e+1}})$ given by multiplication by $x^{p^e(p-1)}$. The quasi-coherent sheaf $\Lambda$ is then the direct limit of 
the modules $k[[x]]/(x^{p^e})$ viewed as a direct system via  multiplications by $x^{p^e(p-1)}$. Note that this is a torsion $k[[x]]$-module, and $\Lambda \otimes_{k[[x]]} k_{0_{\hat E}}=0$ (though $\Lambda \neq 0$). Further, $\Supp \Lambda = \{ 0_{\hat E} \}$.

If  $E$ is an ordinary elliptic curve, then the induced map $H^1(\sO _E)\to H^1(F_*\sO _E)$ is an isomorphism and  $\hat F = V$ is the \'etale map that sends each $Q \in \hat E$ to $Q^p$. It follows that $\Lambda_e= \bigoplus_{Q^{p^e} = \sO_E} k(Q)$, and the maps $\Lambda_e \to \Lambda_{e+1}$ are the natural embeddings. Therefore $\Lambda =\bigoplus _{y\in E[p^\infty]}k(y)$ where $E[p^\infty]$ denotes the set of all $p^\infty$ torsion points in $\hat E$. In particular, $\Supp \Lambda = E[p^\infty]$, which is a countable infinite set.

\end{ex}

\section{Geometric consequences}
\label{sec:geometry}

\subsection{Frobenius stable Kodaira dimension}
\label{ss-K_S}

In this section $X$ is always a smooth, projective variety over $k$. In characteristic $p>0$ the space $S^0(X,\mathcal O _X(mK_X))$ is better behaved than $H^0(X,\mathcal O _X(mK_X))$. So we define $$S(K_X)=\bigoplus _{m\geq 0}S^0(X,\mathcal O _X(mK_X))\subset  R(K_X)=\bigoplus _{m\geq 0}H^0(X,\mathcal O _X(mK_X))$$
and
$$\kappa _S(X)={\rm max} \{ k|\dim S^0(X,\mathcal O _X(mK_X))=O(m^k) \ {\rm for}\ m\ {\rm sufficiently \ divisible}\}.$$
It is easy to see that $S(K_X)$ is a birational invariant.
\begin{lem}\label{l-rm} $S(K_X)\subset R(K_X)$ is an ideal.\end{lem}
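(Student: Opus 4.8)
The plan is to reduce the statement to the single bilinear containment
\begin{equation*}
S^0(X,\sO_X(mK_X)) \cdot H^0(X,\sO_X(nK_X)) \subseteq S^0(X,\sO_X((m+n)K_X)) \qquad (m,n \geq 0).
\end{equation*}
Indeed, each $S^0(X,\sO_X(mK_X))$ is a $k$-subspace of $H^0(X,\sO_X(mK_X))$, so $S(K_X)$ is a graded $k$-subspace of the commutative ring $R(K_X)$; once the displayed containment is known, it follows at once (by bilinearity, and because the grading is respected) that $S(K_X) \cdot R(K_X) \subseteq S(K_X)$, i.e.\ that $S(K_X)$ is an ideal.

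To prove the containment, recall that (taking $\Delta = 0$ and $s=1$ in the definition, and using that $H^0$ is finite dimensional so the descending intersection stabilizes) $S^0(X,\sO_X(mK_X))$ is the stable image of the maps
\begin{equation*}
{\rm Tr}^e_m : H^0\bigl(X, \sO_X((1 + p^e(m-1))K_X)\bigr) \longrightarrow H^0(X,\sO_X(mK_X)),
\end{equation*}
where ${\rm Tr}^e_m$ is obtained from the Grothendieck trace $F^e_*\omega_X \to \omega_X$ by twisting with $\sO_X((m-1)K_X)$ and applying the projection formula to identify the source with $F^e_*\sO_X((1+p^e(m-1))K_X)$. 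The crucial step will be the multiplicativity relation
\begin{equation*}
{\rm Tr}^e_{m+n}\bigl(s \cdot t^{p^e}\bigr) = {\rm Tr}^e_m(s)\cdot t
\end{equation*}
for $s \in H^0(X,\sO_X((1+p^e(m-1))K_X))$ and $t \in H^0(X,\sO_X(nK_X))$. This is a direct consequence of the $\sO_X$-linearity of the Grothendieck trace: twisting ${\rm Tr}^e_m$ further by $\sO_X(nK_X)$ yields exactly ${\rm Tr}^e_{m+n}$, and under the projection-formula isomorphism the element $s \otimes t$ corresponds to $s\cdot F^{e*}t = s\cdot t^{p^e}$, while the (twisted) trace sends it to ${\rm Tr}^e_m(s)\otimes t = {\rm Tr}^e_m(s)\cdot t$.

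With this in hand the argument closes quickly. Given $\sigma \in S^0(X,\sO_X(mK_X))$ and $\tau \in H^0(X,\sO_X(nK_X))$, for every $e$ choose a preimage $\sigma_e$ with ${\rm Tr}^e_m(\sigma_e) = \sigma$, which exists because $\sigma$ lies in the image of ${\rm Tr}^e_m$ for all $e$. Then the multiplicativity relation gives ${\rm Tr}^e_{m+n}(\sigma_e \cdot \tau^{p^e}) = \sigma\cdot \tau$, so $\sigma\tau$ lies in the image of ${\rm Tr}^e_{m+n}$ for every $e$, hence in the intersection $S^0(X,\sO_X((m+n)K_X))$. I expect the only genuine difficulty to be the careful verification of the multiplicativity relation above, in particular checking that under the projection formula the section $t^{p^e}$ is the correct image of $t$ (i.e.\ that $t^{p^e} = F^{e*}t$) and that the iterated twisting of the trace is carried out consistently; this is essentially bookkeeping with the $\sO_X$-linearity of the Grothendieck trace and the behaviour of $p^e$-th powers under $F^e_*$.
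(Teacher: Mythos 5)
Your proposal is correct and is essentially the paper's own argument: the paper also takes $f\in S^0(X,\sO_X(mK_X))$, $g\in H^0(X,\sO_X(lK_X))$, lifts $f$ to $f_e\in H^0(X,\sO_X((1+(m-1)p^e)K_X))$ with $\Phi^e F^e_*(f_e)=f$, and uses the projection formula (i.e.\ the $\sO_X$-linearity of the trace) to conclude $\Phi^e F^e_*(f_e g^{p^e})=fg$. Your multiplicativity relation ${\rm Tr}^e_{m+n}(s\cdot t^{p^e})={\rm Tr}^e_m(s)\cdot t$ is exactly this step, just spelled out in more detail.
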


\begin{proof}If $f\in S^0(X,\mathcal O _X(mK_X))$ and $g\in H^0(X,\mathcal O _X(lK_X))$ then $fg\in S^0(X,\mathcal O _X((m+l)K_X))$ in fact there are $f_e\in H^0(X,\mathcal O _X((1+(m-1)p^e)K_X))$ such that $\Phi ^eF^e_*(f_e)=f$ and by the projection formula we have $f_eg^{p^e}\in  H^0(X,\mathcal O _X((1+(m+l-1)p^e)K_X))$ such that $\Phi ^eF^e_*(f_eg^{p^e})=fg$.
\end{proof}

\begin{rmk} Note that  for curves we have $S(K_{\bP^1}) =0$, $S(K_E) =0$ (resp.  $S(K_E) =k[x]$) if $E$ is  a supersingular (resp. ordinary) elliptic curve and if $X$ is a curve of genus at least two, then $S(K_X)_n= R(K_X)_n$ for $n \gg0$, since $K_X$ is ample \cite[Corollary 2.23]{Pat12}.  In higher dimensions, assuming the finite generation of $R(K_X)$, it then follows that the ideal $S(K_X)$ is a finitely generated $R(K_X)$ module. In particular this holds in dimension $2$. \end{rmk}

\begin{lem} \label{lem:kappa_vs_kappa_S} If $\kappa _S(X)\geq 0$, then $\kappa (X)=\kappa _S(X)$.\end{lem}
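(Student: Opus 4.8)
The plan is to establish the two inequalities $\kappa_S(X)\le \kappa(X)$ and $\kappa(X)\le \kappa_S(X)$ separately. The first is immediate and requires no hypothesis: since $S^0(X,\mathcal O_X(mK_X))\subseteq H^0(X,\mathcal O_X(mK_X))$ for every $m$, the polynomial growth order of $\dim_k S^0(X,\mathcal O_X(mK_X))$ is bounded above by that of $\dim_k H^0(X,\mathcal O_X(mK_X))$, so $\kappa_S(X)\le \kappa(X)$. In particular the hypothesis $\kappa_S(X)\ge 0$ forces $\kappa(X)\ge 0$, which means the classical pluricanonical growth estimates are available: there are a positive integer $a_0$ and constants $0<\alpha\le\beta$ with $\alpha\, m^{\kappa(X)}\le h^0(X,\mathcal O_X(m a_0 K_X))\le \beta\, m^{\kappa(X)}$ for all $m\gg 0$, and in particular $\kappa(X)$ is a genuine order of polynomial growth, not merely an upper bound.

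For the reverse inequality I would exploit the ideal structure of \eqref{l-rm}. Since $\kappa_S(X)\ge 0$, the graded ideal $S(K_X)\subseteq R(K_X)$ is nonzero, so I may fix $m_0>0$ and a nonzero $f\in S^0(X,\mathcal O_X(m_0K_X))$. Setting $a:=\mathrm{lcm}(a_0,m_0)=b\,m_0$, the ideal property of \eqref{l-rm} gives $f^{\,b}=f\cdot f^{\,b-1}\in S^0(X,\mathcal O_X(m_0K_X))\cdot H^0(X,\mathcal O_X((b-1)m_0K_X))\subseteq S^0(X,\mathcal O_X(aK_X))$, and $f^{\,b}\ne 0$ because $R(K_X)$ is a domain ($X$ being integral). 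Multiplication by $f^{\,b}$ is then an injective $k$-linear map whose image lands, again by \eqref{l-rm}, in the Frobenius stable part:
\[
\cdot f^{\,b}:\ H^0(X,\mathcal O_X(jaK_X))\ \hookrightarrow\ S^0(X,\mathcal O_X((j+1)aK_X)),\qquad j\ge 0 .
\]
Hence $h^0(X,\mathcal O_X(jaK_X))\le \dim_k S^0(X,\mathcal O_X((j+1)aK_X))$ for every $j$.

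Combining this with the growth estimate along multiples of $a$ (which is a multiple of $a_0$) yields, for suitable $\alpha'>0$,
\[
\alpha'\, j^{\kappa(X)}\ \le\ h^0(X,\mathcal O_X(jaK_X))\ \le\ \dim_k S^0(X,\mathcal O_X((j+1)aK_X))\ =\ O\!\big(j^{\kappa_S(X)}\big),
\]
where the final bound is the upper-bound half of the definition of $\kappa_S(X)$, applied along the sufficiently divisible degrees $(j+1)a$. Letting $j\to\infty$ forces $\kappa(X)\le \kappa_S(X)$, which together with the first inequality gives $\kappa(X)=\kappa_S(X)$.

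The genuinely delicate points are bookkeeping ones rather than conceptual. The first is arranging the divisibility so that the power $f^{\,b}$ lands in a degree that is again a multiple of $a$, which is what lets the right-hand side be controlled by the growth order $\kappa_S(X)$ evaluated on sufficiently divisible degrees. The second, and the one I expect to be the main obstacle to a clean write-up, is ensuring that $\kappa(X)$ really is the attained order of polynomial growth of $h^0(X,\mathcal O_X(mK_X))$ along divisible $m$ (so that the lower bound $h^0(X,\mathcal O_X(jaK_X))\gtrsim j^{\kappa(X)}$ genuinely holds), since this is exactly where the hypothesis $\kappa(X)\ge 0$ — guaranteed by $\kappa_S(X)\ge 0$ via the first inequality — is used.
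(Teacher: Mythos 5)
Your proof is correct and is essentially the paper's argument: the first inequality comes from the inclusion $S^0\subseteq H^0$, and the reverse inequality comes from multiplying by a nonzero homogeneous element of the ideal $S(K_X)$ to embed $R(K_X)$ into $S(K_X)$ as a module (the paper phrases this as "$S(K_X)$ is a torsion free module over the integral domain $R(K_X)$, hence there are many module embeddings $R(K_X)\hookrightarrow S(K_X)$"). Your version just makes the degree bookkeeping and the classical two-sided growth estimate for $h^0(mK_X)$ explicit.
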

\begin{proof} Since $S^0(X,\mathcal O _X(mK_X))\subset H^0(X,\mathcal O _X(mK_X))$ the inequality  $\kappa (X)\geq \kappa _S(X)$ is clear. The reverse inequality is immediate from the fact that $S(K_X)$ is a torsion free module over the integral domain $R(K_X)$ and hence there are many (module) embeddings $R(K_X) \hookrightarrow S(K_X)$.
\end{proof}
\begin{rmk} Note however that if $Y$ is a supersingular elliptic curve, then $\kappa _S(Y)=-\infty$ but $\kappa (Y)=0$. If $Z$ is a variety of general type, then for $X = Y \times Z$, $k(X)= \dim X -1$. However $S^0(X, \omega_X^m) = S^0(Y, \omega_Y^m) \otimes S^0(Z, \omega_Z^m)$ (c.f., \cite[Lemma 2.31]{Pat12}), and so $S^0(X, \omega_X^m)=0$ for every $m>0$. Thus $\kappa_S(X)=-\infty$ and $\kappa(X)=\dim X -1$.\end{rmk}
\begin{lem}\label{l-lim} We have $$\lim \dim S^0(X,\mathcal O _X(mK_X))/m^n=\lim \dim H^0(X,\mathcal O _X(mK_X))/m^n.$$ In particular $\kappa (X)=\dim X$ iff $\kappa _S(X)=\dim X$.\end{lem}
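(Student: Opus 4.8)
The plan is to exploit that $S(K_X)\subseteq R(K_X)$ is an ideal (Lemma \ref{l-rm}) in order to compare the leading growth of $\dim S^0(X,\mathcal O_X(mK_X))$ with that of $\dim H^0(X,\mathcal O_X(mK_X))$, after first reducing to the case where $X$ is of general type. Write $n=\dim X$. Since $S^0(X,\mathcal O_X(mK_X))\subseteq H^0(X,\mathcal O_X(mK_X))$, one has at once
$$\limsup_m \frac{\dim S^0(X,\mathcal O_X(mK_X))}{m^n}\ \leq\ \lim_m \frac{\dim H^0(X,\mathcal O_X(mK_X))}{m^n},$$
so only the reverse bound is at stake. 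If $\kappa(X)<n$ the right-hand side is $0$ and there is nothing to prove; hence I may assume $\kappa(X)=n$, i.e. that $K_X$ is big.

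Granting for a moment the nonvanishing $S(K_X)\neq 0$, fix $d>0$ and a nonzero $f\in S^0(X,\mathcal O_X(dK_X))$. By the computation in the proof of Lemma \ref{l-rm}, multiplication by $f$ carries $H^0(X,\mathcal O_X((m-d)K_X))$ into $S^0(X,\mathcal O_X(mK_X))$, and this map is injective because $R(K_X)$ is a domain and $f\neq 0$. Thus $\dim S^0(X,\mathcal O_X(mK_X))\geq \dim H^0(X,\mathcal O_X((m-d)K_X))$ for every $m>d$. Since $K_X$ is big, the limit defining the right-hand side exists, and writing
$$\frac{\dim H^0(X,\mathcal O_X((m-d)K_X))}{m^n}=\frac{\dim H^0(X,\mathcal O_X((m-d)K_X))}{(m-d)^n}\cdot\frac{(m-d)^n}{m^n}$$
shows it has the same limit as $\dim H^0(X,\mathcal O_X(mK_X))/m^n$ (the second factor tends to $1$). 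Combining this lower bound with the trivial upper bound above shows that $\lim_m \dim S^0(X,\mathcal O_X(mK_X))/m^n$ exists and equals $\lim_m \dim H^0(X,\mathcal O_X(mK_X))/m^n$, which is the asserted equality.

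The remaining, and genuinely substantive, point is the nonvanishing $S(K_X)\neq 0$ when $K_X$ is big; this is where the essential positivity input enters, and I expect it to be the main obstacle. By Kodaira's lemma write $m_0K_X\sim H+E$ with $H$ ample, $E\geq 0$ effective and $m_0>0$. For $t\gg 0$ the Frobenius trace $H^0(X,F^e_*(\omega_X\otimes\mathcal O_X(tH)^{\otimes p^e}))\to H^0(X,\omega_X\otimes\mathcal O_X(tH))$ is surjective for all $e$ (surjectivity of the trace on sections of a sufficiently ample adjoint bundle, via Serre vanishing), so $S^0(X,\omega_X\otimes\mathcal O_X(tH))=H^0(X,\omega_X\otimes\mathcal O_X(tH))\neq 0$. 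Multiplying such a Frobenius-stable section by the section of $\mathcal O_X(tE)$ cutting out $tE$ preserves Frobenius-stability, exactly by the mechanism of Lemma \ref{l-rm}, and produces a nonzero element of $S^0(X,\omega_X\otimes\mathcal O_X(tm_0K_X))=S^0(X,\mathcal O_X((tm_0+1)K_X))$; hence $S(K_X)\neq 0$. Finally, the ``in particular'' is immediate from the displayed equality: by definition $\kappa_S(X)=n$ (respectively $\kappa(X)=n$) holds exactly when the left-hand (respectively right-hand) limit is strictly positive, and the two limits agree.
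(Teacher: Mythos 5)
Your argument is correct and is essentially the paper's proof: both reduce to the sandwich $h^0(X,\omega_X^{m})\leq \dim S^0(X,\omega_X^{m+c})\leq h^0(X,\omega_X^{m+c})$ obtained by multiplying a full pluricanonical space by one fixed nonzero Frobenius-stable section (the ideal property of Lemma~\ref{l-rm}), with the needed nonvanishing coming from the standard fact that $S^0=H^0$ for sufficiently ample adjoint twists together with bigness of $K_X$. The only cosmetic difference is that the paper takes a very ample $\sA$ with $S^0(X,\sA)\neq 0$ and $\sA\hookrightarrow\omega_X^N$ to map $H^0(X,\omega_X^m)$ into $S^0(X,\omega_X^{m+N})$, whereas you first manufacture a nonzero element of $S(K_X)$ itself via Kodaira's lemma and then multiply from below.
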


\begin{proof} If $\kappa(X)= \dim X$, then there is a very ample line bundle $\sA$ such that $S^0(X,\sA)\ne 0$ and an integer $N>0$, such that $\sA \hookrightarrow \omega_X^N$. Therefore, multiplying by a nonzero section in $S^0(X,\sA)$, we have $$H^0(X,\omega_X^m )\hookrightarrow S^0(X, \omega_X^m \otimes \sA) \hookrightarrow S^0(X, \omega_X^{m+N}).$$ 
We thus have inequalities $$h^0 (X,\omega_X^m )\leq \dim S^0(X, \omega_X^{m+N})\leq h^0(X, \omega_X^{m+N})$$ and the claim follows immediately.\end{proof}
\begin{lem} The limit  $$\lim _{1||m\to +\infty}\frac{\dim S^0(X,\mathcal O _X(mK_X))}{m^{\kappa _S(X)}}$$ exists. (Here we have assumed that $1||m$ i.e. that $m>0$ is sufficiently divisible.) \end{lem}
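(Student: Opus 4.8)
The plan is to squeeze the normalized dimensions $\dim S^0(X,\mathcal O_X(mK_X))/m^{\kappa_S(X)}$ between two sequences built from the ordinary plurigenera $h^0(X,\mathcal O_X(mK_X))$, whose limit (the Iitaka volume) is known to exist, thereby forcing the $S^0$-limit to exist and to coincide with it. I may assume $\kappa_S(X)\geq 0$, since if $\kappa_S(X)=-\infty$ then $S^0(X,\mathcal O_X(mK_X))=0$ for all $m>0$ and there is nothing to prove; writing $\kappa:=\kappa_S(X)$, \eqref{lem:kappa_vs_kappa_S} then gives $\kappa=\kappa(X)\geq 0$. First I would fix a sufficiently divisible modulus $d$ (a common multiple of all the integers needed below) and, using $\kappa_S(X)\geq 0$, choose a multiple $m_0$ of $d$ together with a nonzero section $f_0\in S^0(X,\mathcal O_X(m_0K_X))$. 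Since $S(K_X)\subset R(K_X)$ is an ideal by \eqref{l-rm}, multiplication by $f_0$ gives, exactly as in the proof of \eqref{l-lim}, an injection
\begin{equation*}
H^0(X,\mathcal O_X(mK_X))\hookrightarrow S^0(X,\mathcal O_X((m+m_0)K_X)),
\end{equation*}
which combined with the inclusion $S^0\subseteq H^0$ yields, for every multiple $m$ of $d$ (so that $m$ and $m+m_0$ are both multiples of $d$),
\begin{equation*}
h^0(X,\mathcal O_X(mK_X))\leq \dim S^0(X,\mathcal O_X((m+m_0)K_X))\leq h^0(X,\mathcal O_X((m+m_0)K_X)).
\end{equation*}

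Next I would invoke the existence of the Iitaka volume
\begin{equation*}
L:=\lim_{1||m\to+\infty}\frac{h^0(X,\mathcal O_X(mK_X))}{m^{\kappa}},
\end{equation*}
which holds in arbitrary characteristic: applying the theory of Newton--Okounkov bodies to the section ring $R(K_X)$ and an admissible flag, the associated graded semigroup has Okounkov body of dimension $\kappa$, and the corresponding limit exists (and equals the normalized volume of that body) once one restricts to sufficiently divisible $m$. Dividing the displayed inequality by $(m+m_0)^{\kappa}$ and letting $m\to+\infty$ through multiples of $d$, both outer terms converge to $L$, since $m^{\kappa}/(m+m_0)^{\kappa}\to 1$. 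By the squeeze theorem the middle term converges to $L$ as well, and since $m+m_0$ then runs over all sufficiently large multiples of $d$, this gives
\begin{equation*}
\lim_{1||n\to+\infty}\frac{\dim S^0(X,\mathcal O_X(nK_X))}{n^{\kappa}}=L,
\end{equation*}
which is the assertion.

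The one genuine input is the existence of $L$ in characteristic $p$, so I expect the main obstacle to be the characteristic-free verification of the relevant semigroup condition, namely that the Okounkov body of $R(K_X)$ has full dimension $\kappa$ and governs the growth; this is where the Newton--Okounkov machinery must be quoted with care, as none of the vanishing- or Frobenius-theoretic tools developed earlier in the paper bear on it. Alternatively, one could apply the same semigroup theorem directly to the graded algebra $S(K_X)$ --- which is genuinely a subalgebra, since by \eqref{l-rm} it is an ideal and hence $S^0\cdot S^0\subseteq S^0$ --- in which case the sandwich above serves only to identify its Okounkov body with that of $R(K_X)$ and thereby to pin down its dimension as $\kappa$.
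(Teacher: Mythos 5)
Your proposal is correct and follows essentially the same route as the paper: reduce to the case $S^0\neq 0$, sandwich $h^0(X,\omega_X^m)\leq \dim S^0(X,\omega_X^{m+N})\leq h^0(X,\omega_X^{m+N})$ for sufficiently divisible $m,N$ by multiplying with a fixed nonzero element of $S^0(X,\omega_X^{N})$ (the ideal property \eqref{l-rm}), and conclude from the Kaveh--Khovanskii theorem that $\lim h^0(X,\omega_X^m)/m^{\kappa_S(X)}$ exists over sufficiently divisible $m$. The paper cites \cite{KK12} for exactly the Newton--Okounkov input you identify as the one genuine external ingredient.
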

\begin{proof} If $\dim S^0(X,\mathcal O _X(mK_X))=0$ for all $m\geq 0$ then the result trivially holds. Thus we may assume that
$\dim S^0(X,\mathcal O _X(mK_X))>0$ for some $m\geq 0$. Arguing as in the proof of \eqref{l-lim}, we have 
$$h^0 (X,\omega_X^m )\leq \dim S^0(X, \omega_X^{m+N})\leq h^0(X, \omega_X^{m+N})$$ for all $m,N>0$ sufficiently divisible. The claim now follows since by a result of Kaveh and  Khovanskii \cite{KK12}, we have  that $$\lim _{1||m\to +\infty}\frac{h^0(X,\mathcal O _X(mK_X))}{m^{\kappa _S(X)}}$$ exists.
\end{proof}


\subsection{Proof of point $(1)$ of Theorem \ref{t-one}} \label{ss-thm_1}

\begin{defn}
Let $X$ be a smooth projective variety over $k$ such that $\kappa(X)=0$. The \emph{Calabi-Yau index} is then defined to be the greatest common divisor of the integers $m>0$, for which $h^0(mK_X) \neq 0$. By the following remark, it is also the smallest integer $m>0$, for which $h^0(mK_X) \neq 0$. 
\end{defn}

\begin{rem}
From the Chinese remainder theorem it follows that if $r$ is the Calabi-Yau index  of $X$, then there is an integer $l > 0$, such that $h^0(rl K_X) \neq 0$ and $h^0(r(l+1)K_X) \neq 0$. Let $D \in |rl K_X|$ and $D' \in |r(l+1)K_X|$ be the unique elements for some $l \gg 0$. Then $(l+1)D = l D'$. In particular $\frac{l+1}{l} D = D'$ and hence $D' - D = \frac{1}{l}D$ is an effective $\bZ$-divisor, an element of $|r K_X|$. 
\end{rem}

\begin{rem}
Note also that if $\kappa_S(X) = 0$, then $\kappa(X)=0$ by Lemma \ref{lem:kappa_vs_kappa_S}, and hence the Calabi-Yau index is defined.
\end{rem}

\begin{lem}
\label{lem:Calabi_Yau_index_divides_p_minus_1}
If $X$ is a smooth projective variety with $\kappa_S(X) = 0$, then the Calabi-Yau index of $X$ divides $p-1$.
\end{lem}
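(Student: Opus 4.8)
The plan is to unwind the definition of $S^0$ through the twisted Frobenius trace and to read off a congruence modulo the Calabi--Yau index $r$ directly from the mere nonvanishing of a single pluricanonical space.

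First I would record two elementary facts. By Lemma \ref{lem:kappa_vs_kappa_S}, the hypothesis $\kappa_S(X) = 0 \geq 0$ forces $\kappa(X) = 0$, so the Calabi--Yau index $r$ is defined; and since $r$ is by definition the greatest common divisor of all $n > 0$ with $h^0(nK_X) \neq 0$, we have
\begin{equation*}
h^0(X, \sO_X(nK_X)) \neq 0 \ \Longrightarrow\ r \mid n .
\end{equation*}
Second, because $\kappa_S(X) = 0$, there is an integer $m > 0$ (any sufficiently divisible one will do) with $S^0(X, \sO_X(mK_X)) \neq 0$. As $S^0(X, \sO_X(mK_X)) \subseteq H^0(X, \sO_X(mK_X))$, the displayed implication gives $r \mid m$, and hence $m - 1 \equiv -1 \pmod r$.

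Next I would expand $S^0(X, \sO_X(mK_X)) = S^0(X, \sigma(X,0) \otimes \sO_X(mK_X))$ using the defining intersection with $\Delta = 0$ and $s = 1$ (a legitimate choice since $(p-1)\cdot 0$ is integral). The $e$-th term is the image of the twisted trace
\begin{equation*}
\mathrm{Tr}^e : H^0\bigl(X, \sO_X((1-p^e)K_X + p^e m K_X)\bigr) \longrightarrow H^0(X, \sO_X(mK_X)),
\end{equation*}
whose source, after simplifying $(1-p^e)K_X + p^e m K_X = (1 + p^e(m-1))K_X$, is $H^0(X, \sO_X((1+p^e(m-1))K_X))$. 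Because each $\mathrm{Tr}^e$ factors through $\mathrm{Tr}^{e-1}$ (this is exactly the composition structure of the iterated trace $\phi^e$ in \eqref{eq:canonical_Cartier_module}), these images form a descending chain whose stable value is $S^0(X, \sO_X(mK_X))$. In particular $S^0(X, \sO_X(mK_X)) \subseteq \operatorname{im}\mathrm{Tr}^1$, so the nonvanishing of $S^0$ forces $\operatorname{im}\mathrm{Tr}^1 \neq 0$, hence forces its source $H^0(X, \sO_X((1 + p(m-1))K_X))$ to be nonzero.

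Finally I would apply the first fact with $n = 1 + p(m-1)$ to conclude $r \mid 1 + p(m-1)$. Reducing modulo $r$ and using $m \equiv 0 \pmod r$ gives $1 + p(m-1) \equiv 1 - p \pmod r$, whence $r \mid p - 1$, as desired. The only step requiring genuine care is the second one, namely the observation that the stable Frobenius-trace image sits inside the single image at level $e = 1$ (i.e.\ the descending-chain property of the maps $\phi^e$); once this is in place, the divisibility $r \mid p-1$ is extracted from nothing more than the nonemptiness of the degree-$(1+p(m-1))$ pluricanonical system, with no finer information about the trace maps needed.
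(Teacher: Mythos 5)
Your proof is correct and follows essentially the same route as the paper's: both arguments observe that $S^0(X,\omega_X^m)\neq 0$ forces the source $H^0(X,\sO_X((p^e m + (1-p^e))K_X))$ of the trace map to be nonzero, so $r \mid p^e m + (1-p^e)$, and combined with $r\mid m$ this gives $r\mid p^e-1$, specialized at $e=1$. The extra care you take in justifying that $S^0$ sits inside the image of the level-one trace is exactly the descending-chain property the paper takes for granted.
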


\begin{proof} Let and $m>0$ be an integer such that $S^0(X, \omega_X^m) \neq 0$ and
 $r$ be the Calabi-Yau index, then $r | m$. Furthermore, since for every $e>0$ there is a non-zero element of $H^0(X, \sO_X((p^e m + (1-p^e)) K_X))$, where
\begin{equation*}
0 \equiv p^e m + (1-p^e) \equiv (1-p^e) \mod r .
\end{equation*}
For $e=1$ we obtain that $r|p-1$.
\end{proof}

\begin{lem}
\label{lem:S_0_if_Kodaira_dimension_zero}
If $X$ is a smooth projective variety such that $\kappa_S(X) = 0$ with Calabi-Yau index $r$ and $a : X \to A$ is morphism to a projective scheme $A$ over $k$, then 
\begin{enumerate}
\item $S^0(X, \omega_X^r) \neq 0$,
\item if $G \in |rK_X|$ is the unique element, then $S^0(X, \sigma(X,\Delta) \otimes \omega_X^r) \neq0$, where $\Delta = \frac{r-1}{r} G$,
\item the natural inclusions $S^0(X, \sigma(X,\Delta) \otimes \omega_X^r) \subseteq H^0(A, \Omega_0) \subseteq H^0(X, \omega_X^r)$ are equalities, where $\Omega_0= S^0 a_* ( \sigma(X,\Delta) \otimes \omega_X^r)$ and $\Delta$ is as above and the natural inclusion is obtained from \eqref{lem:obvious_inclusion}.
\item the natural maps  $H^0(A, F^{e+1}_* \Omega_0) \to H^0(A, F^{e}_* \Omega_0)$ and $H^0(X, F^{e+1}_* \omega_X^r) \to  H^0(X, F^{e}_* \omega_X^r)$ are compatible with the above inclusions and hence both are isomorphisms.
\end{enumerate}

\end{lem}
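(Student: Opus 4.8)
The plan is to prove (2) first, since (1) is a special case of it and the remaining statements follow by elementary bookkeeping. I begin by collecting the inputs. Since $\kappa_S(X)=0$, Lemma~\eqref{lem:kappa_vs_kappa_S} gives $\kappa(X)=0$, so for the Calabi--Yau index $r$ we have $h^0(X,\omega_X^r)=1$; write $\gamma$ for a generator and $G=\operatorname{div}\gamma\in|rK_X|$. By Lemma~\eqref{lem:Calabi_Yau_index_divides_p_minus_1} we have $r\mid p-1$, hence $r\mid p^e-1$ for every $e\ge 1$, and the integer $s$ of Lemma~\eqref{lem:S_f_*_mK_X} (applied with $m=r$, $D=G$, $\Delta=\tfrac{r-1}{r}G$) equals $1$. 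Finally, the characterization of $\kappa_S=0$ recalled in the introduction provides a sufficiently divisible $t$ with $\dim_k S^0(X,\omega_X^{rt})=1$; in particular $S^0(X,\omega_X^{rt})\neq 0$.

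For (2), and hence (1), the key tool is the projection formula for the Grothendieck trace already used in the proof of Lemma~\eqref{l-rm}. Set $n_e=\tfrac{1+(r-1)p^e}{r}$ and $N_e=\tfrac{1+(rt-1)p^e}{r}$, both integers by $r\mid p^e-1$, and note $N_e-n_e=(t-1)p^e$. The source of the trace computing $S^0(X,\omega_X^{rt})$ at level $e$ is $H^0(X,\omega_X^{rN_e})=\langle\gamma^{\otimes N_e}\rangle$, so $S^0(X,\omega_X^{rt})\neq 0$ forces $\operatorname{Tr}^{e}(\gamma^{\otimes N_e})\neq 0$ for every $e$. Writing $\gamma^{\otimes N_e}=\gamma^{\otimes n_e}\cdot(\gamma^{\otimes(t-1)})^{p^e}$ and applying the projection formula gives $\operatorname{Tr}^{e}(\gamma^{\otimes n_e})\cdot\gamma^{\otimes(t-1)}=\operatorname{Tr}^{e}(\gamma^{\otimes N_e})\neq 0$, whence $\operatorname{Tr}^{e}(\gamma^{\otimes n_e})\neq 0$ for every $e$; since $H^0(X,\omega_X^r)=\langle\gamma\rangle$ this yields $S^0(X,\omega_X^r)=\langle\gamma\rangle$, proving (1). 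To upgrade this to (2) I check that $\gamma^{\otimes n_e}$ in fact lies in the smaller, $\Delta$-twisted source sheaf $\sO_X\big((1-p^e)(K_X+\Delta)+p^e rK_X\big)=\omega_X^{rn_e}\otimes\sO_X\big(-(r-1)\tfrac{p^e-1}{r}G\big)$: indeed $\operatorname{div}(\gamma^{\otimes n_e})=n_e G$ and $n_e-(r-1)\tfrac{p^e-1}{r}=1\ge 0$, so $\gamma^{\otimes n_e}$ vanishes along $(r-1)\tfrac{p^e-1}{r}G$. Since the $\Delta$-twisted trace is the restriction of the plain one, the same computation gives $\gamma\in S^0(X,\sigma(X,\Delta)\otimes\omega_X^r)$, so this space is nonzero.

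For (3), the two inclusions are Lemma~\eqref{lem:obvious_inclusion} (with $Y=A$, $M=rK_X$) and the containment $\Omega_0\subseteq a_*\omega_X^r$, which together give $S^0(X,\sigma(X,\Delta)\otimes\omega_X^r)\subseteq H^0(A,\Omega_0)\subseteq H^0(X,\omega_X^r)$. One then simply reads off dimensions: the left-hand space is nonzero by (2), while the right-hand space has dimension $h^0(\omega_X^r)=1$, so all three spaces are one-dimensional and both inclusions are equalities.

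For (4), observe first that $\dim_k H^0(X,F^e_*\omega_X^r)=\dim_k H^0(X,\omega_X^r)=1$ and $\dim_k H^0(A,F^e_*\Omega_0)=\dim_k H^0(A,\Omega_0)=1$ for all $e$, since pushforward by Frobenius leaves global sections unchanged and $k$ is perfect (using (3) for $e=0$). By the identifications in Lemma~\eqref{lem:S_f_*_mK_X} the maps in question are the single steps of the iterated trace, and they are compatible with the inclusions of (3) by naturality, so it suffices to treat the $\omega_X^r$-chain. For every fixed $e$ the composite $H^0(X,F^{e}_*\omega_X^r)\to H^0(X,\omega_X^r)$ is the level-$e$ trace, which hits a nonzero multiple of $\gamma$ by the computation in the second paragraph and is therefore surjective; a composite of maps between one-dimensional spaces can be nonzero for every length only if each single step is nonzero, i.e. an isomorphism. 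Thus every step of both chains is an isomorphism. The main obstacle throughout is the descent in step (2): it rests entirely on the projection-formula passage from exponent $rt$ to exponent $r$ and on the numerical coincidence $n_e-(r-1)\tfrac{p^e-1}{r}=1$ that lands $\gamma^{\otimes n_e}$ in the $\Delta$-twisted source sheaf, both of which depend essentially on $r\mid p-1$.
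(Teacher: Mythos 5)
Your proposal is correct and follows essentially the same route as the paper: the projection-formula descent from the level-$e$ trace on $\omega_X^{rt}$ (the paper's $\omega_X^{rl}$) to the one on $\omega_X^{r}$ via $\gamma^{N_e}=\gamma^{n_e}\cdot(\gamma^{t-1})^{p^e}$, the identification of the $\Delta$-twisted source with $\omega_X^{rn_e}\otimes\sO_X(-\tfrac{(r-1)(p^e-1)}{r}G)$ using $r\mid p^e-1$, the one-dimensionality count for (3), and the sandwiching of the $\Omega_0$-chain between isomorphic one-dimensional spaces for (4). The only cosmetic difference is that you argue (2) by viewing the twisted trace as a restriction of the untwisted one, whereas the paper phrases it via the multiplication-by-$f^{\frac{(r-1)(p^e-1)}{r}}$ isomorphism of Lemma \eqref{lem:S_f_*_mK_X}; these are the same computation.
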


\begin{proof}
For point $(1)$, let $0\ne f \in H^0(X, \sO _X(rK_X))$ corresponding to a divisor $G$. Then, there is an integer $l>0$, such that $S^0(X,\omega _X^{rl})\ne 0$ \eqref{lem:kappa_vs_kappa_S}. Thus, for all integers $e >0$, there is an element of $H^0(X, \mathcal O _X((p^erl +(1-p^e) ) K_X) )$ mapping to $0 \neq f^l \in  H^0(X, \mathcal O _X(rl K_X))$. Since $\kappa (X)=0$, that element can only be $\alpha f^{p^el+\frac{1-p^e}{r}}$ for some $\alpha \in k^*$. Now, let us look at $\Phi^e \left(F_*^e \left(\alpha f^{p^e+\frac{1-p^e}{r}} \right) \right)$. If it were zero, then the following element would also be zero
\begin{equation*}
\Phi^e \left(F_*^e \left(\alpha f^{p^e+\frac{1-p^e}{r}} \right) \right) f^{l-1} =  \Phi^e \left(F_*^e \left(\alpha f^{lp^e+\frac{1-p^e}{r}} \right) \right) = f^l .
\end{equation*}
But we know that $f^l$ is not-zero, so also $\Phi^e \left(F_*^e \left(\alpha f^{p^e+\frac{1-p^e}{r}} \right) \right)$ is not zero and hence equals $\beta f$ for some $\beta \in k^*$. Therefore, $f \in \im \Phi^e$ for every $e>0$ and hence $f \in S^0(X,\sO_X(r K_X))$.

Point $(2)$ follows from the fact that that the image of 
\begin{equation*}
\alpha f \in H^0(X,\sO_X(rK_X)) = H^0( X, \sO_X( rp^e K_X + (1-p^e)(K_X + \Delta))) 
\end{equation*}
 in $H^0(X,\sO_X(rK_X))$ is computed by
\begin{equation*}
\alpha f \mapsto \alpha  f \cdot f^{\frac{r-1}{r} (p^e -1)} = \alpha f^{p^e + \frac{1-p^e}{r}} \mapsto   \beta f .
\end{equation*}

Point $(3)$ immediately follows from the fact that $S^0(X, \omega_X^r) \neq 0$, and that $\dim_k H^0(X, \omega_X^r)=1$. 

Point $(4)$ follows from $H^0(A, \_)$ applied to the  commutative diagram
\begin{equation*}
\xymatrix{
F^{e+1}_* \Omega_0 \ar[r] \ar@{^(->}[d] & F^e_* \Omega_0 \ar@{^(->}[d] \\
F^{e+1}_* a_* \omega_X^r \ar[r] \ar@{=}[d] & F^e_* a_* \omega_X^r  \ar@{=}[d] \\
a_* F^{e+1}_* \omega_X^r \ar[r]  & a_* F^e_* \omega_X^r
},
\end{equation*}
where the bottom two horizontal arrows are the arrows from \eqref{lem:S_f_*_mK_X}. Further the bottom one is an isomorphism because the stable image of these maps is exactly $S^0(X, \sigma(X,\Delta) \otimes \omega_X^r)$, which is proven to be non-zero in point $(2)$.
\end{proof}

\begin{thm} Let $a:X\to A$ be a generically smooth morphism from a smooth projective variety to (but not necessarily onto) an abelian variety with general fiber $G$.
If $S^0(G,\omega _G ) \neq 0$, then $H^0(X,\omega _X\otimes a^* P) > 0$ for some $P\in \hat A$.
\end{thm}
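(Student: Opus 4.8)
The plan is to reduce the statement to the non-vanishing of the coherent sheaf $\Omega_0:=S^0a_*\omega_X$ on $A$ and then feed this into the generic vanishing machinery of \eqref{ss-GVT}. First I would record that $(a_*\omega_X,a_*\phi)$ is a Cartier module with $a_*\omega_X$ coherent, and that by \cite[Proposition 8.1.4]{BS12} (exactly as in the proof of \eqref{lem:slightly_less_obvious_containment}) the sheaf $\Omega_0=S^0a_*\omega_X$ is a coherent subsheaf of $a_*\omega_X$ for which the structural map $F^s_*\Omega_0\to\Omega_0$ is surjective. I then adopt \eqref{notation} with this choice of $\Omega_0$, so that $\Omega=\varprojlim\Omega_e$ with $\Omega_e=F^{es}_*\Omega_0$.

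Granting for the moment that $\Omega_0\neq 0$, the conclusion is quick. Since $F^s_*\Omega_0\to\Omega_0$ is surjective, \eqref{lem:support} gives $\Supp\Omega=\Supp\Omega_0\neq\emptyset$, hence $\Omega\neq 0$. The contrapositive of \eqref{c-4} then produces a point $y\in\hat A$ with $H^0(A,\Omega_0\otimes P_y)\neq 0$. As $\Omega_0\subseteq a_*\omega_X$, the projection formula \eqref{PF} yields
\[
0\neq H^0(A,\Omega_0\otimes P_y)\subseteq H^0(A,a_*\omega_X\otimes P_y)=H^0(X,\omega_X\otimes a^*P_y),
\]
so the theorem holds with $P=P_y$.

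It remains to prove $\Omega_0\neq 0$, and this is where the hypothesis $S^0(G,\omega_G)\neq 0$ enters through \eqref{thm:openness_S_0}. Since $a$ need not be surjective, I would factor it as $a=\iota\circ b$ with $b:X\to Y:=a(X)$ surjective and $\iota:Y\hookrightarrow A$ the closed immersion of the reduced image. As $\iota_*$ is exact and commutes with Frobenius, $(a_*\omega_X,a_*\phi)=\iota_*(b_*\omega_X,b_*\phi)$, and taking stable images gives $\Omega_0=\iota_*S^0b_*\omega_X$; thus $\Omega_0\neq 0$ is equivalent to $S^0b_*\omega_X\neq 0$. Now let $W_0\subseteq Y$ be the dense open obtained by deleting $Y_{\mathrm{sing}}$ together with the locus over which $b$ fails to be smooth, and set $X_0:=b^{-1}(W_0)$, $b_0:=b|_{X_0}:X_0\to W_0$. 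Then $W_0$ is smooth, $X_0$ is smooth (being open in $X$), and $b_0$ is a proper, surjective, generically smooth morphism of smooth varieties whose general fiber is $G$. By \eqref{thm:openness_S_0} the rank of $S^0b_{0,*}\omega_{X_0}$ is at least $\dim_kS^0(G,\omega_G)>0$. Since the formation of $S^0f_*\omega$ commutes with restriction to open subsets of the base — pushforward, Frobenius, the trace, taking images, and the defining intersection all being compatible with the exact restriction functor — we obtain $(S^0b_*\omega_X)|_{W_0}=S^0b_{0,*}\omega_{X_0}\neq 0$, whence $S^0b_*\omega_X\neq 0$.

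The main obstacle is precisely that the image $Y=a(X)$ may be singular, so \eqref{thm:openness_S_0} cannot be applied to $b$ directly; the device of passing to the smooth locus $W_0$ of $Y$ and using that $S^0f_*\omega$ localizes on the base is what circumvents this, and it is the step demanding genuine care (one would want to check the compatibility of $S^0$ with base restriction in enough detail). An alternative would be to resolve $Y$ and invoke the birational invariance of $S^0(G,\omega_G)$, but restricting to $W_0$ seems cleaner.
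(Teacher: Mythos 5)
Your proposal is correct and follows essentially the same route as the paper's proof: non-vanishing of $S^0a_*\omega_X$ via \eqref{thm:openness_S_0}, then \eqref{c-4} applied to $\Omega_0=S^0a_*\omega_X$ to produce the desired $P$. The only differences are cosmetic — the paper runs the argument by contradiction using the surjection $\Omega\twoheadrightarrow S^0a_*\omega_X$ where you use \eqref{lem:support}, and you spell out the reduction to the smooth locus of $a(X)$ needed to legitimately invoke \eqref{thm:openness_S_0}, a detail the paper's one-line proof leaves implicit.
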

\begin{proof} Since $S^0(G,\omega _G) \neq 0$, it follows by \eqref{thm:openness_S_0} that $S^0a_*\omega _X\ne 0$. If $$H^0(X,\omega _X\otimes a^* P)= 0,\qquad \forall\ P\in \hat A,$$ then 
$\Omega =0$ by \eqref{c-4}, but since $\Omega \to S^0a_*\omega _X\ne 0$ is surjective, this is impossible.
\end{proof}

\begin{rem}
Note that the above theorem does not hold if one replaces $H^0$ by $S^0$. That is, there are examples where $S^0(X,\omega _X\otimes a^* P)= 0$ for every $P \in \hat A$. An easy example is when $X=A$ is a non-ordinary abelian variety, and $a= \id_A$. Then $S^0(X, \omega_X \otimes a^* P ) = S^0(A, \omega_A \otimes P)$, which is zero for $P \neq \sO_A$ because then $H^0(A, \omega_A \otimes P) =0$ and it is zero for $P = \sO_A$, because $A$ is not ordinary.
\end{rem}
                                      
\begin{thm} Let $X$ be a smooth projective variety defined over $k$ and $a:X\to A$ the Albanese morphism. Suppose that $S^0a_*\omega _X\ne 0$ and ${\rm Pic }^0(X)$ has no supersingular factors, then if $\kappa _\nu (X)=0$, then $\kappa (X)=0$. \end{thm}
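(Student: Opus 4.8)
The plan is to show both inequalities $\kappa(X)\le 0$ and $\kappa(X)\ge 0$. The first is free: the Kodaira dimension never exceeds the numerical one, so $\kappa(X)\le \kappa_\nu(X)=0$. Thus everything reduces to producing a single nonzero pluricanonical form, and the cleanest way to do this is to find a \emph{torsion} $P\in\hat A$ with $H^0(X,\omega_X\otimes a^*P)\ne 0$: if $P^m\cong\sO_A$, then the $m$-th tensor power of a nonzero section lies in $H^0(X,\omega_X^m\otimes a^*P^m)=H^0(X,\omega_X^m)$, giving $\kappa(X)\ge 0$ and hence $\kappa(X)=0$. Since the hypotheses and conclusion are insensitive to enlarging $k$ to an uncountable algebraically closed field (Kodaira dimensions and the $S^0$-invariants are preserved), I may assume $k$ uncountable so as to invoke \eqref{c-finite}.

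First I would set $\Omega_0:=a_*\omega_X$ as in \eqref{notation}. Since $S^0a_*\omega_X\ne 0$ is precisely the nonzero stable image of the Cartier action, \eqref{lem:support} gives $\Supp\Omega=\Supp S^0a_*\omega_X\ne\emptyset$, so $\Omega\ne 0$, and then $\Lambda\ne 0$ by \eqref{c-0}. From $\Lambda\ne 0$ I get a closed point $y$ with $\Lambda\otimes k(y)\ne 0$, which by \eqref{c-1} means $H^0(X,\omega_X\otimes a^*P_y^{-p^e})\ne 0$ for infinitely many $e$; equivalently, the Frobenius orbit $R_e:=P_y^{-p^e}$ (so $R_{e+1}=R_e^p$) meets $V^0:=\{P\in\hat A\mid H^0(X,\omega_X\otimes a^*P)\ne 0\}$ infinitely often. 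In particular $V^0\ne\emptyset$ (as also follows directly from \eqref{c-4}).

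The heart of the argument is to show that the relevant support locus is zero-dimensional. Here both remaining hypotheses enter. Because $\kappa_S(X)\le\kappa(X)\le\kappa_\nu(X)=0$, Corollary \eqref{c-finite} applies and pins the Frobenius-stable locus down to at most one point; more to the point, if the closure $\overline{T}$ of the support locus of $\Lambda$ were positive-dimensional, then by \eqref{p-finite1} (this is exactly where $\mathrm{Pic}^0(X)$ having no supersingular factors is used, via \eqref{t-pr}) it would contain a torsion translate $P_0+B$ of an abelian subvariety with $\dim B\ge 1$; running the multiplication argument from the proof of \eqref{c-finite} over the dense torsion points of this component would force $\dim_k S^0(X,\omega_X^m)$ to grow with $m$, whence $\kappa(X)=\kappa_S(X)\ge 1$ by \eqref{lem:kappa_vs_kappa_S}, contradicting $\kappa(X)\le\kappa_\nu(X)=0$. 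Therefore $\overline{T}$, and with it $V^0$, is finite. Since $R_e\in V^0$ for infinitely many $e$ while $V^0$ is finite, two iterates coincide, $R_{e_1}=R_{e_2}$ with $e_1<e_2$, so $R_{e_1}=P_y^{-p^{e_1}}$ is a torsion element of $V^0$. Powering up its section as above yields $H^0(X,\omega_X^m)\ne 0$, and we conclude $\kappa(X)=0$.

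I expect the main obstacle to be exactly the zero-dimensionality step: the numerical hypothesis $\kappa_\nu(X)=0$ only controls the growth of the honest pluricanonical systems $|mK_X|$, whereas the sections manufactured by the generic vanishing machinery live in the twisted systems $|K_X+a^*P|$ and in the Frobenius-limit object $\Lambda$. Bridging the two — turning a nonvanishing limit/twisted section into an honest torsion pluricanonical section, and ruling out positive-dimensional support — is precisely what forces one to pass through the torsion-translate structure of the cohomology support loci, and it is there that the absence of supersingular factors is indispensable. The routine pigeonhole and power-up steps are then immediate.
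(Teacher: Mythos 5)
Your route is genuinely different from the paper's: the paper invokes \cite{CHMS12} to write $K_X\equiv G=N_\sigma(K_X)$, uses this to pin $V^0(\omega_X)$ down to at most a single point $\{-y\}$ with $K_X-G\sim a^*P_y$, and only then applies \eqref{p-lambda} to see that $y$ is torsion; you avoid the $N_\sigma$ machinery entirely and try to get by with the soft bound $h^0(mK_X)\le 1$ plus the torsion-translate structure of the support loci. That strategy is viable, but two steps fail as written. First, ``$\Lambda\ne 0$, so there is a closed point $y$ with $\Lambda\otimes k(y)\ne 0$'' confuses stalks with fibers: the paper's own supersingular elliptic curve example in \eqref{ss-examples} has $\Lambda\ne 0$ but $\Lambda\otimes k(y)=0$ for \emph{every} closed point $y$. (That example is excluded by the no-supersingular-factors hypothesis, but this only shows the implication requires a proof, which you do not give.) Second, ``Therefore $\overline{T}$, and with it $V^0$, is finite'' is not justified: \eqref{t-pr}, \eqref{p-finite1} and \eqref{p-lambda} control only $p$-stable loci, and $V^0(\omega_X)=\{P\mid h^0(\omega_X\otimes a^*P)\ne 0\}$ is not known to satisfy $P\in V^0\Rightarrow P^p\in V^0$; so you cannot conclude that its positive-dimensional components are torsion translates, hence cannot run the two-distinct-torsion-points argument on them, hence cannot conclude $V^0$ is finite. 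Your pigeonhole step then has nothing to pigeonhole into.

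Both defects are repaired by routing the whole argument through the closed, $p$-stable set $Z$ of \eqref{p-lambda}: take $\Omega_0=S^0a_*\omega_X$, which is nonzero by hypothesis, so $\Omega\ne 0$ by \eqref{lem:support}, $\Lambda\ne 0$ by \eqref{c-0}, and therefore $Z\ne\emptyset$. For $P\in Z$ one has $\sH^0(\Lambda_0)_P\ne 0$; since $\sH^0(\Lambda_0)$ is coherent, Nakayama together with \eqref{c-1} gives $h^0(\omega_X\otimes a^*P^\vee)\ne 0$, i.e.\ $-P\in V^0(\omega_X)$. If $Z$ had a positive-dimensional maximal component, it would be a torsion translate of an abelian subvariety by \eqref{p-lambda}, so $V^0(\omega_X)$ would contain two distinct torsion points, hence two distinct divisors in some $|mK_X|$ and $\kappa(X)\ge 1>\kappa_\nu(X)$, a contradiction. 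Thus $Z$ is a nonempty finite union of torsion points, and any $-P$ with $P\in Z$ is the torsion point of $V^0(\omega_X)$ you wanted; powering up its section finishes the proof. With this replacement your argument closes and is arguably more elementary than the paper's, since it makes no use of the divisorial Zariski decomposition; what it buys less is the precise description $K_X\sim N_\sigma(K_X)+a^*P_y$ that the paper's argument yields along the way.
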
 \begin{proof} 
By \cite{CHMS12}, we know that $K_X\equiv G$ for the effective $\mathbb R$-divisor $G=N_\sigma (K_X)$. Assume now that $P' \in V^0(\omega_X)$, that is, $h^0(K_X + a^* P') \neq 0$. Then ${\rm Fix}(K_X + a^* P')\geq \lceil N_\sigma (K_X + a^* P') \rceil= \lceil N_\sigma (K_X) \rceil = \lceil G \rceil$, and hence $h^0(K_X + a^* P' - \lceil G \rceil) \neq 0$. In particular, $K_X + a^* P' - \lceil G \rceil  \sim E$ for some effective divisor $E$. Therefore, using that $G \equiv K_X$, for the effective $\mathbb R$-divisors $E$ and $\{ -G\}$ we have $E + \{ - G\} \equiv 0$. This implies that both $E = 0$ and $\{ - G\}=0$.  In particular, $G$ is an integral divisor. Hence, we may assume  that $K_X-G\sim Q + a^*P$ for some torsion divisor $Q$ and $P \in \hat A$. We have that $K_X + a^* P' -  G   \sim E$ as earlier, where $E=0$ by the same argument as before. Hence $0 \sim Q + a^* (P + P')$. This implies that $Q  \in \Pic^0(X)$. So, we may assume that $Q=0$ and then $P = -P'$.

We obtained that  $V^0(\omega _X) =\{-y \}$ where $P=P_y$.
Further, we have $V^0(S^0a_*\omega _X)= V^0(\omega _X) \subset \{-y \}$.
It follows that the support of $\mathcal H ^0(\Lambda _0)$ is contained in $y$ where $\Lambda _0=R\hat S(D_A(S^0a_*\omega _X))$. By \eqref{p-lambda},  the image of $\mathcal H^0(\Lambda _0)\to \Lambda$ is a finite union of torsion translates of subtori of $\hat A$. Note that this image is not $0$ as otherwise $\Lambda =0$ and hence $\Omega =0$ (which is impossible as $\Omega $ surjects on to $S^0a_*\omega _X$). 
Thus $y$ is a torsion point. \end{proof}             
\begin{defn}
If $X$ is smooth, projective over $k$, then the $i$-th Betti number $b_i(X)$ is defined as $\dim_{\bQ_l} H^i_{\textrm{\'et}} ( X, \bQ_l)$ for some $l \neq p$. 
\end{defn}

\begin{rem}
Note that the above definition of $b_i(X)$ is independent of the choice of $l$ by an application of \cite{Deligne}.  Furthermore, ${b_1(X)}= {2}\dim \Alb(X)$ by \cite[page 14]{Liedtke}.

\end{rem}

\begin{thm}\label{t-surj} Let $a:X\to A$ be the Albanese morphism of a smooth projective variety over $k$.
If $\kappa _S(X)=0$, then $a:X\to A$ is surjective. In particular, if $\kappa_S(X)=0$, then  $b_1(X) \leq 2 \dim X$. 
\end{thm}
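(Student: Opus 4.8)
The plan is to run the generic vanishing machinery on the Cartier module built out of pluricanonical forms and show that its total support must be all of $A$; since that support sits inside $a(X)$, this forces $a$ to be surjective. Since $\kappa_S(X)=0$, Lemma \eqref{lem:Calabi_Yau_index_divides_p_minus_1} gives that the Calabi--Yau index $r$ divides $p-1$, so the integer $s$ of \eqref{lem:S_f_*_mK_X} is $s=1$, and with $G\in|rK_X|$ the unique member and $\Delta=\frac{r-1}{r}G$ I set $\Omega_0:=S^0a_*(\sigma(X,\Delta)\otimes\omega_X^r)$. By Lemma \eqref{lem:S_0_if_Kodaira_dimension_zero} this is a nonzero Cartier module with $H^0(A,\Omega_0)=H^0(X,\omega_X^r)$ one-dimensional and with the Frobenius transition maps $H^0(A,F^{e+1}_*\Omega_0)\to H^0(A,F^e_*\Omega_0)$ all isomorphisms. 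After replacing $\Omega_0$ by its stable image I may assume $F_*\Omega_0\twoheadrightarrow\Omega_0$, so that by \eqref{lem:support} one has $\Supp\Omega=\Supp\Omega_0\subseteq a(X)$, where $\Omega=\varprojlim\Omega_e$.

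The key observation is then that $a$ is surjective as soon as $\Supp\Omega=A$, so the entire problem becomes the equality $\Supp\Omega=A$. First I would note, using \eqref{c-1} together with the fact that the transition maps on $H^0$ are isomorphisms, that the fibre $\Lambda\otimes k(0)\cong\varinjlim H^0(A,\Omega_e)^\vee$ is one-dimensional; in particular $0\in V^0(\Omega_0)$ and $0\in\Supp\Lambda$. The mechanism for concluding is \eqref{c-2}: if $\Lambda$ carries a nonzero direct factor which is a direct limit of Artinian sheaves, then $\Supp\Omega_0=A$. Equivalently, via \eqref{c-span}, if the smallest abelian subvariety $\hat B$ with $V^0(\Omega_0)$ contained in finitely many translates of $\hat B$ is $\hat B=0$, then $V^0(\Omega_0)$ is finite, $K=\widehat{\hat A/\hat B}=A$, and $\Supp\Omega$ is $A$-invariant and nonempty, hence equals $A$. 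So it suffices to show that $V^0(\Omega_0)=\Supp\sH^0(\Lambda_0)$ is a finite set.

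This is the step I expect to be the main obstacle. The tool is the multiplication map $S^0(X,\omega_X^r\otimes a^*P_y)\otimes S^0(X,\omega_X^r\otimes a^*P_{-y})\to S^0(X,\omega_X^{2r})$ combined with the fact that $\kappa_S(X)=0$ forces $\dim_k S^0(X,\omega_X^{2r})=1$: this is exactly the computation of \eqref{c-finite} and shows that a positive-dimensional component of $V^0(\Omega_0)$ should produce two independent stable bicanonical sections. The difficulty, and the reason this is not just a quotation of \eqref{c-finite}, is that \eqref{c-finite} also assumes that $\Pic^0(X)$ has no supersingular factors and that $k$ is uncountable (in order to place the points of $V^0$ at torsion and to pick a very general $Q$), whereas \eqref{t-surj} must hold without these hypotheses. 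I would therefore not attempt to prove $V^0(\Omega_0)$ finite directly, but rather reduce to the general-type part of the image through the Ueno fibration $\pi\colon A\to B=A/\mathrm{Stab}(a(X))^0$, for which $\pi(a(X))\subseteq B$ has trivial stabiliser and is of general type, and argue along the lines of \eqref{thm:closed_subvariety} that a nontrivial $K$ would yield a fibration of $X$ by positive-dimensional abelian varieties incompatible with the canonical class; this is the delicate point where the full strength of the generic vanishing corollaries is needed, and where I expect the bulk of the work to lie.

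Finally, the Betti-number statement is immediate from surjectivity: since $a\colon X\to A=\Alb(X)$ is surjective we have $\dim A\le\dim X$, and because $b_1(X)=2\dim\Alb(X)=2\dim A$ we obtain $b_1(X)\le 2\dim X$.
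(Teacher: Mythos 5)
Your setup (the Cartier module $\Omega_0=S^0a_*(\sigma(X,\Delta)\otimes\omega_X^r)$ built from the Calabi--Yau index, the reduction to $\Supp\Omega_0=A$, and the identification of \eqref{c-2} as the concluding mechanism) matches the paper exactly. But there is a genuine gap at the step you yourself flag as the main obstacle: you reduce to proving that $V^0(\Omega_0)$ is a \emph{finite} set, cannot do it with the stated hypotheses (correctly noting that \eqref{c-finite} needs no supersingular factors and $k$ uncountable), and then gesture at a Ueno-fibration reduction ``along the lines of \eqref{thm:closed_subvariety}'' that is never carried out and is of doubtful applicability here, since $\kappa_S(X)=0$ forces $\kappa(X)=0$, so there is no general-type subvariety of $A$ in sight to which the argument of \eqref{thm:closed_subvariety} would apply. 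As written, the proposal does not close.

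The missing idea is that global finiteness of $V^0(\Omega_0)$ is not needed: it suffices that the origin be an \emph{isolated} point of $V^0(\omega_X^r)$. The paper proves this by an elementary multiplication-map argument: if $T\subseteq V^0(\omega_X^r)$ were a positive-dimensional component through $0$, the sum map $T^{g+1}\to\hat A$ has positive-dimensional fibers, and for $(P_1,\dots,P_{g+1})$ with $\sum P_i=0$ the product of sections lands in the one-dimensional space $H^0(X,\omega_X^{(g+1)r})$, forcing only finitely many divisors in the systems $|rK_X+a^*P_1|$ while $P_1$ ranges over an infinite set --- a contradiction. Isolatedness at the origin then yields a \emph{local} splitting $\sH^0(\Lambda_0)\cong\sC_0\oplus\sB_0$ with $\sB_0$ Artinian supported at $0$, inducing $\Lambda=(\varinjlim\sC_e)\oplus(\varinjlim\sB_e)$, and the fact that the transition maps $H^0(A,F^{e+1}_*\Omega_0)\to H^0(A,F^e_*\Omega_0)$ are isomorphisms (point (4) of \eqref{lem:S_0_if_Kodaira_dimension_zero}) shows $\varinjlim\sB_e\otimes k(0)\neq0$, hence $\varinjlim\sB_e\neq0$. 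This produces exactly the nonzero Artinian direct-limit factor that \eqref{c-2} requires. Note also that your observation that $\Lambda\otimes k(0)$ is one-dimensional does not by itself produce such a direct factor; the local splitting is what makes \eqref{c-2} applicable. Your deduction of the Betti-number bound from surjectivity is fine.
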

\begin{proof} Let $r$ be the Calabi-Yau index of $X$. By Lemma \ref{lem:S_0_if_Kodaira_dimension_zero} there is a unique  $G \in |rK_X|$. We use the notations of \eqref{notation} with the definitions
\begin{equation*}
\Delta: =\frac {r-1}{r}G, \quad \Omega _e :=F _*^{e}S^0 a_*( \sigma(X,\Delta ) \otimes \sO_X(r K_X) ) \quad \textrm{and} \quad \Omega :=\varprojlim \Omega _e. 
\end{equation*}
 Note that by \eqref{lem:Calabi_Yau_index_divides_p_minus_1}, $r | (p^e-1)$ for every $e \geq 0$ and hence  we may take the $s$ of \eqref{lem:S_f_*_mK_X} (or of \eqref{notation}) to be $1$. Also, according to \eqref{lem:S_f_*_mK_X}, $\Omega_{e+1} \to \Omega_e$ is surjective for every integer $e \geq 0$. Further, by  \eqref{lem:S_0_if_Kodaira_dimension_zero},  $S^0 a_*( \sigma(X,\Delta ) \otimes \sO_X(r K_X) ) \neq 0$. Therefore, for every $e$, $\Omega_e \ne 0$ and hence $\Omega \neq 0$.

We claim that since $\kappa (X)=0$,  \emph{there is a neighborhood $U$  of the origin such that $$V^0( \omega _X ^r)\cap U=\{ P\in \hat A|h^0(X,\omega _X ^r \otimes a^*P)\ne 0 \} \cap U=\{0 _{\hat {A}} \}.$$ } Suppose that this is not the case and let $ T \subset \hat A$ be a positive dimensional irreducible component of $V^0( \omega _X ^r)$ through the origin. 
Let $\xi : T^{g+1} \to A$ be the natural morphism. By dimension count, every fiber of $\xi$ is positive dimensional. Further, since $0 \in T$, $0$ is in the image of $\xi$.  Consider then for $(P_1,\ldots,P_{b+1}) \in \xi^{-1}(\sO_A)$ the maps $$H^0(X,\omega _X^r\otimes a^* P_1)\otimes \ldots \otimes H^0(X,\omega _X^r\otimes a^* P_{b+1})\to H^0 \left(X,\omega _X^{(b+1)r} \right).$$ Since $\dim  H^0 \left(X,\omega _X^{(b+1)r} \right)=1$, there are only finitely many choices for the divisors in $|rK_X+ a^*P_i|$  for $P_i \in p_1(\xi^{-1}(\sO_A))$.  However, since $p_1(\xi^{-1}(\sO_A))$ is an infinite set of not isomorphic line bundles and $\Pic A \to \Pic X$ is injective, this is  a contradiction. This finishes the proof of our claim. 

Since $H^0(A,S^0a_*(\sigma (X,\Delta )\otimes \omega _X^r)\otimes P)\subset H^0(X,\omega _X^r\otimes a^*P)$, we obtain that $H^0(A,\Omega_0 \otimes P)$ is zero for every $\mathcal O _A\ne P \in U$. Further, $H^0(A, \Omega_0) \neq 0$ by \eqref{lem:S_0_if_Kodaira_dimension_zero}. Hence, by \eqref{c-1}, $\sH^0(\Lambda_0)$ is Artinian in a neighborhood of the origin and $\sH^0(\Lambda_0) \otimes k(0) \neq 0$. Therefore, $\sH^0(\Lambda_0) \cong \sC_0 \oplus \sB_0$, where $\sB_0$ is Artinian and supported at $0$ and $0 \not\in \Supp \sC_0$. This induces a similar decomposition $\sC_e \oplus \sB_e$ on $\sH^0(\Lambda_e) \cong V^* \sH^0(\Lambda_0)$. Furthermore, the map $\sH^0(\Lambda_e) \to \sH^0(\Lambda_{e+1})$ is the direct sum of morphisms $\sC_e \to \sC_{e+1}$ and $\sB_e \to \sB_{e+1}$. Hence \begin{equation*}
\Lambda = \varinjlim \Lambda_e = \left(\varinjlim \sC_e \right) \oplus \left( \varinjlim \sB_e \right),                                                                                                                                                                                                                                                                                                                                                                                                                                                                                                                                                                                                                                                                                                                                                                                                                                                                                                                                                  
\end{equation*}
where $\sB_e$ is Artinian for every $e \geq 0$. 

\emph{We claim that $\varinjlim \sB_e \neq 0$.} We argue this by showing that $(\varinjlim \sB_e )\otimes k(0) = \varinjlim (\sB_e \otimes k(0))$ is not zero. To this end note that $\sB_e \otimes k(0) \cong H^0(A, F^e_* \Omega_0)^\vee$, and the homomorphism $\sB_e \otimes k(0) \to \sB_{e+1} \otimes k(0)$ can be identified via these isomorphisms with $H^0(A, F^e_* \Omega_0)^\vee \to H^0(A,F^{e+1}_* \Omega_0)^\vee$.
However, the latter is an isomorphism by  \eqref{lem:S_0_if_Kodaira_dimension_zero}. Hence $\varinjlim (\sB_e \otimes k(0)) \neq 0$ and consequently $\varinjlim \sB_e \neq 0$, which concludes the proof of our claim.

Finally, \eqref{c-2}, concludes our proof.

\end{proof}

\subsection{Proof of point $(2)$ of Theorem \ref{t-one}} \label{ss-thm_2}

\begin{thm}\label{t-bir} Let $X$ be a smooth projective variety over $k$ and $a : X \to A$ the Albanese morphism.   Then the following are equivalent:
\begin{enumerate}
\item $p \nmid \deg a$, $\kappa _S(X)=0$ and $b_1(X)=2\dim X$, and
\item $X$ is birational to an ordinary abelian variety.\end{enumerate}
\end{thm}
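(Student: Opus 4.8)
The plan is to establish the two implications separately. The implication $(2)\Rightarrow(1)$ is the soft one: if $X$ is birational to an ordinary abelian variety $B$, I would use that $\Alb$, $\kappa_S$ and $b_1$ are birational invariants of smooth projective varieties. Then $A=\Alb(X)\cong\Alb(B)=B$, so $\dim A=\dim X$ and $b_1(X)=2\dim\Alb(X)=2\dim X$; the birational map $X\dashrightarrow B$ identifies the Albanese morphism $a$ with a birational morphism, so $\deg a=1$ and trivially $p\nmid\deg a$; and $\kappa_S(X)=\kappa_S(B)$, where $\omega_B\cong\sO_B$ gives $S^0(B,\omega_B^m)=S^0(B,\sO_B)$, which is nonzero (of dimension one) by \eqref{p-ordinary}(7) because $B$ is ordinary, so that $\kappa_S(B)=0$.

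For $(1)\Rightarrow(2)$, I would first record the reductions. From $b_1(X)=2\dim X=2\dim A$ I get $\dim X=\dim A$, so $a$ is generically finite; by \eqref{t-surj} (using $\kappa_S(X)=0$) it is surjective, and $p\nmid\deg a$ makes it separable, so $d:=\deg a$ is a well-defined integer prime to $p$. The key structural reduction is that \emph{it suffices to prove $d=1$}: once $a$ is birational, $X$ is birational to the abelian variety $A$, whence $\kappa_S(A)=\kappa_S(X)=0$; as $\omega_A\cong\sO_A$ this forces $S^0(A,\sO_A)=S^0(A,\omega_A)\ne0$, so $A$ is ordinary by \eqref{p-ordinary}(7) (equivalently $\hat A=\Pic^0(X)$ is ordinary by \eqref{l-isogeny}), and we land in case $(2)$.

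To prove $d=1$ I would run the positive-characteristic analogue of Kawamata's argument through the machinery of \S\ref{ss-GVT}. With $r$ the Calabi--Yau index, $G\in|rK_X|$ the unique element and $\Delta=\tfrac{r-1}{r}G$, I set $\Omega_0:=S^0a_*(\sigma(X,\Delta)\otimes\omega_X^r)$ and use the notation of \eqref{notation}. By \eqref{lem:slightly_less_obvious_containment} and \eqref{lem:S_0_if_Kodaira_dimension_zero} the space of Frobenius-stable top sections is one-dimensional and the maps $H^0(A,F^{e+1}_*\Omega_0)\to H^0(A,F^e_*\Omega_0)$ are isomorphisms, while \eqref{c-1} identifies $\sH^0(\Lambda_0)\otimes k(y)$ with $H^0(A,\Omega_0\otimes P_y^\vee)^\vee$; exactly as in the proof of \eqref{t-surj}, these support loci collapse to the origin, so $\sH^0(\Lambda_0)$ has an Artinian direct factor supported at $0$. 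Using $p\nmid d$ I would split the Grothendieck trace $a_*\omega_X\to\omega_A$ by $\tfrac1d$ times the canonical inclusion, obtaining a decomposition of Cartier modules $a_*\omega_X\cong\omega_A\oplus Q$; by relative duality $Q=0$ is equivalent to $a_*\sO_X=\sO_A$, i.e. to $d=1$. The goal is then to show, via \eqref{GVT}, \eqref{c-2} and \eqref{c-4}, that the Artinian factor of $\Lambda$ at the origin has length exactly $h^0(X,\omega_X^r)=1$, forcing $\Omega_0$ to have rank one and the complementary part $Q$ to vanish.

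The hard part will be precisely this last step, the vanishing of $Q$ (equivalently $d=1$). Two difficulties must be overcome and I expect their interaction to be the crux. First, one cannot invoke Kollár's or Kodaira's vanishing, so the conclusion that the Fourier--Mukai transform is a length-one skyscraper must be extracted purely from the Frobenius-stable generic vanishing of \eqref{GVT} together with the inverse-limit/colimit bookkeeping of \eqref{c-0} and \eqref{c-2}, controlling the contributions of the inseparable Verschiebung pullbacks. Second, when the Calabi--Yau index $r>1$ the sheaf $\omega_X$ itself may have no sections, so the trace splitting and the matching of lengths cannot be run on $\omega_X$ directly but must be carried out on the log object $S^0a_*(\sigma(X,\Delta)\otimes\omega_X^r)$ of \eqref{lem:S_0_if_Kodaira_dimension_zero}; keeping the splitting compatible with this twist, and reconciling the rank-one conclusion for $\Omega_0$ with the vanishing of the genuine complementary summand $Q$ of $a_*\omega_X$, is where the main technical effort will lie.
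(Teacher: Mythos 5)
Your overall framework is the right one and matches the paper's (Fourier--Mukai analysis of $S^0a_*\omega_X$, the splitting coming from $p\nmid\deg a$, and the aim of showing the transform is a length-one skyscraper at the origin so that $\rk\Omega_0=1$), and your $(2)\Rightarrow(1)$ direction is fine. But the proposal has a genuine gap, which you yourself flag: the decisive step --- that the Artinian piece of $\sH^0(\Lambda_0)$ at the origin has length exactly one, forcing $\deg a=1$ --- is stated as a ``goal'' with the difficulties described but not resolved. As written this is a plan, not a proof.

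Two concrete ingredients from the paper would close it, and both are absent from your plan. First, your second ``difficulty'' (the Calabi--Yau index $r>1$) does not arise: since $p\nmid\deg a$ the generically finite surjection $a$ is separable, so the dual of the trace gives an inclusion $\omega_A\hookrightarrow a_*\omega_X$, hence $h^0(\omega_X)\geq 1$; combined with $\kappa(X)=0$ this gives $h^0(\omega_X)=1$ and $r=1$, so one works with $\Omega_0=S^0a_*\omega_X$ directly and the log twist is unnecessary. Moreover this inclusion is Frobenius-compatible, which yields $S^0(\omega_A)\cong S^0(\omega_X)\neq 0$ and hence the ordinarity of $A$ \emph{before} one proves $\deg a=1$; your plan defers ordinarity until after $d=1$, but the paper needs it (via ``no supersingular factors'') as an input to \eqref{p-lambda} in the next step. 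Second, ``exactly as in the proof of \eqref{t-surj}, these support loci collapse to the origin'' is only a \emph{local} statement near $0_{\hat A}$; to get the global statement that the relevant part of $\sH^0(\Lambda_0)$ is supported at the origin one needs the elementary observation that $V^0(\omega_X)$ contains no nontrivial torsion point (uniqueness of the canonical divisor plus the multiplication map to $|o(Q)K_X|$) combined with \eqref{p-lambda}, which says the locus where $\sH^0(\Lambda_0)_P\to\Lambda_P$ is nonzero is a union of torsion translates of abelian subvarieties. Once these are in place, the splitting $\omega_A\hookrightarrow\Omega_0$ gives a $k(0)$ summand of $\sH^0(\Lambda_0)$, the one-dimensionality of $\sH^0(\Lambda_0)\otimes k(0)$ from \eqref{c-1} kills the complement, and applying $(-1_A)^*D_ARS(\;)[-g]$ to $\Lambda_0\to k(0)\to\Lambda$ together with the surjectivity of $\Omega\to\Omega_0$ gives $\rk\Omega_0=1$ and hence $\deg a=1$.
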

\begin{proof} It suffices to show that (1) implies (2). Assume that $\kappa _S(X)=0$ and $\dim X=\dim A$. Then $a$ is surjective by \eqref{t-surj} and hence it is  generically finite. We must show that the generic degree of $a$ is $1$ and $A$ is ordinary. We set $\Omega_0 = S^0 a_* \omega_X$, $\Omega_e= F^e_* \Omega_0$, $\Omega= \varprojlim \Omega_e$, $\Lambda_e = R \hat S( D_A(\Omega_e))$ and $\Lambda = \hocolim \Lambda_e$ as in \eqref{notation}.

\noindent {\bf Step 1. } \emph{$S^0( \omega _X)\ne 0$, $A$ is ordinary and the inclusion $\omega_A \to a_* \omega_X$ factors through the embedding $S^0 a_* \omega_X \to a_* \omega_X$}.

In fact, in this step, we do not need to assume that $p \nmid \deg a$, only that $a$ is separable, hence the ordinarity of $A$ holds in this more general context. Since $a$ is separable, there is a natural inclusion $\omega _A \to a_*\omega _X$ (the dual of the trace map $a_* \sO_X \to \sO_A$) inducing an inclusion $H^0(\omega _A)\subset H^0(\omega _X)$. 
Since $\kappa (X)=\kappa _S(X)=0$, this inclusion is an equality, and by \eqref{lem:S_0_if_Kodaira_dimension_zero} $S^0( \omega _X)\ne 0$.  We claim that \emph{the inclusion $\omega _A\to a_*\omega _X$ is compatible with Frobenius in the sense that the following diagram commutes}.
\begin{equation*}
\xymatrix{
F_* \omega_A \ar[d] \ar[r] & \omega_A \ar[d] \\
F_* a_* \omega_X \ar[r] & a_* \omega_X
}
\end{equation*}
Dualize the above diagram, that is apply $\mathcal{H}om_{\sO_A}(\_, \omega_A)$ to it (and Grothendieck duality at multiple places). Since dualization applied twice is the identity, it is enough to show that the dualized diagram commutes:
\begin{equation*}
\xymatrix{
F_* \sO_A  & \ar[l]  \sO_A \\
F_* a_* \sO_X \ar[u] & \ar[l] \ar[u] a_* \sO_X
} .
\end{equation*}
Because $F_* \sO_A$ is reflexive, it is further enough to show that the above diagram commutes in codimension one. That is, we may assume that $a$ is a finite map of normal varieties. Let $\phi_i$ be the embeddings of the function field of $X$ into its algebraic closure over the function field of $A$. Then we have to verify that $ \left( \sum \phi_i(f) \right)^p =  \sum \phi_i(f^p)$ holds for every local section $f$ of $\sO_X$. However, this follows since $\phi_i$ are ring homomorphisms and hence $\phi_i(f^p) = \phi_i(f)^p$. This concludes our claim.


Since the inclusion $\omega_A \to a_* \omega_X$ is compatible with Frobenius, it follows that $S^0(\omega _A)\cong S^0(\omega _X)\ne 0$.
By \eqref{p-ordinary}, this is equivalent to saying that $A$ is ordinary. Further, since $$S^0(X, \omega_X) \subseteq H^0( A, S^0 a_* \omega_X ) \subseteq H^0(A, a_*\omega_X) \cong H^0(X, \omega_X),$$ by \eqref{lem:obvious_inclusion} and since $\omega_A \cong \sO_A$, we also see that the natural morphism $\omega_A \to a_* \omega_X$ factors through $\Omega_0 = S^0 a_* \omega_X$.

\noindent {\bf Step 2. } \emph{$V^0(\omega_X)$ contains no torsion points except $\sO_A$.}

Assume the contrary, i.e., let $Q \not\cong \sO_A$ be a torsion point of $V^0(\omega_X)$ and let $D$ be the unique element of $|K_X|$.  Then considering the multiplication map (where $o(Q)$ is the order of $Q$),
\begin{equation*}
|K_X + Q| \times \dots \times |K_X + Q| \to |o(Q) K_X|
\end{equation*}
we see that the only element in $|K_X + Q|$ can be $\frac{1}{o(Q)}o(Q) D = D$. Hence $K_X + Q \sim D \sim K_X$, which is a contradiction.

\noindent {\bf Step 3.} \emph{The image of $\Lambda _0\to \Lambda$ is supported on $0_{\hat A}$.} 

If this were not the case, then by \eqref {p-lambda}, 
 there is a torsion translate $0_{\hat A}\ne T$ of an abelian subvariety such that for each $Q \in T$, the map $\Lambda_0 \otimes \sO_{\hat A, Q} \to \Lambda \otimes \sO_{\hat A, Q}$ is non-zero. Then by \eqref{c-1} also  $T \subseteq  V^0(\Omega_0) \subseteq V^0(\omega_X)$, and by Step 2 one obtains a contradiction.

\noindent {\bf Step 4.} \emph{$\sH^0(\Lambda_0) \cong k(0)$.}

Since we assume that $p \nmid \deg a$, the embedding $\omega_A \hookrightarrow a_* \omega_X$ is in fact a splitting. Further since this map factors through $\omega_A \hookrightarrow \Omega_0$, the latter also splits. In particular, $\sH^0(\Lambda_0)$ has a direct summand isomorphic to $\sH^0(R \hat S ( D_A( \omega_A))) \cong k(0)$. However, by \eqref{c-1} $\dim_k \sH^0(\Lambda_0) \otimes k(0) =1$. So, any direct complement $\sF$ of $k(0)$ in $\sH^0(\Lambda_0)$ is a coherent sheaf  supported at $0$, such that $\sF \otimes k (0)=0$. Therefore, $\sF=0$.

\noindent {\bf Step 5. } \emph{$\deg a=1$.}

By \eqref{GVT}, we know that there is a factorization 
\begin{equation*}
\xymatrix{
\Lambda_0 \ar[r] \ar@/^1pc/[rr] & \sH^0(\Lambda_0) \ar[r] & \Lambda
} .
\end{equation*}
By Steps 4, $\sH^0(\Lambda_0) \cong k(0)$. Thus 
we have a commutative diagram
\begin{equation*}
\xymatrix{
\Lambda_0 \ar[r] \ar@/^1pc/[rr] & k(0) \ar[r] & \Lambda
} .
\end{equation*}
Applying $(-1_A)^* D_{A} (RS (   \_ ))[-g]$ to the above diagram we obtain
\begin{equation*}
\xymatrix{
\Omega_0   & \sO_{A} \ar[l] & \Omega \ar@/_1pc/[ll] \ar[l]
} .
\end{equation*}
Since  the long arrow is surjective we obtain that $\rk \Omega_0 = 1$. This concludes the proof.

\end{proof}

 \end{document}